\documentclass{amsart}

\usepackage[T1]{fontenc}
\usepackage[utf8]{inputenc}
\usepackage{framed}
\usepackage[english]{babel}
\usepackage{amssymb}
\usepackage{amsthm}
\usepackage{amsmath}
\usepackage{stmaryrd,mathtools}
\usepackage{bbm}
\usepackage{tikz}
\usepackage{tikz-cd}
\usetikzlibrary{matrix,arrows,decorations.pathmorphing}
\usepackage{pdfpages}
\usepackage{graphicx}

\newtheorem{pro}{Proposition}
\numberwithin{pro}{subsection}
\newtheorem{df}{Definition}
\numberwithin{df}{subsection}
\newtheorem{thm}{Theorem}
\numberwithin{thm}{subsection}
\newtheorem{lem}{Lemma}
\numberwithin{lem}{subsection}
\newtheorem{cor}{Corollary}
\numberwithin{cor}{subsection}
\newtheorem{rem}{Remark}
\numberwithin{rem}{subsection}

\newcommand{\V}{\mathcal{V}}

\newcommand{\R}{\mathbb{R}}
\newcommand{\Z}{\mathbb{Z}}

\newcommand{\C}{\mathbb{C}}

\newcommand{\G}{\mathcal{G}}

\newcommand{\Lie}[1]{\mbox{\sf #1}}

\newcommand\emptyarg{{}\cdot{}}

\makeatletter
\newcommand*{\rom}[1]{\expandafter\@slowromancap\romannumeral #1@}
\makeatother

\graphicspath{{./MFIllustrationer/}}

\begin{document}
\title[Construction of Modular Functors from Modular Categories]{Construction of Modular Functors\\from Modular Tensor Categories}
\date{}

\author{J{\o}rgen Ellegaard Andersen}
\address{Center for Quantum Geometry of Moduli Spaces\\
        University of Aarhus\\
        DK-8000, Denmark}
\email{andersen{\@@}qgm.au.dk}
\author{William Elbæk Petersen}
\address{Center for Quantum Geometry of Moduli Spaces\\
        University of Aarhus\\
        DK-8000, Denmark}
\email{william{\@@}qgm.au.dk}

\begin{abstract}  In this paper we follow the constructions of Turaev's book \cite{Tu} closely, but with small modifications,  to construct  a modular functor, in the sense of Kewin Walker, from any modular tensor category. We further show that this modular functor has duality and if the modular tensor category is unitary, then the resulting modular functor is also unitary.
\end{abstract}

\thanks{Supported in part by the center of excellence grant ``Center
  for quantum geometry of Moduli Spaces'' from the Danish National
  Research Foundation (DNRF95).}
  
\maketitle

\section{Introduction} Following Turaev's book \cite{Tu} closely, we provide in this paper, for any modular tensor category $\mathcal{V}$ a construction of a modular functor $Z_{\mathcal{V}}$ subject to the axioms formulated by Kevin Walker in \cite{W} and use in \cite{AU2,AU3,AU4}.

A labeled marked surface is a closed oriented surface $\mathbf{\Sigma}$ endowed with a finite set of distinguished points equipped with a direction as well as a label from a finite set $\Lambda.$ Moreover $\mathbf{\Sigma}$ is equipped with a Lagrangian subspace of its first homology group. A modular functor associates to any labeled marked surface $\Sigma$ a module called its module of states. 
See section \ref{AxiomsforMF} below where we spell out the axioms for a modular functor in all details.

Given a modular tensor category $(\mathcal{V},(V_i)_{i\in I})$, Turaev constructs a $2$-DMF in \cite{Tu}. Taking the label set to be $\Lambda=I,$ we simply use the modular functor provided by Turaev, and provide natural identifications between certain modules of states to make up for the difference between Turaev's axioms for a $2$-DMF and then Walker's axioms for a 2-DMF. To do this one needs to fix isomorphisms
\begin{equation}\label{qi}
 q_i : V_{i^*} \rightarrow \left( V_i \right)^*.
 \end{equation}

We first obtain the following result (Theorem \ref{MT} and \ref{QIT}).

\begin{thm}
For any choices of the isomorphisms (\ref{qi}) we get a modular functor $Z_\mathcal{V}$. For any two choices of the isomorphisms (\ref{qi}) we get quasi-isomorphic modular functors. 
\end{thm}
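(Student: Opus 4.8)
The plan is to take as input Turaev's $2$-DMF $\tau$ associated to $\mathcal{V}$ in \cite{Tu}, with label set $\Lambda = I$, and to promote it to a modular functor $Z_\mathcal{V}$ in Walker's sense by keeping the module of states that $\tau$ already assigns to a labeled marked surface and supplying the extra coherence data (the gluing isomorphisms, the action of morphisms of labeled marked surfaces, and the normalizations) that Walker's axioms demand. The only place where Turaev's data is insufficient is in the identification of the label $i^*$ with the dual of the label $i$ at the level of the underlying objects $V_i$; this is precisely the role of the isomorphisms $q_i \colon V_{i^*} \to (V_i)^*$ of (\ref{qi}). So the first step is, for each axiom in the list of section \ref{AxiomsforMF}, to write down the required structure morphism in terms of Turaev's corresponding operator and the $q_i$, and then to verify that axiom.

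Concretely I would proceed axiom by axiom. The empty-surface and disjoint-union axioms are immediate from the monoidal structure of $\tau$. For the axiom on morphisms of labeled marked surfaces one uses Turaev's functoriality together with the naturality of $q_i$ in $\mathcal{V}$. The mapping-class-group and central-extension part is inherited from Turaev's projective action, with the same anomaly. The substantive verifications are: (a) the normalization on the standard sphere with two (resp.\ three) marked points, where the $q_i$ enter through the duality pairing $V_i \otimes V_{i^*} \to \mathbf{1}$; and (b) the gluing axiom, where Walker's formulation sums over $i \in I$ a tensor of spaces attached to the two new boundary circles labeled $i$ and $i^*$, and one must check that composing Turaev's gluing isomorphism with the appropriate insertion of $q_i$ yields an isomorphism that is associative (independent of the order of several simultaneous gluings) and compatible with the mapping-class-group action. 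This is where the bulk of the work lies: it is essentially a careful matching of Turaev's gluing homomorphism against Walker's, the $q_i$ being chosen so that the two agree on the nose.

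For the second assertion, fix two families $(q_i)_{i\in I}$ and $(q'_i)_{i\in I}$. Since each $V_i$ is simple, $\mathrm{Hom}_\mathcal{V}(V_{i^*},(V_i)^*)$ is one-dimensional, so there are nonzero scalars $\lambda_i$ with $q'_i = \lambda_i q_i$, and compatibility of the $q_i$ with the duality pairing forces a relation between $\lambda_i$ and $\lambda_{i^*}$ (they are mutually inverse up to the scalar coming from the pivotal structure). I would then define, for every labeled marked surface $\Sigma$ with marked points labeled $i_1,\dots,i_m$, the map $Z_\mathcal{V}(\Sigma) \to Z'_\mathcal{V}(\Sigma)$ to be multiplication by $\prod_r \lambda_{i_r}$, the underlying module being literally the same. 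One then checks that this family is natural with respect to morphisms of labeled marked surfaces, intertwines the two mapping-class-group actions (the scalars are central, so the anomalies coincide), and is compatible with gluing: when one glues along a pair of circles labeled $i$ and $i^*$, the extra factor $\lambda_i\lambda_{i^*}$ appearing on the two sides cancels against the discrepancy between the two gluing operators, exactly because of the relation between $\lambda_i$ and $\lambda_{i^*}$. Hence the two modular functors are quasi-isomorphic.

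The main obstacle I anticipate is the gluing axiom in both halves: in the first half, pinning down the precise correction that turns Turaev's gluing into Walker's and verifying associativity of iterated gluings; in the second half, tracking the scalar $\prod_r \lambda_{i_r}$ through a gluing and confirming that the assignment is consistent, i.e.\ independent of how $\Sigma$ is cut into pieces. Everything else (disjoint union, functoriality, the sphere normalizations, and the mapping-class-group action) should be a routine transcription of the corresponding properties of Turaev's $2$-DMF.
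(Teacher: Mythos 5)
Your construction of $Z_\mathcal{V}$ in the first half follows the paper's route: the paper takes $Z_\mathcal{V}=\mathcal{H}_\mathcal{V}\circ\G$ and defines the Walker gluing as Turaev's splitting isomorphism precomposed with the coupon morphisms $h\circ\dot{q_i}$ that convert the marking $(V_{i^*},1)$ into $(V_i,-1)$, then checks the four gluing sub-axioms exactly as you outline (the heavy lifting being far-commutativity of the coupon morphisms and their compatibility with Turaev's gluing, proved by surgery/cobordism presentations). That part of your plan is sound and essentially identical in strategy.

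The second half has a genuine gap. You write $q'_i=\lambda_i q_i$ and then assert that ``compatibility of the $q_i$ with the duality pairing forces a relation between $\lambda_i$ and $\lambda_{i^*}$.'' No such relation exists: the $q_i\colon V_{i^*}\to (V_i)^*$ are completely arbitrary isomorphisms, chosen independently for $i$ and $i^*$, and nothing in the construction ties $q_i$ to $q_{i^*}$. Consequently the scalars $\lambda_i$ and $\lambda_{i^*}$ are independent units of $K$. If you nevertheless set $\Phi(\mathbf{\Sigma})=\prod_r\lambda_{i_r}\cdot\mathrm{id}$, then tracking a gluing along a pair labeled $(i,i^*)$ forces the scalar in diagram (\ref{quasi-isomorphism}) to be $\gamma(i)=(\lambda_i\lambda_{i^*}\lambda_i)^{-1}$, and the required constraint $\gamma(i)\gamma(i^*)=1$ becomes $(\lambda_i\lambda_{i^*})^3=1$, which fails in general; there is no cancellation ``on the nose,'' and indeed the whole point of allowing the label-dependent factor $\gamma$ in the definition of quasi-isomorphism is that the discrepancy cannot in general be cancelled. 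The paper's fix is to take $\Phi(\mathbf{\Sigma})=\prod_r\alpha(i_r)\cdot\mathrm{id}$ for an \emph{a priori unrelated} function $\alpha\colon I\to K^*$, observe that the gluing compatibility forces $\gamma(i)=\bigl(\alpha(i)\alpha(i^*)\lambda_i\bigr)^{-1}$, and then choose $\alpha$ so that $\bigl(\alpha(i)\alpha(i^*)\bigr)^2\lambda_i\lambda_{i^*}=1$ (adjoining square roots to $K$ if necessary), which is exactly the condition guaranteeing $\gamma(i)\gamma(i^*)=1$. Your argument needs this extra degree of freedom in $\alpha$; with $\alpha(i)=\lambda_i$ it does not close.
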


Here quasi-isomorphism refers to a  notation which is exactly like isomorphism of modular functor, except it allows for scalings of the glueing isomorphisms in a label dependent way, see Definition \ref{qid}. Hence we see that there is a unique quasi-isomorphism class of modular functors associated to every modular tensor category. Two sets of isomorphisms $q_i^{(j)} : V_{i^*} \rightarrow \left( V_i \right)^*$, $j=1,2$ give rise to two strictly isomorphic modular functors if the unique $ u_i \in K^*$ determined by $q^{(2)}_i =  u_ i q^{(1)}_{i}$ satisfies that $u_{i^*} = u_i$.

\begin{thm}
For any choices of the isomorphisms (\ref{qi}) we get a duality structure on the modular functor $Z_\mathcal{V}$. If the modular tensor category is unitary, then we also get a unitary structure compatible with the rest of the structure of the modular functor.
\end{thm}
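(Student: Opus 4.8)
The plan is to transport to Walker's language the duality and (in the unitary case) the Hermitian structure already present on Turaev's side, using the same identifications $q_i\colon V_{i^{*}}\to (V_i)^{*}$ that entered the proof of Theorem~\ref{MT}. Recall from Section~\ref{AxiomsforMF} that a duality structure assigns to each labeled marked surface $\mathbf{\Sigma}$ a nondegenerate pairing $Z_{\mathcal{V}}(-\mathbf{\Sigma})\otimes Z_{\mathcal{V}}(\mathbf{\Sigma})\to K$, where $-\mathbf{\Sigma}$ carries the opposite orientation and has every label $i$ replaced by $i^{*}$, and that this pairing is required to be natural under isomorphisms of labeled marked surfaces and compatible with disjoint union, with glueing, and with the canonical identification $-(-\mathbf{\Sigma})=\mathbf{\Sigma}$. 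Turaev's construction of the $2$-DMF from $\mathcal{V}$ produces, besides the modules of states, a natural nondegenerate pairing between the module of the mirror surface and that of $\mathbf{\Sigma}$, built from the duality morphisms $(b_V,d_V,b'_V,d'_V)$ of $\mathcal{V}$; the pairing we want for $Z_{\mathcal{V}}$ is obtained from it by inserting at each marked point the isomorphism $q_i$, which converts the label $i^{*}$ occurring on the mirror side into the dual object $(V_i)^{*}$ evaluated against $V_i$ on the $\mathbf{\Sigma}$ side. With this definition, nondegeneracy and naturality under mapping classes are inherited verbatim from \cite{Tu}, and compatibility with disjoint union holds on the nose, since the pairing of a disjoint union is the tensor product of the pairings.

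The delicate point is compatibility with glueing. When $\mathbf{\Sigma}'$ is obtained from $\mathbf{\Sigma}$ by cutting along a simple closed curve, the glueing axiom identifies $Z_{\mathcal{V}}(\mathbf{\Sigma})$ with $\bigoplus_{i\in\Lambda}Z_{\mathcal{V}}(\mathbf{\Sigma}')$, where the two new marked points carry labels $i$ and $i^{*}$; passing to $-\mathbf{\Sigma}$ reverses the two sides of the cut and dualizes all labels, so matching Turaev's cut-and-paste formula for his pairing against Walker's axiom forces a comparison of $q_{i^{*}}$ with the transpose of $q_i^{-1}$. This comparison produces a nonzero scalar depending only on the label $i$, and since the glueing isomorphism of a quasi-isomorphism is allowed to be rescaled in a label-dependent way (Definition~\ref{qid}), this discrepancy does no harm; for suitable normalizations of the $q_i$ one even gets a strict duality, paralleling the discussion of strict isomorphism in the paragraph before the statement. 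Finally, checking that the canonical equality $-(-\mathbf{\Sigma})=\mathbf{\Sigma}$ intertwines the pairing with itself, up to the standard sign coming from $V^{**}\cong V$, is a routine diagram chase through the $q_i$ and the canonical isomorphisms of $\mathcal{V}$.

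For the unitary statement, assume $\mathcal{V}$ is unitary: every morphism space of $\mathcal{V}$ carries a positive-definite Hermitian form, the conjugation $V\mapsto\overline{V}$ is a conjugate-linear involution compatible with tensor product, braiding and twist, and the quantum dimensions $\dim(V_i)$ are positive real numbers. Turaev's module of states is built as a direct sum, over colourings of a skeleton of the surface, of tensor products of such morphism spaces, so it inherits a Hermitian form. I would first verify that this form is positive-definite, which reduces to the unitarity identities for the fusion matrices and the modular $S$-matrix together with positivity of the $\dim(V_i)$, exactly as in the treatment of Hermitian and unitary TQFTs in \cite{Tu}. I would then check, in this order: that the mapping class group acts by unitary operators, which it suffices to verify on generating Dehn twists and the modular $S$-transformation, where it is literally the unitarity of the numerical data of $\mathcal{V}$; that the glueing isomorphism is an isometry up to the positive real scalars $\dim(V_i)$, which is precisely the latitude permitted by Walker's unitary axiom; and that the Hermitian form is compatible with the duality structure, i.e. equals the duality pairing $Z_{\mathcal{V}}(-\mathbf{\Sigma})\otimes Z_{\mathcal{V}}(\mathbf{\Sigma})\to K$ precomposed with the conjugate-linear isomorphism $\overline{Z_{\mathcal{V}}(\mathbf{\Sigma})}\to Z_{\mathcal{V}}(-\mathbf{\Sigma})$ induced by the conjugation functor of $\mathcal{V}$ on the morphism spaces involved — once more a reduction to a statement inside $\mathcal{V}$.

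The step I expect to be the main obstacle is the glueing compatibility of the duality pairing: one must keep exact track of how orientation reversal swaps the two sides of each cut and relabels $i\leftrightarrow i^{*}$, and then match Turaev's cut-and-paste formula for his pairing against Walker's axiom, identifying the correction factor that appears with a label-dependent scalar of exactly the kind the quasi-isomorphism notion tolerates. For the unitary part the analogous difficulty is the unitarity of the generators acting on the modules of states, which I would dispatch by invoking the standard unitarity identities of a unitary modular category, so that beyond careful bookkeeping no computation is needed that is not already in \cite{Tu}.
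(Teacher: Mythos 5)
Your overall strategy --- transporting Turaev's pairing and Hermitian form through the identifications $q_i$ and verifying Walker's axioms with the permitted projective factors --- is exactly the route the paper takes: it sets $(\emptyarg,\emptyarg)_{\mathbf{\Sigma}}=\langle\emptyarg,\zeta(\emptyarg)\rangle_{\overline{\G(\mathbf{\Sigma})}}$ with $\zeta=\mathcal{T}^e(r)\circ\tilde{h}\circ\dot{q}$, and cites Turaev for naturality, multiplicativity and, in the unitary case, positivity. However, the mechanism you invoke for the step you yourself single out as delicate would not go through as written. The glueing correction factor is \emph{not} ``a nonzero scalar depending only on the label $i$,'' and the latitude that absorbs it is not the quasi-isomorphism notion of Definition~\ref{qid} (which compares two modular functors and only permits rescaling the glueing maps by a function of the label). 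The paper's surgery-presentation computation gives $(\tilde g,\tilde g)_{\mathbf{\Sigma}}=\sum_i D^4\dim(i)^{-1}(\emptyarg,\emptyarg)_{\mathbf{\Sigma}(i)}$ when the two glued points lie on one component, but $\sum_i\dim(i)^{-1}(\emptyarg,\emptyarg)_{\mathbf{\Sigma}(i)}$ when they lie on two; the factor $D^4$ depends on the topology of the cut, so no label-dependent rescaling can remove it. What makes this harmless is that the duality axiom itself (Definition~\ref{duality def}) allows $\mu_\lambda$ to depend on the isomorphism class of $\mathbf{\Sigma}(\lambda)$; strictness is only recovered afterwards by the genus-dependent normalization $D^{-4g}\prod_l\sqrt{\dim(i_l)}$ of equation~(\ref{normalization}).

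Two further inaccuracies are worth flagging, though neither affects the truth of the theorem as stated. The parenthetical claim that ``for suitable normalizations of the $q_i$ one even gets a strict duality'' fails in general: for a self-dual label $i=i^*$ the self-duality scalar $\mu(i)$ is independent of the choice of $q_i$ and can equal $-1$ (the symplectic labels), which is precisely what motivates the fundamental symplectic character in the later sections. And identifying $Z(-\mathbf{\Sigma})$ with the states of $-\overline{\G(\mathbf{\Sigma})}$ requires more than inserting the $q_i$ at the marked points: one must also exchange each marking $(V_i^*,1)$ with $(V_i,-1)$ and apply a half-twist homeomorphism at each arc (the maps $\tilde{h}$ and $r$ of the paper), and it is the interaction of these half-twists with the $q_i$ that produces the self-duality scalars $\mu(i)$ in the first place.
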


This is the content of Theorem \ref{DT} and Theorem \ref{HT} below. We emphasise that we do not need to choose the same $q_i$ for the glueing maps and for the duality as discussed in section \ref{scaling section}.
Further, in the compatibility between glueing and duality, duality with it self and duality with the unitary structure, there are projective factors allowed, as detailed in the Definition \ref{duality def} and Definition \ref{unitarity}. First we establish that we can normalise the duality pairing and the unitary pairing, such that it is strictly compatible with glueing. This is done in section \ref{scaling section}.

From this scaling analysis, one sees that the scaling can be separated into a product of two factors, one which only depends on the genus of the surface (see Definition \ref{genus normalized pairing}) and one, which is simply a product of contributions from each of the labels (see equation (\ref{scalenormalization})).  This provides us with what we call the {\em canonical symplectic scaling}, where (\ref{canonical}) in Theorem \ref{Canonical solution} relate the two scalings of the isomorphisms (\ref{qi}), which has the effect that the quantum invariant of the flat unknot labeled by $i$ becomes $\dim(V_i)$ (see equation (\ref{3}), which is the corresponding normalization for the unknot with one negative twist).
The multiplicative factor in the compatibility of duality with duality and unitary pairing with duality becomes in this case negative one raised to the number of {\em symplectic self-dual labels} of a given labeled marked surface (see Definition \ref{symplectic label} and \ref{symplectic multiplicity} and Theorem \ref{Canonical solution} and \ref{Canonical solution w. unitarity}).

In order to analyse if we can find a normalization such that all projective factors in the compatibility between glueing and duality, duality with it self and duality with unitarity can be made unity, which we call {\em strict compatibility}, we introduce the {\em dual fundamental group}  $\Pi(\mathcal{V},I)^*$ of a modular tensor category.

\begin{df}[$\Pi(\mathcal{V},I)^*$]
Let $\Pi(\mathcal{V},I)^*$ consist of the set of functions 
\[
\tilde{\mu}:I \rightarrow K^*
\] that satisfies 
\[
\tilde{\mu}(i)\tilde{\mu}(i^*)=1,
\] and such that 
\[
\tilde{\mu}(i)\tilde{\mu}(j)\tilde{\mu}(k)\not=1,
\] implies 
\[
\text{Hom}(\mathbf{1},V_i\otimes V_j \otimes V_k)=\mathbf{0}.
\]
\end{df}

We call it the dual of the fundamental group due to its similarity with the dual of the fundamental group of a simple Lie algebra as spelled out in section \ref{g}. 

We make the following definition.

\begin{df}
An $\tilde \mu \in \Pi(\mathcal{V},I)^*$ with the property that $\tilde \mu$ takes on the values $\pm 1$ on the self-dual simple objects, in such way that $\tilde \mu$ is $-1$ on the symplectic simple objects and $1$ on the rest of the self-dual simple objects, is called a {\em fundamental symplectic character}. 
\end{df}

We observe that if ${\mathcal V}$ has no symplectic simple objects, then the identity in $\Pi(\mathcal{V},I)^*$ is a fundamental symplectic character. We ask the question if any modular tensor category has such a fundamental symplectic character.

\begin{thm}
If ${\mathcal V}$ has a fundamental symplectic character, then we can arrange that glueing and duality, duality with it self and duality with the unitary paring are strictly compatible.
\end{thm}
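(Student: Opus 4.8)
The plan is to build on the canonical symplectic scaling supplied by Theorem \ref{Canonical solution} and Theorem \ref{Canonical solution w. unitarity}. In that normalization the glueing isomorphisms are already strictly compatible with the 2-DMF axioms and with the duality and unitary pairings, and the only residual projective factor is the one occurring in the compatibility of the duality pairing with itself, and of the unitary pairing with duality, namely $(-1)^{s(\Sigma)}$, where $s(\Sigma)$ denotes the number of symplectic self-dual labels carried by the labeled marked surface $\Sigma$ (Definitions \ref{symplectic label} and \ref{symplectic multiplicity}). So it suffices to produce one further admissible rescaling of the pairings which multiplies these factors by $(-1)^{s(\Sigma)}$ while disturbing nothing that is already strict.

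Given a fundamental symplectic character $\tilde\mu\in\Pi(\mathcal{V},I)^*$, I would invoke the freedom highlighted in Section \ref{scaling section} to use different isomorphisms (\ref{qi}) for the duality and unitary pairings than for the glueing maps, and replace the $q_i$ entering the pairings by $\tilde\mu(i)\,q_i$, keeping the glueing maps exactly as in the canonical symplectic scaling. Because the glueing maps are untouched, the 2-DMF structure and its strict self-compatibility are unaffected, and because $\tilde\mu(i)\tilde\mu(i^*)=1$ the rescaled maps still assemble into a genuine duality datum (and, in the unitary case, into a genuine unitary structure, since $\tilde\mu$ is real on the self-dual objects). The key point is that strict compatibility of glueing with duality survives: this compatibility constant localises on pairs of pants, where for boundary labels $i,j,k$ it picks up the factor $\tilde\mu(i)\tilde\mu(j)\tilde\mu(k)$, and the defining property of $\Pi(\mathcal{V},I)^*$ is precisely that this factor equals $1$ whenever $\text{Hom}(\mathbf{1},V_i\otimes V_j\otimes V_k)\neq\mathbf{0}$, i.e. on every pair of pants that actually occurs in a splitting. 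This is the conceptual reason the dual fundamental group is the right object to introduce here.

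It then remains to evaluate the effect of the rescaling on the two surviving projective factors. Bookkeeping the occurrences of $q_\ell$ and $q_{\ell^*}$ in the double-duality identification and in the comparison of the unitary pairing with duality, the contribution of each non-self-dual label $\ell$ of $\Sigma$ is $\tilde\mu(\ell)\tilde\mu(\ell^*)=1$, while the contribution of each self-dual label $\ell$ is $\tilde\mu(\ell)=\pm1$, which by the definition of a fundamental symplectic character is $-1$ exactly when $\ell$ is symplectic. Hence these factors get multiplied by $\prod_{\ell\ \text{self-dual}}\tilde\mu(\ell)=(-1)^{s(\Sigma)}$, cancelling the pre-existing $(-1)^{s(\Sigma)}$ and producing $1$; combined with the previous step this makes glueing and duality, duality with itself, and the unitary pairing with duality all strictly compatible at once. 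I expect the main obstacle to be exactly this last computation: one must read off from the constructions of Section \ref{scaling section} the precise exponents with which the various $q_i$ enter each of the three compatibility constants, so as to be certain that the non-self-dual labels really drop out and that each symplectic self-dual label is counted with odd multiplicity.
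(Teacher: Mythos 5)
There is a genuine gap in your final bookkeeping, and it stems from a structural point that your rescaling cannot get around. By Proposition \ref{scalingofmu}, replacing the $q_i$ entering the pairing by $c_iq_i$ multiplies the per-label self-duality coefficient $\mu(i)$ by $c_i/c_{i^*}$. For a self-dual label $i=i^*$ this ratio is $1$ for \emph{any} choice of $c$ --- this is exactly the content of the proposition stating that $\mu(i)$ is well-defined, i.e.\ independent of $q_i$, on self-dual objects. Hence your substitution $c_i=\tilde\mu(i)$ leaves the contribution of every symplectic self-dual label equal to $-1$ and cannot cancel $(-1)^{\nu(\mathbf{\Sigma})}$ label by label. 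Moreover, at a non-self-dual label the substitution multiplies $\mu(i)$ by $\tilde\mu(i)/\tilde\mu(i^*)=\tilde\mu(i)^2$, not by $\tilde\mu(i)\tilde\mu(i^*)=1$, so your claim that the non-self-dual labels drop out while each self-dual label contributes $\tilde\mu(\ell)$ has the exponents exactly backwards, and the resulting global factor is $\bigl(\prod_{\ell\ \text{non-self-dual}}\tilde\mu(\ell)^2\bigr)(-1)^{\nu(\mathbf{\Sigma})}$ rather than $1$.

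The paper's proof of Theorem \ref{strict} proceeds differently, and the difference is essential. One does not attempt to make each label's contribution equal to $1$ (impossible at symplectic labels, as above); instead one chooses $w_i=\sqrt{\tilde\mu(i)}\sqrt{\mu(i)}$, so that $\mu(i,w)=\tilde\mu(i)$ for \emph{every} $i$, and then proves that the \emph{global} self-duality factor $\prod_l\tilde\mu(i_l)$ equals $1$ on every surface with nonzero module of states. That last step is where the defining property of $\Pi(\mathcal{V},I)^*$ enters: one cuts $\mathbf{\Sigma}$ along a curve system into spheres with at most three marked points, and on each trinion the implication $\mathrm{Hom}(\mathbf{1},V_i\otimes V_j\otimes V_k)\neq\mathbf{0}\Rightarrow\tilde\mu(i)\tilde\mu(j)\tilde\mu(k)=1$ forces the product to be $1$. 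You do invoke this trinion condition, but for the wrong compatibility: the gluing--duality constant depends only on the product $u_iu_{i^*}/(w_iw_{i^*})$ at the glued label (equations (\ref{samecom}) and (\ref{difcom})) and is therefore automatically preserved by any rescaling satisfying $\tilde\mu(i)\tilde\mu(i^*)=1$; it does not localize on pairs of pants. The pants-decomposition argument is needed precisely for the self-duality and unitarity--duality factors, which is the part of your computation that fails.
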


This is proven in section \ref{Thedualofthefundamentalgroup}. In section \ref{SU(N)} we provide a fundamental symplectic character for the quantum $\mbox{SU}(N)$ modular tensor category $H^{\rm{SU}(N)}_k$ at the root of unity $q= e^{2\pi i /(k+N)}$ first constructed by Reshetikhin and Turaev for $N=2$ \cite{RT1,RT2} and by Turaev and Wenzl for general $N$ \cite{TW1,TW2}. See also \cite{BHMV1,BHMV2} for a skein theory model of the $N=2$ case and \cite{B} for the general $N$. In section \ref{g}, we provide a fundamental symplectic character for any modular tensor category associated to the quantum group at a root of unity for any simple Lie algebra. Hence we have established

\begin{thm}
Any quantum group at a root of unity gives a modular functor such that glueing and duality, duality with it self and duality with the unitary paring are strictly compatible.
\end{thm}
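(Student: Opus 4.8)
The plan is to deduce the statement from the preceding theorem — which gives strict compatibility of glueing with duality, of duality with itself, and of duality with the unitary pairing as soon as $\mathcal{V}$ admits a fundamental symplectic character — by exhibiting such a character for every modular tensor category coming from a quantum group at a root of unity. Since the modular tensor category attached to a semisimple quantum group at a root of unity is the (Deligne) product of the ones attached to its simple summands, and since a fundamental symplectic character of a product is the product of fundamental symplectic characters of the factors — both the fusion condition and the relation $\tilde\mu(i)\tilde\mu(i^*)=1$ in the definition of $\Pi(\mathcal{V},I)^*$ being tested factorwise — it suffices to treat a simple Lie algebra $\mathfrak{g}$. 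This is precisely what is done in Section \ref{SU(N)} for $H^{\mathrm{SU}(N)}_k$ and in Section \ref{g} in general, and these categories are unitary at the relevant roots of unity, so that the unitary part of the statement also makes sense.

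For simple $\mathfrak{g}$ the simple objects $V_i$ of $\mathcal{V}$ are indexed by the dominant weights in the level-$k$ Weyl alcove, duality is $\lambda\mapsto -w_0\lambda$, and the tensor category is graded by the fundamental group $P/Q$ of $\mathfrak{g}$ with $\mathbf{1}$ in degree $0$; in particular $\mathrm{Hom}(\mathbf{1},V_i\otimes V_j\otimes V_k)\neq\mathbf{0}$ forces $\lambda_i+\lambda_j+\lambda_k\in Q$. Hence any function pulled back from a character $\chi:P/Q\to\{\pm1\}\subset K^*$ via $\tilde\mu(i)=\chi(\lambda_i\bmod Q)$ automatically satisfies the fusion implication, and it satisfies $\tilde\mu(i)\tilde\mu(i^*)=1$ as soon as $\chi\circ(-w_0)=\chi$ — automatic when $-w_0=\mathrm{id}$, and a short check for the types $A_{n\ge 2}$, $D_{2n+1}$, $E_6$ where $-w_0$ is the nontrivial diagram automorphism. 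So the problem reduces to choosing $\chi$ so that $\tilde\mu$ is $-1$ on exactly the symplectic self-dual simple objects and $+1$ on the orthogonal self-dual ones.

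The heart of the matter — and the main obstacle — is this last matching: one must verify that the Frobenius--Schur indicator of a self-dual $V_i$ depends only on $\lambda_i\bmod Q$ and is cut out by a homomorphism $P/Q\to\{\pm1\}$. This is a finite, Dynkin-type-by-type verification using the standard formulas for the indicator. For the types with nontrivial even part of the center it is concrete: for $A_1$ and $C_n$ one has $P/Q=\Z/2$ and the symplectic simple objects are exactly those in the nontrivial class, so $\chi$ is the nontrivial character; for $D_n$ and $E_7$ one identifies the corresponding character of $P/Q$ realising the symplectic pattern on the half-spin / minuscule labels. For the remaining types ($E_8$, $F_4$, $G_2$, and those with odd-order center) one checks that $\mathcal{V}$ has \emph{no} symplectic simple object, so that the identity of $\Pi(\mathcal{V},I)^*$ is already a fundamental symplectic character, as in the observation preceding the definition. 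In each case one then confirms $\chi\circ(-w_0)=\chi$, securing $\tilde\mu(i)\tilde\mu(i^*)=1$.

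With a fundamental symplectic character $\tilde\mu\in\Pi(\mathcal{V},I)^*$ in hand, the theorem is immediate: applying the preceding theorem, one normalises the duality and unitary pairings using $\tilde\mu$ (following the scaling analysis of Section \ref{scaling section} and the dual-fundamental-group discussion of Section \ref{Thedualofthefundamentalgroup}), and all projective factors in the compatibility of glueing with duality, of duality with itself, and of duality with the unitary pairing collapse to $1$, i.e.\ strict compatibility holds. For a non-simple quantum group one takes the product over the simple factors, and the proof is complete.
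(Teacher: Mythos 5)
Your proposal is correct and follows essentially the same route as the paper: reduce to Theorem \ref{strict} by producing a fundamental symplectic character, obtain it as the pullback along $\Lambda^{\Lie{g}}_k \to \Pi(\Lie{g})={\mathcal W}/{\mathcal R}$ of a character of the fundamental group (using the grading of the fusion rules and $\lambda+\lambda^\dagger\in{\mathcal R}$), and match it type by type against the symplectic pattern on self-dual labels, with the trivial character sufficing when no symplectic objects exist. The only cosmetic differences are that you insist on $\{\pm1\}$-valued characters where the paper allows $\zeta_N$-valued ones (harmless, since self-dual classes are $2$-torsion in $\Pi(\Lie{g})$) and that you make explicit the reduction of a general quantum group to its simple factors.
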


We thank Henning Haahr Andersen, Christian Blanchet, Jens Carsten Jantzen, Nicolai Reshetikhin and Vladimir Turaev for valuable discussion regarding this paper.

\section{Axioms for a modular tensor category}

For the axioms of a modular tensor category $(\mathcal{V},(V_i)_{i \in I})$ we refer to chapter $\rom{2}$ in \cite{Tu}.  For any modular tensor category, we have an induced involution $^* : I \rightarrow I$, determined by
$$(V_i)^* \cong V_{i^*}.$$
Recall that the ground ring is $K=\text{End}(\mathbf{1})$ in the notation of \cite{Tu}. For an object $V$ we have the important $K$-linear trace operation $\text{tr}:\text{End}(V) \rightarrow K.$ We have the following definition
$\text{dim}(V):=\text{tr}(\text{id}_V)$ and one gets the following identities for all objects $V$
\begin{align*}
\text{dim}(V)&=\text{dim}(V^*),
\end{align*}
We simply write $\text{dim}(V_i)=\text{dim}(i)$ and so for all indices $i \in I$
$$\\ \text{dim}(i) = \text{dim}({i^*}).
$$

\section{Labeled marked surfaces, extended surfaces and marked surfaces}

\subsection{$\Lambda$-Labeled marked surfaces}

Let $\Lambda$ be a finite set equipped with an involution $^\dagger: \Lambda \rightarrow \Lambda$ and a preferred element $0\in \Lambda$ with $0^{\dagger}=0.$ 

We start by recalling that for a closed connected surface $\Sigma$, Poincare duality induce a non-degenerate skewsymmetric pairing
\[
(\emptyarg, \emptyarg): H_1(\Sigma,\Z) \times H_1(\Sigma,\Z) \longrightarrow \Z,
\]
called the intersection pairing. For the rest of this paper, $H_1(\Sigma)$ will mean the first integral homology group. We remark that we could just as well have considered $H_1(\Sigma,\R).$

For any real vector space $W,$ let $P(W):= (W\setminus \{0\})/ \R_{+}.$

We now define the objects of the category of $\Lambda$-labeled marked surfaces.

\begin{df}[$\Lambda$-marked surfaces]
A $\Lambda$-marked surface is given by the following data: $(\Sigma,P,\lambda,V,L).$

Here $\Sigma$ is a smooth oriented closed surface. $P$ is a finite subset of $\Sigma.$ We call elements of $P$ distinguished points of $\Sigma.$ $V$ assigns to any $p$ in $P$ an element $v(p) \in P(T_p \Sigma).$ We say that $v(p)$ is the direction at $p.$ $\lambda$ is an assignment of labels from $\Lambda$ to the points in $P$, e.g. it is a map $P \rightarrow \Lambda.$ We say that $\lambda(p)$ is the label of $p.$ 

Assume $\Sigma$ splits into connected components $\{\Sigma_{\alpha}\}.$ Then $L$ is a  Lagrangian subspace of $H_1(\Sigma)$ such that the natural splitting $H_1(\Sigma) \simeq \oplus_{\alpha} H_1(\Sigma_{\alpha})$ induce a splitting $L \simeq \oplus_{\alpha} L_{\alpha}$ where $L_{\alpha} \subset H_1(\Sigma_{\alpha})$ is a Lagrangian subspace for each $\alpha$.

By convention the empty set $\emptyset$ is regarded as a $\Lambda$-labeled marked surface.
\end{df}

For the sake of brevity, we will refer to a $\Lambda$-labeled marked surface as a labeled marked surface, whenever there is no risk of ambiguities. Now we describe the morphisms of this category.

\begin{df}[Morphisms]
\label{mor}
Let $\mathbf{\Sigma}_i,i=1,2$ be two (non-empty) $\Lambda$-labeled marked surfaces. For $i=1,2,$ write $\mathbf{\Sigma}_i=(\Sigma_i,P_i,V_i,\lambda_i,L_i).$

A morphism is a pair $\mathbf{f}=(f,s)$, where $s$ is an integer, and $f$ is an equivalence class of orientation preserving diffeomorphisms $\phi:\Sigma_1 \overset{\sim}{\longrightarrow} \Sigma_2$ that restricts to a bijection of distinguished points $P_1 \overset{\sim}{\longrightarrow} P_2$ that preserves directions and labels. Two such diffeomorphisms $\phi,\psi$ are said to be equivalent if they are related by an isotopy of such diffeomorphisms.
\end{df}

For a diffeomorphism such as $\phi,$ we will write $[\phi]$ for the equivalence class desribed above. Thus we will sometimes denote a morphism by $([f],s)$ if we want to stress that we are dealing with a pair where the isotopy class is the equivalence class of the diffeomorphism $f.$

Let $\sigma$ be Wall's signature cocycle for triples of Lagrangian subspaces. We now define composition.

\begin{df}[Composition]
\label{comp} Assume that we are given two composable morphisms $\mathbf{f}_1=(f_1,s_1):\mathbf{\Sigma}_1 \rightarrow \mathbf{\Sigma}_2$ and $\mathbf{f}_2=(f_2,s_2):\mathbf{\Sigma}_2 \rightarrow \mathbf{\Sigma}_3.$ We then define:
\[
\mathbf{f}_2 \circ \mathbf{f}_1:= \left(f_2\circ f_1,s_2+s_1-\sigma((f_2\circ f_1)_{\#}(L_1),(f_2)_{\#}(L_2),L_3) \right)
\]
\end{df}

Using properties of Wall's signature cocycle we obtain that the composition operation is associative and therefore we obtain the category of $\Lambda$-labbelled marked surfaces:

\begin{df}[The category of $\Lambda$-labeled marked surfaces]
The category $\mathbf{C}(\Lambda)$ of $\Lambda$-labeled marked surfaces has $\Lambda$-labeled marked surfaces as objects and morphisms as desribed in defintion \ref{mor} and composition as described in definition \ref{comp}. 
\end{df}

There is an easy way to make this category into a symmetric monoidal category.

\begin{df}[The operation of disjoint union]
Let $\mathbf{\Sigma}_1, \mathbf{\Sigma}_2$ be two $\Lambda$-labeled marked surfaces. For $i=1,2,$ write $\mathbf{\Sigma}_i=(\Sigma_i,P_i,V_i,\lambda_i,L_i).$ We define their disjoint union $
\mathbf{\Sigma}_1 \sqcup \mathbf{\Sigma}_2$ to be
\[
(\Sigma_1 \sqcup \Sigma_2, P_1\sqcup P_2, V_1 \sqcup V_2, \lambda_1 \sqcup \lambda_2, L_1 \oplus L_2).
\]
For morphisms $\mathbf{f}_i: \mathbf{\Sigma}_i \rightarrow \mathbf{\Sigma}_3$  we define $\mathbf{f_1\sqcup f_2}$ to be
\[
(f_1\sqcup f_2, s_1+s_2).
\]
We have an obvious natural transformation:
\[
\text{Perm}:\mathbf{\Sigma}_1 \sqcup \mathbf{\Sigma}_2 \rightarrow \mathbf{\Sigma}_2 \sqcup \mathbf{\Sigma}_1
\]
\end{df}

\begin{pro}[$\mathbf{C}(\Lambda)$ is a symmetric monoidal category]
The category of $\Lambda$-labeled marked surfaces is a symmetric monoidal category with disjoint union as product, the empty surface as unit, and Perm as the braiding.
\end{pro}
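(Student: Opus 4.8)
The plan is to reduce every monoidal coherence condition to a handful of standard properties of Wall's signature cocycle $\sigma$, namely: (i) additivity under direct sums, $\sigma(A_1\oplus A_2,B_1\oplus B_2,C_1\oplus C_2)=\sigma(A_1,B_1,C_1)+\sigma(A_2,B_2,C_2)$; (ii) invariance under applying one symplectomorphism simultaneously to all three arguments; and (iii) vanishing whenever two of the three arguments coincide. These will be played against the fact that disjoint union of finite sets and of closed oriented surfaces, together with direct sum of first homology groups, is associative and unital up to the evident canonical diffeomorphisms, and that by construction $H_1(\Sigma_1\sqcup\Sigma_2)\simeq H_1(\Sigma_1)\oplus H_1(\Sigma_2)$ carries $L_1\oplus L_2$ to the Lagrangian attached to $\mathbf{\Sigma}_1\sqcup\mathbf{\Sigma}_2$; note also $H_1(\emptyset)=0$, so that $L\oplus 0=L$.

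First I would check that $\sqcup$ is a bifunctor $\mathbf{C}(\Lambda)\times\mathbf{C}(\Lambda)\to\mathbf{C}(\Lambda)$. On objects and on identity morphisms $(\mathrm{id},0)$ this is immediate from the definitions. For composition, compare the integer parts of $(\mathbf{f}_2\sqcup\mathbf{g}_2)\circ(\mathbf{f}_1\sqcup\mathbf{g}_1)$ and of $(\mathbf{f}_2\circ\mathbf{f}_1)\sqcup(\mathbf{g}_2\circ\mathbf{g}_1)$: the sums of the $s$-integers agree on the nose, the underlying diffeomorphisms agree by functoriality of disjoint union, and the single $\sigma$-correction appearing on the left splits, by (i), as the sum of the two $\sigma$-corrections appearing on the right, because every Lagrangian in sight is a direct sum and pushforward along $f\sqcup g$ respects the decomposition. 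This is the one genuinely computational step, and it is short.

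Next I would write down the coherence isomorphisms as morphisms of $\mathbf{C}(\Lambda)$ with trivial integer part: the associator is $(\alpha,0)$ with $\alpha$ the canonical diffeomorphism $(\Sigma_1\sqcup\Sigma_2)\sqcup\Sigma_3\xrightarrow{\sim}\Sigma_1\sqcup(\Sigma_2\sqcup\Sigma_3)$, the unitors are the canonical diffeomorphisms involving $\emptyset$ (again with integer $0$), and the symmetry is the given $\mathrm{Perm}=(\mathrm{perm},0)$. Each is invertible: the inverse is the reverse canonical diffeomorphism with integer part $0-\sigma(\cdots)$, where the three Lagrangians fed to $\sigma$ all coincide after the relevant pushforward, so the correction vanishes by (iii). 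The remaining axioms — naturality of $\mathrm{Perm}$ and of the associator and unitors, the pentagon and triangle identities, the two hexagon identities, and $\mathrm{Perm}_{\mathbf{\Sigma}_2,\mathbf{\Sigma}_1}\circ\mathrm{Perm}_{\mathbf{\Sigma}_1,\mathbf{\Sigma}_2}=\mathrm{id}$ — each split into a statement about underlying diffeomorphisms and a statement about integer parts. The diffeomorphism statements hold because on underlying surfaces both sides are the same canonical relabeling (or are isotopic through such). The integer-part statements say that the $\sigma$-correction accrued along each composite vanishes; in every case one checks that the pushforward of the source Lagrangian along the whole composite equals the target Lagrangian, so that two of the three arguments of $\sigma$ agree and (iii) applies, with (ii) used to recognise the coincidence and (i) used whenever a composite is itself a $\sqcup$ of two morphisms.

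The main obstacle is bookkeeping rather than mathematics: in each composite one must track precisely which Lagrangian subspace sits on each intermediate surface and how the successive pushforwards act on the direct-sum splitting, in order to exhibit the two-out-of-three coincidence that forces $\sigma=0$. Once the bifunctoriality computation is done, all the other verifications have exactly this shape and are routine.
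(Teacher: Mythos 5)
The paper offers no proof of this proposition at all: it is stated as an immediate consequence of the definitions, the only substantive ingredient (associativity of composition, via the cocycle identity for $\sigma$) having been invoked just before the definition of $\mathbf{C}(\Lambda)$. Your argument is correct and supplies precisely the verification the paper omits, and your reduction to the three properties of Wall's cocycle --- additivity under direct sums, invariance under symplectomorphisms, vanishing when two arguments coincide --- is the right organizing principle; the bifunctoriality computation is indeed the only step with real content. One small imprecision: in the naturality squares for $\mathrm{Perm}$ and the associator, where arbitrary morphisms $\mathbf{f}_i$ occur, it is \emph{not} generally true that the pushforward of the source Lagrangian along the whole composite equals the target Lagrangian, since morphisms of $\mathbf{C}(\Lambda)$ are not required to preserve the Lagrangian subspaces. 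The two-out-of-three coincidence that kills $\sigma$ in those squares involves a different pair of arguments: for $\mathrm{Perm}\circ(\mathbf{f}_1\sqcup\mathbf{f}_2)$ the second and third arguments of $\sigma$ agree, because $\mathrm{perm}_{\#}$ does carry $L_1'\oplus L_2'$ to $L_2'\oplus L_1'$, while for $(\mathbf{f}_2\sqcup\mathbf{f}_1)\circ\mathrm{Perm}$ the first and second agree, because the composite's pushforward of $L_1\oplus L_2$ equals $(f_2\sqcup f_1)_{\#}(L_2\oplus L_1)$. The principle you actually invoke --- exhibit \emph{any} coincidence of two arguments so that (iii) applies --- is correct and does work in every case; only the blanket description of which two coincide needs adjusting. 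For the pentagon, triangle and hexagon identities your description is literally accurate, since there all maps involved are the canonical rearrangement diffeomorphisms, which do preserve the direct-sum Lagrangians.
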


We now describe the operation of orientation reversal. For an oriented surface $\Sigma$ we let $-\Sigma$ be the oriented surface where we reverse the orientation on each component. For a map $g$ with values in $\Lambda$ we let $g^{\dagger}$ be the map with the same domain and codomain given by $g^{\dagger}(x)=g(x)^{\dagger}.$

\begin{df}[Orientation reversal] Let $\mathbf{\Sigma}=(\Sigma,P,V,\lambda,L)$ be a $\Lambda$-labeled marked surface. Then we define 
\[
-\mathbf{\Sigma}:=(-\Sigma,P,V,\lambda^{\dagger},L)
\]
We say that $-\mathbf{\Sigma}$ is obtained form $\mathbf{\Sigma}$ by reversal of orientation. For a morphism $\mathbf{f}=(f,s)$ we let
\[
-\mathbf{f}:=(f,-s).
\]
\end{df}

\noindent {\bf Remark.} 
{\it We note that we could also have defined the reversal of orientation to also involve changing the sign on the tangent vectors at the marked points. This gives complete equivalent theories, since there is a canonical morphism of labeled marked surfaces, which induces minus the identity at the marked points, and which is the identity on the complement of small disjoint neighbourhoods of the marked points and which locally around each marked point twist half a turn positively according to the surface orientation around the marked point, jet remains the identity near the boundary of the neighbourhood of the marked point.}

Finally we describe the factorization procedure, where we obtain a $\Lambda$-labeled marked surface by cutting along an oriented simple closed curve $\gamma$ whose homology class is in the distinguished Lagrangian subspace, collapsing the resulting two boundary components to points which get labeled by $(i,i^{\dagger})$ in the following way.

\begin{df}[Factorization data]
Factorization data is a triple $(\mathbf{\Sigma},\gamma,i).$ Here $\mathbf{\Sigma}$ is a $\Lambda$-labeled marked surface and $\gamma$ is a smooth, oriented, simple closed curve with a basepoint $x_0$, such that the homology class of $\gamma$ lies in $L.$ Further, $i$ is a element of the labelset  $\Lambda.$

We also say that the pair $(\gamma,i)$ is a choice of factorization data for $\mathbf{\Sigma}.$
\end{df}

\begin{df}[Factorization]
Let $\mathbf{\Sigma}=(\Sigma,P,V,\lambda,L)$ be a $\Lambda$-labeled marked surface with factorization data $(\gamma,i).$ We will define a $\Lambda$-labeled marked surface $\mathbf{\Sigma}_{\gamma}^i.$ We denote the underlying smooth surface by $\Sigma_{\gamma}.$

Cutting along $\gamma$ we get a smooth oriented surface with boundary $\tilde{\Sigma}_{\gamma}$ with two boundary components $\gamma_{-}$ and $\gamma_{+}.$ The orientation of $\gamma$ together with the orientation of $\Sigma$ allows us to define $\gamma_{+}$ to be the component whose induced Stokes orientation agrees with that of $\gamma.$

The underlying smooth surface is given by $ \Sigma_{\gamma}:=\tilde{\Sigma}_{\gamma}/\sim$ where we collapse $\gamma_{-}$ to a point $p_{-}$ and we collapse and we collapse $\gamma_{+}$ to a point $p_+.$ We orient this surface such that $\Sigma \setminus \gamma \hookrightarrow \tilde{\Sigma}_{\gamma}/\sim$ is orientation preserving. The set of distinguised points for $\Sigma_{\gamma}$ is $P\sqcup \{p_{-},p_{+}\}.$ Identifying $P(T_{p_{\pm}} (\tilde{\Sigma}_{\gamma}))$ with $\gamma$, we choose $v(p_{\pm})$ to be $x_0.$ We extend the labelling $\lambda$ by labelling $p_{+}$ by $i$ and $p_{-}$ by $i^\dagger.$

There is a topological space $X$ given by identifying $p_{-}$ and $p_{+}.$ Clearly this space is naturally homeomorphic to $\Sigma/ \sim,$ where we collapse $\gamma$ to a point Thus we have quotient maps $q:\Sigma \rightarrow X$ and $n: \Sigma_{\gamma} \rightarrow X.$ Define
$L_{\gamma}:= (n_{\#})^{-1}(q_{\#})(L).$ This yield a Lagrangian subspace of $H_1(\Sigma_{\gamma})$ that respect the splitting induced by decomposing $\Sigma_{\gamma}$ into connected components.

We say that $\mathbf{\Sigma}_{\gamma}^i$ is obtained by factorizing $\mathbf{\Sigma}$ along $(\gamma,i).$
\end{df}

There is an inverse procedure that we call gluing.

\begin{df}[Gluing data]
Gluing data consist of a triple $(\mathbf{\Sigma},(p_0,p_1),c ).$ Here $\mathbf{\Sigma}=(\Sigma,P,V,\lambda,L)$ is a $\Lambda$-labeled marked surface with $p_0,p_1 \in P$ such that $\lambda(p_{0})=\lambda(p_{1})^\dagger$ and $c:P(T_{p_0} \Sigma) \overset{\sim}{\longrightarrow} P(T_{p_+} \Sigma)$ is an orientation reversing projective linear isomorphism mapping $v(p_{0})$ to $v(p_{1}).$

We also say that $(p_0,p_1,c)$ determine gluing data for $\mathbf{\Sigma}$ and that $(p_0,p_1)$ is subject to gluing.
\end{df}

As we are dealing with ordered pairs $(p_0,p_1)$ we will sometimes speak of $p_0$ as the prefered point.

\begin{df}[Gluing] Assume we are given gluing data $(\mathbf{\Sigma},(p_0,p_1),c ).$ We will define a $\Lambda$-labeled marked surface $\mathbf{\Sigma}_{c}^{p_0,p_1}.$ We denote the underlying smooth surface by $\Sigma_{c}^{p_0,p_1}.$

Blow up $\Sigma$ at $p_0,p_1$ and glue in $P(T_{p_0} \Sigma)$ and $P(T_{p_1} \Sigma)$ to obtain a smooth oriented surface with boundary, that as a set can be canonically identified with
\[
(\Sigma \setminus \{p_0,p_1\} )\sqcup P(T_{p_0}\Sigma)\sqcup P(T_{p_1} \Sigma)
\]

Now identify the two boundary components through $x\sim c(x).$ This yield a smooth oriented surface, that will be the underlying surface of $\mathbf{\Sigma}_{c}^{p_0,p_1}.$ As distinguished points, directions and labels, we simply take those from $\mathbf{\Sigma}.$

Let $X$ be the topological space obtained from $\Sigma$ by identifying $p_0$ with $p_1.$ We have continuous maps $q: \Sigma \rightarrow X$ and $n: \Sigma_{c}^{p_1,p_2} \rightarrow X.$

Set $L_{c,p_0,p_1}:= (n_{\#})^{-1}(q_{\#})(L).$ This is a Lagrangian subspace of $H_1(\Sigma_{\gamma})$ that respect the splitting induced by decomposing $\Sigma_{\gamma}$ into connected components.

Observe that the homology class of $ P(T_{p_0}\Sigma)$ lies in $L_{c,p_0,p_1}.$
\end{df}

\begin{pro}[Consecutive gluing]
\label{cg}
Assume that two distinct pairs of points $(p_1,p_2,c)$ and $(q_1,q_2,d)$ are subject to gluing. Then there is a canonical diffeomorphism
\[
s^{p_1,p_2,q_1,q_2}:(\mathbf{\Sigma}_c^{p_1,p_2})_d^{q_1,q_2} \rightarrow (\mathbf{\Sigma}_d^{q_1,q_2})_c^{p_1,p_2}.
\]
In an abuse of notation we will also write $s^{p_1,p_2,q_1,q_2}$ for the induced morphism of labeled marked surfaces given by $([s^{p_1,p_2,q_1,q_2}],0).$
\end{pro}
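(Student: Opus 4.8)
The plan is to realize $s^{p_1,p_2,q_1,q_2}$ as the identity map on the underlying set common to the two iterated surfaces, and then to verify that it respects all the structure of a $\Lambda$-labeled marked surface, the only step with genuine content being the comparison of the two Lagrangian subspaces.

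First I would note that, since both $(\mathbf{\Sigma}_c^{p_1,p_2})_d^{q_1,q_2}$ and $(\mathbf{\Sigma}_d^{q_1,q_2})_c^{p_1,p_2}$ are defined, and since the distinguished points of a glued surface are exactly those of $\mathbf{\Sigma}$ lying outside the glued pair, the four points $p_1,p_2,q_1,q_2$ are pairwise distinct. Fix pairwise disjoint closed disc neighborhoods $D_{p_1},D_{p_2},D_{q_1},D_{q_2}$ around them. The gluing construction for $(p_1,p_2,c)$ alters $\mathbf{\Sigma}$ only inside $D_{p_1}\cup D_{p_2}$ (it blows up $p_1,p_2$, glues in the projective tangent circles, and identifies them via $c$) and leaves a neighborhood of $q_1,q_2$, together with their tangent spaces, untouched; symmetrically for $(q_1,q_2,d)$. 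Hence performing the two gluings in either order produces a smooth oriented surface whose underlying set is canonically
\[
\bigl(\Sigma\setminus\{p_1,p_2,q_1,q_2\}\bigr)\ \sqcup\ \bigl(P(T_{p_1}\Sigma)\,/\,(x\sim c(x))\bigr)\ \sqcup\ \bigl(P(T_{q_1}\Sigma)\,/\,(y\sim d(y))\bigr),
\]
and, because the modifications near the $p$'s and near the $q$'s are supported in disjoint regions, the two orders equip this set with the same smooth structure. I define $s^{p_1,p_2,q_1,q_2}$ to be the resulting identity map; it is a diffeomorphism, and it is orientation preserving because in both orders the surface is oriented so that $\Sigma\setminus\{p_1,p_2,q_1,q_2\}$ embeds orientation-preservingly. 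Moreover both iterated surfaces carry $P\setminus\{p_1,p_2,q_1,q_2\}$ as distinguished points with directions and labels inherited from $\mathbf{\Sigma}$, so $s^{p_1,p_2,q_1,q_2}$ is a bijection of distinguished points preserving directions and labels; thus $([s^{p_1,p_2,q_1,q_2}],0)$ is a morphism of $\mathbf{C}(\Lambda)$. Canonicity is immediate, since the underlying map does not depend on the auxiliary choice of the discs $D_{\bullet}$.

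It remains to check that $s^{p_1,p_2,q_1,q_2}$ also matches the two Lagrangian subspaces, so that it is in fact an isomorphism of $\Lambda$-labeled marked surfaces. Let $X$ be the topological space obtained from $\Sigma$ by simultaneously identifying $p_1$ with $p_2$ and $q_1$ with $q_2$, with quotient map $\rho\colon\Sigma\to X$. Each of the two iterated surfaces carries a natural continuous map to $X$ collapsing both of its glued circles to the corresponding identified points; denote these by $n$ on $(\mathbf{\Sigma}_c^{p_1,p_2})_d^{q_1,q_2}$ and $n'$ on $(\mathbf{\Sigma}_d^{q_1,q_2})_c^{p_1,p_2}$, so that $n'\circ s^{p_1,p_2,q_1,q_2}=n$. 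Applying the definition of the glued Lagrangian twice, and using that every intermediate quotient space occurring in the two-step construction maps to $X$ compatibly with $\rho$ and with the collapsing maps, one checks — with the same properties of the collapsing maps on first homology that enter the verification that $L_{c,p_0,p_1}$ is Lagrangian — that the Lagrangian of $(\mathbf{\Sigma}_c^{p_1,p_2})_d^{q_1,q_2}$ equals $(n_\#)^{-1}(\rho_\#(L))$ and that of $(\mathbf{\Sigma}_d^{q_1,q_2})_c^{p_1,p_2}$ equals $(n'_\#)^{-1}(\rho_\#(L))$. Since $n$ and $n'$ are intertwined by $s^{p_1,p_2,q_1,q_2}$, the induced map on $H_1$ carries one Lagrangian onto the other; compatibility with the splitting into connected components follows likewise.

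The only real obstacle is the last paragraph: one must check that building the Lagrangian by two successive gluings genuinely produces the preimage under the single ``collapse both circles'' map of $\rho_\#(L)$, rather than something that a priori depends on the order in which one glues, and the cleanest route is precisely to introduce the single space $X$ with both identifications at once and to verify that every intermediate quotient map in sight fits into one commuting diagram over $X$. Everything else — the smoothness and orientation of $s^{p_1,p_2,q_1,q_2}$, and its compatibility with distinguished points, directions and labels — is formal once the disjoint neighborhoods $D_{\bullet}$ have been fixed.
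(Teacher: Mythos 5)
Your argument is correct, and it is exactly the verification the paper leaves implicit (the proposition is stated without proof there): the two gluings are supported in disjoint neighborhoods, so the identity of underlying sets is a canonical orientation-preserving diffeomorphism respecting points, directions and labels, and the only substantive point is the order-independence of the induced Lagrangian, which you rightly settle by comparing both iterated constructions to the single quotient $X$ where both identifications are performed at once. Your observation that the hypothesis that both iterated surfaces exist forces the four points to be pairwise distinct is also the correct reading of the statement.
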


We recall that any two orientation reversing self-diffeomorphisms of $S^1$ fixing a basepoint are isotopic among diffeomorphisms fixing this basepoint. Therefore we wish to detail the independence of the choice of $c$ in the gluing construction.

\begin{pro}[Gluing independent of $c.$] \label{glui} Assume we are given a $\Lambda$-labeled marked surface $\Sigma$ and two pairs of gluing data $(p_0,p_1,c_1)$ and $(p_0,p_1,c_2).$ Then there is an orientation preserving diffeomorphism $f: \Sigma \rightarrow \Sigma$ that induce the identity on $(P,V,\lambda,L)$ and such that $c_1 \circ df=df \circ c_2.$ Moreover $f$ can be choosen to induce the identity morphism $(\text{id},0)$ on $\mathbf{\Sigma}.$ Any two such $f$ induce the same morphism of $\Lambda$-labeled marked surfaces, and therefore we have a canonical identification morphism $\tilde{f}(c_1,c_2): \mathbf{\Sigma}_{c_1}^{p_0,p_1} \rightarrow \mathbf{\Sigma}_{c_2}^{p_0,p_1}$ given by the pair
$([f],0).$
\end{pro}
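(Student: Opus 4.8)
The plan is to build the diffeomorphism $f$ by a purely local modification near $p_0$ and $p_1$, descend it through the gluing quotient to obtain $\tilde f(c_1,c_2)$, and then establish the uniqueness clause; the real difficulty is concentrated in that last step.

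\textbf{Construction of $f$.} Put $g:=c_1^{-1}\circ c_2$, an automorphism of the circle of directions $P(T_{p_0}\Sigma)$. Since $c_1$ and $c_2$ are orientation reversing, $g$ is orientation preserving, hence induced by an element of $\mathrm{GL}^+(T_{p_0}\Sigma)$; and since $c_1$ and $c_2$ both carry $v(p_0)$ to $v(p_1)$, $g$ fixes $v(p_0)$. Let $B\subset\mathrm{GL}^+(T_{p_0}\Sigma)$ be the subgroup of linear automorphisms fixing the ray $v(p_0)$; in a basis adapted to $v(p_0)$ it is the group of upper triangular matrices with positive diagonal, hence connected, indeed contractible. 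Choose disjoint disk neighbourhoods $D_0,D_1$ of $p_0,p_1$, disjoint from the rest of $P$. Since $B$ is connected, a standard cut-off construction produces an orientation preserving diffeomorphism $f$ of $\Sigma$, supported in $D_0\sqcup D_1$, equal to the identity near $p_1$, whose derivative at $p_0$ induces $g$ on $P(T_{p_0}\Sigma)$, and which is moreover isotopic to $\mathrm{id}_\Sigma$ through diffeomorphisms supported in $D_0\sqcup D_1$ whose derivatives lie in $B$ at $p_0$ and are the identity at $p_1$ (realise a path in $B$ from $\mathrm{id}$ to a representative of $g$ by such an isotopy). Then $f$ fixes every point of $P$ and induces $g,\mathrm{id},\mathrm{id}$ on $P(T_{p_0}\Sigma)$, $P(T_{p_1}\Sigma)$ and the remaining $P(T_p\Sigma)$, each of which fixes the corresponding direction, so $f$ induces the identity on $(P,V,\lambda)$. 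The exhibited isotopy consists of diffeomorphisms of labelled marked surfaces — this is exactly where the $B$-valued (hence $v(p_0)$-preserving) derivatives are needed — so $f$ acts as the identity on $H_1(\Sigma)$, a fortiori on $L$, and $[f]=[\mathrm{id}_\Sigma]$; consequently $([f],0)=(\mathrm{id},0)$ is the identity morphism on $\mathbf{\Sigma}$. Finally, as maps $P(T_{p_0}\Sigma)\to P(T_{p_1}\Sigma)$ one has $c_1\circ df=c_1\circ g=c_1\circ c_1^{-1}\circ c_2=c_2=df\circ c_2$, the asserted relation.

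\textbf{The identification morphism.} The relation $c_1\circ df=df\circ c_2$, together with the fact that $f$ fixes $p_0$ and $p_1$, is precisely what is needed for $f$ to descend through the gluing construction: letting $q\colon\Sigma\to X$ and $n_j\colon\mathbf{\Sigma}_{c_j}^{p_0,p_1}\to X$ be the quotient maps from the definition of gluing, $f$ induces a self-map $f_X$ of $X$ and descends to an orientation preserving diffeomorphism $\bar f$ between the glued surfaces $\mathbf{\Sigma}_{c_1}^{p_0,p_1}$ and $\mathbf{\Sigma}_{c_2}^{p_0,p_1}$, fixing their distinguished points and directions. We set $\tilde f(c_1,c_2)$ to be this diffeomorphism (or its inverse, so as to match the direction of the arrow) together with the integer $0$; it is a bona fide morphism of $\Lambda$-labelled marked surfaces.

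\textbf{Uniqueness.} Let $f,f'$ be two diffeomorphisms with the stated properties and set $h:=f^{-1}\circ f'$. A short computation with the two intertwining relations gives $c_1\circ dh=dh\circ c_2$, so $h$ descends to $\bar h=\bar f^{-1}\circ\bar f'$, a self-diffeomorphism of $\mathbf{\Sigma}_{c_2}^{p_0,p_1}$; since $[f]=[f']=[\mathrm{id}_\Sigma]$ we also have $[h]=[\mathrm{id}_\Sigma]$. It suffices to show $\bar h$ is isotopic to the identity rel distinguished points and directions. First one arranges $h$ to be the identity to first order at $p_0$ and $p_1$: as $dh_{p_0}\in B$ and $B$ is contractible, a path in $B$ from $dh_{p_0}$ to $\mathrm{id}$, realised near $p_0$ by the cut-off construction and near $p_1$ by its conjugate under $c_1$ — which is exactly what keeps the intertwining relation valid at every stage — gives a gluing-compatible isotopy from $h$ to a diffeomorphism $h_1$ whose derivative induces the identity on both $P(T_{p_0}\Sigma)$ and $P(T_{p_1}\Sigma)$; descending, $\bar h\simeq\bar h_1$. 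A linear automorphism inducing the identity on rays is a positive scalar, so $h_1$ is conformal to first order at $p_0$ and $p_1$, and hence $\bar h_1$ fixes the glued circle $C$ pointwise and is tangent to the identity along $C$. One then pushes $\bar h_1$ to the identity on a neighbourhood of $C$ — the only obstruction, a power of the Dehn twist about $C$, being excluded because $[h]=[\mathrm{id}_\Sigma]$ on $\Sigma$ — after which $\bar h_1$ agrees with $h_1$ off that neighbourhood, and the isotopy $h_1\simeq\mathrm{id}_\Sigma$ rel $(P,V)$ can be chosen supported off $C$ (any residual point-pushing around $p_0$ or $p_1$ is again killed by $[h]=[\mathrm{id}]$), so it descends to give $\bar h_1\simeq\mathrm{id}$. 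The main obstacle is this last step: the isotopies on $\Sigma$ witnessing $[h]=[\mathrm{id}_\Sigma]$ need not respect the gluing identification, and the device that overcomes this is the contractibility of the stabiliser $B$ of the distinguished direction, which lets one first replace $h$, through gluing-compatible isotopies, by a map that is the identity to first order at $p_0$ and $p_1$, after which the relative isotopy-extension argument above applies.
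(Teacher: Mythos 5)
The paper states Proposition \ref{glui} without proof, so there is no argument of the authors' to compare yours against; your proposal supplies the missing argument, and it is essentially correct. The construction of $f$ (realising $g=c_1^{-1}\circ c_2$ as the projectivised derivative of a diffeomorphism supported in a disk, using the contractibility of the ray stabiliser $B$ to stay inside direction-preserving diffeomorphisms) and the descent through the gluing quotient are both right. Two small slips should be fixed in the uniqueness step: for $h=f^{-1}\circ f'$ the two relations $c_1\circ df=df\circ c_2$ and $c_1\circ df'=df'\circ c_2$ combine to give $c_2\circ dh=dh\circ c_2$ (not $c_1\circ dh=dh\circ c_2$), which is exactly what is needed for $h$ to descend to a self-map of $\mathbf{\Sigma}_{c_2}^{p_0,p_1}$; correspondingly, the path of derivatives at $p_1$ in your straightening isotopy should be the $c_2$-conjugate of the path at $p_0$, not the $c_1$-conjugate.

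The final step is correct in spirit but can be tightened so that no Dehn-twist obstruction ever has to be identified and excluded. Once the contractibility of $B$ has been used (through intertwining, direction-preserving isotopies) to make $h$ literally the identity on neighbourhoods $D_0,D_1$ of $p_0,p_1$, invoke the fibration $\mathrm{Diff}(\Sigma\ \mathrm{rel}\ D_0\cup D_1)\to\mathrm{Diff}(\Sigma;P,V)\to\mathcal{E}$, where $\mathcal{E}$ is the space of directed embedded disks centred at $p_0,p_1$; since $\mathcal{E}\simeq B\times B$ is contractible, the inclusion is a $\pi_0$-bijection, so $[h]=[\mathrm{id}]$ rel $(P,V)$ upgrades to an isotopy $h\simeq\mathrm{id}$ that is the identity on $D_0\cup D_1$ throughout. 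Such an isotopy descends verbatim to $\mathbf{\Sigma}_{c_2}^{p_0,p_1}$ (it is the identity on the glued annulus), giving $\bar h\simeq\mathrm{id}$ directly. This is precisely where the direction data earns its keep: without directions the kernel of $\pi_0\mathrm{Diff}(\Sigma\ \mathrm{rel}\ D_0\cup D_1)\to\pi_0\mathrm{Diff}(\Sigma;P)$ contains the boundary twists $T_{\partial D_0}$, which descend to nontrivial powers of $T_C$, and the proposition would be false. You identify this device correctly in your closing sentence; making the $\pi_0$-injectivity explicit replaces the looser ``the only obstruction is a power of the Dehn twist about $C$, excluded because $[h]=[\mathrm{id}_\Sigma]$'' with an actual argument.
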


It follows from this that in order to specify gluing, it will suffice to specity an ordered pair $(p_0,p_1)$ with $\lambda(p_0)=\lambda(p_1)^\dagger.$ 

\begin{pro}[Functoriality of gluing]
\label{Fofg}
Let $\mathbf{\Sigma}_i$ for $i=1,2$ be $\Lambda$-labeled marked surfaces. Assume $(p_0^i,p_1^i)$ are subject to gluing for $i=1,2.$ Consider any morphism $\mathbf{f}=([f],s):\mathbf{\Sigma}_1 \rightarrow \mathbf{\Sigma}_2$ with $f(p_0^1)=p_0^2$ and $f(p_1^1)=p_1^2.$ Let $c: P(T_{p_0^1} \Sigma_1) \rightarrow P(T_{p_1^1} \Sigma_1)$ be orientation reversing. Let $c':=df \circ c \circ df^{-1}: P(T_{p_0^2} \Sigma_2) \rightarrow P(T_{p_0^2} \Sigma_2).$ This data induce a morphism

\[
\mathbf{f}'=([f'],s): \left(\mathbf{\Sigma}_1 \right)_{c}^{p_0^1,p_1^2} \longrightarrow \ \left( \mathbf{\Sigma}_2\ \right)_{c'}^{p_0^2,p_1^2}
\] compatible with $\mathbf{f}.$
\end{pro}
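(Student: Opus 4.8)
The plan is to build the diffeomorphism $f'$ directly out of $f$ by transporting it through the blow-up/glue construction, and then to check that the integer $s$ survives unchanged because no Lagrangian correction is needed. First I would recall that $\left(\mathbf{\Sigma}_j\right)_{c_j}^{p_0^j,p_1^j}$ has, as its underlying surface, the set $(\Sigma_j\setminus\{p_0^j,p_1^j\})\sqcup P(T_{p_0^j}\Sigma_j)\sqcup P(T_{p_1^j}\Sigma_j)$ with the two circle boundaries identified via $c_j$. Since $f$ restricts to a diffeomorphism $\Sigma_1\setminus\{p_0^1,p_1^1\}\to\Sigma_2\setminus\{p_0^2,p_1^2\}$ carrying directions to directions, its differential $df$ induces projective linear isomorphisms $P(T_{p_0^1}\Sigma_1)\to P(T_{p_0^2}\Sigma_2)$ and $P(T_{p_1^1}\Sigma_1)\to P(T_{p_1^2}\Sigma_2)$; these glue with $f|_{\Sigma_1\setminus\{p_0^1,p_1^1\}}$ to a homeomorphism of the blown-up surfaces, which is in fact a diffeomorphism because $f$ is smooth and the blow-up is functorial in smooth maps. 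The choice $c':=df\circ c\circ df^{-1}$ is precisely what makes this map descend to the quotients: if $x\sim c(x)$ in the source then $df(x)\sim c'(df(x))$ in the target. This produces an orientation-preserving diffeomorphism $f':\left(\mathbf{\Sigma}_1\right)_{c}^{p_0^1,p_1^1}\to\left(\mathbf{\Sigma}_2\right)_{c'}^{p_0^2,p_1^2}$ respecting distinguished points, directions and labels, since those are inherited verbatim from $\mathbf{\Sigma}_1,\mathbf{\Sigma}_2$ and $f$ respects them.

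Next I would verify that $f'$ carries the glued Lagrangian $L_{1,c,p_0^1,p_1^1}$ to $L_{2,c',p_0^2,p_1^2}$, which is what lets us keep the same integer $s$ rather than introduce a signature correction. Using the notation of the Gluing definition, let $X_j$ be obtained from $\Sigma_j$ by identifying $p_0^j$ with $p_1^j$, with maps $q_j:\Sigma_j\to X_j$ and $n_j:\left(\Sigma_j\right)_{c_j}^{p_0^j,p_1^j}\to X_j$. The morphism $f$ induces a homeomorphism $\bar f:X_1\to X_2$ with $\bar f\circ q_1=q_2\circ f$, and by construction $\bar f\circ n_1=n_2\circ f'$. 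Since $(f)_{\#}(L_1)=L_2$ by hypothesis, applying $(\bar f)_\#$ and chasing through the two pullback-pushforward squares gives $(f')_{\#}\big((n_{1\#})^{-1}(q_{1\#})(L_1)\big)=(n_{2\#})^{-1}(q_{2\#})(L_2)$, i.e. $(f')_{\#}(L_{1,c,p_0^1,p_1^1})=L_{2,c',p_0^2,p_1^2}$. Hence $\mathbf{f}'=([f'],s)$ is a legitimate morphism, and the fact that $s$ is unchanged is exactly the statement that $\mathbf{f}'$ is "compatible with $\mathbf{f}$" in the sense required for functoriality (both downstairs and upstairs the integer part is $s$, and the diagram relating glued surfaces to their unglued versions commutes on the nose at the level of morphisms by Definition \ref{comp}, since the relevant signature cocycle term vanishes when the Lagrangians already correspond).

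Finally I would check well-definedness: the class $[f']$ depends only on $[f]$, because an isotopy of diffeomorphisms of $\Sigma_1$ preserving $(P_1,V_1,\lambda_1)$ and fixing $p_0^1,p_1^1$ induces, via the same blow-up/glue transport, an isotopy of the glued diffeomorphisms; one must note that the isotopy moves $c'$ along with it, but by Proposition \ref{glui} gluing is canonically independent of the choice of the identification map, so the induced morphism of labeled marked surfaces is unaffected. The main obstacle, and the only place where care is genuinely required, is the Lagrangian bookkeeping in the second paragraph: one has to set up the commuting diagram of spaces $\Sigma_j,\left(\Sigma_j\right)_{c_j}^{p_0^j,p_1^j},X_j$ together with $f,f',\bar f$ and verify that $(n_{j\#})^{-1}$ is genuinely compatible with the induced maps — i.e. that $f'$ maps the preimage under $n_{1\#}$ of a class to the preimage under $n_{2\#}$ of its image — which uses that $n_{j\#}$ need not be injective but that the diagram is a genuine commuting square of group homomorphisms, so the preimages correspond set-theoretically once we know $(\bar f)_\#(q_{1\#}(L_1))=q_{2\#}(L_2)$. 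Everything else is a routine unwinding of the definitions of blow-up, quotient, and the category structure.
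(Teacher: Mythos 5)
Your construction of $f'$ is the right one, and since the paper offers no proof of Proposition \ref{Fofg} (it is treated as a routine consequence of the blow-up/quotient construction), your first and third paragraphs essentially supply the intended argument: $df$ induces the maps on the projectivized tangent circles, the choice $c'=df\circ c\circ df^{-1}$ is exactly what makes the map descend to the quotient, and $[f']$ depends only on $[f]$ by transporting isotopies and invoking Proposition \ref{glui}.

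There is, however, one genuine error in your second paragraph: you assert that $(f)_{\#}(L_1)=L_2$ ``by hypothesis.'' This is not a hypothesis. By Definition \ref{mor}, a morphism of $\Lambda$-labeled marked surfaces is only required to preserve distinguished points, directions and labels — it is \emph{not} required to preserve the Lagrangian subspaces. That is precisely why the composition law in Definition \ref{comp} carries the Wall signature cocycle correction; if morphisms preserved Lagrangians the cocycle term would always vanish and the integer $s$ would be superfluous. Fortunately this does not sink the conclusion: for $([f'],s)$ to be a legitimate morphism no Lagrangian condition is needed at all, so the existence of $\mathbf{f}'$ follows already from your first paragraph. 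What your diagram chase with $X_1$, $X_2$ and $\bar f$ actually proves — and this is the statement worth recording, since it is what makes the assignment $\mathbf{f}\mapsto\mathbf{f}'$ behave well under composition — is the naturality $(f')_{\#}\bigl(L_{1,c,p_0^1,p_1^1}\bigr)=\bigl(f_{\#}(L_1)\bigr)_{c',p_0^2,p_1^2}$, i.e.\ the glueing operation on Lagrangians commutes with pushforward. Your chase goes through verbatim with $f_{\#}(L_1)$ in place of $L_2$; just drop the false premise and state the conclusion in this form.
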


\subsection{Extended surfaces}\mbox{ }

We now describe the category of extended surfaces following Turaev \cite{Tu}. Observe that this is only defined relative to a modular tensor category $(\mathcal{V},(V_i)_{i\in I}).$ We recall that an orientation for a closed topological surface $\Sigma$ is a choice of fundamental class in $H^2(\Sigma_{\alpha},\Z)$ of each component $\Sigma_{\alpha}.$ A degree $1$-homeomorphism between oriented closed surfaces is a homeomorphism that respects this choice. We recall that an arc $\gamma \subset \Sigma$ is a topological embedding of $[0,1].$ 

\begin{df}[Extended surfaces]
An $e$-surface $\Sigma$ is given by the following data: $(\Sigma,(\alpha_i),(W_i,\mu_i), L).$

Here $\Sigma$ is an oriented closed surface, $(\alpha_i)$ is a finite collection of disjoint oriented arcs. To each arc $\alpha_i$ we have an object $W_i$ of $\mathcal{V}$ and a sign $\mu_i \in \{\pm 1\}.$ The pair $(W_i, \mu_i)$ is called the marking of $\alpha_i.$ Finally, $L$ is a Lagrangian subspace of $H_1(\Sigma,\R).$

By convention $\emptyset$ is an $e$-surface.
\end{df}

We now describe the arrows.

\begin{df}[Weak extended homeomorphisms and their composition]
Let $\Sigma_1, \Sigma_2$ be two $e$-surfaces. A weak $e$-homoemorphism $f: \Sigma_1 \rightarrow \Sigma$ is a degree $1$-homeomorphism between the underlying topological surfaces $\Sigma_1 \rightarrow \Sigma_2$ that induce an orientation and marking preserving bijection between their distinguished arcs. 

An $e$-homeomorphism $f: \Sigma_1 \rightarrow \Sigma$ is a weak $e$-homeomorphism that induce an isomorphism of distinguished Lagrangian subspaces: $f_{\#}:L_1 \rightarrow L_2.$ 

We obeserve that the class of weak $e$-homeomorphisms is closed under compostion, and that this is also the case for $e$-homeomorphisms.
\end{df}

Thus we have the category of extended surfaces based on $(\mathcal{V},(V_i)_{i \in I}).$ 
\begin{df}[The category of extended surfaces based on $\mathcal{V}$]
The category of extended surfaces based on $\mathcal{V}$ has $e$-surfaces as objects and weak $e$-homeomorphisms as morphisms. We denote it by $\mathbf{E}(\mathcal{V}).$
\end{df}

As above we wish to make this into a symmetric monoidal category with an orientation reversal.

\begin{df}[Disjoint union of $e$-surfaces]

Let $\Sigma_1=(\Sigma_1, (\alpha_i), (W_i,\mu_i),L)$ and $\Sigma_2=(\Sigma_2, (\beta_j), (Z_j, \eta_j),L')$  be two $e$-surfaces. We define $\Sigma_1 \sqcup \Sigma_2$ to be
\[
(\Sigma_1 \sqcup \Sigma_2, (\alpha_i \sqcup \beta_j), (W_i^, \mu_i)\sqcup (Z_j, \eta_j), L \oplus L').
\]
For a pair of (weak) morphism $f_i: \Sigma_i \rightarrow \Sigma_3$ we observe that $f_1\sqcup f_2$ is a (weak) morphism.

We have an obvious natural transformation:
\[
\text{Perm}:\Sigma_1 \sqcup \Sigma_2 \rightarrow \Sigma_2 \sqcup \Sigma_1
\]
\end{df}

\begin{pro}[$\mathbf{E}(\mathcal{V})$ is a symmetric monoidal category]
The category of extended surfaces is a symmetric monoidal category with disjoint union as product, the empty surface as unit, and Perm as the braiding.
\end{pro}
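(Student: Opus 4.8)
The plan is to verify the symmetric monoidal axioms directly, in exactly the same style as the analogous statement for $\mathbf{C}(\Lambda)$, by reducing every structural map and every coherence diagram to the corresponding data for the cocartesian monoidal structure on topological spaces together with the standard symmetric monoidal structure $(\R\text{-vector spaces},\oplus)$ on homology. First I would check that $\sqcup$ is a bifunctor $\mathbf{E}(\mathcal{V})\times\mathbf{E}(\mathcal{V})\to\mathbf{E}(\mathcal{V})$: given weak $e$-homeomorphisms $f_i\colon\Sigma_i\to\Sigma_i'$ for $i=1,2$, the map $f_1\sqcup f_2$ has degree $1$ on each component, carries distinguished arcs to distinguished arcs preserving orientations and markings, and manifestly respects composition and identities, so it is again a weak $e$-homeomorphism. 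One also has to confirm that the data assigned by $\sqcup$ really is an $e$-surface; the only point that is not purely formal is that $L\oplus L'$ is Lagrangian in $H_1(\Sigma_1\sqcup\Sigma_2,\R)$. This follows from the canonical isomorphism $H_1(\Sigma_1\sqcup\Sigma_2,\R)\cong H_1(\Sigma_1,\R)\oplus H_1(\Sigma_2,\R)$ together with the fact that under it the intersection form is the orthogonal direct sum of the two intersection forms, so a direct sum of Lagrangians is again a Lagrangian of the correct dimension.

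Next I would introduce the structural isomorphisms. For the associator, the evident homeomorphism $(\Sigma_1\sqcup\Sigma_2)\sqcup\Sigma_3\overset{\sim}{\longrightarrow}\Sigma_1\sqcup(\Sigma_2\sqcup\Sigma_3)$ has degree $1$, matches up the arcs and their markings, and on first homology is the canonical associativity isomorphism of direct sums, which carries $(L_1\oplus L_2)\oplus L_3$ onto $L_1\oplus(L_2\oplus L_3)$; hence it is in fact an $e$-homeomorphism, not merely a weak one, and it is natural in the three arguments. Similarly $\emptyset\sqcup\Sigma$ and $\Sigma\sqcup\emptyset$ are canonically identified with $\Sigma$, the homology side being trivial since $H_1(\emptyset,\R)=0$, and these give the left and right unit constraints. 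The braiding is the transformation $\mathrm{Perm}$ already introduced, namely the swap homeomorphism $\Sigma_1\sqcup\Sigma_2\to\Sigma_2\sqcup\Sigma_1$, whose effect on homology is the transposition $H_1(\Sigma_1,\R)\oplus H_1(\Sigma_2,\R)\to H_1(\Sigma_2,\R)\oplus H_1(\Sigma_1,\R)$ sending $L_1\oplus L_2$ to $L_2\oplus L_1$; it is natural and visibly satisfies $\mathrm{Perm}_{\Sigma_2,\Sigma_1}\circ\mathrm{Perm}_{\Sigma_1,\Sigma_2}=\mathrm{id}$.

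Finally I would check the pentagon and triangle identities for the associator and unit constraints, and the two hexagon identities for $\mathrm{Perm}$. What makes this step painless is that a weak $e$-homeomorphism is completely determined by its underlying map of topological surfaces — the Lagrangian condition enters only as a property of that map, not as extra structure in the morphism — so two parallel arrows in $\mathbf{E}(\mathcal{V})$ coincide as soon as their underlying continuous maps coincide. Therefore each coherence diagram in $\mathbf{E}(\mathcal{V})$ commutes because, passing to underlying spaces, it becomes the corresponding coherence diagram for disjoint union of topological spaces, which is classical. I do not expect any genuine obstacle; the only points that deserve a moment's care are the verification that $L\oplus L'$ is Lagrangian noted above, and the bookkeeping that $\sqcup$ of spaces is associative and unital only up to the canonical homeomorphisms, so that these canonical isomorphisms must be propagated through the construction rather than treated as literal equalities.
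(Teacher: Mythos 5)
Your verification is correct and is exactly the routine check the authors have in mind: the paper states this proposition (like the analogous ones for $\mathbf{C}(\Lambda)$ and $\mathbf{M}(C)$) without proof, and your reduction of the coherence diagrams to the cocartesian structure on spaces and the direct-sum structure on homology, together with the observation that a direct sum of Lagrangians is Lagrangian for the orthogonal sum of intersection forms, fills in precisely what is being taken for granted. The one point you rightly flag — that parallel weak $e$-homeomorphisms coincide once their underlying continuous maps do, since the Lagrangian condition is a property rather than extra structure — is what makes the coherence step immediate, and it is correctly used.
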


\begin{df}[Orientation reversal for $e$-surfaces]
\label{Orientation reversal for e}
Consider an extended surface $\Sigma=(\Sigma,(\alpha_i),(W_i,\mu_i), L).$ We define $-\Sigma$ to be
\[
(-\Sigma,(-\alpha_i),(W_i, -\mu_i),L).
\]
That is, we reverse the orientation on each component, reverse the orientation of arcs, keep the labels, multiply all signs by $-1,$ and keep the Lagrangian subspace. We observe that any (weak) $e$-homeomorphism $f: \Sigma_1 \rightarrow \Sigma_2$ yield a (weak) morphism $f: -\Sigma_1 \rightarrow -\Sigma_2.$
\end{df}

\subsection{Marked surfaces}

Finally we describe the category of marked surfaces\footnote{Not to be confused with $\Lambda$-labeled marked surfaces}. This is defined relative to a monoidal class. That is, a class $C$ together with a strictly associative operation $C\times C \rightarrow C$ and a unit $1$ for this operation. Again we here follow Turaev \cite{Tu}.

\begin{df}[Marked surface over $C$]
A marked surface (over $C$) is a compact oriented surface $\Sigma$ endowed with a Lagrangian subspace of $H_1(\Sigma,\R)$ and such that each connected component $X$ of $\partial \Sigma$ is equipped with a basepoint, a sign $\delta$, and an element $V$ of $C$ called the label. The pair $(V,\delta)$ is called the marking of $X.$

By convention $\emptyset$ is an $m$-surface.
\end{df}

Next we desribe the morphisms

\begin{df}[Weak m-homeomorphisms] Let $\Sigma_1, \Sigma_2$ be two marked surfaces. A weak $m$-homeomorphism $f: \Sigma_1 \rightarrow \Sigma_2$ is an orientation preserving homeomorphism $f$ that that respect the marks of boundary components. An $m$-homeomorphism is a weak $m$-homeomorphism that also preserve the Lagrangian subspaces.

We observe that the class of weak $m$-homoemorphisms is closed under composition.
\end{df}

\begin{df}[The category of marked surfaces over $C$]
The category of marked surfaces over $C$ has $m$-surfaces as objects and weak $m$-homeomorphisms as morphisms. We denote it $\mathbf{M}(C).$
\end{df}

As above this naturally constitute a symmetric monoidal category with disjoint union as the product:

\begin{df}[Disjoint union of marked surfaces]
Let $\Sigma_1, \Sigma_2$ be two $m$-surfaces. Then we define the marked surface $\Sigma_1 \sqcup \Sigma_2$ by declaring that the boundary components naturally inherit basepoints and markings, and equipping it with a Lagrangian subspace of $H_1(\Sigma_1 \sqcup \Sigma_2, {\R})$, by taking the direct sum of Lagrangian subspaces of $\Sigma_1,\Sigma_2.$ If $f_1,f_2$ are (weak) $m$-homeomorphisms, then so is $f_1 \sqcup f_2$ is a (weak) $m$-homeomorphism. We have an natural transformaion 
\[
\text{Perm}: \Sigma_1 \sqcup \Sigma_2 \rightarrow \Sigma_2 \sqcup \Sigma_1.
\]
\end{df}

\begin{pro}[$\mathbf{M}(C)$ is a symmetric monoidal category]
The category $\mathbf{M}(C)$ of marked surfaces (over C) is a symmetric monoidal category with disjoint union as product, the empty surface as unit, and Perm as the braiding.
\end{pro}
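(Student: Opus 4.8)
The plan is to verify the three defining properties of a symmetric monoidal category for $\mathbf{M}(C)$ in the same manner as was done for $\mathbf{C}(\Lambda)$ and $\mathbf{E}(\mathcal{V})$: that disjoint union is a (strictly associative, unital) bifunctor, that $\mathrm{Perm}$ is a natural isomorphism, and that the coherence (pentagon and hexagon) axioms hold, together with the symmetry condition $\mathrm{Perm}_{\Sigma_2,\Sigma_1}\circ\mathrm{Perm}_{\Sigma_1,\Sigma_2}=\mathrm{id}$. Since the preceding definition already records that disjoint union of $m$-surfaces is well defined, that $f_1\sqcup f_2$ is again a weak $m$-homeomorphism, and that $\mathrm{Perm}$ exists, what remains is essentially bookkeeping.

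First I would check bifunctoriality of $\sqcup$: it respects identities because $\mathrm{id}_{\Sigma_1}\sqcup\mathrm{id}_{\Sigma_2}$ is visibly the identity homeomorphism of $\Sigma_1\sqcup\Sigma_2$, and it respects composition because $(g_1\circ f_1)\sqcup(g_2\circ f_2)$ and $(g_1\sqcup g_2)\circ(f_1\sqcup f_2)$ agree as set-maps and both are orientation preserving and mark preserving. Next I would note strict associativity: for three $m$-surfaces the underlying topological spaces $(\Sigma_1\sqcup\Sigma_2)\sqcup\Sigma_3$ and $\Sigma_1\sqcup(\Sigma_2\sqcup\Sigma_3)$ are literally equal (disjoint union of sets is strictly associative), the boundary basepoints and markings are inherited componentwise and so coincide, and the Lagrangian subspace is $(L_1\oplus L_2)\oplus L_3=L_1\oplus(L_2\oplus L_3)$ under the canonical identification $H_1((\Sigma_1\sqcup\Sigma_2)\sqcup\Sigma_3,\R)\cong H_1(\Sigma_1,\R)\oplus H_1(\Sigma_2,\R)\oplus H_1(\Sigma_3,\R)$; hence the associator can be taken to be the identity. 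Similarly the empty surface $\emptyset$ is a strict two-sided unit, so the left and right unitors are identities. Thus $\mathbf{M}(C)$ is a strict monoidal category.

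Then I would treat the braiding: $\mathrm{Perm}_{\Sigma_1,\Sigma_2}\colon\Sigma_1\sqcup\Sigma_2\to\Sigma_2\sqcup\Sigma_1$ is the evident homeomorphism swapping the two summands; it is orientation preserving, carries basepoints and markings correctly, and induces on homology the swap $L_1\oplus L_2\to L_2\oplus L_1$, so it is an $m$-homeomorphism (in particular a weak one), and it is natural in both arguments since swapping summands commutes with any pair of morphisms applied summandwise. The symmetry equation $\mathrm{Perm}_{\Sigma_2,\Sigma_1}\circ\mathrm{Perm}_{\Sigma_1,\Sigma_2}=\mathrm{id}_{\Sigma_1\sqcup\Sigma_2}$ is immediate because swapping twice returns the original labelling of summands. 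Finally the hexagon axioms reduce, because the associators and unitors are identities, to the statement that permuting $\Sigma_1$ past $\Sigma_2\sqcup\Sigma_3$ equals first permuting it past $\Sigma_2$ and then past $\Sigma_3$, which holds on the level of set-maps and hence for the underlying homeomorphisms. Assembling these observations completes the proof.

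The only mild subtlety — and the step I would be most careful about — is making precise the ``canonical identification'' of first homology of a disjoint union with the direct sum of the first homologies of the pieces, and checking that the braiding really does act as the coordinate swap on it so that it lands among $m$-homeomorphisms (not merely weak ones); but this is a standard consequence of the additivity of singular homology over disjoint unions and presents no real difficulty. Everything else is a transcription of the corresponding verifications already carried out for $\mathbf{C}(\Lambda)$ and $\mathbf{E}(\mathcal{V})$.
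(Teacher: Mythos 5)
Your verification is correct and is exactly the routine check the paper has in mind: the paper states this proposition (like its analogues for $\mathbf{C}(\Lambda)$ and $\mathbf{E}(\mathcal{V})$) without proof, treating it as an immediate consequence of the definitions of disjoint union and $\mathrm{Perm}$. Your bookkeeping of bifunctoriality, strict associativity and unit, naturality and symmetry of $\mathrm{Perm}$, and the coherence axioms supplies precisely the omitted details, and your care about the identification $H_1(\Sigma_1\sqcup\Sigma_2,\R)\cong H_1(\Sigma_1,\R)\oplus H_1(\Sigma_2,\R)$ is the right place to be careful.
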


\begin{df}[Gluing] Let $\Sigma$ be an $m$-surface. Assume that there are two components $X,Y$ with the same label, but with opposite sign. Then there is a (unique up to isotopy) basepoint preserving orientation-reversing homeomorphism $c:X \rightarrow Y.$ Then the quotient $\Sigma'=\Sigma/ \sim$ where $ x \sim c(x)$ is naturally an oriented compact surface. The quotient map $q: \Sigma \rightarrow \Sigma'$ yields a bijection $\partial \Sigma'  \sim \partial \Sigma \setminus X \cup Y.$ Using this, we equip each component of $\partial \Sigma'$ with a basepoint and a marking. Finally, equip $\Sigma'$ with the Lagrangian subspace that is the image of the Lagrangian subspace of $\Sigma$ under $q_{\#}.$ Denote the resulting $m$-surface by
\[
\Sigma/ [X=Y]_c.
\]
\end{df}

\begin{pro}[Functorial property of gluing of $m$-surfaces]
Let $\Sigma$ be an $m$-surface. Assume $X,Y \subset \partial \Sigma$ are two boundary components subject to gluing. Let $x: X \rightarrow Y$ be basepoint preserving and orientation reversing. Let $f: \Sigma \rightarrow \Sigma'$ be a (weak) $m$-homeomorphism.

Then $X'=f(X),Y'=f(Y)\subset \partial \Sigma'$ are subject to gluing and the map $c'$ given by $f\circ c \circ f^{-1}: X' \rightarrow Y'$ is orientation reversing and basepoint preserving. There is a unique (weak) homeomorphism $f_c: \Sigma /[X=Y]_c \rightarrow \Sigma' / [X'=Y']_{c'}$ inducing a commutative diagram:

\begin{center}
\begin{tikzcd}[swap]
\Sigma \arrow{r}[swap]{f}
\arrow{d}{q}
& \Sigma' \arrow{d}[swap]{q}
\\ \Sigma/ [X=Y]_c \arrow{r}[swap]{f_c} & \Sigma'/[X'=Y']_{c'}
\end{tikzcd}
\end{center}
Here the vertical maps are the quotient maps.
\end{pro}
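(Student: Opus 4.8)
The plan is to construct $f_c$ from the universal property of the quotient topology and then verify, one piece of structure at a time, that it is a (weak) $m$-homeomorphism; the whole argument is formal once the orientation bookkeeping is set up correctly.

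First I would dispose of the preliminary claims. Since $f$ is a (weak) $m$-homeomorphism it restricts to a homeomorphism $\partial\Sigma\to\partial\Sigma'$ carrying each boundary component onto a boundary component and preserving basepoints, signs and labels; hence $X'=f(X)$ and $Y'=f(Y)$ are single components, $X'$ and $Y'$ carry the common label of $X,Y$ and opposite signs, so they are subject to gluing. Because $f$ is orientation preserving, the restrictions $f|_X\colon X\to X'$ and $f|_Y\colon Y\to Y'$ are orientation preserving for the induced boundary orientations, so $c'=f|_Y\circ c\circ (f|_X)^{-1}$ is a composition orientation-preserving $\circ$ orientation-reversing $\circ$ orientation-preserving, hence orientation reversing; and it carries the basepoint of $X'$ to the basepoint of $Y'$ because $f$ and $c$ do the corresponding things on basepoints. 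This step also records the relation $c'\circ(f|_X)=(f|_Y)\circ c$ on $X$, which is what drives the descent.

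Next I would build $f_c$. Let $q\colon\Sigma\to\Sigma/[X=Y]_c$ and $q'\colon\Sigma'\to\Sigma'/[X'=Y']_{c'}$ be the quotient maps, and consider $q'\circ f$. If $x\in X$ then $f(c(x))=c'(f(x))$, so $q'(f(c(x)))=q'(c'(f(x)))=q'(f(x))$; thus $q'\circ f$ is constant on the equivalence classes defining $\Sigma/[X=Y]_c$, and the universal property of the quotient topology furnishes a unique continuous $f_c$ with $f_c\circ q=q'\circ f$, which is precisely the asserted commutative square; uniqueness of any $g$ with $g\circ q=q'\circ f$ is immediate from surjectivity of $q$. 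Applying the same construction to $f^{-1}$ — the hypotheses are symmetric since $f^{-1}\circ c'\circ f=c$ — yields a two-sided continuous inverse, so $f_c$ is a homeomorphism.

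Finally I would check that $f_c$ respects the remaining structure. The map $\Sigma\setminus(X\cup Y)\hookrightarrow\Sigma/[X=Y]_c$ is an open embedding onto the complement of the glued curve, and likewise downstairs; through these embeddings $f_c$ agrees with $f$ on this dense open set, hence is orientation preserving there and therefore everywhere, and it carries the glued curve of the source onto that of the target. Via $q$ and $q'$ the boundary of the glued surface is identified with $\partial\Sigma\setminus(X\cup Y)$, resp. $\partial\Sigma'\setminus(X'\cup Y')$, on which $f_c$ again agrees with $f$, so it preserves basepoints, signs and labels of boundary components; thus $f_c$ is a weak $m$-homeomorphism. If moreover $f$ is an $m$-homeomorphism, then $f_{\#}(L)=L'$, and since the Lagrangian subspaces of the glued surfaces are $q_{\#}(L)$ and $q'_{\#}(L')$, we get $(f_c)_{\#}q_{\#}(L)=(q'\circ f)_{\#}(L)=q'_{\#}(L')$, so $f_c$ preserves Lagrangians as well. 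The only genuinely delicate point is the orientation statement: one must know that $\Sigma/[X=Y]_c$ carries a well-defined orientation that restricts to that of $\Sigma\setminus(X\cup Y)$ and is pinned down by that restriction, which is exactly what legitimizes the density argument for $f_c$; everything else is a routine consequence of the universal property.
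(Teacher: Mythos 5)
Your proof is correct, and it is exactly the routine descent argument the paper implicitly relies on: the paper states this proposition without proof, treating it as a formal consequence of the universal property of the quotient together with the fact that $f$ carries all the decorations (markings, basepoints, orientation, Lagrangian) of $\Sigma$ to those of $\Sigma'$. Your write-up supplies the details correctly — in particular the key identity $c'\circ f|_X = f|_Y\circ c$ that makes $q'\circ f$ constant on equivalence classes, the inverse obtained by applying the same construction to $f^{-1}$, and the density argument for orientation — so there is nothing to add beyond noting that the $x$ in the statement is a typo for $c$, which you silently and correctly repaired.
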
 

\begin{rem}\label{desc} Let $\mathcal{M}'(C)$ be the category with the same objects as $\mathbf{M}(C),$ but where morphisms are equivalence classes of weak $m$-homeomorphisms, where two parallel weak $m$-homeomorphisms are equivalent if they are isotopic through weak $m$-homeomorphisms. We recall that the $2$-DMF $\mathcal{H}_{\mathcal{V}}$ defined in chapter $\rom{5}$ of \cite{Tu} descends to $\mathcal{M}'(C)$ in the sense that if $f,g$ are two equivalent weak $m$-homeomorphisms, then we have the identity $\mathcal{H}(f)=\mathcal{H}(g).$
\end{rem}

\section{Axioms for a modular functor}

\label{AxiomsforMF}

We now recall Kevin Walker's axioms for a modular functor as they are given and used in \cite{AU2,AU3,AU4}. For the Turaev axioms of a modular functor, we refer to chapter $\rom{5}$ in \cite{Tu}. We assume familiarity with the notion of symmetric monoidal functors. Roughly speaking, a symmetric monoidal functor between symmetric monoidal categories $(C,\otimes , e)\rightarrow (D,\otimes' , e')$ is a triple $(F,F_2,f)$ where $F: C \rightarrow D$ is a functor, $F_2$ is a family of morphisms $F_2:F(a)\otimes' F(b) \rightarrow F(a\otimes b)$ and $f$ is a morphism $f:e' \rightarrow F(e).$ For the precise formulation of the axioms we refer to \cite{M}. For brevity we will write $F=(F,F_2,f).$ If $F_2,f$ are allways isomorphisms, we say that $F$ is a strong monoidal functor.

\subsection{The Walker axioms for a modular functor}

Let $\Lambda=(\Lambda,^\dagger,0)$ be a label set. Let $K$ be a commutative ring (with unit). Let $\mathbf{P}(K)=\text{Proj}(K)$ be the category of finitely generated projective $K$-modules. We recall that this is a symmetric monoidal category with the tensor product over $K$ as product, and $K$ as unit.

\begin{df}[Modular functor $V$ based on $\Lambda$ and $K$]
\label{defofmf}
A modular functor based on a label set $\Lambda$ and a commutative ring $K$ is a pair $(V,g)$ consisting of a strong monoidal functor $V$
\[
V: \mathbf{C}(\Lambda) \rightarrow \mathbf{P}(K),
\]
and a gluing isomorphism $g$ with the properties described below.

\paragraph{Gluing axiom.}\label{gaxiom} Assume $(p_1,p_2,c)$ is gluing data for a labeled marked surface $\mathbf{\Sigma}.$ For any $\lambda \in \Lambda,$ Let $\mathbf{\Sigma}(\lambda)$ be the labeled marked surface identical to $\mathbf{\Sigma}$ except for the fact that $p_1$ is labeled with $\lambda$ and $p_2$ is labeled with $\lambda^{\dagger}.$ Then $(p_1,p_2,c)$ is gluing data for $\mathbf{\Sigma}(\lambda).$ We demand that there is a specified isomorphism
\begin{equation}
g: \bigoplus_{\lambda \in \Lambda} V(\mathbf{\Sigma}(\lambda)) \overset{\sim}{\longrightarrow} V(\mathbf{\Sigma}_c^{p_1,p_2}).
\end{equation}
Let $g_\lambda$ be the restriction of $g$ to $V(\mathbf{\Sigma}(\lambda)).$ If the context is clear, we will simply write $g$ for this restriction, and suppress $\lambda$ from the notation. If we wish to stress the gluing map $c$, we will write $g^c.$ The gluing isomorphism is subject to the four axioms below.

\paragraph*{(i).} The isomorphism should be associative in the following sense. Assume that $(q_1,q_2,d)$ is another pair subject to gluing. For any pair $(\lambda,\mu) \in \Lambda^2$ let $\mathbf{\Sigma}(\lambda,\mu)$ be the labeled marked surface identical to $\mathbf{\Sigma}$ except that $p_1$ is labeled with $\lambda,$ $p_2$ is labeled with $\lambda^{\dagger}$, $q_1$ is labeled with $\mu$ and $q_2$ is labeled with $\mu^{\dagger}.$ Then the following diagram is commutative:

\begin{equation}
\begin{tikzcd}[swap]
V(\mathbf{\Sigma}(\lambda,\mu))\arrow{r}[swap]{g_\mu}
\arrow{d}{g_\lambda}
& V((\mathbf{\Sigma}_d^{q_1,q_2})(\lambda)) \arrow{d}[swap]{g_\lambda}
\\ V((\mathbf{\Sigma}_c^{p_1,p_2})(\mu)) \arrow{r}[swap]{s'\circ g_\mu} & V( (\mathbf{\Sigma}_d^{q_1,q_2})_c^{p_1,p_2})
\end{tikzcd}
\end{equation}
Here $s'=V(s^{p_1,p_2,q_1,q_2}),$ where $s^{p_1,p_2,q_1,q_2}$ is as defined in Prop. \ref{cg}.

\paragraph*{(ii).} The isomorphism should be compatible with gluing of morphisms in the following sense. Assume that $\mathbf{f}: \mathbf{\Sigma}_1 \rightarrow \mathbf{\Sigma}_2$ is a morphism such that a pair $(p_0,p_1)$ subject to gluing is taken to the pair $(q_0,q_1).$ Choosing $c$ will induce a morphism $\mathbf{f}': \left(\mathbf{\Sigma}_1\right)_c^{p_0,p_1} \longrightarrow \left(\mathbf{\Sigma}_2\right)_{c'}^{q_0,q_1}$ as in Prop \ref{Fofg}. This should induce a commutative diagram:

\begin{equation}
\begin{tikzcd}[swap]
V(\mathbf{\Sigma}_1) \arrow{r}[swap]{g}
\arrow{d}{V(\mathbf{f})}
& V((\mathbf{\Sigma}_1)_c^{p_0,p_1}) \arrow{d}[swap]{V(\mathbf{f}')}
\\ V(\mathbf{\Sigma}_2) \arrow{r}[swap]{g} & V((\mathbf{\Sigma}_2)_{c'}^{q_0,q_1})
\end{tikzcd}
\end{equation}

\paragraph*{(iii).} The isomorphism should be compatible with disjoint union in the following way. Assume that $(p_0,p_1,c)$ is gluing data for $\mathbf{\Sigma}_1.$ For any $\mathbf{\Sigma}_2,$ we see that $(p_0,p_1,c)$ is also a choice of gluing data for $\mathbf{\Sigma}_1 \sqcup \mathbf{\Sigma}_2,$ and that there is a canonical morphism $\mathbf{\iota}=(\iota,0): (\mathbf{\Sigma}_1)_{c}^{p_1,p_2} \sqcup \mathbf{\Sigma}_2 \longrightarrow \left(\mathbf{\Sigma}_1 \sqcup \mathbf{\Sigma}_2\right)_{c}^{p_0,p_1}.$ This should induce a commutative diagram

\begin{equation}\label{multofg}
\begin{tikzcd}[swap]
V(\mathbf{\Sigma}_1 \sqcup \mathbf{\Sigma}_2) \arrow{r}[swap]{g}
& V((\mathbf{\Sigma}_1\sqcup \mathbf{\Sigma}_2)_c^{p_0,p_1})
\\ V(\mathbf{\Sigma}_1) \otimes V(\mathbf{\Sigma}_2) \arrow{r}[swap]{g\otimes 1} \arrow{u}[swap]{V_2} & V((\mathbf{\Sigma}_1)_{c}^{p_0,p_1})\otimes V(\mathbf{\Sigma}_2) \arrow{u}{V(\iota)\circ V_2}
\end{tikzcd}
\end{equation}

\paragraph*{(iv).} The isomorphism should be independent of the the gluing map $c$ in the following way. Assume a pair of points $(p_0,p_1)$ in $\mathbf{\Sigma}$ is subject to gluing. Assume that $c_1,c_2: P(T_{p_0} \Sigma
) \rightarrow P(T_{p_1}\Sigma)$ are two gluing maps. Consider the identification morphism $\tilde{f}(c_1,c_2): \mathbf{\Sigma}_{c_1}^{p_0,p_1} \rightarrow \mathbf{\Sigma}_{c_2}^{p_0,p_2}$ as in Prop \ref{glui}. This should induce a commutative diagram:
\begin{equation}
\begin{tikzcd}[swap]
  V(\mathbf{\Sigma}) \arrow{r}[swap]{g^{c_1}}
       \arrow{rd}{g^{c_2}}   
  & V(\mathbf{\Sigma}_{c_1}^{p_0,p_1}) \arrow{d}[swap]{V(\tilde{f}(c_1,c_2))}
  \\ &  V (\mathbf{\Sigma}_{c_2}^{p_0,p_1})
\end{tikzcd}
\end{equation}

\paragraph{Once punctured sphere axiom.} For any $\lambda \in \Lambda$ consider a sphere with one distinghuished point: $\mathbf{\Sigma}_{\lambda}=(S^2,\{p\},\{v\},\{\lambda\},0).$ We demand that:
\begin{equation} \label{oncepunctured}
V(\mathbf{\Sigma}_0) \simeq \begin{cases} & K \ \text{if $\lambda=0$}
\\& 0 \ \text{if $\lambda\not=0$} \end{cases}  .
\end{equation}

\paragraph{Twice punctured sphere axiom.} For any ordered pair $(\lambda,\mu)$ in $\Lambda,$ consider a sphere with two distinguished points $\mathbf{\Sigma}_{\lambda,\mu}= (S^2,\{p_1,p_2\},\{v_1,v_2\},\{\lambda,\mu\},0).$ We demand that:
\begin{equation} \label{twicepunctured}
V(\mathbf{\Sigma}_{\lambda,\mu}) \simeq \begin{cases} & K \ \text{if $\mu=\lambda^{\dagger}$}
\\& 0 \ \text{if $\mu\not=\lambda^{\dagger}$} \end{cases}  .
\end{equation}
\end{df} 
 
 We stress that the isomorphisms given in (\ref{oncepunctured}) and (\ref{twicepunctured}) are not part of the data of a modular functor. Only the existence of such isomorphisms are required.
 
\section{Construction of a modular functor $Z_\mathcal{V}.$}

\subsection{The symmetric monoidal functor}

From now on, we consider a modular tensor category $(\mathcal{V},(V_i)_{i\in I})$ and take $\Lambda=I$ and $^\dagger = ^*$. We let $K$ be the commutative ring $\text{End}(\mathbf{1}),$ where $\mathbf{1}$ is the unit for the tensorproduct in $\mathcal{V}.$

\begin{pro}[Existence of a strong monoidal functor $\mathbf{C}(I) \rightarrow \mathbf{M}(\mathcal{V})$]
\label{C(I) to M(V)}
Consider a modular tensor category $(\mathcal{V},(V_i)_{i\in I}).$ Let $\Lambda =I,$ $^\dagger = ^*$  and let $C= \mathcal{V}$ considered as a monoidal class. There is a strong monoidal functor from the category of $\Lambda$-labeled marked surface into the category $\mathbf{M}'(C).$
\[
\G:\mathbf{C}(I) \rightarrow \mathbf{M}'(C). 
\]
 For a $\Lambda$-labeled marked surface $\mathbf{\Sigma}=(\Sigma,P,V,\lambda,L)$ the marked surface $\Phi(\mathbf{\Sigma)}$ is given as follows. For any distinguished point $p,$ blow up $\Sigma$ at $p.$ That is, the underlying topological surface of $\G(\mathbf{\Sigma})$ is denoted by $\G(\Sigma)$ and is given as follows:
\[
\left( \Sigma \setminus P \bigsqcup_{p \in P} S^1_p \right) / \sim.
\]
Here we glue in the circle $S^1_p$ using smooth coordinates in a neigbourhood of $p.$ The orientation agrees with that on $\mathbf{\Sigma}.$ The direction $v_p$ yields a basepoint on $S^1,$ the label $i \in I$ yields a marking $(V_i,1).$ Collapsing $S^1_p$ to a point at at all $p$ yields a surface $\Sigma'$ that is canonically homeomorphic to $\Sigma.$ Let $\eta$ denote the natural homeomorphism $\Sigma' \rightarrow \Sigma.$ Let $q$ denote the quotient map that collapses any component to a point. The composition $g:=\eta \circ q: \G(\Sigma) \rightarrow \Sigma$ will be an isomorphism on homology, and this provide us with a Lagrangian subspace $L':= {g_{\#}}^{-1}(L).$ 
Given a morphism of labeled marked surfaces $(f,s): \mathbf{\Sigma}_1 \rightarrow \mathbf{\Sigma}_2$ any representative of $f$ naturally induce a weak $m$-homeomorphism $ \mathcal{G}(\mathbf{\Sigma_1}) \rightarrow \mathcal{G}(\mathbf{\Sigma}_2)$ and we let $\mathcal{G}(f,s)$ be the corresponding equivalence class.
\end{pro}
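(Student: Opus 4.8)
The plan is to construct the functor $\G$ on objects exactly as prescribed in the statement, then define it on morphisms, check functoriality, and finally equip it with the monoidal data $(\G_2, g)$ making it a strong monoidal functor into $\mathbf{M}'(C)$. On objects there is essentially nothing to prove beyond bookkeeping: given $\mathbf{\Sigma}=(\Sigma,P,V,\lambda,L)$, the blow-up $\G(\Sigma)$ is a well-defined compact oriented surface (the orientation on each glued-in circle neighbourhood is inherited from $\Sigma$), each boundary circle $S^1_p$ carries the basepoint determined by the direction $v_p \in P(T_p\Sigma)$ and the marking $(V_{\lambda(p)},1)\in C\times\{\pm1\}$, and the collapse map $g=\eta\circ q\colon \G(\Sigma)\to\Sigma$ is a homotopy equivalence, hence an isomorphism on $H_1(\cdot,\R)$, so $L':=g_{\#}^{-1}(L)$ is a Lagrangian subspace of $H_1(\G(\Sigma),\R)$ respecting the decomposition into connected components. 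Thus $\G(\mathbf{\Sigma})$ is a legitimate object of $\mathbf{M}(C)$.

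For morphisms, I would first observe that any orientation-preserving diffeomorphism $\phi\colon\Sigma_1\to\Sigma_2$ sending $(P_1,V_1,\lambda_1)$ to $(P_2,V_2,\lambda_2)$ lifts, via its derivative at the points of $P_1$, to an orientation-preserving homeomorphism $\G(\phi)\colon\G(\Sigma_1)\to\G(\Sigma_2)$ that carries basepoints to basepoints and respects markings — i.e. a weak $m$-homeomorphism; one must note here that the lift is canonical only up to isotopy (the choice of smooth coordinates used to glue in the circles, and the precise way $d\phi_p$ is realised on $S^1_p$, are unique up to isotopy fixing the basepoint, by the standard fact recalled before Prop.\ \ref{glui}). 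Hence the assignment is well-defined as a morphism of $\mathbf{M}'(C)$, and crucially the isotopy ambiguity is exactly what $\mathbf{M}'(C)$ quotients out, which is why the target must be $\mathbf{M}'(C)$ rather than $\mathbf{M}(C)$; the integer $s$ in the morphism $(f,s)$ plays no role and is simply discarded. Isotopic diffeomorphisms induce isotopic weak $m$-homeomorphisms, so $\G$ is well-defined on the equivalence classes that constitute morphisms of $\mathbf{C}(I)$, and it respects composition and identities because lifting commutes with composition up to isotopy.

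Next I would supply the monoidal structure. There is an evident homeomorphism $\G_2\colon\G(\mathbf{\Sigma}_1)\sqcup\G(\mathbf{\Sigma}_2)\xrightarrow{\sim}\G(\mathbf{\Sigma}_1\sqcup\mathbf{\Sigma}_2)$ — blowing up a disjoint union is the disjoint union of the blow-ups — and it is compatible with basepoints, markings and Lagrangians (the Lagrangian of a disjoint union is the direct sum, and $g_{\#}^{-1}$ respects this), hence an $m$-homeomorphism; similarly the unit morphism $g\colon\emptyset\to\G(\emptyset)$ is the identity on the empty surface. One then checks the coherence squares for a symmetric monoidal functor — associativity of $\G_2$, unitality, and compatibility with the two $\mathrm{Perm}$ natural transformations — which all reduce to the obvious statements that disjoint union of surfaces is strictly associative and that $\G$ intertwines the two permutation homeomorphisms; since $\G_2$ and the unit morphism are isomorphisms, $\G$ is strong monoidal.

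The only genuinely delicate point — and the one I expect to be the main obstacle to write carefully — is the well-definedness of $\G$ on morphisms: pinning down that the lift of a diffeomorphism through the blow-up is canonical up to isotopy fixing basepoints, independently of the auxiliary choices of tubular-neighbourhood coordinates at the points of $P$, and that isotopic inputs give isotopic outputs. This is where one invokes the uniqueness-up-to-isotopy of orientation-preserving self-homeomorphisms of $S^1$ fixing a basepoint (equivalently, the contractibility of the relevant spaces of framings/coordinate choices), and it is the reason the construction is stated as landing in $\mathbf{M}'(C)$. Everything else is routine diagram-chasing with the definitions of the symmetric monoidal categories $\mathbf{C}(I)$ and $\mathbf{M}'(C)$ recalled above.
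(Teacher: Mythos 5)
Your plan coincides with what the paper actually does: the paper states the construction of $\G$ inside the proposition itself and supplies no separate proof, so the whole content is the routine verification you outline --- well-definedness on objects, canonicity up to isotopy of the lift of a diffeomorphism through the blow-up (which is exactly why the target must be $\mathbf{M}'(C)$ and why the integer $s$ is discarded), and the evident strong monoidal data for disjoint union. On all of those points your argument is the intended one.

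One step, as you state it, is false and needs repair. The collapse map $g=\eta\circ q\colon \G(\Sigma)\to\Sigma$ is \emph{not} a homotopy equivalence, and $g_{\#}$ is \emph{not} an isomorphism on $H_1$ as soon as a component carries at least two distinguished points: for a connected genus-$h$ component with $n=|P|\geq 1$ marked points one has $H_1(\G(\Sigma);\R)\cong\R^{2h+n-1}$ while $H_1(\Sigma;\R)\cong\R^{2h}$, so $g_{\#}$ is surjective with an $(n-1)$-dimensional kernel spanned by the classes of the boundary circles. (The same slip occurs in the wording of the proposition itself, so you inherited it; but since you are proving the proposition you should not re-derive it from a homotopy equivalence that does not exist.) The conclusion you actually need survives: because $g$ restricts to a homeomorphism from the interior of $\G(\Sigma)$ onto the complement of $P$, the intersection form on $H_1(\G(\Sigma);\R)$ is the pullback under $g_{\#}$ of the form on $H_1(\Sigma;\R)$, so $\ker g_{\#}$ is precisely the radical of that form, and $L'=g_{\#}^{-1}(L)$ is an isotropic subspace of dimension $h+n-1$, i.e.\ a maximal isotropic (Lagrangian) subspace compatible with the splitting into connected components. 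With that correction the rest of your argument goes through as written.
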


We are now finally ready to define our modular functor. We recall that even though Turaevs axioms for a $2-DMF$ as given in chapter $\rom{5}$ only requires functoriality with respect to $m$-homeomorphisms, it is also defined on weak $m$-homeomorphisms. See section $4.3$ in chapter $\rom{5}.$

\begin{df}[The definition of $Z_{\mathcal{V}}$] 
\label{The definition of Z}
Let $\mathcal{H}_{\mathcal{V}}$ be the $2$-DMF as defined in chapter $\rom{5$} of \cite{Tu} relative to $(\mathcal{V},(V_i)_{i\in I}).$ On the level of objects we define $Z_{\mathcal{V}}$ to be
\[
Z_{\mathcal{V}}:=\mathcal{H}_{\mathcal{V}}\circ \G: \mathbf{C}(\Lambda) \longrightarrow \mathbf{P}(K).
\]
For a morhpism of labeled marked surfaces $(f,s): \mathbf{\Sigma} \rightarrow \mathbf{\Sigma}'$ we define
\[
Z_{\mathcal{V}}(f,s):=(\Delta^{-1} D)^s \mathcal{H}_{\mathcal{V}}(\mathcal{G}(f,s)).
\]
\end{df}

Here $D,\Delta$ are invertible scalars in $K$ to be introduced in section \ref{quantuminvariantsof3mfds} below. We write $Z=Z_{\mathcal{V}}$ and $\mathcal{H}=\mathcal{H}_{\mathcal{V}}.$ We need to adress the issue of functoriality. That is we must verify that $Z(\mathbf{f})\circ Z(\mathbf{g})=Z(\mathbf{f}\circ \mathbf{g})$ for composable morphisms of labeled marked surfaces. Let $V,V'$ be symplectic vector spaces. Recall that Walkers signature cocycle for an ordered triple $(L_1,L_2,L_3)$ of Lagrangian subspaces $L_i \subset V$ coincide with the Maslov index $\mu(L_1,L_2,L_3).$ Recall also that $\mu(L_1,L_2,L_3)=\mu(f(L_1),f(L_2),f(L_3))$ for any symplectomorphism $f:V \rightarrow V'.$ These facts together with remark \ref{eqidefofZ} and lemma $6.3.2$ in chapter $\rom{4}$ of \cite{Tu} easily imply functoriality.

We need to define a gluing isomorphism. We start by observing the following proposition:
\begin{pro}[$\G$ is compatible with gluing]
Assume $\mathbf{\Sigma}$ is a labeled marked surface. Assume we are given gluing data $(p,q,c).$ Assume $p$ is labeled with $i.$ Consider $\Sigma'=\G(\mathbf{\Sigma}).$ If we replace the marking of $X_q$ with $(V_i,-1)$ to obtain a new marked surface $\Sigma''$ then $X_p\subset \partial \Sigma''$ and $X_q\subset \partial \Sigma''$ are subject to gluing. We observe 
\[
\G(\mathbf{\Sigma}_c^{p,q})=\Sigma'' /[X_p \approx X_q].
\]
\end{pro}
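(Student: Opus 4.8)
The plan is to unravel both sides of the claimed identity as concrete marked surfaces (over $C=\mathcal V$) and exhibit a canonical $m$-homeomorphism between them that preserves all the structure, namely: the underlying oriented surface, the basepoints and markings on the remaining boundary components, and the Lagrangian subspace. First I would recall the two constructions being compared. On the left, one first factors out the marked points $p,q$ by gluing in circles $S^1_p,S^1_q$ (this is the functor $\G$ applied to $\mathbf\Sigma$), which gives a marked surface $\Sigma'$ with boundary components $X_p,X_q$ carrying markings $(V_i,1),(V_i,1)$ — but note that to glue $m$-surfaces one needs opposite signs, so one replaces the marking of $X_q$ with $(V_i,-1)$ to get $\Sigma''$. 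Then one glues $X_p$ to $X_q$ via the (essentially unique) basepoint-preserving orientation-reversing homeomorphism to obtain $\Sigma''/[X_p\approx X_q]$. On the right, one first performs the gluing of labeled marked surfaces: blow up $\Sigma$ at $p_0=p$ and $p_1=q$, glue in $P(T_p\Sigma)$ and $P(T_q\Sigma)$, and identify the two resulting boundary circles via the orientation-reversing projective linear $c$; this produces $\mathbf\Sigma_c^{p,q}$, to which one then applies $\G$ at the remaining marked points.

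The key observation is that on underlying topological surfaces these two procedures commute up to canonical homeomorphism: "blow up $p$ and $q$, then glue the two circles" and "blow up all the other marked points" can be performed in either order, and the blow-up-circle $P(T_p\Sigma)$ used in the gluing-data construction is canonically identified (via smooth coordinates) with the circle $S^1_p$ glued in by $\G$. Concretely I would check: (1) the underlying surface $\G(\Sigma_c^{p,q})$ is, by definition of $\G$ and of gluing, obtained from $\Sigma\setminus(P\cup\{p,q\})$ by gluing in $S^1_r$ for $r\in P$ and gluing in $P(T_p\Sigma)$, $P(T_q\Sigma)$ and then identifying the latter two via $c$; this is literally the underlying surface of $\Sigma''/[X_p\approx X_q]$ once one uses Proposition \ref{glui} to replace the chosen $c$ by the standard basepoint-preserving orientation-reversing identification used in $m$-surface gluing. (2) The basepoints and markings on the boundary circles $X_r$, $r\in P$, agree on both sides, since in both constructions they come directly from the direction $v_r$ and label $\lambda(r)$ of $\mathbf\Sigma$. (3) The signs: the label of $p$ being $i$ forces, in the factorization/gluing picture, $p$ to carry sign $+1$ and $q$ to carry the matching marking, which is exactly why one passes to $\Sigma''$ by flipping the sign at $X_q$ to $-1$; this is the bookkeeping that makes the two boundary components "subject to gluing" in the $m$-surface sense.

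The main obstacle — and the only point requiring genuine care rather than unwinding definitions — is the Lagrangian subspaces. Both constructions define the new Lagrangian as a pullback-pushforward $(n_\#)^{-1}(q_\#)(L)$ through the intermediate space $X$ obtained by identifying $p$ with $q$, and in both cases $\G$'s homology isomorphism $g_\#=\eta_\#\circ q_\#$ identifies the homology of the blown-up surface with that of the original. So I would argue that the diagram of quotient maps relating $\Sigma$, $\G(\Sigma)$, $\mathbf\Sigma_c^{p,q}$, $\G(\mathbf\Sigma_c^{p,q})$ and the point-identified space $X$ commutes on the level of $H_1$, and hence the two prescriptions for the Lagrangian subspace coincide under the canonical homeomorphism; here one uses that collapsing a contractible circle and the homology isomorphisms induced are compatible with the identification of $P(T_p\Sigma)$ with $S^1_p$. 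Finally I would invoke Proposition \ref{glui} (gluing independent of $c$) and Remark \ref{desc} (that $\mathcal H_\mathcal V$ descends to isotopy classes) to conclude that the resulting identification is canonical, i.e. independent of the auxiliary choices, so that the displayed equality holds as an equality of marked surfaces (equivalently, via a canonical isomorphism in $\mathbf M'(C)$).
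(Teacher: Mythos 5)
The paper offers no proof of this proposition --- it is stated as an observation that follows from unwinding the definitions --- and your argument is exactly that unwinding, with the Lagrangian subspaces correctly singled out as the only point needing genuine care and the appeal to Proposition \ref{glui} handling the choice of $c$. One minor slip: since $q$ is labeled $i^{\dagger}=i^{*}$, the boundary component $X_q$ of $\G(\mathbf{\Sigma})$ carries the marking $(V_{i^{*}},1)$ rather than $(V_i,1)$; this is precisely why the replacement by $(V_i,-1)$ (and, downstream, the maps $\dot{q}_i$ and $h$) is needed, and it does not affect the structure of your argument.
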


We now compare the gluing isomorphism axiom of Walker and the splitting axiom of Turaev more closely.

Turaevs modular functor is subject to the splitting axiom, which means that the gluing homomorphisms provide an isomorphism:
\[
g:\bigoplus_{i \in I} \mathcal{H}\left(\G(\Sigma), \lambda, (V_i,1), (V_i,-1))\right)  \overset{\sim}{\longrightarrow}  Z(\mathbf{\Sigma}_c).
\]
See chapter $\rom{5}$, the splitting axiom on page $246.$ in \cite{Tu}. Comparing with Walker's glueing axiom, we see that the summands are not the same, since there we need an isomorphism
\[
g:\bigoplus_{i \in I} \mathcal{H}\left(\G(\Sigma), \lambda, (V_i,1), (V_{i^*},1)\right)  \overset{\sim}{\longrightarrow}  Z(\mathbf{\Sigma}_c).
\]
Hence we just need to provide isomorphisms between modules of states, where we exchange a marking $(V_{i^*},1)$ with $(V_i,-1).$

To provide these identifications, we first recall that $\mathcal{H}$ is compatible with the operator invariant $\tau^e.$ See the following remark. 

\begin{rem}\label{eqidefofZ} Let $\Sigma$ be a marked surface over $\mathcal{V}.$ We recall that $\mathcal{H}(\Sigma)$ is naturally isomorphic to $\mathcal{T}^e(\overline{\Sigma}),$ where $\overline{\Sigma}$ is an extended surface obtained from $\Sigma$ by gluing in discs with preferred diamemeters, that are taken to be marked arcs. We can therefore use the operator invariant $\tau^{e}$ to obtain morphisms between modules of states. The natural isomorphism $\mathcal{H}(\Sigma) \simeq \mathcal{T}^e(\overline{\Sigma})$ also implies, that for a labeled marked surface $\mathbf{\Sigma}$ we could just as well defined $Z(\mathbf{\Sigma})$ as $\mathcal{T}^e(\widetilde{\mathbf{\Sigma}})$, where $\widetilde{\mathbf{\Sigma}}$ is an extended surface naturally obtained from $\mathbf{\Sigma}.$ Similarly we observe that a morphism $(f,s)$ of $I$-labeled marked surfaces induce an isotopy class $f'$ of weak $e$-homeomorphisms, and that $\mathcal{H}(\mathcal{G}(f,s))\sim \mathcal{T}^e(f').$ 
\end{rem}

Now we provide the needed identifications.

\begin{lem}[The natural transformation $\dot{f}$]
\label{f}
Let $\Sigma$ be an $m$-surface with a boundary component $X_\alpha$ marked with $(V,1).$ Assume that $\Sigma'$ is obtained from $\Sigma$ by replacing the marking $(V,1)$ with $(W,1).$ Assume that $f:V \rightarrow W$ is a morphism.

There is a $K$-linear morphism 
\[
\dot{f}: \mathcal{H}(\Sigma) \rightarrow \mathcal{H}(\Sigma').
\] 
where $\dot{f}$ is induced from the extended three manifold $M=\overline{\Sigma}\times I$ where we think of the bottom as $\overline{\Sigma},$ the top as $\overline{\Sigma'}$, and we provide $M$ with following ribbon graph. For each arc $\beta$ different from the arc $\alpha$ corresponding to $X_{\alpha} \subset \partial \Sigma,$ we put in the identity strand $\beta \times I.$ For $\alpha,$ we put in a coupon colored with $f.$

\end{lem}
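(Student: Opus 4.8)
The plan is to verify three things: (1) that the stated ribbon graph in $M = \overline{\Sigma}\times I$ defines a legitimate morphism in Turaev's category of extended cobordisms from $\overline{\Sigma}$ to $\overline{\Sigma'}$, so that applying the operator invariant $\tau^e$ (equivalently, the $2$-DMF structure of $\mathcal{H}$ through the natural isomorphism $\mathcal{H}(\Sigma)\simeq \mathcal{T}^e(\overline{\Sigma})$ of Remark \ref{eqidefofZ}) produces a $K$-linear map $\dot f:\mathcal{H}(\Sigma)\to\mathcal{H}(\Sigma')$; (2) that this assignment is natural, i.e. compatible with weak $m$-homeomorphisms on both sides; and (3) the elementary functoriality/compatibility properties that will be used later ($\dot{\mathrm{id}}=\mathrm{id}$, $\dot{g}\circ\dot f=\dot{(g\circ f)}$, behaviour under disjoint union), though the statement as given only asserts the existence of $\dot f$, so the core obligation is really (1) together with well-definedness.

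First I would set up the extended $3$-manifold precisely. Take $M=\overline{\Sigma}\times[0,1]$ with $\partial M = (-\overline{\Sigma})\sqcup \overline{\Sigma'}$; here $\overline{\Sigma}$ and $\overline{\Sigma'}$ differ only in the colour of one arc, $\alpha$ on $\overline{\Sigma}$ being coloured $V$ and the corresponding arc $\alpha'$ on $\overline{\Sigma'}$ being coloured $W$. Inside $M$ place the ribbon graph consisting of the product ribbons $\beta\times[0,1]$ for every arc $\beta\neq\alpha$ (coloured by the colour of $\beta$, which is unchanged), together with a single ribbon $\alpha\times[0,\tfrac12]$ running from $\alpha$ up to a coupon, and $\alpha'\times[\tfrac12,1]$ running from the coupon to $\alpha'$; the coupon is coloured by the morphism $f\in\mathrm{Hom}(V,W)$. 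One checks that the colours and directions at the two horizontal boundary pieces match the markings of $\overline{\Sigma}$ and $\overline{\Sigma'}$, so this is a well-formed morphism $\overline{\Sigma}\to\overline{\Sigma'}$ in Turaev's category; then $\dot f := \mathcal{T}^e$ applied to it, transported along the isomorphisms $\mathcal{H}(\Sigma)\simeq\mathcal{T}^e(\overline{\Sigma})$, $\mathcal{H}(\Sigma')\simeq\mathcal{T}^e(\overline{\Sigma'})$. That $\tau^e$ is well defined on such decorated cobordisms up to the relevant equivalence is exactly the content of Turaev's construction in chapters \rom{2}--\rom{4} of \cite{Tu}, which I would simply cite.

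For naturality, suppose $h:\Sigma_1\to\Sigma_2$ is a weak $m$-homeomorphism carrying the distinguished boundary component of $\Sigma_1$ to that of $\Sigma_2$, and let $\Sigma_1',\Sigma_2'$ be the corresponding surfaces with the colour of that component changed from $V$ to $W$; then $h$ also gives a weak $m$-homeomorphism $h':\Sigma_1'\to\Sigma_2'$. The map $h\times\mathrm{id}_{[0,1]}$ extends to a homeomorphism of extended cobordisms carrying the ribbon graph for $\dot f$ on $M_1$ to the ribbon graph for $\dot f$ on $M_2$ (the coupon coloured $f$ is carried to the coupon coloured $f$, since $h$ does nothing to colours), so by functoriality and the homeomorphism-invariance of $\tau^e$ we get the square
\begin{equation*}
\begin{tikzcd}
\mathcal{H}(\Sigma_1) \arrow{r}{\dot f}\arrow{d}{\mathcal{H}(h)} & \mathcal{H}(\Sigma_1')\arrow{d}{\mathcal{H}(h')}\\
\mathcal{H}(\Sigma_2) \arrow{r}{\dot f} & \mathcal{H}(\Sigma_2')
\end{tikzcd}
\end{equation*}
commuting; stacking cylinders vertically and using that $\tau^e$ is a functor on cobordisms gives $\dot g\circ\dot f=\dot{(g\circ f)}$ and $\dot{\mathrm{id}}=\mathrm{id}$, and the product ribbon structure makes compatibility with disjoint union immediate. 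The main obstacle is not any single hard computation but rather bookkeeping: one must be careful that the arcs $\alpha$ in $\overline{\Sigma}$ and $\alpha'$ in $\overline{\Sigma'}$ are genuinely "the same arc with a different colour" (they sit over the same boundary circle of $\Sigma$, with the same basepoint and sign, since only the label changed), so that the coupon really has source object $V$ and target object $W$ and the resulting cobordism has the asserted boundary; and one must invoke the precise form of the natural isomorphism $\mathcal{H}(\Sigma)\simeq\mathcal{T}^e(\overline{\Sigma})$ from Remark \ref{eqidefofZ} consistently on both the $\Sigma$ and the $\Sigma'$ side so that $\dot f$ is independent of the auxiliary choices (discs glued in, preferred diameters) made in passing between the $m$-surface and $e$-surface pictures.
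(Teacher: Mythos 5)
Your proposal is correct and follows essentially the same route as the paper: the existence of $\dot f$ is obtained by observing that the described coupon-decorated cylinder is a legitimate extended cobordism from $\overline{\Sigma}$ to $\overline{\Sigma'}$ and applying Turaev's operator invariant $\tau^e$, transported along the identification $\mathcal{H}(\Sigma)\simeq\mathcal{T}^e(\overline{\Sigma})$; naturality is likewise proven by exhibiting a homeomorphism of the two glued extended three-manifolds commuting with boundary parametrizations. The only point where the paper is slightly more explicit is in invoking Proposition \ref{cob of e-homeo} to note that $\mathcal{H}(g)$ agrees with the operator invariant of the mapping cylinder only up to an invertible scalar $c$, and that this scalar is the same on both sides of the naturality square and hence cancels.
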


\begin{lem}[The natural transformation $h_{\alpha}$]
\label{h}
Let $\Sigma$ be an $m$-surface with a boundary component $X_\alpha$ marked with $(V^*,1).$ Assume that $\Sigma'$ is obtained from $\Sigma$ by replacing the marking $(V^*,1)$ with the marking $(V,-1).$

There is a $K$-linear morphism 
\[
h_{\alpha}: \mathcal{H}(\Sigma) \rightarrow \mathcal{H}(\Sigma').
\] 
The morphism $h_{\alpha}$ is induced from the extended three manifold $M=\overline{\Sigma}\times I$ where we think of the bottom as $\overline{\Sigma},$ the top as $\overline{\Sigma'}$, and we provide $M$ with following ribbon graph. For each arc $\beta$ different from the arc $\alpha$ corresponding to $X_{\alpha} \subset \partial \Sigma,$ we put in the identity strand $\beta \times I.$ For $\alpha,$ we put in a coupon colored with $\text{id}_{V^*}.$

\end{lem}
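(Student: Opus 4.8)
The lemma is in essence a construction, and the proof consists in checking that the recipe yields a well-defined $K$-linear map. The plan is to define $h_\alpha$ as the operator invariant $\tau^e$ of the decorated cobordism $M$, transported along the natural isomorphisms $\mathcal{H}(\Sigma)\simeq\mathcal{T}^e(\overline{\Sigma})$ and $\mathcal{H}(\Sigma')\simeq\mathcal{T}^e(\overline{\Sigma'})$ recalled in Remark~\ref{eqidefofZ}. Recall from Chapters~\rom{2}--\rom{4} of \cite{Tu} that a decorated cobordism --- a compact oriented $3$-manifold with parametrised extended boundary carrying a coloured ribbon graph --- is sent by $\tau^e$ to a $K$-linear morphism from the module of states of its bottom to that of its top; hence it suffices to present $M$, equipped with the ribbon graph described in the statement, as a decorated cobordism from $\overline{\Sigma}$ to $\overline{\Sigma'}$.

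First I would make the boundary parametrisation precise. The extended surfaces $\overline{\Sigma}$ and $\overline{\Sigma'}$ share the same underlying surface, obtained from $\Sigma$ by capping each boundary circle with a disc whose preferred diameter is declared to be a marked arc; they differ only in that the arc $\alpha$ coming from $X_\alpha$ is marked $(V^*,1)$ in $\overline{\Sigma}$ and $(V,-1)$ in $\overline{\Sigma'}$. Set $M=\overline{\Sigma}\times[0,1]$ and use the standard identifications of $\overline{\Sigma}\times\{0\}$ and $\overline{\Sigma}\times\{1\}$ with the bottom and the top, so that $M$ becomes a cobordism from $\overline{\Sigma}$ to $\overline{\Sigma'}$.

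Then I would install the ribbon graph $\Omega$ and verify admissibility. Over each marked arc $\beta\neq\alpha$, coloured by an object $W_\beta$, put the product band $\beta\times[0,1]$ coloured $W_\beta$ with its core oriented from bottom to top; near such a $\beta$ the picture is the standard one for the identity cobordism, so the band is admissible and at its two ends matches the common mark of $\beta$ in $\overline{\Sigma}$ and in $\overline{\Sigma'}$. Over $\alpha$, put the band $\alpha\times[0,1]$ coloured $V^*$ containing a single coupon coloured $\text{id}_{V^*}\colon V^*\to V^*$. The one non-formal point is that the colour and sign at the two ends of this band agree with the marks $(V^*,1)$ of $\alpha\subset\overline{\Sigma}$ at the bottom and $(V,-1)$ of $\alpha\subset\overline{\Sigma'}$ at the top; tracing through Turaev's conventions for the mark induced on the diameter of a capping disc and for how the sign $-1$ on an arc passes to the dual object, this discrepancy is exactly what the coupon coloured $\text{id}_{V^*}$ absorbs. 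Granting this, $(M,\Omega)$ is an admissible decorated cobordism, and we set $h_\alpha:=\tau^e(M,\Omega)$, viewed via Remark~\ref{eqidefofZ} as a map $\mathcal{H}(\Sigma)\to\mathcal{H}(\Sigma')$; it is $K$-linear because $\tau^e$ takes values in morphisms of $K$-modules. Since $\Omega$ is determined up to isotopy rel $\partial M$ and $\tau^e$ is isotopy invariant, $h_\alpha$ is independent of auxiliary choices, and naturality with respect to weak $m$-homeomorphisms fixing $X_\alpha$ follows from functoriality of $\tau^e$ applied to the corresponding mapping cylinders.

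I expect the main obstacle to be precisely the bookkeeping of the third paragraph: reconciling Turaev's conventions for (a) the mark induced on the diameter of a capping disc when forming $\overline{\Sigma}$ from $\Sigma$, (b) the effect of the sign $-1$ on an arc, i.e.\ passage to the dual object, and (c) the framing and core-orientation conventions for ribbon bands meeting an extended boundary, so as to confirm that inserting a single coupon coloured $\text{id}_{V^*}$ is exactly what makes $(M,\Omega)$ admissible. Once that local verification is in hand, the statement follows immediately from the formal properties of $\tau^e$ established in \cite{Tu}.
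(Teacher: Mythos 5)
Your proposal is correct and follows essentially the same route as the paper, which simply defines $h_\alpha$ as the operator invariant $\tau^e$ of the extended cobordism $\overline{\Sigma}\times I$ with the prescribed ribbon graph and refers to the analogous construction for $\dot f$ in Lemma \ref{f}; the admissibility bookkeeping you isolate (the coupon coloured $\mathrm{id}_{V^*}$ mediating between the marks $(V^*,1)$ and $(V,-1)$) is exactly the point the paper delegates to Turaev's conventions in \cite{Tu}.
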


 If the relevant boundary component is understood, we will simply write $h_{\alpha}=h.$ In section \ref{Proofs of lemmas} below we will give all details of how these two lemmas follow directly from similar statements in \cite{Tu}.

\subsection{The gluing isomorphism}

Let $\mathbf{\Sigma_c}$ be a $\Lambda$-labeled marked surface obtained from $\mathbf{\Sigma}$ by gluing. We must provide an isomorphism
\[
\bigoplus_{i \in I} Z(\mathbf{\Sigma},\lambda, i,i^{\dagger} ) \overset{\sim}{\longrightarrow} Z(\mathbf{\Sigma_c}, \lambda).
\]

For each $i \in I$ fix an isomorphism $q_i:V_{i^*} \rightarrow V_{i}^*.$ We define the gluing isomorphism as follows. For each $i,$ consider the composition
\[
Z(\mathbf{\Sigma},\lambda, i,i^* ) \overset{\dot{q_i}}{\longrightarrow} \mathcal{H}\left(\G(\Sigma), \lambda, (V_i,1), (V_i^*,1))\right) \overset{h}{\longrightarrow} \mathcal{H}\left(\G(\Sigma), \lambda, (V_i,1), (V_i,-1))\right).
\]

Using that $\mathcal{H}$ satisfies the splitting axiom as defined in chapter $\rom{5}$ we see that we have an isomorphism
\[
g:\bigoplus_{i \in I} \mathcal{H}\left(\G(\Sigma), \lambda, (V_i,1), (V_i,-1))\right)  \overset{\sim}{\longrightarrow}  Z(\mathbf{\Sigma}_c).
\]
Thus we can define our gluing isomorphism as follows.
\begin{df}[The gluing isomorphism]
\label{the gluing isomorphism}
We define
\[
  \tilde{g}(q):=g\circ(\oplus_{i\in I}h \circ \dot{q_i}):\bigoplus_{i \in I} Z(\mathbf{\Sigma},\lambda,i,i^*) \overset{\sim}{\longrightarrow}  Z(\mathbf{\Sigma}_c).
  \]
\end{df}
We write $\tilde{g}(q)$ to stress that this depends on the choices of isomorphisms $q_i.$

  \subsection{Main theorem}

We are now ready to state our main theorem. 

\begin{thm}[Main Theorem]
\label{MT}
For any modular tensor category $(\mathcal{V},(V_i)_{i\in I})$ the symmetric modular functor $Z_{\mathcal{V}}$ as given in definition \ref{The definition of Z} together with the gluing isomorphism $\tilde{g}(q)$ as given in definition \ref{the gluing isomorphism} satisfies Walker's axioms of a modular functor based on $I$ and $K$ as given in section \ref{AxiomsforMF}.
\end{thm}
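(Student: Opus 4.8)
The plan is to verify each of Walker's axioms in turn, reducing every statement to a corresponding fact about Turaev's $2$-DMF $\mathcal{H}_{\mathcal{V}}$ together with the bookkeeping contributed by the signature-cocycle twist $(\Delta^{-1}D)^s$ and by the identification morphisms $\dot{q_i}$ and $h_\alpha$ of Lemmas \ref{f} and \ref{h}. First I would treat the underlying strong monoidal functor: functoriality of $Z_\mathcal{V}$ was already sketched after Definition \ref{The definition of Z} using Wall's cocycle and Maslov index additivity, so I would only record that $Z_\mathcal{V}=\mathcal{H}_\mathcal{V}\circ\mathcal{G}$ is strong monoidal because $\mathcal{G}$ is strong monoidal (Prop. \ref{C(I) to M(V)}) and $\mathcal{H}_\mathcal{V}$ is a monoidal $2$-DMF in the sense of \cite{Tu}; the coherence morphisms $V_2$ and $f$ are those of Turaev's functor, and the scalar $(\Delta^{-1}D)^s$ is multiplicative under disjoint union since signatures add, so it passes through the monoidal structure.

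Next I would dispatch the once- and twice-punctured sphere axioms: after applying $\mathcal{G}$ these become the statements that $\mathcal{H}_\mathcal{V}$ of a disc marked by $(V_\lambda,1)$, resp. an annulus marked by $(V_\lambda,1),(V_\mu,1)$, is $K$ when $\lambda=0$ (resp. $\mu=\lambda^*$) and $0$ otherwise — exactly the normalization of Turaev's $2$-DMF for the sphere, which I would cite from chapter $\rom{5}$ of \cite{Tu} (the genus-zero modules of states), noting that the twist plays no role since $L=0$ forces $s$-contributions to vanish. The bulk of the work is the gluing axiom and its four sub-axioms. The map $\tilde g(q)=g\circ\bigoplus_i(h\circ\dot{q_i})$ is an isomorphism because $g$ is (splitting axiom for $\mathcal{H}_\mathcal{V}$) and each $h$, $\dot{q_i}$ is (the coupon is colored by an isomorphism, so the associated extended cobordism invariant is invertible — this is the content of how Lemmas \ref{f}, \ref{h} are proven in section \ref{Proofs of lemmas}). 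For sub-axiom (iv), independence of $c$, I would use Prop. \ref{glui} together with the fact, recalled in Remark \ref{desc}, that $\mathcal{H}_\mathcal{V}$ descends to isotopy classes of weak $m$-homeomorphisms, so that $\tilde f(c_1,c_2)$ acts as the canonical identification on both sides and commutes with $g$, $h$, $\dot{q_i}$ since those are built from cylinders $\overline\Sigma\times I$ that are natural in $\Sigma$. For sub-axioms (ii) and (iii), naturality with respect to morphisms and disjoint union, the key point is that $\dot{f}$ and $h_\alpha$ are genuinely \emph{natural transformations} (as the lemma titles assert), so the diagrams reduce to the corresponding naturality/multiplicativity of Turaev's gluing homomorphisms, which hold in \cite{Tu}; the twist factor $(\Delta^{-1}D)^s$ is the same on both routes around each square because the morphism $\mathbf{f}'$ carries the same integer $s$ as $\mathbf{f}$ (Prop. \ref{Fofg}).

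The main obstacle is sub-axiom (i), associativity of the gluing isomorphism when two disjoint pairs of points are glued in either order, because here the reindexing of summands by $\dot{q_i}$ and $h$ must be shown to commute with Turaev's double-splitting isomorphism \emph{and} with the permutation morphism $s'=\mathcal{H}_\mathcal{V}(s^{p_1,p_2,q_1,q_2})$, and one must check that no spurious scalar (a would-be source of mere quasi-isomorphism rather than strict isomorphism) is introduced. I would handle this by passing to the operator-invariant picture of Remark \ref{eqidefofZ}: express both composites as $\tau^e$ of the same extended $3$-cobordism $\overline\Sigma\times I$ decorated with two pairs of coupons (colored by the $q_i$'s and identities) plus the ribbon tangle implementing the two cuttings, and then invoke isotopy invariance of $\tau^e$ to slide the coupons past each other — the two orders of gluing give isotopic decorated cobordisms. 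The associativity of Turaev's underlying splitting (his double-coupon / double-cut compatibility in chapter $\rom{5}$) then finishes it, and because every coupon is colored by an honest isomorphism $q_i$ with no normalization chosen, the resulting identity is strict. Throughout, I would keep careful track that the diffeomorphisms $s^{p_1,p_2,q_1,q_2}$ and the canonical $\iota$, $\tilde f(c_1,c_2)$ are assigned the integer $0$, so the twist factor never obstructs commutativity.
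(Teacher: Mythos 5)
Your proposal follows essentially the same route as the paper: the sphere axioms reduce to Turaev's disc and normalization axioms, and the four gluing sub-axioms reduce to Turaev's splitting axiom $1.5.4$ together with the commutation of the coupon morphisms $\dot{q_i}$, $h$ with each other and with Turaev's gluing homomorphism, all established in the operator-invariant/extended-cobordism picture (the paper packages these commutations as its Lemmas \ref{farc}, \ref{Cfhg} and \ref{<}, which are proved exactly by the coupon-sliding argument you describe). The one point you gloss over is that rewriting a composite $\tau^e(M_2)\circ\tau^e(M_1)$ as $\tau^e(M_2\circ M_1)$ carries a gluing anomaly, a power of $\Delta^{-1}D$ governed by a Maslov index, and it is this anomaly (not the coloring of the coupons) that could introduce a spurious scalar; the paper's proof of Lemma \ref{Cfhg} explicitly computes that the relevant Maslov index vanishes, and your argument needs the same check.
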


We will sometimes write $Z(q)$ for the modular functor $Z_{\mathcal{V}}$ equipped with the gluing $\tilde{g}(q).$

\section{Proof of the main theorem}

We first state more or less trivial statements about the K-linear morphisms coming from Lemma \ref{f} and \ref{h}.

Recalling the setting and notation of Lemma \ref{f}, it is clear that if $g: \Sigma \rightarrow \tilde{\Sigma}$ is a weak $m$-homeomorphism, then so is $g: \Sigma' \rightarrow \tilde{\Sigma}',$ where $\tilde{\Sigma}'$ is obtained from $\tilde{\Sigma}$ by replacing the marking of $g(X_{\alpha})$ with $(W,1).$

\begin{lem}[The natural transformation $\dot{f}$]
\label{fd}
For each such $g$, $\dot{f}$ induce a commutative diagram.

\begin{center}
\begin{tikzcd}[swap]
\mathcal{H}(\Sigma) \arrow{r}[swap]{\mathcal{H}(g)}
\arrow{d}{\dot{f}}
& \mathcal{H}(\tilde{\Sigma}) \arrow{d}[swap]{\dot{f}}
\\ \mathcal{H}(\Sigma') \arrow{r}[swap]{\mathcal{H}(g)} & \mathcal{H}(\tilde{\Sigma}')
\end{tikzcd}
\end{center}
Moreover, $\dot{f}$ is compatible with disjoint union in the following sence. Assume $\Sigma=\Sigma_1 \sqcup \Sigma_2,$ and $X_{\alpha} \subset \Sigma_2.$ Then the following diagram commute:
\begin{center}
\begin{tikzcd}[swap]
\mathcal{H}(\Sigma) \arrow{r}[swap]{}
\arrow{d}{\dot{f}}
& \mathcal{H}(\Sigma_1)\otimes \mathcal{H}(\Sigma_2) \arrow{d}[swap]{\text{id}\otimes \dot{f}}
\\ \mathcal{H}(\Sigma') \arrow{r}[swap]{} & \mathcal{H}(\Sigma_1) \otimes  \mathcal{H}(\Sigma_2')
\end{tikzcd}
\end{center}
\end{lem}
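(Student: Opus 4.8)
### Proof Plan for Lemma \ref{fd}

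\textbf{Overview of the strategy.} Both claims assert that the $K$-linear map $\dot{f}$, which is built from the extended $3$-manifold $M = \overline{\Sigma} \times I$ equipped with a ribbon graph having one coupon colored by $f$ and identity strands elsewhere, behaves coherently with respect to the structural morphisms of the $2$-DMF $\mathcal{H}_{\mathcal{V}}$. The plan is to reduce both statements to the functoriality and monoidality properties of the operator invariant $\tau^e$ for extended $3$-manifolds, as established in Chapter \rom{4} of \cite{Tu}, using the identification $\mathcal{H}(\Sigma) \simeq \mathcal{T}^e(\overline{\Sigma})$ recalled in Remark \ref{eqidefofZ}. Since $\dot{f}$ is by definition (Lemma \ref{f}) the operator $\tau^e$ applied to a specific cobordism, the naturality square and the disjoint-union compatibility should both follow from gluing/stacking identities for $\tau^e$.

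\textbf{Step 1: Naturality with respect to weak $m$-homeomorphisms.} First I would observe, as noted just before the lemma, that if $g:\Sigma \to \tilde\Sigma$ is a weak $m$-homeomorphism then it restricts to one $g:\Sigma' \to \tilde\Sigma'$, since $g$ carries $X_\alpha$ to $g(X_\alpha)$ and the only change between $\Sigma$ and $\Sigma'$ (respectively $\tilde\Sigma$ and $\tilde\Sigma'$) is the label on that one boundary component. Under the identification of Remark \ref{eqidefofZ}, the map $\mathcal{H}(g)$ corresponds to $\mathcal{T}^e$ applied to the mapping cylinder of (an extension of) $g$, which carries a trivial ribbon graph of identity strands. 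Then both composites in the square correspond to $\tau^e$ of the $3$-manifold obtained by stacking the mapping cylinder of $g$ on top of $M = \overline{\Sigma}\times I$ (or below it): in one order the coupon colored by $f$ sits on the $\Sigma$-level and in the other on the $\tilde\Sigma$-level, but these two ribbon graphs in $\overline{\Sigma}\times I$ (stacked with the $g$-cylinder) are related by an ambient isotopy sliding the coupon along the strand $g(X_\alpha)\times I$. Since $\tau^e$ is an invariant of extended $3$-manifolds with ribbon graphs up to such isotopy, the two composites agree. I would cite Lemma $6.3.2$ of Chapter \rom{4} of \cite{Tu} (already invoked for functoriality of $Z$) together with the basic gluing axiom for $\tau^e$ to make this precise.

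\textbf{Step 2: Compatibility with disjoint union.} For the second diagram, with $\Sigma = \Sigma_1 \sqcup \Sigma_2$ and $X_\alpha \subset \Sigma_2$, the horizontal maps are the monoidality isomorphisms $\mathcal{H}(\Sigma_1 \sqcup \Sigma_2) \simeq \mathcal{H}(\Sigma_1)\otimes \mathcal{H}(\Sigma_2)$ coming from the multiplicativity of $\tau^e$ under disjoint union of extended $3$-manifolds. The cobordism $M$ defining $\dot{f}$ on $\Sigma$ decomposes as $(\overline{\Sigma_1}\times I) \sqcup (\overline{\Sigma_2}\times I)$, where the first summand carries only identity strands (hence induces $\mathrm{id}_{\mathcal{H}(\Sigma_1)}$) and the second carries the coupon colored by $f$ (hence induces $\dot{f}:\mathcal{H}(\Sigma_2)\to\mathcal{H}(\Sigma_2')$). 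Multiplicativity of $\tau^e$ then gives exactly $\dot{f}_{\Sigma} = \mathrm{id}\otimes \dot{f}_{\Sigma_2}$ under the identifications, which is the commutativity of the square.

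\textbf{Main obstacle.} The routine parts are genuinely routine once one accepts the translation to $\tau^e$; the only real care needed is in Step 1, making rigorous that the coupon can be slid from one end of the strand $g(X_\alpha)\times I$ to the other within the stacked cobordism without changing $\tau^e$ — i.e., verifying that the relevant isotopy of ribbon graphs is admissible in the extended (framed, with Lagrangian/weight data) setting and that no anomaly correction of the form $(\Delta^{-1}D)^s$ intervenes. I expect this to follow cleanly because the mapping cylinder of $g$ contributes no signature defect beyond what is already bookkept in $\mathcal{H}(g)$, but it is the step where one must actually check the definitions in Chapter \rom{4}–\rom{5} of \cite{Tu} rather than invoke a black box.
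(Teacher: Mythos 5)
Your proposal is correct and follows essentially the same route as the paper: both composites are recognized, up to a common invertible scalar, as operator invariants of glued extended three--manifolds that are homeomorphic through a homeomorphism commuting with the boundary parametrizations, and the disjoint--union square follows from multiplicativity of $\tau^e$. The anomaly issue you flag as the main obstacle is exactly what Proposition \ref{cob of e-homeo} settles, since the scalar there depends only on the underlying continuous map and the Lagrangian subspaces and hence cancels between the two composites.
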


Recalling the setting and notation of Lemma \ref{h}, we observe that if $g: \Sigma \rightarrow \tilde{\Sigma}$ is a weak $m$-morphism, then so is $g: \Sigma' \rightarrow \tilde{\Sigma}',$ where $\tilde{\Sigma}'$ is obtained from $\tilde{\Sigma}$ by replacing the marking of $g(X_{\alpha})$ with $(V,-1).$

\begin{lem}[The natural transformation $h_{\alpha}$]
\label{hd}

For each such $g$, $h_{\alpha}$ induce a commutative diagram:

\begin{center}
\begin{tikzcd}[swap]
\mathcal{H}(\Sigma) \arrow{r}[swap]{\mathcal{H}(g)}
\arrow{d}{h_{\alpha}}
& \mathcal{H}(\tilde{\Sigma}) \arrow{d}[swap]{h_{\alpha}}
\\ \mathcal{H}(\Sigma') \arrow{r}[swap]{\mathcal{H}(g)} & \mathcal{H}(\tilde{\Sigma}')
\end{tikzcd}
\end{center}
Moreover, $h_{\alpha}$ is compatible with disjoint union in the following sense. Assume $\Sigma=\Sigma_1 \sqcup \Sigma_2,$ and $X_{\alpha} \subset \Sigma_2.$ Then the following diagram commute:
\begin{center}
\begin{tikzcd}[swap]
\mathcal{H}(\Sigma) \arrow{r}[swap]{}
\arrow{d}{h_{\alpha}}
& \mathcal{H}(\Sigma_1)\otimes \mathcal{H}(\Sigma_2) \arrow{d}[swap]{\text{id}\otimes h_{\alpha}}
\\ \mathcal{H}(\Sigma') \arrow{r}[swap]{} & \mathcal{H}(\Sigma_1) \otimes  \mathcal{H}(\Sigma_2')
\end{tikzcd}
\end{center}
\end{lem}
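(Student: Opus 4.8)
The plan is to argue exactly as one does for Lemma \ref{fd}, reducing everything to the description of $h_\alpha$ as the operator invariant of a mapping cylinder decorated by a trivial ribbon graph, and then invoking the functoriality and monoidality of $\mathcal{H}$ (equivalently $\tau^e$, via Remark \ref{eqidefofZ}) together with the standard gluing/stacking behaviour of ribbon graphs in cylinders from \cite{Tu}. Throughout I will pass freely between $\mathcal{H}$ and $\tau^e$ using the natural isomorphism $\mathcal{H}(\Sigma)\simeq\tau^e(\overline\Sigma)$, since the ribbon graphs defining $h_\alpha$ live in $\overline\Sigma\times I$.

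\emph{First} I would treat the naturality square. Given a weak $m$-homeomorphism $g:\Sigma\to\tilde\Sigma$, observe that $g$ carries the boundary component $X_\alpha$ marked $(V^*,1)$ to $g(X_\alpha)$ marked $(V^*,1)$, hence induces a weak $m$-homeomorphism $g:\Sigma'\to\tilde\Sigma'$ as noted just before the statement. Now both composites $\mathcal{H}(g)\circ h_\alpha$ and $h_\alpha\circ\mathcal{H}(g)$ are, after the $\tau^e$ translation, operator invariants of the same decorated cobordism: in one case we stack the cylinder $\overline\Sigma\times I$ carrying the coupon $\mathrm{id}_{V^*}$ on $\alpha$ (and identity strands elsewhere) on top of the cylinder $\overline{\tilde\Sigma}\times I$ realising $\mathcal{H}(g)$ as a homeomorphism-induced morphism — but the latter is just the trivial cylinder of a homeomorphism, carrying only identity strands — and in the other case we stack in the opposite order. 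The key point is that $g\times\mathrm{id}_I$ is a homeomorphism of the two resulting decorated $3$-manifolds intertwining the two ribbon graphs, because $g$ matches the arc $\alpha$ of $\Sigma$ with the arc of $\tilde\Sigma$ carrying the same coupon colour $\mathrm{id}_{V^*}$. By invariance of $\tau^e$ under such homeomorphisms (the analogue of lemma $6.3.2$, ch.\ $\rom{4}$ of \cite{Tu} used for functoriality of $Z$ in Definition \ref{The definition of Z}) the two operator invariants agree, which is precisely commutativity of the square. This is the exact analogue of the argument for $\dot f$ in Lemma \ref{fd}, the only difference being the colour of the coupon, which plays no role in the argument.

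\emph{Second}, for compatibility with disjoint union, write $\Sigma=\Sigma_1\sqcup\Sigma_2$ with $X_\alpha\subset\partial\Sigma_2$. Since $\mathcal{H}$ is a (strong) symmetric monoidal functor and $\tau^e$ is multiplicative under disjoint union of decorated cobordisms, the cylinder $\overline\Sigma\times I=(\overline{\Sigma_1}\times I)\sqcup(\overline{\Sigma_2}\times I)$ with its ribbon graph decomposes as a disjoint union: on the $\Sigma_1$-part the graph is all identity strands, inducing $\mathrm{id}_{\mathcal{H}(\Sigma_1)}$, and on the $\Sigma_2$-part it is the graph defining $h_\alpha:\mathcal{H}(\Sigma_2)\to\mathcal{H}(\Sigma_2')$. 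Hence under the canonical isomorphism $\mathcal{H}(\Sigma)\simeq\mathcal{H}(\Sigma_1)\otimes\mathcal{H}(\Sigma_2)$ the morphism $h_\alpha$ is carried to $\mathrm{id}\otimes h_\alpha$, which is the second claimed commutative diagram.

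\emph{The main obstacle} I anticipate is purely bookkeeping rather than conceptual: one must check that the identifications $\mathcal{H}(\Sigma)\simeq\tau^e(\overline\Sigma)$ are natural enough that stacking cylinders really does correspond to composing the induced $K$-linear maps, and that the homeomorphism $g\times\mathrm{id}_I$ is genuinely a morphism of \emph{extended} (decorated, $2$-framed) cobordisms, i.e.\ that it respects the weights/Lagrangian data so that no anomaly scalar intervenes — here one uses that $h_\alpha$ and $\mathcal{H}(g)$ are both defined on the nose at the level of $\mathcal{M}'(C)$ as in Remark \ref{desc}, so the signature-cocycle corrections are trivial. Once that compatibility is recorded (it is the same one already invoked for Lemma \ref{fd} and in the functoriality discussion after Definition \ref{The definition of Z}), both diagrams follow immediately. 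I would therefore present the proof as: "The proof is identical to that of Lemma \ref{fd}, replacing the coupon colour $f$ by $\mathrm{id}_{V^*}$; we omit the details," after having given the $\dot f$ argument in full in Section \ref{Proofs of lemmas}.
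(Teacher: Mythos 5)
Your proposal is correct and follows essentially the same route as the paper, which simply notes that the proof is virtually identical to that of Lemmas \ref{f} and \ref{fd}: both composites are realised (up to one common scalar) as operator invariants of glued extended three-manifolds that are homeomorphic through a homeomorphism commuting with boundary parametrizations. Your explicit treatment of the disjoint-union square via multiplicativity of $\tau^e$ is left implicit in the paper but is the intended argument.
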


We need to know how the morphisms $\dot{f},h$ relate to the gluing homomorphism provided by Turaev, and we need to know what happens if we apply the $\dot{f},h$ operations consecutively to distinct boundary components. We call a morphism of type $\dot{f}$ or $h$ a coupon morphism.

\begin{lem}[Far commutativity]
\label{farc} Assume that an $m$-surface $\Sigma_2$ is obtained from an $m$-surface $\Sigma_1$ by altering the markings of two distinct boundaries $X_\alpha,X_\beta$ components in one of the two ways described above. Let $q$ be the $K$-morphism $\mathcal{H}(\Sigma_1) \rightarrow \mathcal{H}(\Sigma_2)$ that is obtained from composing the coupon morphism that alters $X_{\alpha}$ with the coupon morphism that alters $X_{\beta}.$ Let $p$ be the $K$-morphism $\mathcal{H}(\Sigma_1) \rightarrow \mathcal{H}(\Sigma_2)$ that is obtained from composing the coupon morphism that alters the labelling of $X_{\beta}$ with the coupon morphism that alters the labelling of $X_{\alpha}.$ Then we have $p=q.$
\end{lem}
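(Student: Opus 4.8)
The plan is to reduce the statement to the structure of the extended $3$-manifolds (cobordisms with ribbon graphs) that induce the coupon morphisms, and then invoke the functoriality of the operator invariant $\tau^e$ under gluing of such cobordisms, as established in \cite{Tu}. First I would recall, using Remark \ref{eqidefofZ}, that each coupon morphism of type $\dot f$ or $h$ is by definition the operator $\tau^e$ of the cobordism $M = \overline{\Sigma}\times I$ equipped with a ribbon graph which, on the strand over the altered arc, carries a single coupon (colored with the relevant $f$, respectively with $\mathrm{id}_{V^*}$), and which is the trivial identity ribbon graph $\beta\times I$ on every other strand. Stacking two such cobordisms — first the one altering $X_\alpha$ on top of (or below) the one altering $X_\beta$ — gives, by the standard composition law for the relative quantum invariant (Theorem in chapter \rom{4} of \cite{Tu}, together with the normalisation already used in Definition \ref{The definition of Z}), the cobordism $\overline{\Sigma}\times I$ whose ribbon graph has a coupon over $X_\alpha$ at one height and a coupon over $X_\beta$ at another height, with identity strands elsewhere.

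Next I would observe that the two orders of composition, $q$ and $p$, produce two such stacked cobordisms that differ only by a vertical isotopy of the ribbon graph inside $\overline{\Sigma}\times I$: since $X_\alpha$ and $X_\beta$ are distinct boundary components, the two coupons lie over disjoint arcs $\alpha\times I$ and $\beta\times I$, hence they can be slid past each other in the $I$-direction through an ambient isotopy rel boundary. Because $\tau^e$ is an invariant of extended $3$-manifolds with ribbon graph up to such isotopy (this is part of the axioms of a modular functor / the invariance properties recorded in \cite{Tu}), the two cobordisms have the same $\tau^e$, and therefore $p=q$. Concretely I would phrase this as: both $p$ and $q$ equal $\tau^e$ of the single cobordism $(\overline{\Sigma}\times I, R)$ where $R$ is the ribbon graph with one coupon over $\alpha$ and one over $\beta$ (the vertical ordering being immaterial), and identity strands over all remaining arcs.

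The main obstacle — really the only subtle point — is bookkeeping rather than mathematics: one must make sure that when one stacks the two cobordisms, the coupon morphism altering $X_\beta$ is taken with respect to the \emph{already modified} surface (the one where $X_\alpha$ has its new marking), i.e.\ that the target of the first coupon morphism matches the source of the second, and that Lemmas \ref{fd} and \ref{hd} guarantee this is well-defined regardless of the order. Here the disjoint-union compatibility in Lemmas \ref{fd} and \ref{hd} is not literally what is needed (the two arcs need not lie in different components), but the same argument — the coupon over $\alpha$ and the coupon over $\beta$ live in disjoint sub-cylinders $\alpha\times I$ and $\beta\times I$ of $\overline\Sigma\times I$ — gives the required commutation directly. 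I would then close by noting that the same reasoning shows $p=q$ whether each of the two alterations is of type $\dot f$ or of type $h$, since the proof only used that each is $\tau^e$ of a cobordism supported near a single arc.
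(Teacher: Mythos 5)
Your proposal is correct and follows essentially the same route as the paper: the paper reduces Lemma \ref{farc} to the argument used for Lemma \ref{f}, namely that both composites are (up to a common anomaly scalar coming from the gluing of cylinders) operator invariants $\tau^e$ of stacked extended cobordisms, and that the two stacked cobordisms are $e$-homeomorphic rel boundary parametrizations because the two coupons sit over disjoint arcs and can be slid past each other. Your additional remarks on the bookkeeping of sources and targets and on the common normalisation scalar are consistent with, and slightly more explicit than, the paper's terse treatment.
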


\begin{lem}[Compatibility of $\dot{f},h$ with gluing.]
\label{Cfhg}
Assume that $\mathbf{\Sigma_c}$ is obtained from $\mathbf{\Sigma}$ by gluing. Consider a component $X_\beta$ of $\mathbf{\Sigma},$ that is not part of the gluing data. Assume the marking of $\beta$ is altered either by using a morphism $f$ of objects of $\mathcal{V},$ or by replacing $(V^*,1)$ with $(V,-1).$ Then this operation applies to $\mathbf{\Sigma}_c$ as well. Let $r$ denote the resulting isomorphisms of modules. Let $g$ denote the gluing homomorphism. Then
\begin{gather*}
r\circ g=g\circ r
\end{gather*}
\end{lem}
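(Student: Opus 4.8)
The plan is to reduce the statement to the corresponding compatibility statement in Turaev's book by unwinding the definition of the coupon morphisms $\dot f$ and $h$ via the operator invariant $\tau^e$ of the relevant extended three-manifolds, and then to apply the standard cobordism-gluing axiom for the 3-dimensional TQFT $(\tau^e, \mathcal T^e)$. Recall from Remark \ref{eqidefofZ} that $\mathcal H(\Sigma)$ is canonically isomorphic to $\mathcal T^e(\overline{\Sigma})$, and that $\dot f$ (resp. $h$) is, by Lemma \ref{f} (resp. Lemma \ref{h}), induced by the cylinder cobordism $\overline{\Sigma}\times I$ equipped with a ribbon graph consisting of identity strands on all arcs except a single coupon (colored by $f$, resp. by $\mathrm{id}_{V^*}$) sitting over the distinguished arc $\beta$. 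The gluing homomorphism $g$ of Turaev's modular functor is likewise realized by a specific cobordism, namely the one obtained by attaching a 2-handle (the "pair of pants" / "trinion" cobordism) along the two glued boundary components, so that on the level of $\tau^e$ both $g$ and $r$ are values of the TQFT on explicit cobordisms.

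The key steps, in order, are as follows. First I would fix gluing data $(p_0,p_1,c)$ for $\mathbf\Sigma$ and a boundary component $X_\beta$ disjoint from the glued pair, and pass to the extended picture: write $\overline\Sigma$ for the extended surface associated to $\G(\mathbf\Sigma)$, so that $Z(\mathbf\Sigma)\cong\mathcal T^e(\overline\Sigma)$ and similarly for $\mathbf\Sigma_c$. Second, I would describe the cobordism $M_g$ from $\overline{\G(\Sigma)}$ (with the two marked arcs for $p_0,p_1$) to $\overline{\G(\Sigma_c)}$ that induces $g$, following the construction of the splitting homomorphism in Chapter V of \cite{Tu}; crucially $M_g$ is supported, as a ribbon graph, near the glued arcs and is the product cobordism (with identity strands) near $X_\beta$. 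Third, I would describe the cylinder cobordism $M_r = \overline\Sigma\times I$ carrying the single coupon over $\beta$ that induces $r$; again it is a product cobordism away from a neighbourhood of $\beta$. Fourth — this is the crux — I would observe that the two composite cobordisms $M_r\cup M_g$ and $M_g\cup M_r$ (glued along the intermediate extended surface) are \emph{isotopic rel boundary} as extended three-manifolds: their ribbon graphs live in disjoint "vertical columns" over the disjoint regions (neighbourhood of $\beta$, resp. neighbourhood of the glued arcs), and the only subtlety is matching up the Lagrangian/weight decorations and the anomaly correction, which is handled because $\tau^e$ is an invariant of extended cobordisms and the two columns can be slid past each other in the $I$-direction. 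Fifth, applying the functoriality/gluing axiom for $\tau^e$ (Theorem in Chapter IV of \cite{Tu}, the multiplicativity $\tau^e(M\cup M')=\tau^e(M')\circ\tau^e(M)$ up to the computed anomaly, which cancels since it is the same for both orders) gives $r\circ g = g\circ r$.

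Two bookkeeping points need care along the way. One must treat the two cases of $r$ uniformly: whether $\beta$'s marking is altered by a morphism $f$ of $\mathcal V$ (Lemma \ref{fd}) or by the swap $(V^*,1)\leftrightarrow(V,-1)$ (Lemma \ref{hd}, i.e. the $h_\alpha$ morphism); in both cases $r$ is realized by the same shape of cylinder cobordism, so the argument is case-independent. One must also confirm that altering the marking of $X_\beta$ genuinely commutes with the \emph{formation of the glued surface} at the level of extended surfaces — i.e. that $(\mathbf\Sigma_c)$ with $\beta$ altered equals $(\mathbf\Sigma$ with $\beta$ altered$)_c$ — which is immediate since gluing only modifies a neighbourhood of $p_0,p_1$ and the Lagrangian subspace is transported functorially; this legitimizes writing "$r$" on both source and target of $g$.

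The main obstacle I anticipate is the anomaly/normalization accounting: $\tau^e$ is only projectively multiplicative under gluing of cobordisms (there is a correction involving Maslov indices of the Lagrangian subspaces, and in $Z_{\mathcal V}$ an extra factor $(\Delta^{-1}D)^s$ from Definition \ref{The definition of Z}), so one must check that this correction factor is literally the same for the two composition orders $M_r\cup M_g$ and $M_g\cup M_r$. Since the underlying diffeomorphism type of the composite cobordism and the involved Lagrangian subspaces are the same in both orders (the cobordisms differ only by an isotopy rel boundary supported in disjoint columns), the correction factors coincide and drop out of the identity $r\circ g=g\circ r$; making this precise is the one genuinely technical point, and I would do it by citing the behaviour of Wall's signature cocycle / the Maslov index under the relevant disjoint-support moves, exactly as in the functoriality verification for $Z_{\mathcal V}$ already sketched after Definition \ref{The definition of Z}.
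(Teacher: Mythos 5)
Your proposal is correct and follows essentially the same route as the paper: both realize $g$ as the operator invariant of the handle-attachment cobordism and $r$ as that of a cylinder with a single coupon, observe that the two composite extended cobordisms are $e$-homeomorphic because the ribbon-graph modifications have disjoint support, and then deal with the anomaly. The only cosmetic difference is that the paper shows the Maslov-index correction is actually zero for each composition (since the intermediate gluing is through the identity $e$-homeomorphism, citing Theorem 7.1 and Lemma 6.7.2 of Chapter IV of \cite{Tu}), whereas you argue it is merely equal for the two orders and hence cancels — which also suffices.
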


These lemmas will be proven in section \ref{Proofs of lemmas} below.

\begin{proof}[Proof of the Main Theorem]
Since $\mathcal{H}$ is a strong monoidal functor, it is immediate that $Z_\mathcal{V}$ is a symmetric monoidal functor, since it is a composition of strong monoidal functors. Thus it remains to verify the once punctured sphere axiom, the twice punctured sphere axiom, and the gluing axiom.

The once punctured sphere axiom follows directly from Turaev's disc axiom, which is axiom $1.5.5$ in chapter $\rom{5}.$ 

The twice punctured sphere axioms follows directly from the third normalization axiom $1.6.2$ in chapter $\rom{5}$ of \cite{Tu}. It remains to verify the gluing axiom. 

 If $\mathbf{f}=(f,n)$ is a morphism of labeled marked surfaces, we will abuse notation and write $f$ for $\G(\mathbf{f}).$

\noindent {\bf (i)} In the notation of definition \ref{gaxiom} we must prove that the following diagram commutes
\begin{center}
\begin{tikzcd}[swap]
Z(\mathbf{\Sigma}(i,j))\arrow{r}[swap]{\tilde{g}_j}
\arrow{d}{\tilde{g}_i}
& Z((\mathbf{\Sigma}_d^{q_1,q_2})(i)) \arrow{d}[swap]{\tilde{g}_i}
\\ Z((\mathbf{\Sigma}_c^{p_1,p_2})(j)) \arrow{r}[swap]{s'\circ \tilde{g}_j} & Z( (\mathbf{\Sigma}_d^{q_1,q_2})_c^{p_1,p_2})
\end{tikzcd}
\end{center}
Here $s'=Z(s^{p_1,p_2,q_1,q_2}),$ where $s^{p_1,p_2,q_1,q_2}$ is as defined in Prop. \ref{cg}, and $i=\lambda$ and $j=\mu.$ Let $\alpha$ be the relevant distinguished point labeled with $j^*.$ Let $\beta$ be the relevant distinguished point labeled with $i^*.$ As above, let $g$ be the gluing homomorphism provided by Turaev in chapter $\rom{5}$ of \cite{Tu}.  In the following calculation we use that the integer associated to the morphism $s^{p_1,p_2,q_1,q_2}$ is $0.$

Commutativity of the diagram above can be rewritten as the following equation
\begin{equation}
\label{*}
g_i\circ h_{\beta} \circ \dot{q_i}\circ g_j\circ h_{\alpha} \circ \dot{q_j} = s' \circ g_j\circ h_{\alpha} \circ \dot{q_j} \circ g_i\circ h_{\beta} \circ \dot{q_i}.
\end{equation}
Using lemma \ref{Cfhg} and lemma \ref{farc} we see that 
\[
g_i\circ h_{\beta} \circ \dot{q_i}\circ g_j\circ h_{\alpha} \circ \dot{q_j} = g_i \circ g_j \circ h_{\alpha} \circ \dot{q_j}\circ h_{\beta} \circ \dot{q_i}.
\]
Using lemma \ref{Cfhg} we then get that
\[
s' \circ g_j\circ h_{\alpha} \circ \dot{q_j} \circ g_i\circ h_{\beta} \circ \dot{q_i} = s' \circ g_j \circ g_i \circ h_{\alpha} \circ \dot{q_j}\circ h_{\beta} \circ \dot{q_i}.
\]
Using axiom $1.5.4 (ii)$ in chapter $\rom{5}$ of \cite{Tu} we see that
\[
s' \circ g_j \circ g_i=g_i \circ g_j.
\]
Therefore we see that equation (\ref{*}) holds.

\noindent {\bf (ii)} In the notation of definition \ref{gaxiom} we must prove that the following diagram commutes
\begin{center}
\begin{tikzcd}[swap]
Z(\mathbf{\Sigma}_1) \arrow{r}[swap]{\tilde{g}}
\arrow{d}{Z(\mathbf{f})}
& Z((\mathbf{\Sigma}_1)_c^{p_0,p_1}) \arrow{d}[swap]{Z(\mathbf{f}')}
\\ Z(\mathbf{\Sigma}_2) \arrow{r}[swap]{\tilde{g}} & Z((\mathbf{\Sigma}_2)_{c'}^{q_0,q_1})
\end{tikzcd}
\end{center}

This amounts to proving
\begin{equation}
\label{**}
\mathcal{H}(f') \circ g\circ h \circ \dot{q}= g\circ h \circ \dot{q} \circ \mathcal{H}(f).
\end{equation}
Here we use that $\mathbf{f'}$ is equipped with the same integer as $\mathbf{f}.$ Equation (\ref{**}) follows directly from lemma \ref{h}, lemma \ref{f}, and axiom $1.5.4 (i)$ in chapter $\rom{5}$ of \cite{Tu}. Here we use that even though the naturality condition is only formulated for $m$-homeomorphisms in this axiom, Turaev argues in section $4.6$ of chapter $\rom{5}$ that it is also valid for weak $m$-homeomorphisms.

\noindent {\bf (iii)} In the notation of definition \ref{gaxiom} we must prove that the following diagram commutes
\begin{center}
\begin{tikzcd}[swap]
Z(\mathbf{\Sigma}_1 \sqcup \mathbf{\Sigma}_2) \arrow{r}[swap]{\tilde{g}}
& Z((\mathbf{\Sigma}_1\sqcup \mathbf{\Sigma}_2)_c^{p_0,p_1})
\\ Z(\mathbf{\Sigma}_1) \otimes Z(\mathbf{\Sigma}_2) \arrow{r}[swap]{\tilde{g}\otimes 1} \arrow{u}[swap]{Z_2} & Z((\mathbf{\Sigma}_1)_{c}^{p_0,p_1})\otimes Z(\mathbf{\Sigma}_2) \arrow{u}{Z(\iota)\circ Z_2}
\end{tikzcd}
\end{center}
 As the integer associated with the morphism $\iota$ is zero, this takes the following equational form
\begin{equation}
\label{***}
g\circ h \circ \dot{q} \circ \mathcal{H}_2 = \mathcal{H}(\iota) \circ \mathcal{H}_2 \circ (g\circ h \circ \dot{q}\otimes 1)
\end{equation}
Rewrite the RHS as $\mathcal{H}(\iota) \circ \mathcal{H}_2 \circ (g\otimes 1)\circ (h\circ \dot{q}\otimes 1).$ Now use lemma \ref{f} and lemma \ref{h} to rewrite the LHS as $g\circ H_2 \circ (h\circ \dot{q}\otimes 1).$ Now axiom $1.5.4 (iii)$ entails $g\circ H_2=\mathcal{H}(\iota) \circ \mathcal{H}_2 \circ (g\otimes 1).$ This implies equation (\ref{***}).

\noindent {\bf (iv)} In the notation of definition \ref{gaxiom} we must prove that the following diagram commutes\begin{center}
\begin{tikzcd}[swap]
  Z(\mathbf{\Sigma}) \arrow{r}[swap]{\tilde{g}^{c_1}}
       \arrow{rd}{\tilde{g}^{c_2}}   
  & Z(\mathbf{\Sigma}_{c_1}^{p_0,p_1}) \arrow{d}[swap]{Z(\tilde{f}(c_1,c_2))}
  \\ &  Z (\mathbf{\Sigma}_{c_2}^{p_0,p_1})
\end{tikzcd}
\end{center}
As the integer associated with the morphism $\tilde{f}(c_1,c_2)$ is zero, this takes the form
\begin{equation}
\label{****}
\mathcal{H}(\tilde{f}(c_1,c_2))) \circ  g^{c_1}\circ h \circ \dot{q}= g^{c_2} \circ h \circ \dot{q}.
\end{equation}
\begin{lem}\label{<} The morphisms $\mathcal{H}(\tilde{f}(c_1,c_2))) \circ  g^{c_1}$  and $g^{c_2}$ are operator invariants of extended three manifolds that are naturally $e$-homeomorphic through an $e$-homeomorphism commuting with boundary parametrizations. In particular they coincide.
\end{lem}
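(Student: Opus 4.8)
The plan is to leave the language of $m$-surfaces and the functor $\mathcal{H}$ in favour of extended surfaces and the operator invariant $\tau^{e}$, via the natural identification recalled in Remark \ref{eqidefofZ}, and then to realise all three maps appearing in (\ref{****}) as operator invariants of cobordisms, so that the statement reduces to the topological invariance of $\tau^{e}$ for parametrized cobordisms.

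First I would recall from Chapter \rom{5} of \cite{Tu} how Turaev's gluing homomorphism is built, and observe that, transported along the isomorphism $\mathcal{H}(\Sigma)\simeq\mathcal{T}^{e}(\overline{\Sigma})$ of Remark \ref{eqidefofZ}, the map $g^{c}$ is the operator invariant $\tau^{e}(M^{c})$ of an extended $3$-manifold $M^{c}$ which is a parametrized cobordism from $\bigsqcup_{i\in I}\widetilde{\mathbf{\Sigma}(i)}$ to $\widetilde{\mathbf{\Sigma}_{c}^{p_{0},p_{1}}}$, obtained from $\widetilde{\mathbf{\Sigma}}\times I$ by joining, on the outgoing boundary, the two discs carrying the marked arcs at $p_{0}$ and $p_{1}$ by a standard tube, the outgoing boundary parametrization being induced by $c$ (so that $M^{c_{1}}$ and $M^{c_{2}}$ differ only through the gluing data, together with the normalization scalars built into Turaev's definitions, which do not depend on $c$). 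Dually, by Lemma \ref{f}, Lemma \ref{h}, and the description of morphisms in Remark \ref{eqidefofZ}, $\mathcal{H}(\tilde f(c_{1},c_{2}))$ is the operator invariant $\tau^{e}(C_{f})$ of the mapping cylinder $C_{f}=\widetilde{\mathbf{\Sigma}_{c_{1}}^{p_{0},p_{1}}}\times I$ of a representative $f$ of the identification diffeomorphism furnished by Proposition \ref{glui}; by that proposition $f$ can be chosen equal to the identity outside a small disc about the glued points, to restrict there to a half-twist, and to induce the identity morphism $(\mathrm{id},0)$ on $\mathbf{\Sigma}$.

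Next I would stack $C_{f}$ on top of $M^{c_{1}}$. The gluing is along a common extended boundary surface, hence along matching Lagrangian subspaces, so no Maslov/anomaly correction arises and $\tau^{e}(C_{f}\cup M^{c_{1}})=\tau^{e}(C_{f})\circ\tau^{e}(M^{c_{1}})=\mathcal{H}(\tilde f(c_{1},c_{2}))\circ g^{c_{1}}$, whereas $g^{c_{2}}=\tau^{e}(M^{c_{2}})$. It then remains to exhibit a homeomorphism $C_{f}\cup M^{c_{1}}\to M^{c_{2}}$ which is the identity on the incoming boundary, intertwines the outgoing boundary parametrizations, carries the ribbon graph of one onto that of the other, and matches the distinguished Lagrangian subspaces, i.e.\ an $e$-homeomorphism commuting with boundary parametrizations. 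This homeomorphism is assembled from the data of Proposition \ref{glui}: the relation $c_{1}\circ df=df\circ c_{2}$ is exactly the compatibility needed to splice the half-twist of $C_{f}$ into the tube of $M^{c_{1}}$ and recognise the result as the tube of $M^{c_{2}}$; away from the glued points $f$ is the identity, so the cobordism is unchanged there; and since $f$ induces $(\mathrm{id},0)$ on $\mathbf{\Sigma}$ the Lagrangian data are matched, so the map is an $e$-homeomorphism of the required kind.

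Finally I would invoke the topological invariance of the operator invariant of parametrized cobordisms from Chapter \rom{4} of \cite{Tu}: an $e$-homeomorphism between two such cobordisms that commutes with the boundary parametrizations induces equality of their operator invariants, including all normalization factors (which are themselves invariants of the $e$-homeomorphism type) and with no anomaly term, precisely because $e$-homeomorphisms preserve the Lagrangian data. This gives $\tau^{e}(C_{f}\cup M^{c_{1}})=\tau^{e}(M^{c_{2}})$, i.e.\ $\mathcal{H}(\tilde f(c_{1},c_{2}))\circ g^{c_{1}}=g^{c_{2}}$, and precomposing with $h\circ\dot q$ yields (\ref{****}). The step I expect to be the main obstacle is the penultimate one: writing the homeomorphism of $3$-manifolds down carefully enough to verify simultaneously that it respects the ribbon graphs, the boundary parametrizations and the Lagrangian subspaces, and in particular checking that the half-twist supported near the glued points in $C_{f}$ is precisely what reconciles the two tube attachments coming from $c_{1}$ and $c_{2}$; the remaining steps are direct appeals to results already quoted from \cite{Tu}.
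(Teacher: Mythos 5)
Your proposal is correct and follows exactly the route the paper intends: the paper's own proof is a one-liner asserting that the lemma follows from the description of the gluing homomorphism as the operator invariant of $\Sigma\times I$ with a handle attached via $c$ (given in the proof of Lemma \ref{Cfhg}) together with the existence of the claimed $e$-homeomorphism, and your write-up is a careful expansion of precisely that argument, including the anomaly-vanishing check that the paper performs in the same way for Lemma \ref{Cfhg}.
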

We see that lemma \ref{<} implies equation (\ref{****}). The lemma will be proven in section \ref{Proofs of lemmas}.
\end{proof}

\section{Review of the TQFT based on extended cobordisms}
\label{Review of the TQFT}
As observed above, $\mathcal{H}$ is defined as 
\[
\mathcal{H}(\Sigma)=\mathcal{T}^e_{\mathcal{V}}(\overline{\Sigma}),
\]
where $\overline{\Sigma}$ is the associated extended surface, and $\mathcal{T}^e_{\mathcal{V}}$ is the modular functor based on the category of extended surfaces and the modular tensor category $\mathcal{V}.$ 

In this section we will give a quick review of the TQFT $(\mathcal{T}^e_{\mathcal{V}}, \tau^e_{\mathcal{V}})$ based on the cobordism theory of extended cobordisms, as defined in chapter $\rom{4}$ of \cite{Tu}. 

We will assume familiarity with the axioms for a TQFT based on a cobordism theory as defined in chapter $\rom{3}$ of \cite{Tu}. We will assume familiarity with the quantum invariant $\tau(M,\Omega)$ of a closed oriented three manifold $M$ containing a ribbon graph $\Omega$ with colors in $\mathcal{V}.$ This invariant is defined in chapter $\rom{2}$ of \cite{Tu}. We will however provide the formula associated to a surgery presentation below, but we will not explain the Reshetikhin-Turaev functor $F_{\mathcal{V}}$ as defined in chapter $\rom{1}$ of \cite{Tu}.

\subsection{Quantum invariants of $3$-manifolds}

\label{quantuminvariantsof3mfds}

We now recall the construction of $\tau(\tilde{M})\in K$ where $\tilde{M}$ is a closed oriented three manifold with a colored ribbon graph inside. Here $\tau(\tilde{M})$ is called the quantum invariant of $\tilde{M}.$ We may assume $\tilde{M}=\partial W,$ where $W$ is a compact oriented four manifold obtained by performing surgery along a framed link $L=\{L_1,...,L_m\}$ in $S^3 =\partial B^4.$ Let $\sigma(L)$ be the signature of the intersection form on $H_2(W,\R).$ Let $\text{Col}(L)$ be the set of all colorings of $L$ by colors in $(V_i)_{i \in I}.$ For any coloring $\lambda$ we let $\Gamma(L,\lambda)$ be the associated colored ribbon graph in $S^3.$ Then $\tau(\tilde{M},\Omega)$ is given by
\begin{equation}
\tau(\tilde{M},\Omega) = \Delta^{\sigma(L)} D^{-\sigma(L)-m-1}\sum_{\lambda \in \text{Col}(L)}\text{dim}(\lambda) F(\Gamma(L,\lambda)\cup \Omega).
\end{equation}
Here $\text{dim}(\lambda)=\text{dim}(\lambda_1)\cdots \text{dim}(\lambda_m)$ and $D$ is given by
\[
D^2 =\sum_{i \in I} \text{dim}(i)^2.
\]  and $\Delta$ is given by
\[
\Delta:=\sum_{i \in I} k_i^{-1} (\text{dim}(i))^2 \in K,
\]
where the $k_i$ are the standard twists coefficients - Turaev denotes them $v_i$ in \cite{Tu}.

\subsection{The TQFT based on decorated cobordisms}

\subsubsection{Modules of states}
Recall the notion of a $d$-surface and decorated type as defined in section $1.1$ of chapter $\rom{4}$ in \cite{Tu}. Recall the notion of a standard $d$-surface and of a parametrized $d$-surface as in section $1.2$ and section $1.3$ of chapter $\rom{4}$ in \cite{Tu}. 

Assume $\Sigma$ is a connected parametrized $d$-surface of topological type $t$ given by $(g;(W_i,\mu_i),...,(W_m,\mu_m))$. Recall the standard $d$-surface of type $t.$ This is denoted by $\Sigma_t.$ These notions can be found in sections $1.1-1.3$ of chapter $\rom{4}.$

For a decorated type $t$ as above, and for $i \in I^g,$ let 
\[
\Phi(t,i):= W_1^{\mu_1} \otimes \cdots \otimes W_m^{\mu_m} \bigotimes_{s=1}^g (V_{i_s}\otimes V_{i_s}^*). 
\]

Here $W^1=W,$ and $W^{-1}=W^*.$ Recall that elements of $\Phi(t,i)$ can be though of as colorings of the ribbon graph $R_t$ sitting inside $\Sigma_t,$ as defined in section $1.2$ of \cite{Tu}. Moreover we define $\mathcal{T}(\Sigma):=\Psi(t)$ where

\[
\Psi(t):= \bigoplus_{i \in I^g} \text{Hom}(\mathbf{1},\Phi(t,i)).
\]

Finally, if $\Sigma$ is not connected, then we define $\mathcal{T}(\Sigma)$ to be the unordered tensor product of the modules of states of the components of $\Sigma.$

\subsubsection{Operator invariants}
We now describe the construction of $\tau(M)$, where $M$ is a decorated cobordism. That is, $M$ is a triple $(M,\partial_{-} M, \partial_{+}M)$ where $\partial_{\pm} M$ are parametrized $d$-surfaces.

For a general decorated type $t$ let $U_t$ be the standard decorated handlebody bounded by $\Sigma_t$ as in section 1.7 of chapter $\rom{4}$ in \cite{Tu}. Equip it with the RH orientation. For an element $x \in \Phi(t,i)$ consider the three manifold with boundary $H(U_t,R_t,i,x).$

Let $U_t^-$ be the image of $U_t$ under the reflection of $\R^3$ in the plane $\R^2\times \{1/2\}.$ We denote this orientation reversing diffeomorphism by $\text{mir}:\R^3 \rightarrow \R^3.$ Equip $U_t^{-}$ with the RH orientation. We recall that they contain certain ribbon graphs denoted $R_t,R_{-t}$ respectively.

Let $f:\Sigma_{t_0} \rightarrow \partial_{-} M$ be a parametrization of a component of $\partial_{-}M.$ We glue in $U_{t_0}$ by gluing $\partial U_{t_0}$ to $\Sigma_{t_1}\times \{0\}$ through $f.$ We do this for all components of $\partial_{-}M.$

 Similarly, for any parametrized component $g: \Sigma_{t_1}\rightarrow \partial_{+}M$ we glue in $U_{t_1}^{-}$ by gluing according to $\text{-g}\circ \text{mir}: \partial (U_{t_1}^{-}) \rightarrow \Sigma_{t_1}.$

This produces a closed oriented three manifold $\tilde{M}$ with a ribbon graph inside, such that choosing an element $x\in \mathcal{T}(\partial_{-}M)$ and an element $y \in \mathcal{T}(\partial_{+} M)^*$ will produce a colored ribbon graph $\Omega(x,y) \subset \tilde{M}.$ This descends to a $K$-linear map $\mathcal{T}(\partial_{-}M)\otimes_K \mathcal{T}(\partial_{+}M)^* \longrightarrow K$ given by
\[
x\otimes y \longrightarrow \tau(\tilde{M},\Omega(x,y)),
\]
where $\tau$ is the quantum invariant defined in chapter $\rom{2}$ of \cite{Tu}.  This pairing induce a morphism $j:\mathcal{T}(\partial_{-}M) \rightarrow \mathcal{T}(\partial_{+}M).$  Finally, composing this with the map $\eta: \mathcal{T}(\partial_{+}M) \rightarrow \mathcal{T}(\partial_{+}M)$ induced by multiplication by $\mathcal{D}^{1-g}\text{dim}(i)$ on $\text{Hom}(\mathbf{1}, \Phi(t_1;i))$, we get the desired $K$-linear map 
\[
\tau(M):=\eta \circ j: \mathcal{T}(\partial_{-}M) \rightarrow \mathcal{T}(\partial_{+}M).
\]

\subsection{The TQFT based on extended cobordisms}

\subsubsection{Module of states} \label{Modules of e-states} We start by desribing the module of states for an $e$-surface. Start by assuming that $\Sigma$ is a connected $e$-surface. Recall the notion of a parametrization of $\Sigma.$ This is simply a weak e-homeomorphism $\Sigma_t \rightarrow \Sigma$ t

Given two parametrizations $f: \Sigma_{t_0} \rightarrow \Sigma$ and $g:\Sigma_{t_1} \rightarrow \Sigma,$ we wish to define an isomorphism $\varphi(f,g)$ between $\Psi(t_0)$ and $\Psi(t_1).$

We define
\[
\varphi(f,g):=(D\Delta^{-1})^{-\mu((f_0)_*(\lambda(t_0)),\lambda(\Sigma), (g_0)_*(\lambda(t_1)))}\mathcal{E}(g^{-1}f): \Psi(t_0) \rightarrow \Psi(t_1).
\]
Here $\mu$ is the Maslov index for triples of Lagrangian subspaces. $\mathcal{E}$ is the morphism induced by the decorated three manifold $\Sigma_{t_1}\times I$ where the bottom is parametrized by $g^{-1}\circ f: \Sigma_{t_0} \rightarrow \Sigma_{t_1}$ and the top is parametrized by the identity.

Turaev proves in \cite{Tu} that
\[
\varphi(f_1,f_2)\circ \varphi(f_0,f_1)=\varphi(f_0,f_2).
\]

Now $\mathcal{T}^e(\Sigma)$ is defined as the $K$-module of coherent sequences $(x(t,f))_{(t,f)}$ where we index over all parametrizations.

Finally, if $\Sigma$ is not connected, then we define $\mathcal{T}^e(\Sigma)$ to be the unordered tensor product of the modules of states of the components of $\Sigma.$

\subsubsection{Operator invariants}
Consider an extended $3$-manifold $(M,\partial_{-} M, \partial_{+}M).$ Then any two parametrizations  $f:\Sigma_{-} \rightarrow \partial_{-} M$ and $g:\Sigma_{+} \rightarrow \partial_{+} M$ makes $M$ into a decorated cobordism $\tilde{M}.$ We now define $\tau^e(M)$ to be composition
\[
\mathcal{T}^e(\partial_{-} M) \rightarrow \mathcal{T}(\Sigma_{-})\overset{\lambda(M)\tau(\tilde{M})}{ \longrightarrow} \mathcal{T}(\Sigma_{+}) \rightarrow \mathcal{T}^e(\partial_{+}M).
\]
Here $\lambda(M)$ is an invertible element of $K$ defined in section $6.5$ of chapter $\rom{4}$ of \cite{Tu}.

We observe that the description of $\mathcal{T}^e$ given above implies that a once punctured sphere marked with $i$ has a module of states that is isomorphic to $\text{Hom}(1,V_{i}),$ which is $K$ if $i=0$ and $\mathbf{0}$ otherwise.

\section{Proofs of lemmas}
\label{Proofs of lemmas}

\begin{pro}[The cobordism associated to a weak $e$-homomorphism]
\label{cob of e-homeo}
Let $f: \mathbf{\Sigma}_1 \rightarrow \mathbf{\Sigma}_2$ be a weak $e$-homomorphism. There is an invertible scalar $c \in K$ such that the operator invariant of the extended cobordism $\Sigma_1 \times I \cup_f \Sigma_2 \times I$ coincide with  $c\mathcal{T}^e(f):\mathcal{T}^e(\Sigma_1) \rightarrow \mathcal{T}^e(\Sigma_2).$ The scalar $c$ depends only on the underlying continuous map of $f$ and the Lagrangian subspaces $L_i \subset H_1(\Sigma_i).$
\end{pro}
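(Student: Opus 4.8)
The plan is to reduce the statement to the connected case and then to the behaviour of the mapping cylinder on parametrized surfaces, where everything becomes a statement about Turaev's invariant $\tau$ together with the normalization factors $\lambda(M)$ and the Maslov/signature corrections built into $\varphi(f,g)$ and $\tau^e$. First I would observe that since $\mathcal{T}^e$ is defined componentwise as an unordered tensor product, and since the cobordism $\Sigma_1\times I\cup_f\Sigma_2\times I$ decomposes as a disjoint union of the mapping cylinders of the restrictions of $f$ to connected components, multiplicativity of $\tau^e$ under disjoint union lets me assume $\Sigma_1$ (hence $\Sigma_2$) is connected. So fix a connected weak $e$-homeomorphism $f:\Sigma_1\to\Sigma_2$ and set $M=\Sigma_1\times I\cup_f\Sigma_2\times I$, viewed as an extended cobordism with $\partial_-M=\Sigma_1$, $\partial_+M=\Sigma_2$.

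Next I would unwind the definition of $\tau^e(M)$ from Section 7.3.2. Choose a parametrization $f_1:\Sigma_{t}\to\Sigma_1$; then $g_1:=f\circ f_1:\Sigma_{t}\to\Sigma_2$ is a parametrization of $\Sigma_2$ of the same topological type $t$. With respect to these two parametrizations, $M$ becomes a decorated cobordism $\tilde M$, and by construction $\tilde M$ is obtained by gluing $U_t$ to the bottom and $U_t^-$ to the top of $\Sigma_t\times I$; the resulting closed decorated manifold is (decoratedly) the double of $U_t$, i.e. $\#^g(S^1\times S^2)$ with the standard ribbon graph, whose operator invariant $\tau(\tilde M)$ is the identity on $\mathcal{T}(\Sigma_t)$ up to the explicit scalar coming from the $\eta$-normalization $\mathcal{D}^{1-g}\dim(i)$ — this is exactly Turaev's computation that the mapping cylinder of a parametrization-compatible homeomorphism gives (a scalar multiple of) the transition isomorphism $\mathcal{E}$. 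Concretely, I expect to get $\tau^e(M) = \lambda(M)\,(D\Delta^{-1})^{\epsilon}\,\mathcal{T}^e(f)$ for some sign-exponent $\epsilon$ that is the Maslov index $\mu\big((f_1)_*\lambda(t),\,\lambda(\Sigma_1)\ \text{or}\ L_1,\,\dots\big)$ appearing in $\varphi(f_1,\cdot)$ transported through $f$; collecting $\lambda(M)$ (invertible by definition, Section IV.6.5) and this power of $D\Delta^{-1}$ into a single invertible $c\in K$ gives the claim. The independence statement — that $c$ depends only on the underlying continuous map and on $L_1,L_2$ — then follows because $\lambda(M)$ is manifestly a function of the cobordism's topology (hence of the map and the Lagrangians) and the Maslov-index exponent is a function of $(f_{\#}L_1, L_2)$ and the parametrization data, which cancels out of the final expression or is itself determined by $f_{\#}$ on homology; any residual dependence on the choice of parametrization $f_1$ is killed by the coherence relation $\varphi(f_1,f_2)\circ\varphi(f_0,f_1)=\varphi(f_0,f_2)$ used to define $\mathcal{T}^e$.

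The main obstacle I anticipate is bookkeeping the anomaly/framing corrections precisely: both $\tau^e$ (via $\lambda(M)$ and the $\mathcal D^{1-g}$ factor) and the transition isomorphisms $\varphi$ (via the $(D\Delta^{-1})^{-\mu(\cdots)}$ prefactor) carry Maslov-index and signature data, and I must check that when these are assembled for the mapping cylinder $M$ the leftover scalar genuinely depends only on the continuous map and the $L_i$, with no hidden dependence on the smooth structure near arcs or on the auxiliary parametrization. I would handle this by working with a single fixed parametrization $f_1$ of $\Sigma_1$, pushing it forward to $\Sigma_2$ via $f$, so that $g^{-1}f=\mathrm{id}$ in the formula for $\mathcal{E}$ and the only surviving correction is the explicit Maslov term, which by the cocycle property of Wall's signature and invariance of the Maslov index under symplectomorphisms (recalled just after Definition \ref{The definition of Z}) is a function of $\big(f_{\#}(L_1),L_2\big)$ alone; combined with the topological nature of $\lambda(M)$ this yields the asserted dependence of $c$.
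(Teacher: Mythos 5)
Your outline is sound, but it is worth knowing that the paper's entire proof of this proposition is a single sentence: it invokes Theorem~7.1 of Turaev's book (cited there as chapter~VII, evidently a misprint for chapter~IV, since the same theorem is used later with the correct chapter number), which states precisely that the mapping cylinder of a weak $e$-homeomorphism induces $(D\Delta^{-1})^{\mu(\cdot,\cdot,\cdot)}\mathcal{T}^e(f)$ with the Maslov exponent computed from $f_{\#}(L_1)$, $L_2$ and the parametrization Lagrangians. What you have written is, in effect, a reconstruction of the proof of that cited theorem: the reduction to connected components, the choice of a parametrization $f_1$ of $\Sigma_1$ pushed forward to $f\circ f_1$ on $\Sigma_2$ so that the transition map $\mathcal{E}(g^{-1}f_1)$ becomes the identity, the identification of the resulting closed decorated manifold with the double of $U_t$ whose invariant is the identity after the $\eta$-normalization, and the collection of $\lambda(M)$ together with the residual Maslov power of $D\Delta^{-1}$ into the invertible scalar $c$. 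This buys self-containedness at the cost of the anomaly bookkeeping, which is exactly the delicate part of Turaev's argument; your final paragraph correctly identifies where the care is needed (the parametrization-independence follows from the cocycle relation for $\varphi$, and the homological nature of $c$ from the fact that both $\lambda(M)$ and the Maslov exponent are functions of $f_{\#}$ and the $L_i$ alone), but if you intend the argument to stand on its own rather than as a citation, you would need to write out the specific Maslov indices appearing in $\lambda(M)$ and verify that no dependence on the colorings of the arcs survives; as an appeal to the structure of Turaev's Theorem~IV.7.1 it is complete.
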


\begin{proof}
This follows from theorem $7.1$ of chapter $\rom{7}$ in \cite{Tu}.
\end{proof}

Here it is understood that the extended three manifold $\Sigma_1 \times I \cup_f \Sigma_2 \times I$ is obtained by gluing the top of $\Sigma_1 \times I$ to the bottom of $\Sigma_2 \times I$ through $f.$

\begin{proof}[Proof of Lemmas \ref{f} and \ref{fd}]
Using \ref{cob of e-homeo} and theorem $7.1$ of chapter $\rom{7}$ we see that both $\mathcal{H}(g)\circ \dot{f}$ and $\dot{f}\circ\mathcal{H}(g)$ are - up to the same scalar- induced by gluing certain extended three manifolds. Let $M_1$ be the extended three manifold with $\tau(M_1)=c\mathcal{H}(g)\circ \dot{f},$ and let $M_2$ be the extended three manifold with $\tau^e(M_2)=c\dot{f}\circ\mathcal{H}(g)$ Clearly there is a homeomorphism of extended three manifolds taking $M_1$ to $M_2,$ commuting with boundary parametrizations. Therefore they induce the same morphism.
\end{proof}

\begin{proof}[Proof of Lemmas \ref{h} and \ref{hd}]
The proof is virtually identical to the proof of \ref{f}.
\end{proof}

\begin{proof}[Proof of Lemma \ref{farc}]
The proof is virtually identical to the proof of \ref{f}.
\end{proof}

\begin{proof}[Proof of Lemma \ref{Cfhg}]
We start by recalling the definition of the gluing homomorphism provided by Turaev in sections $4.4-4.6$ of chapter $\rom{5}$ in \cite{Tu}. Let $M_2$ be the extended three manifold obtained by attaching handles to $\Sigma\times I$ as in section $4.4.$ of chapter $\rom{5}.$ The attachment uses the gluing data $c.$ The operator invariant $\tau^e(M_2)$ now yield a map $ g': \mathcal{T}^e(\Sigma) \rightarrow \mathcal{T}^e(\Sigma_c').$
Here $\Sigma_c'$ is an $e$-surface canonically $e$-homeomorphic to $\Sigma_c.$ Composing with the associated isomorphism of $K$-modules, we get the required gluing homomorphism $g: \mathcal{T}^e(\Sigma) \rightarrow \mathcal{T}^e(\Sigma_c).$ 

Similarly, if we let $\widetilde{\Sigma}$ and $\widetilde{\Sigma}_c'$ be the two same $e$-surfaces with the relevant change of markings, then the gluing $g$ is obtained as the operator invariant of $\widetilde{M_2}$ with the obvious notation. 

Assume now that $r=\dot{f}$ for some homomorphism $f: (V,+1) \rightarrow (W,+1).$ Recalling the naturality property of $\dot{f}$ we see that it is enough to argue that $\dot{f}$ commute with $g'.$

Let $M_1=\Sigma\times I$ be the extended cobordism inducing $\dot{f}.$ Then $g'\circ \dot{f}$ is a multiple of $\tau^e(\widetilde{M_2}\circ M_1),$ where we glue the top of $M_1$ to the bottom of $\widetilde{M_2}$ through the identity. To compute the relevant scalar we use theorem $7.1$ in Chapter $\rom{4}$ of \cite{Tu}. Since the identity is an $e$-homeomorphism here, there is only one Maslow index to compute.

In the notation of theorem $7.1$ we have $\text{id}_{\#}(N_1)_{*}(\lambda_{-} (M_1))=\text{id}_{\#}\lambda_{+}(M_1).$ Thus we see that
\[
0=\mu\left(\text{id}_{\#}(N_1)_{*}(\lambda_{-} (M_1)),\text{id}_{\#}\lambda_{+}(M_1),N_2^*(\lambda_{+}(\widetilde{M_2})\right).
\]
See the proof of lemma $6.7.2$ in chapter $\rom{4}$ of \cite{Tu}. Thus we get that
\[
\tau^e(\widetilde{M_2}\circ M_1)=g'\circ \dot{f}.
\]
Clearly $\widetilde{M_2}\circ M_1$ is $e$-homeomorphic to a cylinder with handles attached on the top, such that the $\beta$-band has  a coupon colored with the $f$-coupon, and all other 'vertical' bands are colored with $\text{id}.$ 

The exact same argument will yield a similar description of $\dot{f}\circ g.$ Consider $Q:= \Sigma_c \times I$ as the extended cobordism inducing $\dot{f}: \mathcal{T}^e(\Sigma_c) \overset{\sim}{\rightarrow} \mathcal{T}^e(\widetilde{\Sigma_c}).$ Arguing as above, wee see that $\dot{f}\circ g'$ is given by the operator invariant $\tau^e(Q\circ M_2).$  But this is $e$-homeomorphic to $\widetilde{M_2}\circ M_1$. Therefore $g\circ \dot{f}=\dot{f}\circ g'.$

Observe that a homomorphism of type $h$ can be dealt with in exactly the same way.
\end{proof}

\begin{proof}[Proof of Lemma \ref{<}]This is a consequence of the description of the gluing homomorphism given above, together with the existence of the proclaimed $e$-morphisms. \end{proof}

\section{Uniqueness up to quasi-isomorphism}

We observe that the construction of the gluing $\tilde{g}$ depended on a choice of isomorphisms $q_i: V_{i^*} \overset{\sim}{\longrightarrow} V_i^*.$ This dependence is not essential. 

\begin{df}[Quasi-isomorphism]
\label{qid}
Let $(Z,g)$ and $(Z',g')$ be two modular functors with the same label set $\Lambda.$ These are said to be quasi-isomorphic if there is a pair $(\Phi,\gamma).$  Here $\Phi$ is an assignment of isomorphisms, which for each labeled marked surface $\mathbf{\Sigma}$ gives an isomorphism
\[
\Phi(\mathbf{\Sigma}): Z(\mathbf{\Sigma}) \overset{\sim}{\longrightarrow} Z'(\mathbf{\Sigma}).
\]
This assignment is required to be natural with respect to morphisms of modules induced by morhisms of labeled marked surfaces. Similarly it is required to preserve the splitting into tensor products induced by disjoint union, as well as the permutation map. Further, $\gamma$ is an assignment $\gamma: I \rightarrow K^*$ such that if $\mathbf{\Sigma}_c$ is obtained from $\mathbf{\Sigma}$ from gluing along an ordered pair $(p,q)$ where $p$ is labeled with $\lambda$, then the following diagram is commutative

\begin{equation}
\label{quasi-isomorphism}
\begin{tikzcd}[swap]
Z(\mathbf{\Sigma}) \arrow{r}[swap]{g}
\arrow{d}{\gamma(\lambda) \Phi(\mathbf{\Sigma})}
& Z(\mathbf{\Sigma}_c) \arrow{d}[swap]{\Phi(\mathbf{\Sigma}_c)}
\\ Z'(\mathbf{\Sigma}) \arrow{r}[swap]{g'} & Z'(\mathbf{\Sigma}_{c}).
\end{tikzcd}
\end{equation}

Moreover we demand that $\gamma(\lambda)\gamma(\lambda^*)=1$ for all $\lambda \in I$.
\end{df}

This is easily seen to define an equivalence relation on modular functors with the same label set.

\begin{thm}[Independence of $(q_i)$ up to quasi-isomorphism]
\label{QIT}
Let $q,q'$ be two choices of isomorphisms $V_{i^*} \overset{\sim}{\longrightarrow} V_i^*.$ Then the two resulting modular functors $Z(q)$ and $Z(q')$ are quasi-isomorphic.
\end{thm}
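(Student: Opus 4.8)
The plan is to exhibit an explicit quasi-isomorphism $(\Phi,\gamma)$ between $Z(q)$ and $Z(q')$. Since both functors have the \emph{same} underlying symmetric monoidal functor $Z_{\mathcal V}=\mathcal H_{\mathcal V}\circ\mathcal G$ and differ only in the gluing isomorphisms $\tilde g(q)$ versus $\tilde g(q')$, the natural first move is to take $\Phi(\mathbf\Sigma)=\mathrm{id}_{Z(\mathbf\Sigma)}$ for every labeled marked surface $\mathbf\Sigma$. This is trivially natural with respect to morphisms, trivially compatible with the disjoint-union tensor structure and with the permutation maps. So everything reduces to finding $\gamma\colon I\to K^*$ with $\gamma(\lambda)\gamma(\lambda^*)=1$ making the square \eqref{quasi-isomorphism} commute, which with $\Phi=\mathrm{id}$ becomes the single equation $\tilde g(q')_\lambda=\gamma(\lambda)\,\tilde g(q)_\lambda$ on each summand.

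The second step is to compute how $\tilde g(q)_\lambda$ depends on $q_\lambda$. By Definition~\ref{the gluing isomorphism}, $\tilde g(q)_i=g\circ h\circ\dot q_i$, where $g$ (Turaev's gluing) and $h$ (Lemma~\ref{h}) do not involve the choice of $q$. Writing $q'_i=u_i q_i$ for the unique $u_i\in K^*$ (possible since $q_i,q'_i$ are both isomorphisms $V_{i^*}\to V_i^*$ and $\mathrm{End}(V_{i^*})\supseteq K\,\mathrm{id}$, using simplicity of $V_{i^*}$), the coupon morphism $\dot q_i$ from Lemma~\ref{f} is built from a coupon colored by $q_i$; by $K$-linearity of the Reshetikhin--Turaev functor $F_{\mathcal V}$ in the colors of coupons, $\dot{q'_i}=\dot{(u_i q_i)}=u_i\,\dot q_i$. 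Hence $\tilde g(q')_i=u_i\,\tilde g(q)_i$, so we are forced to set $\gamma(i):=u_i$.

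The third step is to verify the constraint $\gamma(\lambda)\gamma(\lambda^*)=1$, i.e. $u_\lambda u_{\lambda^*}=1$. This is precisely where the compatibility of the $q_i$'s with the $^*$-structure enters, and it is the one genuinely delicate point. Here the paper has already hinted (in the Introduction, around the discussion of strict isomorphism) that $u_{i^*}=u_i$ is the condition for \emph{strict} isomorphism, not for quasi-isomorphism, so one should not expect $u_\lambda u_{\lambda^*}=1$ to hold for arbitrary $q,q'$ — meaning my naive choice $\Phi=\mathrm{id}$ cannot be the whole story. The main obstacle, then, is that to get the relation $\gamma(\lambda)\gamma(\lambda^*)=1$ one must instead take a \emph{nontrivial} $\Phi$: rescale the identification maps $h_\alpha$ (equivalently, insert on each puncture a coupon that absorbs part of the discrepancy) so that the residual scalar appearing on the gluing summand labeled $\lambda$ becomes a product $\gamma(\lambda)$ symmetric under $\lambda\mapsto\lambda^*$ in the required multiplicative-inverse sense. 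Concretely I would define $\Phi(\mathbf\Sigma)$ as the composite of coupon morphisms of type $\dot{(w_p\,\mathrm{id})}$ at each marked point $p$ of $\mathbf\Sigma$, for a suitable family of scalars $w_i\in K^*$ chosen with $w_i w_{i^*}$ related to $u_i u_{i^*}$, then use Lemma~\ref{fd} (naturality of $\dot f$) and Lemma~\ref{Cfhg} (compatibility of coupon morphisms with gluing) to check that $\Phi$ is natural and splitting-preserving, and finally read off that the induced $\gamma$ satisfies $\gamma(i)\gamma(i^*)=1$.

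In summary, the steps in order are: (1) set $\Phi=\mathrm{id}$ as a first attempt and reduce the theorem to a scalar identity $\tilde g(q')_i=\gamma(i)\tilde g(q)_i$; (2) use $K$-linearity of coupon colors to show $\tilde g(q')_i=u_i\tilde g(q)_i$ where $q'_i=u_iq_i$; (3) recognize that $u_iu_{i^*}=1$ may fail, and correct for this by replacing $\Phi=\mathrm{id}$ with a diagonal coupon rescaling $\Phi$ built from scalars $w_i$, verifying naturality and disjoint-union/permutation compatibility via Lemmas~\ref{fd},~\ref{hd},~\ref{farc},~\ref{Cfhg}, so that the resulting $\gamma$ meets the constraint $\gamma(i)\gamma(i^*)=1$. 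The routine parts are the naturality and monoidality checks for $\Phi$; the crux is step (3), getting the $^*$-symmetry of $\gamma$ right.
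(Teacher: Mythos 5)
Your proposal is correct and follows essentially the same route as the paper: there too one takes $\Phi(\mathbf{\Sigma})=\bigl(\prod_l\alpha(i_l)\bigr)\,\mathrm{Id}_{Z(\mathbf{\Sigma})}$ (your coupon rescaling by $\dot{(w_p\,\mathrm{id})}$ at each marked point is exactly this scalar), derives $\gamma(i)=(\alpha(i)\alpha(i^*)f_i)^{-1}$ from the gluing square with $q'_i=f_iq_i$, and imposes $(\alpha(i)\alpha(i^*))^2f_if_{i^*}=1$ to secure $\gamma(i)\gamma(i^*)=1$. The only detail you leave implicit is the explicit form of this last constraint and the fact that solving it may require adjoining square roots of $f_if_{i^*}$ to $K$, which the paper notes in passing.
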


\begin{proof}
Write $q'_i= f_iq_i.$ Then we have $\tilde{g}_j(q')=f_j\tilde{g}_j(q).$ We want to construct a pair $(\Phi,\gamma).$ Consider a labeled marked surface $\mathbf{\Sigma}$ with labels $i_1,...,i_k.$ We want to construct $\Phi(\mathbf{\Sigma})$ to be of the form $(\prod_l^k \alpha_{i_l}) \text{Id}_{Z(\mathbf{\Sigma})}$ for some function $\alpha: I \rightarrow K^*.$

Assume $\mathbf{\Sigma}_c$ is obtained from $\mathbf{\Sigma}$ by gluing along an ordered pair $(p,q)$ where $p$ is labeled with $i.$ Assume the labels of $\mathbf{\Sigma}$ are $i_1,...,i_k,i,i^*.$ Then equation (\ref{quasi-isomorphism}) becomes
\[
\prod_{l=1}^k \alpha(i_l)= \gamma(i)f_i \alpha(i)\alpha(i^*)\prod_{l=1}^k \alpha(i_l).
\]

Thus we are forced to define
\[
\gamma(i) := \frac{1}{\alpha(i)\alpha(i^*)f_i}.
\]

We still have to ensure $\gamma(i)\gamma(i^*)=1.$ We see that this will follow for any choice of $\alpha$ with
\[
\left(\alpha(i)\alpha(i^*)\right)^2 f_if_{i^*}=1,
\] 
which is easy to solve (by adjoining the needed square roots if needed).
\end{proof}

\section{Universal property}

In this section we will describe how to apply $Z,$ and how to use it in calculations. Let $\mathbf{\Sigma}$ be a connected labeled marked surface. Recall that a parametrization $f: \Sigma_t \rightarrow \overline{\G(\mathbf{\Sigma})}$ is an orientation preserving homeomorphism that preserves all structure of extended surfaces, except possibly the Lagrangian subspaces in homology. Clearly the set of parametrizations is non-empty. Let $f$ be a parametrization. This will induce an isomorphism

\[
\Psi(t) \simeq Z(\mathbf{\Sigma}).
\]
For the definition of $\Psi(t)$ see section \ref{Review of the TQFT}. We now recall the definition of $Z(\mathbf{\Sigma})$ and desribe the isomorphism above. For any pair of parametrizations $f_i : \Sigma_{t_i} \rightarrow \overline{\G(\mathbf{\Sigma})}$ there is an isomoprhism
\[
\varphi(f_1,f_2): \Psi_{t_1} \overset{\sim}{\longrightarrow} \Psi_{t_2}.
\]
See section \ref{Review of the TQFT}. With obvious notation these isomorphisms satisfy
\[
\varphi(f_1,f_3)= \varphi(f_2,f_3) \circ \varphi (f_1,f_2).
\]
The module $Z(\mathbf{\Sigma})$ is the the module of coherent sequences. Hence an element of this module is an equivalence class of pairs $(x,f)$ where $f$ is a parametrization with domain $\Sigma_t$ and $x$ is an element of $\Psi(t).$ We have $(x,f) \sim (y,g)$ if and only if $\varphi(f,g)(x)=y.$ The isomorphism $\Psi(t) \simeq Z_{\mathcal{V}}(\mathbf{\Sigma})$ induced from a parametrization is simply $x \mapsto (x,f).$

If $\mathbf{f}=(f,s): \mathbf{\Sigma}_1 \rightarrow \mathbf{\Sigma}_2$ is a morphism of connected labeled marked surfaces, then any representative $f'$ of the isotopy class $f$ will induce $Z_{\mathcal{V}}(\mathbf{f})$ which is given by
\[
(x,g) \mapsto ((\Delta^{-1}D)^s x,f'\circ g).
\]
Here we also write $f'$ for the induced $e$-homeomorphism $\overline{\G(\mathbf{\Sigma}_1)} \rightarrow \overline{\G(\mathbf{\Sigma}_2)}.$

\section{The duality pairing}

Consider a modular functor $V.$ For a modular functor with duality we would like the operation of orientation reversal to be taken to the operation of taking the dual $K$-module. That is, we would like a perfect pairing $V(\mathbf{\Sigma})\otimes V(-\mathbf{\Sigma}) \rightarrow K$ that is compatible with the structure of $V.$ 

Before we formulate the axioms, consider an arbitrary $\Lambda$-labeled marked surface $\mathbf{\Sigma}'.$ Observe that if $p,q \in \mathbf{\Sigma}'$ are subject to gluing then so are $p,q \in -\mathbf{\Sigma}'.$ Oberve that if $\mathbf{\Sigma}$ is the result of gluing $\mathbf{\Sigma}$ along $p,q$ then $-\mathbf{\Sigma}$ is the result of gluing $-\mathbf{\Sigma}'$ along the same ordered pair of points.

\begin{df}[Duality]
\label{duality def}
Let $(V,g)$ be a modular functor based on $\Lambda$ and $K.$ A duality for $V$ is a perfect pairing
\[
(\emptyarg, \emptyarg)_{\mathbf{\Sigma}}: V(\mathbf{\Sigma})\otimes V(-\mathbf{\Sigma}) \rightarrow K,
\]
subject to the following axioms.

\paragraph{Naturality.} Let $\mathbf{f}=(f,s): \mathbf{\Sigma}_1 \rightarrow \mathbf{\Sigma}_2$ be a morphism between $\Lambda$-labeled marked surfaces. Then
\begin{equation}
(V(\mathbf{f}),V(-\mathbf{f}))_{\mathbf{\Sigma}_2} =(\emptyarg, \emptyarg)_{\mathbf{\Sigma}_1}
\end{equation}
\paragraph{Compatibility with disjoint union.} Consider a disjoint union of $\Lambda$-labeled marked surface $\mathbf{\Sigma}=\mathbf{\Sigma}_1 \sqcup \mathbf{\Sigma}_2.$ The modular functor $V$ provide an isomorphism
\[
\eta:V(\mathbf{\Sigma})\otimes V(-\mathbf{\Sigma}) \overset{\sim}{\longrightarrow} V(\mathbf{\Sigma}_1)\otimes V(-\mathbf{\Sigma}_1) \otimes V(\mathbf{\Sigma}_2) \otimes V(-\mathbf{\Sigma}_2).
\]
We demand that with respect to the natural isomorphism $K \otimes K \simeq K$ we have that
\begin{equation}
( \emptyarg, \emptyarg)_{\mathbf{\Sigma}}=\left((\emptyarg, \emptyarg)_{\mathbf{\Sigma}_1} \otimes (\emptyarg, \emptyarg)_{\mathbf{\Sigma}_2}\right) \circ \eta.
\end{equation}.

\paragraph{Compatibility with gluing.} Let $\mathbf{\Sigma}$ be a $\Lambda$ labeled marked surface obtained from gluing. Consider the gluing isomorphism:
\[
g: \bigoplus_{\lambda \in \Lambda} V(\mathbf{\Sigma}(\lambda)) \overset{\sim}{\longrightarrow} V(\mathbf{\Sigma}),
\] 
as desribed in definition \ref{defofmf}. We have that
\begin{equation}
(g,g)_{\mathbf{\Sigma}}= \sum_{ \lambda \in \Lambda} \mu_{\lambda}(\emptyarg, \emptyarg)_{\mathbf{\Sigma}(\lambda)}.
\end{equation}
where $\mu_{\lambda} \in K$ is invertible and only depends on the isomorphism class of $\mathbf{\Sigma}(\lambda)$ for all $\lambda$.

\paragraph{Compatibility with orientation reversal.} For a $\Lambda$-labeled marked surface $\mathbf{\Sigma}$ we demand that there is an invertible element $\mu \in K^*$ that only depends on $\mathbf{\Sigma}$ such that for all $(v,w) \in V(\mathbf{\Sigma})\times V(-\mathbf{\Sigma)}$ the following equation holds
\begin{equation}
 \mu (w,v)_{-\mathbf{\Sigma}}= (v,w)_{\mathbf{\Sigma}}
\end{equation}

\end{df}

It is worth spelling out how we demand that the duality is compatible with gluing in a litte more detail. Observe $-(\mathbf{\Sigma
}(\lambda))=(-\mathbf{\Sigma})(\lambda^\dagger)$. Thus the gluing isomorphism is a splitting
\[
g': \bigoplus_{\lambda \in \Lambda} V(-\mathbf{\Sigma}(\lambda^\dagger)) \overset{\sim}{\longrightarrow} V(-\mathbf{\Sigma}).
\]
This gives a decomposition
\[
\bigoplus_{\lambda
,\lambda' \in \Lambda} V(\mathbf{\Sigma}(\lambda)) \otimes V(-\mathbf{\Sigma}(\lambda')) \overset{g\otimes g}{\longrightarrow} V(\mathbf{\Sigma})\otimes V(-\mathbf{\Sigma}).
\]

Then the statement is that $g(V(\mathbf{\Sigma}(\lambda))$ and $g(V(-\mathbf{\Sigma}(\lambda'))$ are orthogonal w.r.t. the duality $(\emptyarg, \emptyarg)_{\mathbf{\Sigma}}$ unless $\lambda=\lambda'.$ In this case we have 
\[
(g_{\lambda}, g_{\lambda^{\dagger}})_{\mathbf{\Sigma}}=\mu_\lambda(\emptyarg,\emptyarg)_{\mathbf{\Sigma}(\lambda)}.
\]
We allow $\mu_\lambda$ to depend on the isomorphism class of $\mathbf{\Sigma}(\lambda).$

Let us briefly comment on the self-duality condition. We see that $\mu=1$ is equivalent to
\[
\langle u,w  \rangle_{\mathbf{\Sigma}}=\langle w,u \rangle_{-\mathbf{\Sigma}},
\] 
for all $u \in V(\mathbf{\Sigma})$ and $w \in V(-\mathbf{\Sigma}).$

\subsection{Review of the duality for Turaev's modular functor based on extended surfaces}
Let $\Sigma$ be an $e$-surface. Recall the operation of oriental reversal as desribed in definition \ref{Orientation reversal for e}. We can think of $\Sigma\times I$ as a morphism $\Sigma\sqcup -\Sigma \rightarrow \emptyset.$ This induce a perfect pairing
\[
\langle \emptyarg , \emptyarg \rangle_{\Sigma}:\mathcal{T}^e(\Sigma) \otimes \mathcal{
T}^e(-\Sigma) \rightarrow K. 
\]
See chapter $\rom{3}$ section $2$ in \cite{Tu}.

The pairing is compatible with the action of $e$-homeomorphisms in the sense that for any $e$-homeomorphism $f: \Sigma_1 \rightarrow \Sigma_2$ we have
\[
\langle \mathcal{T}^e(f)(\emptyarg), \mathcal{T}^e(-f)(\emptyarg) \rangle_{\Sigma_2} = \langle \emptyarg, \emptyarg \rangle_{\Sigma_1}.
\]
It is proven in exercise $7.3$ in chapter $\rom{4}$ that the pairing is also natural with respect to weak $e$-homeomorphisms. The pairing is multiplicative with respect to disjoint union. Moreover the pairing is self-dual in the following sense:
\[
\langle \emptyarg, \emptyarg \rangle_{\Sigma} \circ \text{Perm} = \langle \emptyarg, \emptyarg \rangle_{-\Sigma}.
\]
All these properties are stated in axiom $1.2.4$ in section $1.2$ of chapter $\rom{3}$ in \cite{Tu}.

\subsection{Construction of a duality pairing for $Z$}

Consider an $I$-labeled marked surface $\mathbf{\Sigma}=(\Sigma, P,V,(i_p)_{p\in P},L).$ Write 
\[
\overline{\G(\mathbf{\Sigma})}=(\widetilde{\Sigma}, (\alpha_p)_{p \in P}, (V_{i_p},1)_{p \in P}, L)
\]
for the $e$-surface associated to the $m$-surface $\G(\mathbf{\Sigma}).$ We have
\[
\overline{\G(-\mathbf{\Sigma})}=(\widetilde{\Sigma}, (\alpha_p)_{p \in P}, (V_{i_p^*},1)_{p \in P}, L).
\]
This is not quite $-\overline{\G(\mathbf{\Sigma})}.$ However, let $\dot{q}$ be the isomorphism of states that take all markings $(V_{i^*},1)$ to $(V_i^*,1)$ and let $\tilde{h}$ be the isomorphism of modules of states that exchange all markings $({V_i^*},1)$ with $(V_i,-1).$ Define
 \[
*\Sigma:=(\widetilde{\Sigma}, (\alpha_p)_{p \in P}, (V_{i_p},-1)_{p \in P}, L)
\]
Now let $r$ be the orientation preserving diffeomorphism
\[
r:*\Sigma\overset{\sim}{\longrightarrow} -\overline{\G(\mathbf{\Sigma})},
\]
that is given by twisting all arcs with a half-twist. This can of course be done in two different ways, the important thing for now is that it is done the same way for all arcs. We return to this choice in the proof of proposition \ref{rp}.

Then we have an isomorphism
\[
\mathcal{T}^e(r) \circ \tilde{h}\circ \dot{q}: \mathcal{T}^e(\overline{\G(-\mathbf{\Sigma})})\overset{\sim}{\longrightarrow} \mathcal{T}^e(-\overline{\G(\mathbf{\Sigma})})
\]
This will allows us to define a perfect pairing. For notational convenience we will simply write $r'=\mathcal{T}^e(r).$ We will write
\begin{equation*}
\zeta =\mathcal{T}^e(r) \circ \tilde{h}\circ \dot{q}.
\end{equation*} One last notational definition will be convenient. For a decorated type $t$ of the form $(g;(V_{i_l},\nu_l))$  with $i_l \in I$ for all $l,$ let $t*$ be the decorated type $(g;(V_{i_l^*},\nu_l)).$

\begin{df}
Consider an $I$-labeled marked surface $\mathbf{\Sigma}.$ We have a perfect pairing given by the composition
\[
\left( \emptyarg , \emptyarg \right)_{\mathbf{\Sigma}}=\langle \emptyarg , \zeta(\emptyarg) \rangle_{\overline{\G(\mathbf{\Sigma})}}.
\]
\end{df}

We have here resorted to a slight abuse of notation, since technically, $Z(\mathbf{\Sigma})$ is not equal to $\mathcal{T}^e(\overline{\G(\mathbf{\Sigma})}),$ but canonically isomorphic to it.

\begin{thm}[Duality] \label{DT} \label{duality} The pairing $(\emptyarg, \emptyarg)_{\mathbf{\Sigma}}$ is a duality pairing for the modular functor $Z_{\mathcal{V}}.$
\end{thm}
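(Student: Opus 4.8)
The plan is to verify the four axioms in Definition \ref{duality def} one at a time, in each case reducing the statement for $(\emptyarg,\emptyarg)_{\mathbf{\Sigma}}$ to the corresponding already-established property of Turaev's extended pairing $\langle \emptyarg,\emptyarg\rangle_{\overline{\G(\mathbf{\Sigma})}}$ together with the behaviour of the three auxiliary isomorphisms $\dot q$, $\tilde h$ and $r' = \mathcal{T}^e(r)$ that make up $\zeta$. Perfectness is immediate since $\zeta$ is an isomorphism and $\langle\emptyarg,\emptyarg\rangle$ is perfect. Throughout I would freely use Remark \ref{eqidefofZ} to identify $Z(\mathbf{\Sigma})$ with $\mathcal{T}^e(\overline{\G(\mathbf{\Sigma})})$, and Lemmas \ref{f}, \ref{fd}, \ref{h}, \ref{hd}, \ref{farc} for the naturality and disjoint-union compatibility of the coupon morphisms $\dot q$ and $\tilde h$.

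For \textbf{naturality}, given a morphism $\mathbf{f}=(f,s):\mathbf{\Sigma}_1\to\mathbf{\Sigma}_2$ I would unwind $Z(\mathbf{f})$ via the universal-property description (it is $(\Delta^{-1}D)^s$ times the map induced by $\mathcal{T}^e(f')$ on coherent sequences) and note that $Z(-\mathbf{f})$ carries the opposite power $(\Delta^{-1}D)^{-s}$, so the scalar contributions cancel in the pairing. What remains is to commute $\mathcal{T}^e(f')$ past $\zeta$; this follows because $\dot q$ and $\tilde h$ are natural with respect to weak $e$-homeomorphisms (Lemmas \ref{fd}, \ref{hd}) and $\mathcal{T}^e(r)$ commutes with $\mathcal{T}^e(f')$ since the half-twist diffeomorphism $r$ is defined arc-by-arc and is canonical; then invoke the $e$-homeomorphism invariance of $\langle\emptyarg,\emptyarg\rangle$. \textbf{Compatibility with disjoint union} is the easiest: $\G$, orientation reversal, the coupon morphisms, $r$, and Turaev's pairing are all multiplicative over disjoint union, so $\zeta$ and hence $(\emptyarg,\emptyarg)_{\mathbf{\Sigma}}$ factor through the obvious $\eta$; here I simply assemble the statements already recorded in the subsection on Turaev's duality and in Lemmas \ref{fd}, \ref{hd}.

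For \textbf{compatibility with orientation reversal} I expect to produce the scalar $\mu$ explicitly. Applying the construction to $-\mathbf{\Sigma}$ gives a pairing built from $\langle\emptyarg,\emptyarg\rangle_{\overline{\G(-\mathbf{\Sigma})}}$ and a map $\zeta'$ analogous to $\zeta$; using the self-duality $\langle\emptyarg,\emptyarg\rangle_{\Sigma}\circ\mathrm{Perm}=\langle\emptyarg,\emptyarg\rangle_{-\Sigma}$ of Turaev's pairing and comparing $\zeta'$ with $\zeta$ reduces everything to computing the automorphism of $\mathcal{T}^e$ induced by applying the half-twist $r$ twice (going $\overline{\G(\mathbf{\Sigma})}\to{*}\Sigma\to-\overline{\G(\mathbf{\Sigma})}$ and back), which is a full twist on each arc and hence acts by the product of twist coefficients $k_{i_p}^{\pm1}$ over the marked points $p$; this product, depending only on $\mathbf{\Sigma}$, is $\mu$ (or $\mu^{-1}$). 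I would also need to check that $\tilde h$ composed with the "reverse" version of $\tilde h$, and $\dot q$ with its reverse, contribute only a sign or a well-understood scalar — this is where the dependence on the chosen $q_i$ enters, and the self-duality scalar will in general not be $1$ unless the $q_i$ are normalised as in the later scaling section, which is consistent with the projective factor $\mu$ being allowed in Definition \ref{duality def}.

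The \textbf{main obstacle} is \textbf{compatibility with gluing}: one must show that under the gluing isomorphism $\tilde g(q)=g\circ(\oplus_i h\circ\dot q_i)$, the summands labeled $\lambda$ and $\lambda'$ are $(\emptyarg,\emptyarg)_{\mathbf{\Sigma}}$-orthogonal for $\lambda\ne\lambda'$ and that on matching summands one gets $\mu_\lambda(\emptyarg,\emptyarg)_{\mathbf{\Sigma}(\lambda)}$ for an invertible $\mu_\lambda$ depending only on the isomorphism class of $\mathbf{\Sigma}(\lambda)$. The strategy is to realise both the gluing map $g$ (via the handle-attachment cobordism $M_2$ of the proof of Lemma \ref{Cfhg}) and the pairing (via the cylinder cobordism $\Sigma\times I$) as operator invariants of extended cobordisms, glue these cobordisms, and identify the composite with the cobordism computing $(\emptyarg,\emptyarg)_{\mathbf{\Sigma}(\lambda)}$ up to a scalar coming from the Maslov-index correction terms and the normalisation factors $\lambda(M)$, $\mathcal{D}^{1-g}\dim(i)$ appearing in $\tau^e$; the orthogonality for $\lambda\ne\lambda'$ comes from the fact that the resulting closed $3$-manifold contains a separating sphere meeting the ribbon graph in two arcs colored by non-dual objects, forcing the invariant to vanish by the domination/normalisation axioms of the modular category. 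Keeping careful track of all the scalar prefactors (the $(\Delta^{-1}D)$ powers, $\lambda(M)$, the dimension normalisations, and the half-twist contribution from $r$) so that $\mu_\lambda$ comes out invertible and isomorphism-class-dependent is the delicate bookkeeping that constitutes the heart of the proof; I would organise it by first treating the case of connected $\mathbf{\Sigma}$ with all other markings trivial and then reducing the general case to it via the already-proven disjoint-union and naturality axioms.
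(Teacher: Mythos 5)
Your overall architecture agrees with the paper's: perfectness and the first two axioms are reduced to the weak-$e$-homeomorphism naturality and multiplicativity of Turaev's pairing together with the functorial properties of $\dot q$, $\tilde h$ and $r$ (the paper cites exercise $7.3$ of chapter \rom{4} of \cite{Tu} for the weak naturality, exactly as you do), and compatibility with gluing is indeed the hard part, handled in the paper by presenting both the gluing homomorphism and the pairing explicitly and computing. For gluing the paper does not quite glue cobordisms as you propose; it uses the algebraic description of $\tilde g$ from section \ref{dofg} ($h\circ\dot q$ is postcomposition with $q_i$, followed by a summandwise inclusion, resp.\ composition with $\overline{d_{V_i}}$ in the disconnected case) together with the surgery presentation of Proposition \ref{rp}, and the orthogonality for $\lambda\neq\lambda'$ drops out of a local surgery-link identity, resp.\ out of $\text{Hom}(V_i\otimes V_l^*,\mathbf 1)=0$ for $l\neq i$ --- close in spirit to your separating-sphere argument. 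One genuine subtlety your plan omits here: the surgery presentation is only valid relative to the distinguished pairs of parametrizations $(f,y(f))$, and the parametrization $z(y(f))$ of $-\overline{\G(\mathbf{\Sigma})}$ obtained by gluing differs from $y(z(f))$ by a Dehn twist on the new handle; this is where a twist coefficient $k_l$ actually enters and must be tracked and cancelled. Without this comparison the values $\mu_\lambda=D^4\dim(i)^{-1}$ (same component) and $\dim(i)^{-1}$ (distinct components) do not come out.

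The concrete error is in your account of compatibility with orientation reversal. You claim the scalar $\mu$ is the product of twist coefficients $k_{i_p}^{\pm1}$ arising from composing the half-twist $r$ with itself to get a full twist. The paper arranges the conventions precisely so that this does not happen: $s$ uses counterclockwise half-twists and its inverse trip uses clockwise ones, so $s^2=\text{id}$ and $y(y(f))=f$; no full twist and no twist coefficient appears. The scalar instead comes from the purely algebraic step of pulling the coupons colored by $q_{i_l}$ from one side of the surgery presentation to the other, i.e.\ from the quantity $\mu(i)$ defined by $F(q_i)=\mu(i)\,q_{i^*}$ (rotating the coupon), which satisfies $\mu(i)\mu(i^*)=1$ and $\mu(i)^2=1$ for self-dual $i$. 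This is not bookkeeping you can defer: these $\mu(i)$, not the $k_i$, are what define the symplectic labels and drive the entire normalization analysis in the later sections, so identifying $\mu$ with twist coefficients would give the wrong invariant. You do hedge that the $q_i$-dependence enters through $\dot q$ composed with its reverse, which is the right instinct, but as written the primary mechanism you propose for producing $\mu$ is incorrect.
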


\subsection{Description of the gluing homomorphism}
\label{dofg}

For the proof of theorem (\ref{duality}) we will use an explicit description of the gluing homomorphism in two cases.

\subsubsection{The two points lie on the same component} We will start by assuming that the points subject to gluing lie on the same component. Write $\mathbf{\Sigma}$ for the labeled marked surface resulting from gluing and write $\mathbf{\Sigma}(i)$ for the labeled marked surface with the two points subject to gluing where the preferred point is labeled with $i.$ Due to the multiplicativity of the gluing we will assume that $\mathbf{\Sigma}(i)$ is connected. See equation $(\ref{multofg}).$ We will start by assuming $\overline{\mathcal{G}(\mathbf{\Sigma}(i))}=\Sigma_t,$ where 
$$t = (g; (V_{i_1},1),...,(V_{i_k},1),(V_i,1),(V_{i^*},+1)).$$ 
Hence $\overline{\mathcal{G}(\mathbf{\Sigma})}=\Sigma_{t'}$ where
$t'$ is equal to the topological type $(g+1,(V_{i_1},1),...,(V_{i_k},1)).$ Moreover let 
$$\tilde{t} = (g; (V_{i_1},1),...,(V_{i_k},1),(V_i,1),(V_{i},-1)).$$ In Turaev's setup we see that $\Sigma_{\tilde{t}}$ can be glued along the points labeled with $(V_i,1)$ and $(V_i,-1)$ to obtain $\Sigma_{t'}.$ Now the identity parametrizations induce isomorphisms
\begin{align*}
\\ & \mathcal{T}^e(\Sigma_{\tilde{t}}) \simeq \bigoplus_{l \in I^{g}} \text{Hom}(\mathbf{1},\Phi(\tilde{t},l)),
\\& Z(\mathbf{\Sigma}) \simeq \bigoplus_{l \in I^{g+1}} \text{Hom}(\mathbf{1},\Phi(t',l)),
\\& Z(\mathbf{\Sigma}(i)) \simeq \bigoplus_{l \in I^{g}} \text{Hom}(\mathbf{1},\Phi(t,l)).
\end{align*}

With respect to these isomorphisms, we see that our gluing homomorphism is the composition
\begin{equation}
\label{ggg}
Z(\mathbf{\Sigma}(i)) \overset{h \circ \dot{q}}{\rightarrow} \mathcal{T}^e(\Sigma_{\tilde{t}}) \hookrightarrow Z(\mathbf{\Sigma}).
\end{equation}
Here the last map is the natural summandwise inclusion. This is proven in section $5.9$ of chapter $\rom{5}$ of \cite{Tu}. Thus we only need to describe the first map.

Consider a summand in $Z(\mathbf{\Sigma})$ and an element $f$
\[
f\in \text{Hom}\left(\mathbf{1},(\otimes_{j=1}^k V_{i_j}) \otimes V_i\otimes V_{i^*}\otimes_{r=1}^{g}(V_{l_r}\otimes V_{l_r}^*)\right).
\]
Let $W=(\otimes_{j=1}^k V_{i_j}) \otimes V_i.$ Let $R=\otimes_{r=1}^{g}(V_{l_r}\otimes V_{l_r}^*).$ Let $q_i: V_{i^*} \overset{\sim}{\longrightarrow} V_i^*$ be the isomorpism used to define the gluing. Postcomposing $f$ with $(1_{W}\otimes q_{i} \otimes 1_{R})$ we get an element
\[ (1_{W}\otimes q_{i} \otimes 1_{R})\circ f \in 
\text{Hom}\left(\mathbf{1},(\otimes_{j=1}^k V_{i_j}) \otimes V_i\otimes V_i^* \otimes_{r=1}^{g}(V_{l_r}\otimes V_{l_r}^*)\right).
\]
To see that $h\circ\dot{q}(f)= (1_{W}\otimes q_{i} \otimes 1_{R})\circ f $ 
one can either go throug the construction given in the review in section \ref{Review of the TQFT} above and use that the identity cylinder induce the identity, or one can use the techniques of section $2.3$ in chapter $\rom{4}$ of \cite{Tu}.

Now we desribe the gluing in a slightly more general situation. We observe that $\Sigma_{r}$ is naturally a labelled marked surface, for any type $r$ where all marks are of type $(V_i,1).$ Assume that we have a parametrization $f: \Sigma_t \rightarrow  \overline{\G(\mathbf{\Sigma}(i))}.$ There is a natural homeomorphism $\overline{\G(\mathbf{\Sigma}(i))}\simeq \mathbf{\Sigma}(i).$  With respect to this identification we can think of $f$ as a diffeomorphism betweem labeled marked surface that preserve all the data except possibly the Lagrangian subspaces in homology. There is also a natural homeomorphism of $\Sigma_{t'}$ with the surface obtained from $\Sigma_t$ by gluing the ordered pair corresponding under $f$ to the relevant ordered pair of $\mathbf{\Sigma}(i)$. We have a natural homeomorphism $\overline{\G(\mathbf{\Sigma})}=\mathbf{\Sigma}.$ As in proposition \ref{Fofg} we can choose a parametrization diffeomorphism  $F=z(f): \Sigma_{t'} \rightarrow \overline{\G(\mathbf{\Sigma})}$ that is compatible with $f.$

Now $f,F$ induce a pair of isomorphisms
\begin{align}
\label{paraf} & Z(\mathbf{\Sigma}) \simeq \bigoplus_{l \in I^{g+1}} \text{Hom}(\mathbf{1},\Phi(t',l)),
\\& \label{paraF} Z(\mathbf{\Sigma}(i)) \simeq \bigoplus_{l \in I^{g}} \text{Hom}(\mathbf{1},\Phi(t,l)).
\end{align}
With respect to these isomorphisms we have equation (\ref{ggg}). 

\subsubsection{The two points lie on distinct components}

We assume that $\mathbf{\Sigma}(i)=\mathbf{\Sigma}^{+} \sqcup \mathbf{\Sigma}^{-}$ where $\mathbf{\Sigma}^{+}$ and $\mathbf{\Sigma}^{-}$ are two spheres. Assume that $(p,i) \in \mathbf{\Sigma}^{+}$ and that $(q,i^*) \in \mathbf{\Sigma}^{-}.$ We will assume $\mathbf{\Sigma}^{-} = \Sigma_{t_-}$ and $\mathbf{\Sigma}^{+}=\Sigma_{t_+}$ where $t_+=(0;(V_{i_1},1),...,(V_{i_n},1),(V_i,1))$ and $t_- = (0;(V_{i^*},1),(V_{i_{n+1}},1),...,(V_{i_m},1)).$ Thus we get $\mathbf{\Sigma}=\Sigma_t$ where 
$$t = (0,(V_{i_1},1),...,(V_{i_m},1)).$$ 
We get isomorphisms
\begin{align*}
& Z(\mathbf{\Sigma}(i)) \simeq \text{Hom}(\mathbf{1},\Phi(t_+)) \otimes \text{Hom}(\mathbf{1},\Phi(t_{-})),
\\ & Z(\mathbf{\Sigma}) \simeq \text{Hom}(\mathbf{1},\Phi(t)).
\end{align*}
Let $V_1=V_{i_1} \otimes \cdots \otimes V_{i_n}$ and $V_2=V_{i_{n+1}}\otimes \cdots \otimes V_{i_m}.$ With respect to these isomorphisms the gluing homomorphism is given by
\begin{equation}
\label{gggg}
Z(\mathbf{\Sigma}(i)) \ni x\otimes y \mapsto (1_{V_1} \otimes \overline{d_{V_i}} \otimes 1_{V_2}) \circ (x \otimes q_i \circ y) \in Z(\mathbf{\Sigma}).
\end{equation}
Here $\overline{d_V}$ is given by $F(\cap_V^{-})$ where $F$ is the Reshetikhin-Turaev functor and $\cap_V^{-}$ is defined in Figure $2.6$ in section $2.3$ of chapter $\rom{1}$ in \cite{Tu}. 

This formula can be argued by using the description of $h\circ \dot{q}$ given above and by arguing very similarly to the reasoning in section $5.10$ of chapter $\rom{5}$ in \cite{Tu}.

In the general case, where $\mathbf{\Sigma}^{+},\mathbf{\Sigma}^{-}$ are homeomorphic to spheres we start with a parametrization of each component $\overline{\G(\mathbf{\Sigma}^{\pm})}$ and then we glue these two together to obtain a paramtrization of $\overline{\G(\mathbf{\Sigma})}.$ These will induce isomorphisms with respect to which the gluing is given by (\ref{gggg}). If we start with parametrizations $f,g$ we will write $z'(f\otimes g)$ for the resulting parametrization.

\subsection{Proof of theorem \ref{duality}}

\begin{pro} 
The pairing $(\emptyarg, \emptyarg)_{\mathbf{\Sigma}}$ is functorial and is compatible with disjoint union. 
\end{pro}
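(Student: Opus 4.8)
The plan is to transport everything to Turaev's pairing $\langle \emptyarg ,\emptyarg\rangle$ on extended surfaces through the isomorphism $\zeta=\mathcal{T}^e(r)\circ\tilde h\circ\dot q$, and then invoke the corresponding properties of $\langle\emptyarg,\emptyarg\rangle$ stated in the previous subsection. The only genuine work is checking that $\zeta$ itself is natural and monoidal in the appropriate sense; once that is in hand, both assertions are formal.

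For the naturality axiom of Definition \ref{duality def}, fix a morphism $\mathbf{f}=(f,s)\colon\mathbf{\Sigma}_1\to\mathbf{\Sigma}_2$. By the universal-property description of $Z$ together with Remark \ref{eqidefofZ}, $Z(\mathbf{f})$ is $(\Delta^{-1}D)^{s}$ times $\mathcal{T}^e$ applied to the weak $e$-homeomorphism $f''\colon\overline{\G(\mathbf{\Sigma}_1)}\to\overline{\G(\mathbf{\Sigma}_2)}$ induced by $f$, while $Z(-\mathbf{f})$ is $(\Delta^{-1}D)^{-s}$ times $\mathcal{T}^e$ of the weak $e$-homeomorphism $f'_-\colon\overline{\G(-\mathbf{\Sigma}_1)}\to\overline{\G(-\mathbf{\Sigma}_2)}$ induced by the same $f$; the two scalars cancel in the $K$-bilinear pairing. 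First I would verify that the square with horizontal arrows $\mathcal{T}^e(f'_-)$ and $\mathcal{T}^e(-f'')$ and vertical arrows $\zeta$ commutes. This follows by stacking three commuting squares: naturality of $\dot q$ (Lemma \ref{fd}), naturality of $\tilde h$ (Lemma \ref{hd}), and the observation that the half-twist diffeomorphism $r$ of the previous subsection, being supported in disjoint neighbourhoods of the arcs and acting there in a fixed prescribed way, commutes up to isotopy with any arc-preserving homeomorphism, so the two composites agree as $\mathcal{T}^e$-morphisms because $\mathcal{T}^e$ is insensitive to isotopy of weak $e$-homeomorphisms. Granting this, the naturality of $\langle\emptyarg,\emptyarg\rangle$ with respect to weak $e$-homeomorphisms (exercise $7.3$ in chapter $\rom{4}$ of \cite{Tu}) yields
\[
\langle\mathcal{T}^e(f'')(\emptyarg),\,\zeta\,\mathcal{T}^e(f'_-)(\emptyarg)\rangle_{\overline{\G(\mathbf{\Sigma}_2)}}=\langle\mathcal{T}^e(f'')(\emptyarg),\,\mathcal{T}^e(-f'')\zeta(\emptyarg)\rangle_{\overline{\G(\mathbf{\Sigma}_2)}}=\langle\emptyarg,\,\zeta(\emptyarg)\rangle_{\overline{\G(\mathbf{\Sigma}_1)}},
\]
which is precisely $(Z(\mathbf{f}),Z(-\mathbf{f}))_{\mathbf{\Sigma}_2}=(\emptyarg,\emptyarg)_{\mathbf{\Sigma}_1}$.

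For compatibility with disjoint union, I would use that $\G$ is monoidal, so $\overline{\G(\mathbf{\Sigma}_1\sqcup\mathbf{\Sigma}_2)}=\overline{\G(\mathbf{\Sigma}_1)}\sqcup\overline{\G(\mathbf{\Sigma}_2)}$, and that $r$, $\dot q$ and $\tilde h$ can all be chosen componentwise; combined with the disjoint-union compatibility of $\dot q$ and $\tilde h$ (Lemmas \ref{fd} and \ref{hd}) this identifies $\zeta_{\mathbf{\Sigma}_1\sqcup\mathbf{\Sigma}_2}$ with $\zeta_{\mathbf{\Sigma}_1}\otimes\zeta_{\mathbf{\Sigma}_2}$ under the canonical identifications. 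Since $\langle\emptyarg,\emptyarg\rangle$ is multiplicative under disjoint union, the factorisation $(\emptyarg,\emptyarg)_{\mathbf{\Sigma}}=\bigl((\emptyarg,\emptyarg)_{\mathbf{\Sigma}_1}\otimes(\emptyarg,\emptyarg)_{\mathbf{\Sigma}_2}\bigr)\circ\eta$ follows after chasing the identification $\eta$ coming from the monoidal structure of $Z$.

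I expect the one delicate point to be the step asserting that the half-twist diffeomorphism $r$ commutes up to isotopy with the homeomorphism induced by a general morphism $\mathbf{f}$, together with the careful matching of orientation-reversal conventions (the sign of $s$ in $-\mathbf{f}$ and the tangent-direction convention discussed in the remark following the definition of orientation reversal). Everything else reduces to the naturality and monoidality of $\mathcal{T}^e$, $\langle\emptyarg,\emptyarg\rangle$, $\dot q$ and $\tilde h$ already recorded above.
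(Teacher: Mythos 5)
Your proposal is correct and follows essentially the same route as the paper: the paper's proof is a one-line appeal to the properties of $\langle \emptyarg, \emptyarg \rangle$, the functorial properties of $\dot{q}$, $h$ and $r$, and the same exercise $7.3$ in chapter \rom{4} of \cite{Tu} for naturality under weak $e$-homeomorphisms. You have merely spelled out the details (the cancellation of the $(\Delta^{-1}D)^{\pm s}$ factors and the isotopy-commutation of the half-twist $r$) that the paper leaves implicit.
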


\begin{proof}
This easily follows from the properties of $\langle \emptyarg, \emptyarg \rangle$ and the functorial properties of $\dot{q},h$ and $r.$  Observe that even though the axioms in section $1$ of chapter $\rom{2}$ only ensure that $\langle \emptyarg, \emptyarg \rangle$ is natural with respect to $e$-homeomorphisms, it is proven in exercise $7.3$ in chapter $\rom{4}$ that the pairing is also natural with respect to weak $e$-homeomorphisms.
\end{proof}

Thus it remains to prove that it is compatible with gluing, and that it is self-dual.

The proof of the main propositions needed for these results is based on an explicit ribbon graph presentation of $\langle \emptyarg, \emptyarg \rangle_{\Sigma}: \mathcal{T}^e(\Sigma)\otimes \mathcal{T}^e(-\Sigma) \rightarrow K.$ 

All of the proofs in this section are modifications of material appearing in section $10.4.$ in chapter $\rom{4}$ of \cite{Tu}.

\begin{pro}[Surgery presentation of $\langle \emptyarg, \emptyarg \rangle_{\Sigma}$] \label{presentation 1}
Assume $\Sigma$ is a connected $e$-surface. For any parametrization $f:\Sigma_t \rightarrow \Sigma$ there is an induced parametrization $y(f):\Sigma_{-t} \rightarrow -\Sigma$ such that with respect to the two induced isomorphisms 
\begin{gather*}
\Psi(t) \simeq \mathcal{T}^e(\Sigma), \\ \Psi(-t) \simeq \mathcal{T}^e(-\Sigma),
\end{gather*} we have the following surgery presentation of $\langle \emptyarg, \emptyarg \rangle_{\Sigma}.$
\\
\\
\begin{center}
\includegraphics{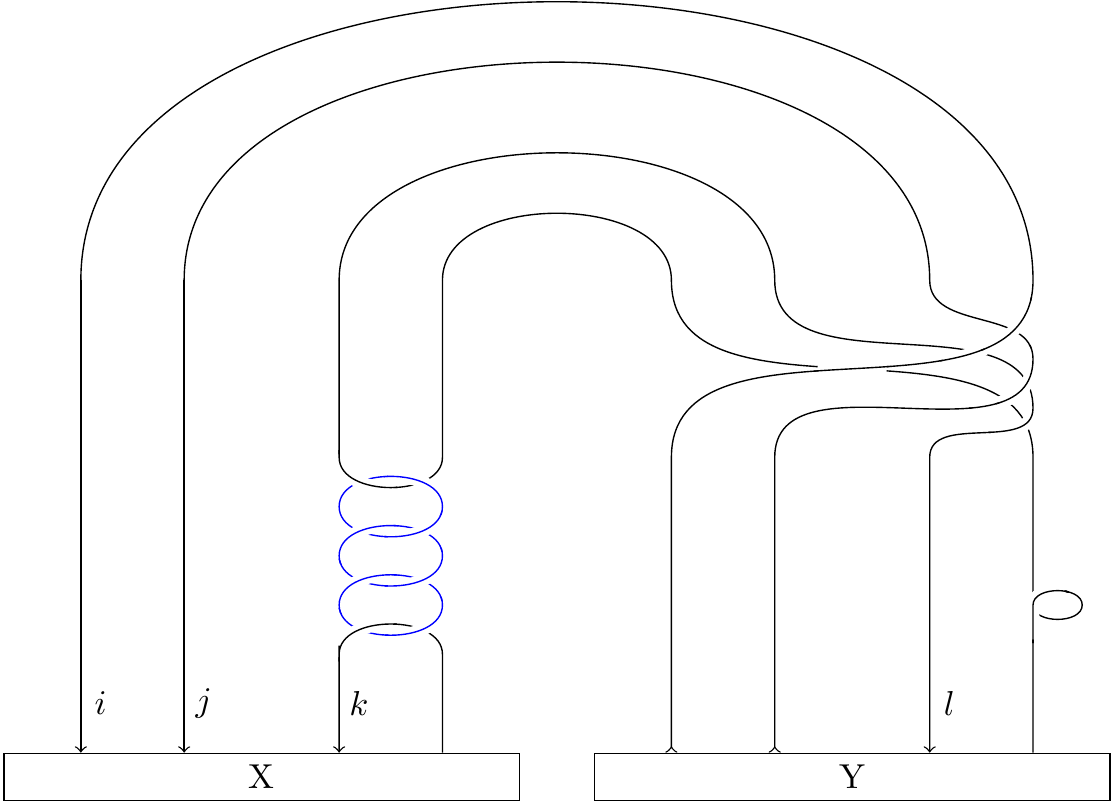}
\end{center}
\end{pro}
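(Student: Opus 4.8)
The plan is to reduce the statement to the standard surgery presentation of the duality pairing of a decorated TQFT that Turaev establishes in section $10.4$ of chapter $\rom{4}$, and then track how the extra scalar factors $(D\Delta^{-1})^{\bullet}$ coming from the Maslov-index corrections enter. First I would fix a parametrization $f:\Sigma_t\to\Sigma$ and recall that, by definition of $\mathcal{T}^e$ in section \ref{Modules of e-states}, the induced isomorphism $\Psi(t)\simeq\mathcal{T}^e(\Sigma)$ is $x\mapsto(x,f)$, the class of the coherent sequence determined by $x$. To produce a parametrization of $-\Sigma$, recall that the reflection $\mathrm{mir}:\R^3\to\R^3$ carries the standard $d$-surface $\Sigma_t$ to (a surface $e$-homeomorphic to) $\Sigma_{-t}$; composing $f$ with this reflection and reversing orientation gives the canonical candidate for $y(f):\Sigma_{-t}\to-\Sigma$. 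This is exactly the construction used by Turaev when he glues $U_t$ and $U_t^-$ to build $\tilde M$, so $y(f)$ is natural in $f$ in the appropriate sense.

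Next I would unwind the definition $\langle\emptyarg,\emptyarg\rangle_\Sigma=\tau^e(\Sigma\times I)$, where $\Sigma\times I$ is viewed as a cobordism $\Sigma\sqcup(-\Sigma)\to\emptyset$. Parametrizing the two incoming boundary components by $f$ and $y(f)$ turns $\Sigma\times I$ into a decorated cobordism; gluing in the standard handlebodies $U_t$ and $U_{-t}$ along these parametrizations, as in the definition of $\tau$ recalled in section \ref{Review of the TQFT}, yields a closed oriented $3$-manifold obtained by doubling a handlebody, i.e. $\#^g(S^1\times S^2)$, together with the ribbon graph $R_t\cup R_{-t}$ and the colorings coming from $x\in\Psi(t)$, $z\in\Psi(-t)$. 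A surgery presentation of this closed manifold is the standard $0$-framed $g$-component unlink, and feeding it into the formula for $\tau$ from section \ref{quantuminvariantsof3mfds} produces precisely the picture drawn in the Proposition: the ribbon graphs $R_t$ and $R_{-t}$ glued mouth-to-mouth along the $g$ pairs of dual strands $(V_{i_s},V_{i_s}^*)$, with the $m$ ``external'' colored arcs contributing their markings $(W_j,\mu_j)$ and $(W_j,\mu_j)$ on the two sides, all cabled by the $0$-framed unknots that effect the connected sum. The normalizing scalars $\mathcal D^{1-g}\dim(\lambda)$ built into $\tau$ account for the weights attached to the unknot components in the figure.

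The one genuine subtlety — and the step I expect to be the main obstacle — is bookkeeping the Maslov/Lagrangian corrections. The isomorphism $\Psi(t)\simeq\mathcal{T}^e(\Sigma)$ depends on $f$ only up to the scalar $(D\Delta^{-1})^{-\mu(\cdots)}$ appearing in $\varphi(f,g)$, and similarly $\tau^e$ differs from $\tau$ by the factor $\lambda(M)$ of section $6.5$ of chapter $\rom{4}$; I must check that, for the specific cobordism $\Sigma\times I$ and the specific parametrizations $f$ and $y(f)$, all these Lagrangian correction terms either cancel or are exactly the ones absorbed into the definition of $y(f)$, so that the displayed surgery presentation holds on the nose with no stray projective factor. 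Concretely this is the computation that the relevant triples of Lagrangian subspaces in $H_1$ of the doubled handlebody are in ``generic enough'' position that the Maslov indices vanish, analogous to the vanishing argument carried out in the proof of Lemma \ref{Cfhg} above and in the proof of lemma $6.7.2$ of chapter $\rom{4}$; once this is in place, everything else is a direct appeal to Turaev's section $10.4$. I would therefore choose $y(f)$ precisely so as to make this Maslov index zero, and state that choice explicitly as part of the Proposition.
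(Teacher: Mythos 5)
Your proposal follows essentially the same route as the paper's proof: define $y(f)$ by post-composing $-f$ with a reflection of the standard surface, glue the standard handlebodies $U_t$ and $U_{-t}$ into $\Sigma\times I$ to obtain the doubled handlebody, and read off the displayed picture from Turaev's theorem 10.4.1 in chapter IV. The only points the paper handles slightly differently are that the reflection is followed by half-twists at the distinguished arcs so that $s:\Sigma_{-t}\to-\Sigma_t$ is an honest $e$-homeomorphism (a convention exploited later in Proposition \ref{rp}), and that the anomaly bookkeeping you flag as the main obstacle is absorbed by directly identifying the glued manifold $W(x,y)$ with $P(x)\cup_{\mathrm{id}}Q(y)$ and comparing it with the manifold considered in Turaev's proof, rather than by a separate Maslov-index computation.
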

Observe that in this proposition, orientation reversal is with respect to extended $e$-surfaces. Here the blue unknot's are the surgery link components. We depict here the genus $1$ case. It is obvious how to generalize to higher genus.

Here the blue unknot's are the surgery link components. We depict only the genus $1$ case, since the generalization to higher genus is obvious.

We stress that the tangle is a presentation of a perfect pairing
\begin{equation}
\label{algpairing}
\Psi(t) \times \Psi(-t) \rightarrow K,
\end{equation}
and that we can only use it as a presentation of the duality pairing with respect to certain pairs of parametrizations $f:\Sigma_t \rightarrow \Sigma$ and $y(f):\Sigma_{-t} \rightarrow -\Sigma.$ This will be explained in the proof.
We will denote the pairing from (\ref{algpairing}) by $\langle \emptyarg, \emptyarg \rangle_t.$

\begin{proof}
This proof is a slight modification of the proof of theorem $10.4.1$ in chapter $\rom{4}$ of \cite{Tu}. Observe that in this proof $-\Sigma$ is the result of using the operation of orientation reversal of extended surfaces on $\Sigma.$

Choose a parametrization $f: \Sigma_t \rightarrow \Sigma.$ This will induce a weak $e$-homeomorphism $-f: -\Sigma_t \rightarrow - \Sigma.$ Consider the $e$-homeomorphism $s:\Sigma_{-t} \rightarrow -\Sigma_t$ given by a reflection in $y=0$ followed by counter clockwise half twists in the $X,Y$-plane at the distinguished arcs - with respect to the usual identification $\R^2\times \R = \R^3.$ This yields a parametrization $y(f):=(-f)\circ s: \Sigma_{-t} \rightarrow - \Sigma.$ These two parametrizations provide isomorphisms
\begin{gather*}
\Psi(t) \simeq \mathcal{T}^e(\Sigma),
\\
\Psi(-t) \simeq \mathcal{T}^e(-\Sigma).
\end{gather*}

Now let 
\[
x \in \text{Hom}(\mathbf{1},\Phi(t,i)) \subset \mathcal{T}^e(\Sigma), \ \ \  y \in \text{Hom}(\mathbf{1},\Phi(-t,j)) \subset \mathcal{T}^e(-\Sigma).
\]

Consider the standard handlebodies denoted by $P(x)=H(U_t,R_t,i,x)$ and $ Q'(y)=H(U_{-t},R_t,j,y).$ We recall that $\langle x,y \rangle$ is given by $\tau(W(x,y)),$ where $W(x,y)$ is the closed three manifold with a colored ribbon graph inside it, that is obtained by gluing $P\sqcup Q'$ to $\Sigma\times I$ through the orientation reversing homeomorphism
\[
 \partial (P(x)\sqcup Q'(y))=\Sigma_t \sqcup \Sigma_{-t} \overset{f \sqcup y(f)}{\longrightarrow} \Sigma\times \{0\}\sqcup \Sigma\times \{1\} = \partial (\Sigma \times I).
\] 

We now observe that the parametrization $s:\Sigma_{-t} \rightarrow -\Sigma$ extends to an $e$-homeomorphism of three manifolds 
\[
Q'(y) \rightarrow Q(y),
\]
where $Q(y)$ is same handlebody with the LH-orientation and the induced colored ribbon graph.

We have a homeomorphism of extended three manifolds with colored ribbon graphs
\[
P(x) \cup_{\text{id}} Q(y) \overset{\sim}{\longrightarrow} W(x,y).
\]
Comparing $P(x) \cup_{\text{id}} Q(y)$ with the three manifold $M \sqcup_{\text{id}} -N$ as considered in the proof of theorem $10.4.1$ in \cite{Tu}, we obtain the desired presentation. 
\end{proof}

Now we want to use this to provide a presentation for the induced duality pairing on $Z_{\mathcal{V}}.$ Again it should be stressed that this presentation is only valid with respect to certain parametrizations.

\begin{pro}[Surgery presentation of $(\emptyarg, \emptyarg)_{\mathbf{\Sigma}}$] \label{rp}
Let $\mathbf{\Sigma}$ be a connected $I$-labeled marked surface. For any parametrization $f:\Sigma_t \rightarrow \overline{\mathcal{G}(\mathbf{\Sigma})}$ there is a parametrization $u(f): \Sigma_{t*} \rightarrow \overline{\G(-\mathbf{\Sigma})}$ such that with respect to the induced isomorphisms
\begin{gather*}
\Psi(t) \simeq Z(\mathbf{\Sigma}),
\\  \Psi(t*) \simeq Z(-\mathbf{\Sigma}),
\end{gather*} we have the following presentation of $(\emptyarg, \emptyarg)_{\mathbf{\Sigma}}.$

\
\begin{center}
\includegraphics{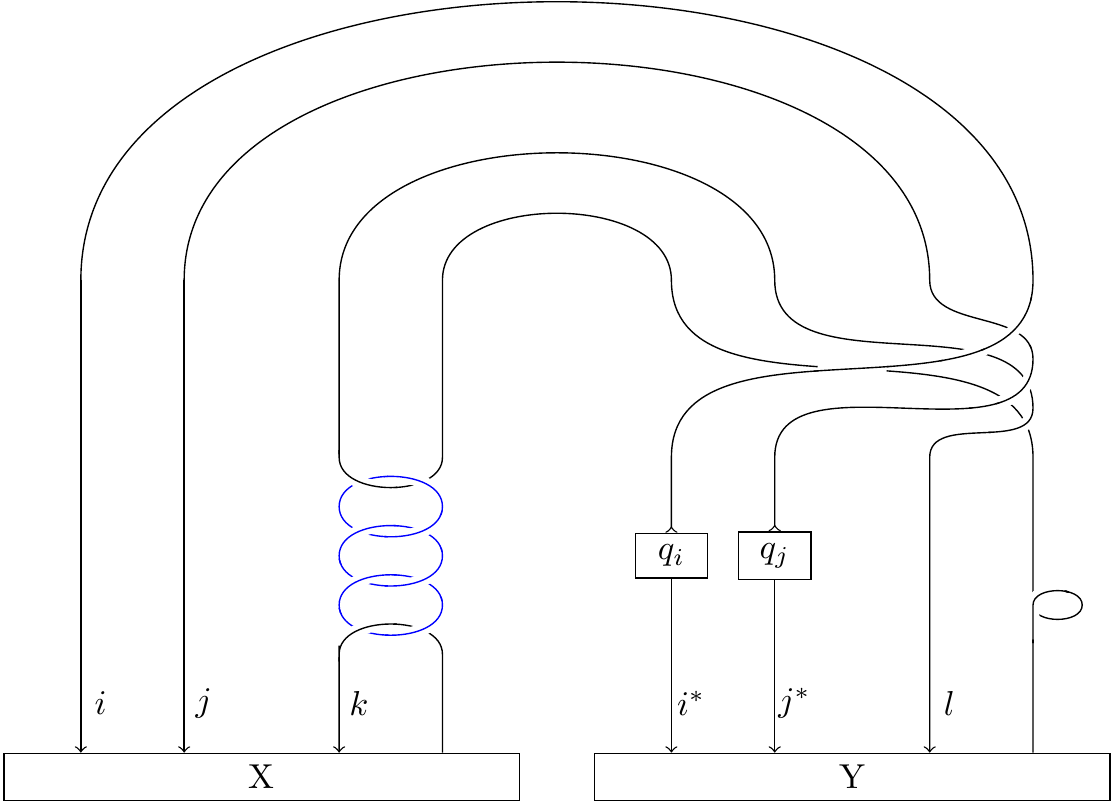}
\end{center}
\end{pro}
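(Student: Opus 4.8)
The plan is to deduce Proposition \ref{rp} from Proposition \ref{presentation 1} applied to the $e$-surface $\overline{\G(\mathbf{\Sigma})}$, keeping careful track of the three pieces $\dot{q}$, $\tilde{h}$ and $r'=\mathcal{T}^e(r)$ that make up $\zeta$. First I would fix a parametrization $f\colon\Sigma_t\to\overline{\G(\mathbf{\Sigma})}$; since every marking of $\overline{\G(\mathbf{\Sigma})}$ has sign $+1$, its decorated type has the form $t=(g;(V_{i_1},1),\dots,(V_{i_k},1))$. Applying Proposition \ref{presentation 1} to $\Sigma:=\overline{\G(\mathbf{\Sigma})}$ and this $f$ produces the parametrization $y(f)=(-f)\circ s\colon\Sigma_{-t}\to-\overline{\G(\mathbf{\Sigma})}$, where $s\colon\Sigma_{-t}\to-\Sigma_t$ is the reflection in $y=0$ followed by counter-clockwise half-twists at the arcs, together with the surgery presentation $\langle\emptyarg,\emptyarg\rangle_t\colon\Psi(t)\times\Psi(-t)\to K$ of $\langle\emptyarg,\emptyarg\rangle_{\overline{\G(\mathbf{\Sigma})}}$.

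Next I would pin down $r$ and construct $u(f)$. The $e$-surfaces $*\Sigma$ and $\overline{\G(-\mathbf{\Sigma})}$ share the same underlying oriented surface, arcs and Lagrangian subspace, and differ only in that the markings $(V_{i_p},-1)$ of $*\Sigma$ are replaced by $(V_{i_p^*},1)$; correspondingly, the decorated types $-t=(g;(V_{i_l},-1))$ and $t*=(g;(V_{i_l^*},1))$ carry $\Phi$-modules which, using $(V_i)^{-1}=V_i^*$, become identified precisely through the isomorphisms $q_{i_l}$. I would choose the half-twists defining $r\colon*\Sigma\to-\overline{\G(\mathbf{\Sigma})}$ to be counter-clockwise, matching those in $s$; then $r^{-1}\circ y(f)\colon\Sigma_{-t}\to*\Sigma$ is a weak $e$-homeomorphism, and I would let $u(f)\colon\Sigma_{t*}\to\overline{\G(-\mathbf{\Sigma})}$ be the parametrization with the same underlying diffeomorphism, which is legitimate by the preceding remark. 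Since $r$ is supported in disc neighbourhoods of the arcs it acts trivially on $H_1$, hence it is an $e$-homeomorphism and $r'=\mathcal{T}^e(r)$ carries no Maslov correction; by construction of $u(f)$ it is absorbed into the change of parametrization.

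With $f$ and $u(f)$ in hand I would read off $\zeta$ in the induced bases. By Lemma \ref{h} the map $\tilde{h}$ is the identity after the evident identification of $\Phi$-modules coming from $(V_i^*)^1=(V_i)^{-1}$, and by Lemmas \ref{f} and \ref{farc} together with the computation recalled around (\ref{ggg}) the map $\dot{q}$ is postcomposition with $\bigotimes_{l=1}^k q_{i_l}$ tensored with the identity on the handle factors $\bigotimes_{s=1}^g(V_{l_s}\otimes V_{l_s}^*)$. Consequently, with respect to the isomorphisms $\Psi(t)\simeq Z(\mathbf{\Sigma})$ and $\Psi(t*)\simeq Z(-\mathbf{\Sigma})$ induced by $f$ and $u(f)$, the pairing $(\emptyarg,\emptyarg)_{\mathbf{\Sigma}}$ becomes $\langle\emptyarg,\emptyarg\rangle_t$ precomposed with $\mathrm{id}\times((\bigotimes_l q_{i_l})\otimes\mathrm{id})$; feeding the coupons $q_{i_l}$ into the tangle of Proposition \ref{presentation 1} and simplifying with the graphical conventions for the cups $\cap^-_{V_i}$ appearing there yields the tangle of Proposition \ref{rp}.

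The step I expect to be the main obstacle is the choice of half-twist in $r$: one must verify that, once these half-twists are taken counter-clockwise as in $s$, the composite $r^{-1}\circ y(f)$ is genuinely isotopic to a standard parametrization and not one differing from it by Dehn twists about the arcs, since such twists would modify the tangle by insertions of the ribbon coefficients $k_{i_l}^{\pm 1}$. This is exactly the ambiguity flagged when $r$ was introduced; once it is resolved, the remaining bookkeeping --- Maslov indices, multiplicativity over connected components, and the handle factors --- is routine given Proposition \ref{presentation 1}.
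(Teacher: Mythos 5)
Your argument follows the paper's own proof essentially step for step: apply Proposition \ref{presentation 1} to $\overline{\G(\mathbf{\Sigma})}$ to get $y(f)=(-f)\circ s$, fix the half-twist convention for $r$ so that it cancels the twists in $s$ (the paper then writes $u(f)=(-f)\circ\tilde{s}$ with $\tilde{s}$ the bare reflection, which is exactly your $r^{-1}\circ y(f)$), observe that $\zeta$ becomes postcomposition with the $q_{i_l}$ in each factor, and feed these coupons into the tangle of Proposition \ref{presentation 1}. The proposal is correct and identifies the same key point (the half-twist convention for $r$) that the paper singles out, so there is nothing further to add.
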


\begin{proof}
Choose a parametrization $f: \Sigma_t \rightarrow \overline{\G(\mathbf{\Sigma})}.$ Consider  the induced parametrization $y(f)=(-f)\circ s:\Sigma_{-t} \rightarrow -\overline{\G(\mathbf{\Sigma})}$ as in the previous proof. This provide isomorphisms
\begin{align}
\oplus_{i \in I^g} \text{Hom}(\mathbf{1},\Phi(t,i)) & \simeq \mathcal{T}^e(\overline{\G(\mathbf{\Sigma})}),
\\ \label{uuu}\oplus_{i \in I^g} \text{Hom}(\mathbf{1},\Phi(-t,i)) & \simeq \mathcal{T}^e(-\overline{\G(\mathbf{\Sigma})}).
\end{align}
Recall the isomorphism
\begin{equation}
\label{vvv} \mathcal{T}^e(r) \circ \tilde{h}\circ \dot{q}: \mathcal{T}^e(\overline{\G(-\mathbf{\Sigma})})\overset{\sim}{\longrightarrow} \mathcal{T}^e(-\overline{\G(\mathbf{\Sigma})})
\end{equation}
We can define $r$ such that the half-twists cancel with those of $s.$ More precisely there is a choice of convention such that the following holds.  Let $\tilde{s}$ be the same as $s$ but without the twists at the arcs. That is $\tilde{s}$ is only reflection in the $y$-plane. Observe that $f$ induce a parametrization
\[
(-f)\circ \tilde{s}: \Sigma_{t*} \rightarrow \overline{\G(-\mathbf{\Sigma})}
\] Take $u(f)=(-f)\circ \tilde{s}.$ We see that with respect to the parametrizations $u(f),y(f),$ the isomorphism
\[
\zeta: \Psi(t*) \overset{\sim}{\longrightarrow} \Psi(-t)
\] is given by postcomposing with $q.$  Now combine the description of the pairing $\Psi(t) \otimes \Psi(-t) \rightarrow K$ given in the previous proposition with the description of $\tilde{h}\circ \dot{q}$ as postcomposing with $q$ in each factor to obtain the desired presentation.
\end{proof}

We show next that the pairing is compatible with gluing. That is, we prove that the formula holds and explicitly calculate the $\mu_{\lambda}'$s. This calculation will depend on whether or not the two points subject to gluing are on the same connected component or not. 

\begin{pro}
Let $\mathbf{\Sigma}$ be a connected $I$-labeled marked surface obtained from gluing two points subject to gluing that lie on the same component. Consider the gluing isomorphism
\[
\tilde{g}: \bigoplus_{i \in I} Z(\mathbf{\Sigma}(i)) \overset{\sim}{\longrightarrow} Z(\mathbf{\Sigma}),
\] 
as desribed in definition \ref{defofmf}. We have
\[
(\tilde{g},\tilde{g})_{\mathbf{\Sigma}}= \sum_{i \in I} D^{4}\text{dim}(i)^{-1}(\emptyarg, \emptyarg)_{\mathbf{\Sigma}(i)}.
\]
\end{pro}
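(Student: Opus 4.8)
The plan is to reduce the compatibility-with-gluing identity, in the case where the two glued points lie on the same component, to a purely local computation in the category $\mathcal{V}$ (or rather in the Reshetikhin--Turaev functor $F$) by feeding suitably chosen parametrizations into both of the surgery presentations established above: Proposition \ref{rp} for the duality pairing on $Z$ (both for $\mathbf{\Sigma}$ and for each $\mathbf{\Sigma}(i)$) and equation (\ref{ggg}) together with the more explicit formula below it for the gluing homomorphism $\tilde g$. First I would fix a parametrization $f:\Sigma_t\to\overline{\mathcal{G}(\mathbf{\Sigma}(i))}$ of the type $t=(g;(V_{i_1},1),\dots,(V_{i_k},1),(V_i,1),(V_{i^*},1))$ as in section \ref{dofg}, and the induced compatible parametrization $F=z(f):\Sigma_{t'}\to\overline{\mathcal{G}(\mathbf{\Sigma})}$ of the glued type $t'=(g+1;(V_{i_1},1),\dots,(V_{i_k},1))$. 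With respect to $f$ and $F$ we have, by the discussion after (\ref{ggg}), that $\tilde g|_{Z(\mathbf{\Sigma}(i))}$ is $(h\circ\dot q)$ followed by the summand-wise inclusion $\mathcal{T}^e(\Sigma_{\tilde t})\hookrightarrow Z(\mathbf{\Sigma})$, and the effect of $h\circ\dot q$ on a Hom-space element $f$ is simply postcomposition with $1_W\otimes q_i\otimes 1_R$.

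The key steps, in order, would be: (1) record the surgery presentation of $(\emptyarg,\emptyarg)_{\mathbf{\Sigma}}$ from Proposition \ref{rp} using $F$, which for the glued surface of genus $g+1$ has one extra ``handle'' unknot compared to the presentation of $(\emptyarg,\emptyarg)_{\mathbf{\Sigma}(i)}$ using $f$; (2) substitute a pair of glued vectors $\tilde g_i(x),\tilde g_i(y)$ (for $x\in Z(\mathbf{\Sigma}(i))$, $y\in Z(-\mathbf{\Sigma}(i))$ — note $-\mathbf{\Sigma}$ is glued from $-\mathbf{\Sigma}'$ along the same ordered pair, as observed before Definition \ref{duality def}) into the genus $g+1$ tangle, and use the explicit form of $h\circ\dot q$ and of the isomorphism $\zeta$ (postcomposition with $q_i$ in the relevant factor, as in the proof of Proposition \ref{rp}) to cancel the $q_i$'s against the $q_i^{-1}$'s appearing in the presentation of the orientation-reversed side; (3) the resulting tangle on the extra handle, once the $q_i$-factors cancel, is a short closed diagram — a single $V_i$-colored unknot-with-handle configuration — whose $F$-value is the scalar $D^{?}\dim(i)^{-1}$ up to the framing/signature normalization constants $\Delta,D$ of $\tau$; evaluate it and check that it recombines with the genus-$g$ tangle to give exactly $(\emptyarg,\emptyarg)_{\mathbf{\Sigma}(i)}$ times the claimed scalar; (4) verify that the scalar $D^4\dim(i)^{-1}$ is independent of the parametrization and of the isomorphism class of $\mathbf{\Sigma}(i)$ only through $i$ — which is automatic since it arose from a purely local sub-tangle — so that the axiom (compatibility with gluing in Definition \ref{duality def}) is genuinely satisfied with $\mu_i = D^4\dim(i)^{-1}$.

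The bookkeeping of the normalization constants is where the actual content lies: one must track the powers of $D$ and $\Delta$ coming from (a) the surgery formula $\tau(\tilde M,\Omega)=\Delta^{\sigma(L)}D^{-\sigma(L)-m-1}\sum\dots$ applied to the genus $g+1$ presentation versus the genus $g$ one (the extra handle changes $m$ and $\sigma(L)$), (b) the $\lambda(M)$ factor in $\tau^e$, (c) the $(D\Delta^{-1})$-powers hidden in the Maslov-index corrections $\varphi(f,g)$ relating the identity parametrizations to the chosen ones, and (d) the $(\Delta^{-1}D)^s$ in the definition of $Z$ on morphisms — all of which must conspire to leave exactly $D^4\dim(i)^{-1}$. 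I expect step (3)–(4), namely isolating the extra-handle sub-tangle and evaluating it through $F$ while correctly accounting for these normalization bookkeeping terms, to be the main obstacle; the cancellation of the $q_i$'s in step (2) is mechanical given the explicit descriptions already in hand, and once the extra handle is reduced to a closed $V_i$-colored diagram its $F$-value is a classical computation (essentially $F$ of a Hopf-link-type or theta-type configuration, giving $\dim(i)$ or its inverse). This is, as the text notes, a modification of the argument in section $10.4$ of chapter $\rom{4}$ of \cite{Tu}, so the strategy is to follow that computation and carefully insert the $q_i$ and $h\circ\dot q$ modifications at the right spots.
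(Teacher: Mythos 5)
Your overall strategy coincides with the paper's: describe $\tilde g$ explicitly through equation (\ref{ggg}) and the parametrizations $f$, $F=z(f)$, feed glued vectors into the surgery presentations of Propositions \ref{presentation 1} and \ref{rp}, cancel the $q_i$-coupons, and reduce to a local evaluation of the extra-handle sub-tangle in the style of Turaev's Theorem $10.4.1$. The reduction to a local link computation, the orthogonality unless $l=i$, and the identification of the scalar $D^4\dim(i)^{-1}$ are all as in the paper.

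There is, however, one genuine gap, and it is exactly the point on which the paper's written proof spends most of its effort. Both the gluing description (\ref{ggg}) and the surgery presentation of the pairing are valid only with respect to \emph{specific} parametrizations, and on the orientation-reversed glued surface $-\overline{\mathcal{G}(\mathbf{\Sigma})}$ these two requirements single out two \emph{different} parametrizations: the pairing presentation for $\overline{\mathcal{G}(\mathbf{\Sigma})}$ requires $y(z(f))$, whereas the description of $\tilde g$ on the orientation-reversed side holds with respect to $z(y(f))$. These differ by a Dehn twist about the attached handle, so under the coherence isomorphisms $\varphi$ of section \ref{Review of the TQFT} the corresponding identifications of $\Psi(-t')$ with $\mathcal{T}^e(-\overline{\mathcal{G}(\mathbf{\Sigma})})$ differ by the twist coefficient $k_l$ on the summand where the new cap is coloured by $V_l$ (equation (\ref{zy=kyz})); the paper consequently computes a presentation of $(\tilde g_i, k_l\tilde g_{l^*})$ rather than of $(\tilde g_i,\tilde g_{l^*})$. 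Your step (2) — ``cancel the $q_i$'s against the $q_i^{-1}$'s'' — tacitly assumes the two sides are expressed in compatible coordinates, and carried out as written it yields the scalar only up to this twist coefficient. Note that your bookkeeping list (powers of $D$ and $\Delta$ from the surgery formula, the factor $\lambda(M)$, Maslov-index corrections) does not capture this discrepancy, since it is a twist coefficient $k_i$ and not a normalization power of $D\Delta^{-1}$. Once this factor is inserted and absorbed in the local tangle evaluation, the rest of your plan goes through.
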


\begin{proof}

We will use the desription of the gluing homomorphism given in section \ref{dofg} above. To do this we need to compare two parametrizations of $-\overline{\G(\mathbf{\Sigma})}.$

Recall that to use (\ref{ggg}) in general, we start with a paramtrization $f: \Sigma_t \rightarrow \overline{\mathcal{G}(\mathbf{\Sigma}(i))}$ and provide a parametrization $F:\Sigma_{t'} \rightarrow \overline{\mathcal{G}(\mathbf{\Sigma})}$ that agrees with $f$ away from the points subject to gluing. Then (\ref{ggg}) holds with respect to the isomorphisms (\ref{paraf}),(\ref{paraF}). We will write $F=z(f).$ Recall that (\ref{presentation 1}) is only valied with respect to a pair of parametrizations $(r,y(r)).$ In order to use (\ref{presentation 1}) for the pairing on $\overline{\G(\mathbf{\Sigma})}$ we use the pair of parametrizations $(z(f),y(z(f)).$ For the pairing on $\overline{\G(\mathbf{\Sigma}(i))}$ we use the pair $(f,y(f)).$ Since (\ref{ggg}) only holds with respect to isomorphisms induced by compatible parametrizations, we need to compare $z(y(f))$ with $y(z(f)).$ We see that $z(y(f))$ is $y(z(f))$ followed by a Dehn twists at the attached handle. This implies that 
\begin{equation}
\label{zy=kyz}
(k_l Y',z(y(f)))= (Y', y(z(f))),
\end{equation}
for all $Y' \in \text{Hom}(\mathbf{1},\Phi(-t',l)) \subset \Psi(-t'),$ where $l$ is such that the cap corresponding to the points subject to gluing is colored with $V_l$ or $V_{l^*}.$ To verify (\ref{zy=kyz}) recall the description of how to pass from one parametrization to another given in section \ref{Review of the TQFT}, and use that a Dehn twists followed by a reflection in $y=0$ is the same as the same reflection followed by the reverse Dehn twist.

Using proposition \ref{rp}, equation (\ref{zy=kyz}) and equation (\ref{ggg}) we see that with respect to the isomorpshims $\Psi(t) \simeq Z(\mathbf{\Sigma}(i))$ and $\Psi(-t) \simeq \mathcal{T}^e(-\overline{\G(\mathbf{\Sigma}(i)})$ induced by the pair of parametrizations $(f,y(f)),$ we have the following presentation of $(\tilde{g}_i, k_l\tilde{g}_{l^*})$ 
\begin{center}
\includegraphics{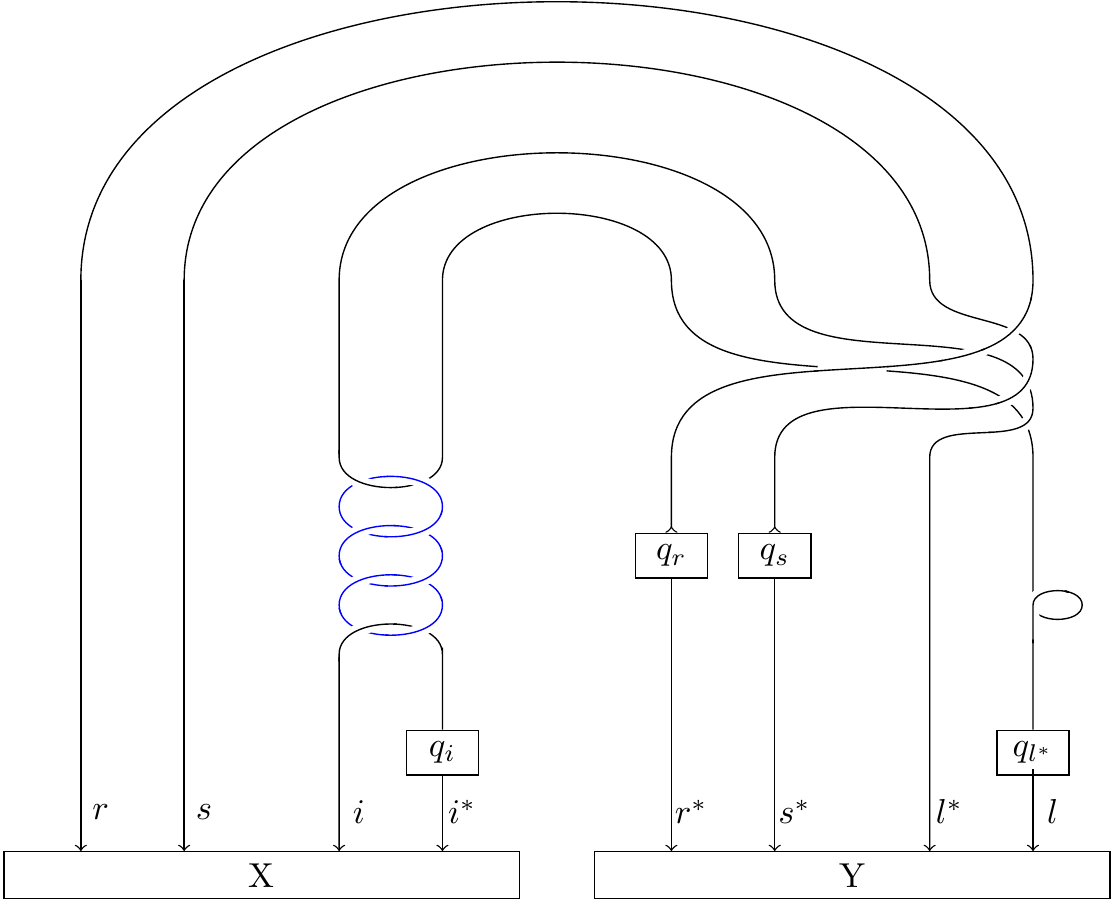}
\end{center} Here the blue unknot's are the surgery link components. For the sake of notational simplicity we have assumed that 
\[
t=(0;(V_{r},1),(V_{s},1),(V_i,1),(V_{i^*},1))
\]
The proof in the general case is easily obtained from this, as it relies on a local argument involving the surgery links.

As in the proof of theorem $10.4.1,$ we get the following local equality
\begin{center}
\includegraphics{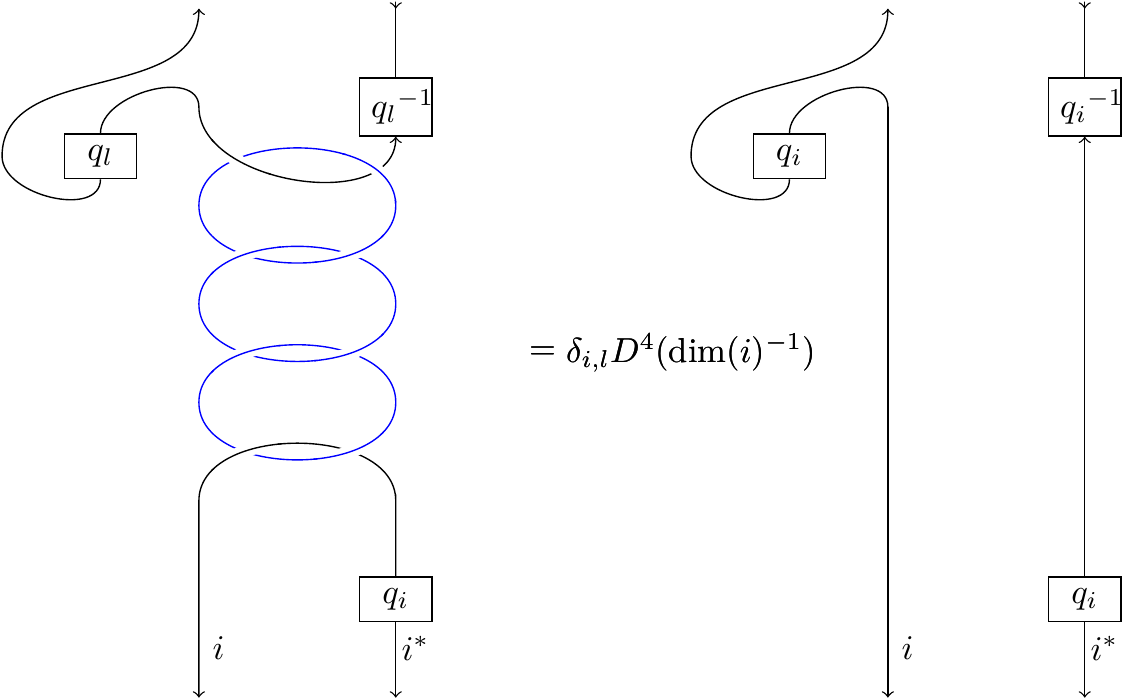}
\end{center}
Thus this pairing is zero unless $l^*=i^*.$ If so, we conclude that the claimed equation hold.
\end{proof}

\begin{pro}\label{distinctcomponentspairing}
Let $\mathbf{\Sigma}$ be a connected $I$-labeled marked surface obtained from gluing two points subject to gluing that lie on two distinct components. Consider the gluing isomorphism
\[
\tilde{g}: \bigoplus_{i \in I} Z(\mathbf{\Sigma}(i)) \overset{\sim}{\longrightarrow} Z(\mathbf{\Sigma}),
\] 
as desribed in definition \ref{defofmf}. We have
\[
(\tilde{g},\tilde{g})_{\mathbf{\Sigma}}= \sum_{i \in I} \text{dim}(i)^{-1}(\emptyarg, \emptyarg)_{\mathbf{\Sigma}(i)}.
\]
\end{pro}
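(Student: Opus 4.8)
The plan is to run the argument of the preceding proposition with the single--component description of the gluing homomorphism, equation (\ref{ggg}), replaced throughout by the distinct--component description, equation (\ref{gggg}). Since $\mathbf{\Sigma}$ is connected and the two points being glued lie on distinct components, $\mathbf{\Sigma}(i)$ must be a disjoint union $\mathbf{\Sigma}^{+}\sqcup\mathbf{\Sigma}^{-}$ with the preferred point $(p,i)$ on $\mathbf{\Sigma}^{+}$ and $(q,i^{*})$ on $\mathbf{\Sigma}^{-}$. Using the already--established compatibility of $(\emptyarg,\emptyarg)_{\mathbf{\Sigma}}$ and of $\tilde g$ with disjoint union, I may fix parametrizations $f_{\pm}:\Sigma_{t_{\pm}}\to\overline{\G(\mathbf{\Sigma}^{\pm})}$ and work with the glued parametrization $z'(f_{+}\otimes f_{-}):\Sigma_{t}\to\overline{\G(\mathbf{\Sigma})}$; with respect to the isomorphisms these induce, the gluing homomorphism is given by (\ref{gggg}), the handle--colorings of $\mathbf{\Sigma}^{\pm}$ (when their genus is positive) simply riding along untouched, since no new handle is created.

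The structural reason the normalizing scalar here is merely $\dim(i)^{-1}$, and not $D^{4}\dim(i)^{-1}$ as in the single--component case, is precisely this absence of a handle. To apply the surgery presentation of Proposition \ref{rp} to $(\emptyarg,\emptyarg)_{\mathbf{\Sigma}}$ I must compare the parametrization $y(z'(f_{+}\otimes f_{-}))$ of $-\overline{\G(\mathbf{\Sigma})}$ with the parametrization $z'(y(f_{+})\otimes y(f_{-}))$ obtained by dualizing first and gluing afterwards. In the single--component proof these differed by a Dehn twist at the attached handle, which produced the correction $k_{l}$ of equation (\ref{zy=kyz}); here there is no such handle and the two parametrizations are isotopic, so no correction intervenes. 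This is essentially the only point at which the present proof departs from the previous one.

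Granting this, I would substitute (\ref{gggg}) into the tangle of Proposition \ref{rp}. Inserting a coloring $x\otimes y\in\Psi(t_{+})\otimes\Psi(t_{-})$ through $(1_{V_{1}}\otimes\overline{d_{V_{i}}}\otimes 1_{V_{2}})\circ(x\otimes q_{i}\circ y)$, and its $t*$--dual symmetrically, turns the diagram into the disjoint surgery presentations of $\langle\emptyarg,\emptyarg\rangle$ on $\overline{\G(\mathbf{\Sigma}^{+})}$ and on $\overline{\G(\mathbf{\Sigma}^{-})}$, with the two strands colored $V_{i}$ and $V_{i^{*}}$ joined through the cap $\overline{d_{V_{i}}}$ and its mirror. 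A local computation identical in form to the one used in the proof of theorem $10.4.1$ in chapter $\rom{4}$ of \cite{Tu}, and to the local move of the previous proposition, then shows that the pairing $(\tilde g_{i},\tilde g_{l^{*}})$ vanishes unless $l=i$, and that for $l=i$ the cap together with the surgery strands contributes exactly the scalar $\dim(i)^{-1}$, leaving $(\emptyarg,\emptyarg)_{\mathbf{\Sigma}(i)}=(\emptyarg,\emptyarg)_{\mathbf{\Sigma}^{+}}\otimes(\emptyarg,\emptyarg)_{\mathbf{\Sigma}^{-}}$. Summation over $i$ yields the asserted formula.

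The main obstacle I anticipate is the scalar bookkeeping in this last step: one has to check that after substituting (\ref{gggg}) and performing the local move no residual powers of $D$ or $\Delta$ remain, and that the surviving factor is $\dim(i)^{-1}$ on the nose. This is exactly where the absence of a new handle --- hence of the Maslov--index correction in the definition of the transition isomorphisms $\varphi(\emptyarg,\emptyarg)$ and of the genus shift in the normalization $\eta$ of $\tau^{e}$ --- has to be used; everything else is a faithful transcription of the single--component argument.
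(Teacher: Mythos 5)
Your overall strategy --- substitute the distinct--component description (\ref{gggg}) of the gluing into the surgery presentation of Proposition \ref{rp}, observe that the presentation factors through $\text{Hom}(V_i\otimes V_l^*,\mathbf{1})$ to get orthogonality, and extract $\dim(i)^{-1}$ from a local move --- is the same as the paper's. But there are two concrete points where your proposal departs from what the paper does, and at least one of them is a genuine error. First, you assert that because no handle is created, the parametrizations $y(z'(f_+\otimes f_-))$ and $z'(y(f_+)\otimes y(f_-))$ are isotopic and ``no correction intervenes.'' The paper asserts the opposite: exactly as in the same--component case, $y(z'(f\otimes g))$ followed by a Dehn twist (along the gluing circle, which is a nontrivial mapping class of the marked sphere even though it is separating) equals $z'(y(f)\otimes y(g))$, so the pairing one must compute is $\langle \tilde g_i(X\otimes Y), k_l\tilde g_{l^*}(X'\otimes Y')\rangle$ with the twist coefficient $k_l$ present. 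That $k_l$ is not spurious: in the final local computation the paper explicitly cancels it against ``the twist that occurs when applying the isotopy to pull $q_i$ to the right of $q_s$.'' If you drop the Dehn--twist correction, that isotopy twist survives uncancelled and your scalar comes out as $k_i^{\pm 1}\dim(i)^{-1}$ rather than $\dim(i)^{-1}$. You have also conflated this correction with the origin of the missing $D^4$; the $D^4$ discrepancy comes from the genus count in the surgery presentation and the normalization of $\tau$, not from the presence or absence of the Dehn twist, whose contribution cancels in both cases.

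Second, your reduction to the sphere case is weaker than the paper's. Formula (\ref{gggg}) is only established for two spheres; you claim it extends to positive--genus components with the handle colorings ``riding along untouched,'' which is plausible but unproven. The paper instead gives a genuine reduction: it realizes each positive--genus component as the result of a same--component (genus--increasing) gluing $h$, writes $x=h(\tilde x)$, $y=h(\tilde y)$, and uses the already--proven same--component proposition to show that the unknown constant relating $(h(\tilde x),h(\tilde y))$ to $(\tilde x,\tilde y)$ appears identically on both sides of the desired identity and cancels; induction on the genus then reduces everything to spheres. You should either adopt that inductive argument or actually prove your extension of (\ref{gggg}); as written this step is a gap.
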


\begin{proof}
Let $\mathbf{\Sigma}_1,$ be the component containing the first point and let $\mathbf{\Sigma}_2$ be the component containing the second point. We may assume that both of these components are homeomorphic to spheres. To see this, let $ x \in Z(\mathbf{\Sigma}_1)$ and let $y \in Z(\mathbf{\Sigma}_2).$ We want to compare $\langle x,y \rangle$ with $\langle \tilde{g}(x), \tilde{g}(y) \rangle.$ We can reduce the genus by $1$ on one of the components by factorization. That is, assume $\mathbf{\Sigma}_1 \sqcup \mathbf{\Sigma}_2$ is obtained from $\tilde{\mathbf{\Sigma}}$ by gluing, where the gluing increase the genus. That is, the points subject to gluing lie on the same component. Then we may assume $x=h(\tilde{x})$ and $y=h(\tilde{y}),$ where $h$ is the gluing homomorphism. Now the task is to identify a scalar $\lambda$ such that
\[
(h(\tilde{x}), h(\tilde{y}))=\lambda (\tilde{g}\circ h(\tilde{x}), \tilde{g}\circ h( \tilde{y})).
\]
But we already know from the previous proposition that there is a $C\in K^*$ such that
\[
(h(\tilde{x}), h(\tilde{y}))=C (\tilde{x},\tilde{y})
\]
and
\[
(\tilde{g}\circ h(\tilde{x}), \tilde{g}\circ h( \tilde{y}))=(h\circ \tilde{g}(\tilde{x}), h \circ \tilde{g}(\tilde{y})) =C (\tilde{g}(\tilde{x}), \tilde{g}(\tilde{y}))
\]
In the last equation we use that the pairing is compatible with morphisms and that gluing is associative. We now see that it will suffice to find $\lambda$ such that 
\[
\lambda(\tilde{g}(\tilde{x}), \tilde{g}(\tilde{y}))=(\tilde{x},\tilde{y}).
\]
We have reduced the genus by $1$ and can proceed inductively. Thus we can assume that we deal with spheres. Thus we can use the description of the gluing given above in section \ref{dofg}.

For the sake of notational simplicity, we illustrate the case where we have two spheres with three marked points.

As in the previous proposition one starts by observing $y(z'(f\otimes g))$ followed by a Dehn twists is $z'(y(f)\otimes y(g)).$ This will allow us to adopt the same strategy.

 We consider $\langle \tilde{g}_i(X\otimes Y), k_l\tilde{g}_{l^*}(X'\otimes Y') \rangle_{\mathbf{\Sigma}}.$ The following presentations shows that the given presentation above naturally factors as a composition $P(X,X')\circ Q(Y,Y')$ where $P(X,X')$ is an element of $\text{Hom}(V_i\otimes V_l^*,\mathbf{1}) $ and $Q(Y,Y') \in \text{Hom}(\mathbf{1}, V_i \otimes V_l^*).$

\begin{center}
\includegraphics{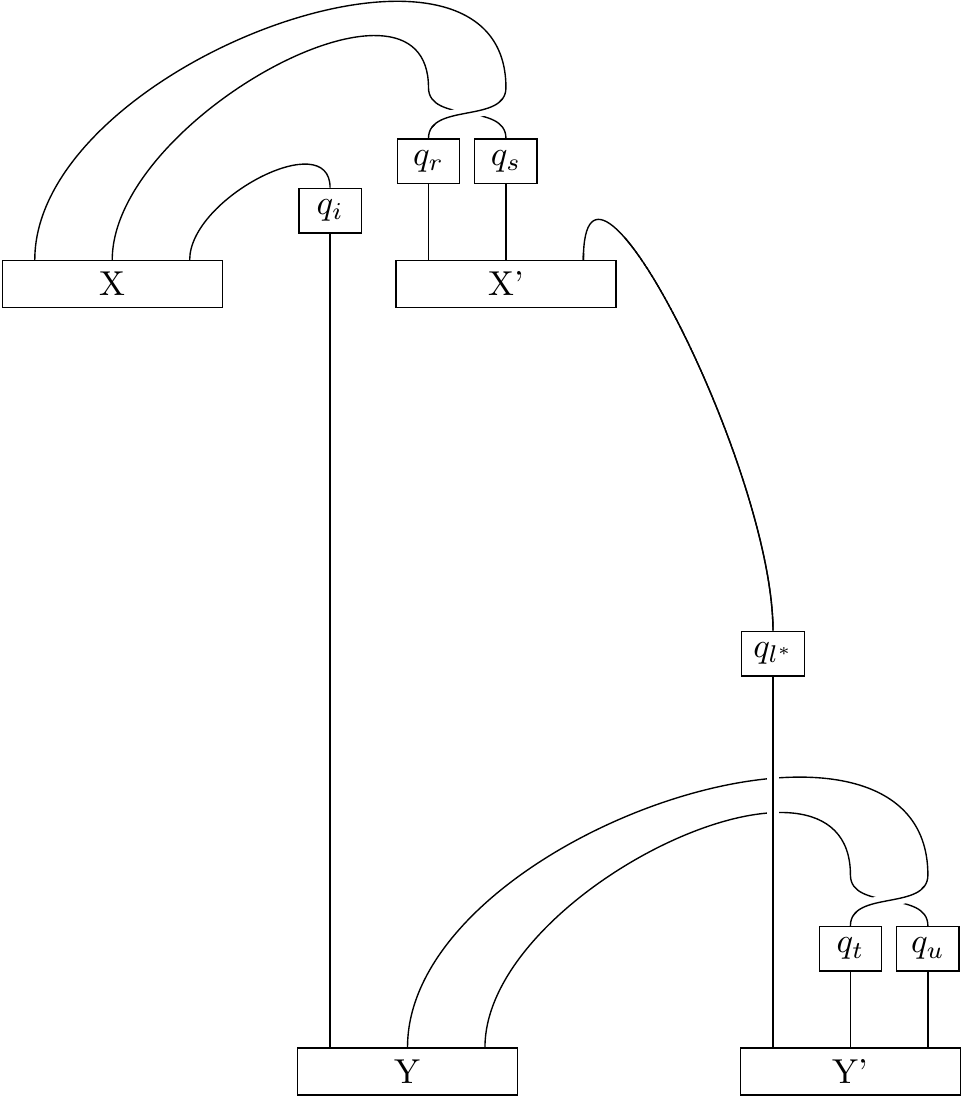}
\end{center}

Now the orthogonality follows from the fact that $\text{Hom}(V_i\otimes V_l^*,\mathbf{1}) $ is $\mathbf{0}$ if $l\not=i$ and isomorphic to $K$ otherwise. For $l=i$ we note the following equation that holds for all $f \in  \text{Hom}(V_i\otimes V_l^*,\mathbf{1})$

\begin{center}
\includegraphics{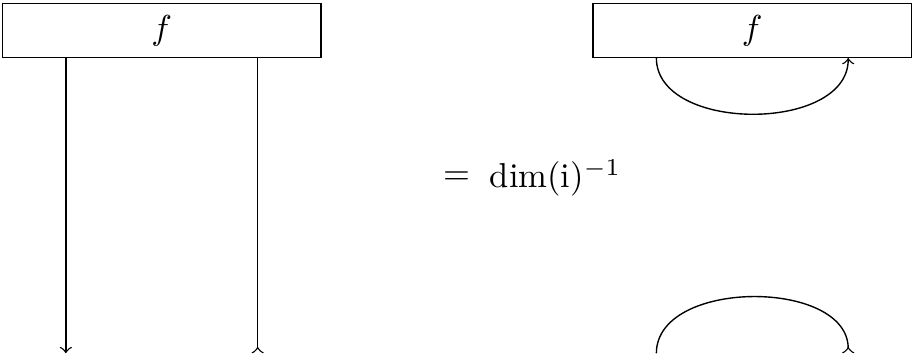}
\end{center}
Applying this to $P(X,X')$ and taking the twist that occurs into account when applying the isotopy to pull $q_i$ to the right of $q_s,$ we see that the claim holds.
\end{proof}

\begin{cor}[Compatibility of the pairing with gluing]
The pairing $(\emptyarg, \emptyarg)_{\mathbf{\Sigma}}$ is compatible with gluing. It can be rescaled to a pairing $\langle \emptyarg \mid \emptyarg \rangle_{\mathbf{\Sigma}}$ according to topological types, such that  
\[
\langle \tilde{g} \mid \tilde{g}\rangle_{\mathbf{\Sigma}}= \sum_{i \in I}\langle \emptyarg \mid  \emptyarg \rangle_{\mathbf{\Sigma}(i)}.
\] 
\end{cor}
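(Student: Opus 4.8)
The plan is to deduce both assertions from the two propositions just established: the first by a direct combination with multiplicativity, the second by exhibiting an explicit topological rescaling factor.

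First I would verify the compatibility-with-gluing axiom of Definition~\ref{duality def}. Given gluing data for a labeled marked surface $\mathbf{\Sigma}(\lambda)$, the two points subject to gluing lie either on one connected component or on two distinct components. Writing $\mathbf{\Sigma}(\lambda)$ as the disjoint union of the component(s) carrying those points and the remaining components, and using that $Z$, the pairing $(\emptyarg,\emptyarg)$, and the gluing $\tilde g$ are all compatible with disjoint union (the last by~(\ref{multofg})), one reduces to the two cases treated in Proposition~\ref{distinctcomponentspairing} and the proposition immediately preceding it. The required orthogonality of $\tilde g_\lambda\big(Z(\mathbf{\Sigma}(\lambda))\big)$ against $\tilde g_{\lambda'}\big(Z(-\mathbf{\Sigma}(\lambda'))\big)$ for $\lambda\neq\lambda'$ is exactly the vanishing proved inside those two proofs. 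One thus obtains $(\tilde g,\tilde g)_{\mathbf{\Sigma}}=\sum_{\lambda}\mu_\lambda(\emptyarg,\emptyarg)_{\mathbf{\Sigma}(\lambda)}$ with
\[
\mu_\lambda=\begin{cases} D^{4}\,\text{dim}(\lambda)^{-1}, & \text{the two glued points lie on one component,}\\ \text{dim}(\lambda)^{-1}, & \text{the two glued points lie on distinct components,}\end{cases}
\]
which in either case is invertible in $K$ and depends only on $\lambda$ and on the way the gluing attaches, hence only on the isomorphism class of $\mathbf{\Sigma}(\lambda)$ together with its ordered pair of glued points. This is the first assertion.

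For the rescaling I would first fix a function $b\colon I\to (K')^{*}$ with $b(i)b(i^{*})=\text{dim}(i)$ for all $i$, where $K'$ is the extension of $K$ obtained by adjoining a square root of $\text{dim}(i)$ for each self-dual $i$ (for $i\neq i^{*}$ one may take $b(i)=\text{dim}(i)$ and $b(i^{*})=1$). To a connected labeled marked surface $\mathbf{\Sigma}$ of type $(g;(V_{i_1},1),\dots,(V_{i_m},1))$ I assign $c(\mathbf{\Sigma}):=D^{4g}\prod_{l=1}^{m}b(i_l)\in (K')^{*}$, extend $c$ multiplicatively over disjoint union, and set $\langle\emptyarg\mid\emptyarg\rangle_{\mathbf{\Sigma}}:=c(\mathbf{\Sigma})^{-1}(\emptyarg,\emptyarg)_{\mathbf{\Sigma}}$. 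Since $c$ is a topological invariant and multiplicative, $\langle\emptyarg\mid\emptyarg\rangle$ is still a perfect pairing, still natural, and still compatible with disjoint union. It then remains to check the two recursions. If $\mathbf{\Sigma}$ arises from a same-component gluing of $\mathbf{\Sigma}(\lambda)$ of type $(g;\dots,\lambda,\lambda^{*})$, then $\mathbf{\Sigma}$ has type $(g+1;\dots)$ and $c(\mathbf{\Sigma})=D^{4}\,\text{dim}(\lambda)^{-1}\,c(\mathbf{\Sigma}(\lambda))=\mu_\lambda\,c(\mathbf{\Sigma}(\lambda))$, using $b(\lambda)b(\lambda^{*})=\text{dim}(\lambda)$; if instead $\mathbf{\Sigma}(\lambda)=\mathbf{\Sigma}^{+}\sqcup\mathbf{\Sigma}^{-}$ and $\mathbf{\Sigma}$ is the distinct-component gluing, then $c(\mathbf{\Sigma}^{+})c(\mathbf{\Sigma}^{-})=b(\lambda)b(\lambda^{*})\,c(\mathbf{\Sigma})=\text{dim}(\lambda)\,c(\mathbf{\Sigma})$, so $\mu_\lambda\,c(\mathbf{\Sigma}^{+})c(\mathbf{\Sigma}^{-})=c(\mathbf{\Sigma})$. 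In both cases, dividing the formula of the relevant proposition by $c(\mathbf{\Sigma})$ and invoking multiplicativity of $c$ yields $\langle\tilde g\mid\tilde g\rangle_{\mathbf{\Sigma}}=\sum_{\lambda}\langle\emptyarg\mid\emptyarg\rangle_{\mathbf{\Sigma}(\lambda)}$.

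The substantive work has already been done in the two propositions and the surgery presentations behind them, so the remaining steps are organisational. The one genuinely unavoidable point — and the only thing I would flag as an obstacle — is that no topological rescaling can make every $\mu_\lambda$ equal to $1$ while keeping the pairing $K$-valued: gluing two once-punctured spheres labeled $\lambda$ and $\lambda^{*}$ forces $c(\text{once-punctured }\lambda)\,c(\text{once-punctured }\lambda^{*})=\text{dim}(\lambda)$, so for self-dual $\lambda$ one must adjoin $\text{dim}(\lambda)^{1/2}$, and the sign ambiguity in this choice is exactly the freedom later organised into the symplectic normalisation. The only other care needed is the reduction of a general gluing to the one- or two-component situations, which is routine multiplicativity.
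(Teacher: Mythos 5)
Your argument is correct and follows the same route as the paper: reduce to the two propositions via multiplicativity of $Z$, of the pairing and of $\tilde g$, read off the orthogonality from the vanishing statements inside their proofs, and then absorb the constants $\mu_\lambda$ into a multiplicative, purely topological rescaling factor. The one substantive point worth recording is that your normalization and the paper's equation (\ref{normalization}) differ: you rescale by $D^{-4g}\prod_l b(i_l)^{-1}$ with $b(i)b(i^*)=\text{dim}(i)$, i.e.\ by $D^{-4g}\prod_l \text{dim}(i_l)^{-1/2}$ for the symmetric choice, while the paper's displayed factor is $D^{-4g}\prod_l\sqrt{\text{dim}(i_l)}$. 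Taking the two propositions at face value, the recursion $N(\mathbf{\Sigma})\,\mu_\lambda=N(\mathbf{\Sigma}(\lambda))$ forces the per-label factor $\phi$ to satisfy $\phi(\lambda)\phi(\lambda^*)=\text{dim}(\lambda)^{-1}$ (the genus part $D^{-4g}$ is fixed either way by the same-component case), so your version is the internally consistent one and the paper's printed label factor is inverted; it is worth flagging this explicitly rather than silently deviating. Your closing remark about having to adjoin $\text{dim}(i)^{1/2}$ for self-dual $i$ is also the paper's position (it later fixes square roots $\sqrt{\text{dim}(i)}$ invariant under $i\mapsto i^*$, formally adjoining them if necessary); just note that your asymmetric alternative $b(i)=\text{dim}(i)$, $b(i^*)=1$ for $i\neq i^*$, while perfectly adequate for this corollary, destroys the $i\mapsto i^*$ symmetry of the normalization that the subsequent self-duality and symplectic-scaling analysis relies on.
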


It is easily verified that the following normalization has the given property. Since the pairing is multiplicative with respect to disjoint union, it is enough to specify the normalization on connected $\mathbf{\Sigma}.$ Assume therefore that $\mathbf{\Sigma}$ is of genus $g$ with labels $i_1,...,i_k.$ Then the normalization is given by
\begin{equation}
\label{normalization}
\langle\emptyarg \mid \emptyarg\rangle_{\mathbf{\Sigma}}=\left(D^{-4g} \prod_{l=1}^k \sqrt{\text{dim}(i_l)} \right)( \emptyarg , \emptyarg)_{\mathbf{\Sigma}}.
\end{equation}

It only remains to prove that the pairing is compatible with orientation reversal.

\begin{pro}[Compatibility with orientation reversal]
\label{compatibility with orientation reversal}

The two pairings $\langle \emptyarg \mid \emptyarg \rangle$ and $(\emptyarg, \emptyarg)$ are both compatible with orientation reversal. 
\end{pro}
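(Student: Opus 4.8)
The plan is as follows. First I would reduce to the case where $\mathbf{\Sigma}$ is connected: both $(\emptyarg,\emptyarg)$ and $\langle\emptyarg\mid\emptyarg\rangle$ are multiplicative with respect to disjoint union, and for a disjoint union the scalar witnessing compatibility with orientation reversal may be taken to be the product of the scalars of the components (using that $-(\mathbf{\Sigma}_1\sqcup\mathbf{\Sigma}_2)=-\mathbf{\Sigma}_1\sqcup -\mathbf{\Sigma}_2$ and that the base pairing is symmetric with respect to $\text{Perm}$). Next, applying the sought identity $\mu\,(w,v)_{-\mathbf{\Sigma}}=(v,w)_{\mathbf{\Sigma}}$ twice forces $\mu_{-\mathbf{\Sigma}}=\mu_{\mathbf{\Sigma}}^{-1}$, so it suffices to exhibit a single invertible scalar $\mu\in K^{*}$ depending only on the topological type and the labels of $\mathbf{\Sigma}$. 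Finally, I would note that it is enough to treat $(\emptyarg,\emptyarg)$: since the normalisation factor in (\ref{normalization}) depends only on the genus and on the numbers $\dim(i_l)$, and the genus of $-\mathbf{\Sigma}$ equals that of $\mathbf{\Sigma}$ while $\dim(i^{*})=\dim(i)$, this factor is unchanged under $\mathbf{\Sigma}\mapsto -\mathbf{\Sigma}$, so the same $\mu$ works for $\langle\emptyarg\mid\emptyarg\rangle_{\mathbf{\Sigma}}$.

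For the connected case I would use the surgery presentation of Proposition \ref{rp}. Fix a parametrization $f:\Sigma_t\to\overline{\G(\mathbf{\Sigma})}$; then Proposition \ref{rp} produces the parametrization $u(f):\Sigma_{t*}\to\overline{\G(-\mathbf{\Sigma})}$ together with a surgery presentation of $(\emptyarg,\emptyarg)_{\mathbf{\Sigma}}$ relative to the induced isomorphisms $\Psi(t)\simeq Z(\mathbf{\Sigma})$ and $\Psi(t*)\simeq Z(-\mathbf{\Sigma})$. I would then apply Proposition \ref{rp} a second time, now to $-\mathbf{\Sigma}$, taking as the initial parametrization $u(f):\Sigma_{t*}\to\overline{\G(-\mathbf{\Sigma})}$ itself. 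Since $(t*)*=t$ and the construction $u$ is, up to an isotopy of weak $e$-homeomorphisms, an involution, this yields a surgery presentation of $(\emptyarg,\emptyarg)_{-\mathbf{\Sigma}}$ relative to the very same pair of isomorphisms $\Psi(t*)\simeq Z(-\mathbf{\Sigma})$, $\Psi(t)\simeq Z(\mathbf{\Sigma})$ — up to a framing/Dehn-twist correction in the change-of-parametrization formula of Section \ref{Review of the TQFT}, which is a power of the $k_i$'s, $D$ and $\Delta$ depending only on the type (compare equation (\ref{zy=kyz})). Thus the statement reduces to comparing the two resulting tangles, which represent pairings $\Psi(t)\times\Psi(t*)\to K$ and $\Psi(t*)\times\Psi(t)\to K$.

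It then remains to compare the two tangle diagrams. The diagram attached to $-\mathbf{\Sigma}$ is obtained from that attached to $\mathbf{\Sigma}$ by three modifications: (i) interchanging the two arguments, which amounts to reading the tangle upside down and is absorbed by the self-duality $\langle\emptyarg,\emptyarg\rangle_{\Sigma}\circ\text{Perm}=\langle\emptyarg,\emptyarg\rangle_{-\Sigma}$ of Turaev's base pairing already underlying Proposition \ref{presentation 1}; (ii) recolouring each marked strand $V_{i_p}$ by $V_{i_p^{*}}$ and each coupon $q_{i_p}$ by $q_{i_p^{*}}$, which by the local ribbon identities relating $\cap_{V_{i_p}}^{-}$ with $\cap_{V_{i_p^{*}}}^{-}$ and absorbing the twist that arises when sliding $q$-coupons — exactly as in the proofs of Proposition \ref{distinctcomponentspairing}, the preceding proposition, and Turaev's Theorem $10.4.1$ of chapter $\rom{4}$ — contributes a scalar depending only on $i_p$ and on the chosen isomorphisms $q$; and (iii) a global power of $D$ and $\Delta$ from the surgery-link normalisation, depending only on the genus. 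The product of all these contributions is the required invertible $\mu\in K^{*}$, and by construction it depends only on the topological type and the labels of $\mathbf{\Sigma}$. The hard part here is the diagrammatic bookkeeping in steps (ii) and (iii): identifying $u(u(f))$ with $f$ and tracking precisely the half-twists and reflections introduced by $r$, $s$ and $\tilde{s}$ so that the two tangle diagrams genuinely line up. This is routine but delicate, and is entirely parallel to the surgery-diagram arguments already carried out in this section.
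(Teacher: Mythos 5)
Your proposal is correct and follows essentially the same route as the paper: reduce to the unnormalized pairing (the normalization being invariant under $\mathbf{\Sigma}\mapsto-\mathbf{\Sigma}$), use the surgery presentation together with the involutivity of the induced parametrization ($u(u(f))=f$, coming from $\tilde{s}^2=\mathrm{id}$), invoke the symmetry $\langle x,y\rangle_{t}=\langle y,x\rangle_{-t}$ of Turaev's base pairing, and extract the scalar $\mu=\prod_l\mu(i_l)$ from sliding the $q_{i_l}$-coupons across the diagram. The only cosmetic difference is that you hedge with a possible genus-dependent power of $D,\Delta$; the paper's computation shows no such factor arises, since the base pairing is strictly self-dual, but this does not affect the validity of the proposition as stated.
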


\begin{proof} Since the normalization factor is the same for $\mathbf{\Sigma}$ and $-\mathbf{\Sigma}$ we see that it is enough to consider $(\emptyarg, \emptyarg).$

For $v \in Z(\mathbf{\Sigma})$ and $w \in Z(-\mathbf{\Sigma})$ we want to find a scalar $\mu$ such that $\mu (v,w)_{\mathbf{\Sigma}}= (w,v)_{\mathbf{\Sigma}}.$

For the moment let $\Sigma'$ be an extended surface.  Recall that the to use the presentation of the pairing as given in proposition (\ref{presentation 1}) we choose a parametrization $f$ of $\Sigma'$ and then we constructed a parametrization $y(f):=(-f)\circ s.$  These give isomorphisms $\mathcal{T}^e(\Sigma') \simeq \Psi(t_0)$ and $\mathcal{T}^e(-\Sigma') \simeq \Psi(-t_0).$ With respect to the parametrizations we can use the presentation of (\ref{presentation 1}). For $x\in \Psi(t_0)$ and $y\in \Psi(-t_0)$ it is an easy exercise to verify
\[
\langle x,y \rangle_{t_0} = \langle y,x \rangle_{-t_0}. 
\]

This identity is also necessary for self-duality, because if we take $y(f)$ as the parametrization $\Sigma_{-t_0} \rightarrow \mathbf{\Sigma}'$ then we see that $y(y(f))=f.$ This follows from the fact that $s^2=\text{id}$ which can be seen from the fact that counter clokwise half twists at the arcs followed by a reflection in the $y$-plane is the same as a reflection in the $y$-plane followed by clockwise half twists at the arcs.

Now choose a parametrization $f: \Sigma_t \rightarrow \overline{\mathcal{G}(\mathbf{\Sigma})}.$  This induce a parametrization
$u(f)=(-f)\circ \tilde{s}: \Sigma_{t*} \rightarrow \overline{\mathcal{G}(-\mathbf{\Sigma})}.$ Here $\tilde{s}$ is simply the reflection in the $y$-plane. With respect to these isomorphisms we see that $\langle \emptyarg , \emptyarg \rangle_{\mathbf{\Sigma}}$ is given as $\langle \emptyarg , \dot{q} \rangle_{t},$ where $\dot{q}$ is given by postcomposing suitably in each factor of the tensor product.

Now choose $g=u(f): \Sigma_{t*} \rightarrow \overline{\mathcal{G}(-\mathbf{\Sigma})}.$ Observe $u(g)=f$ Thus for $(v,w) \in \Psi(t) \times \Psi(t*)$ we simply need to compare $\langle v, \dot{q}(w)\rangle_{t}$ with $\langle w, \dot{q}(v) \rangle_{t*}.$

Assume the labeled marked points of $\mathbf{\Sigma}$ are $i_1,...,i_k.$ Let $\mu(i) \in K^*$ be defined by the following equation

\begin{center}
\includegraphics{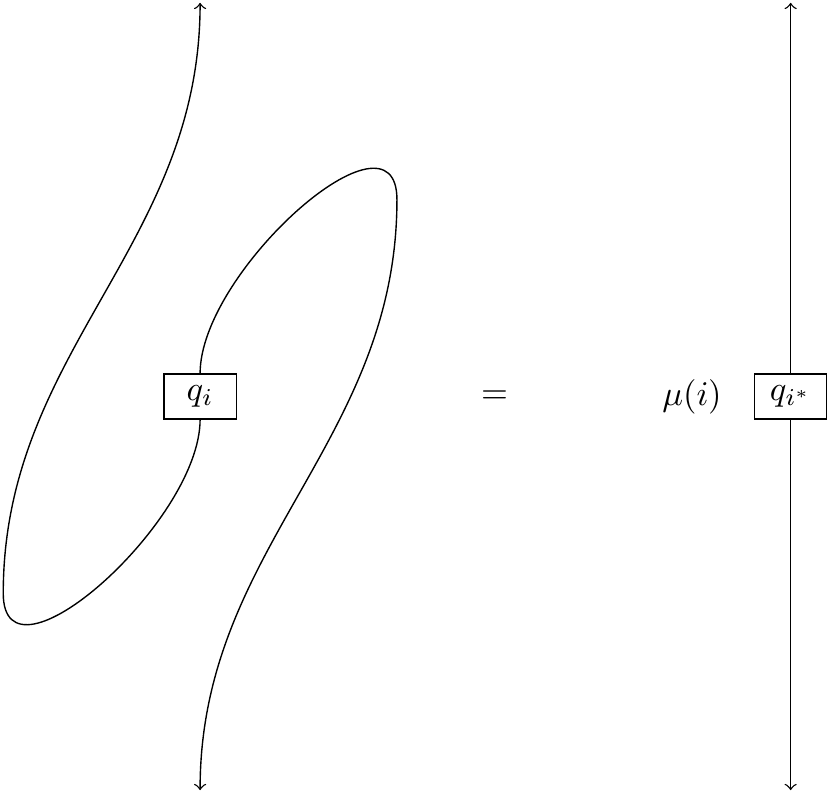}
\end{center}

Let $\mu = \mu_{i_1} \cdots \mu_{i_k}.$ Now use the fact that $\langle v, \dot{q}(w) \rangle_{t}=\langle \dot{q}(w),v\rangle_{-t}.$ Now use the surgery presentation given in proposition (\ref{presentation 1}). In the presentation of $\langle \dot{q}(w),v\rangle_{-t}$ pull over the coupons colored with $q_{i_l}$ from left to right to obtain 
\[
( v,w   )_{\mathbf{\Sigma}} =\mu (w,v)_{-\mathbf{\Sigma}}
\] 
This finishes the proof. \end{proof}

\paragraph*{\textbf{Remark.}} We observe that if $i\not=i^*$ it is possible to consistently choose $q_i$ and $q_{i^*}$ such that $\mu(i)$ and $\mu(i^*)$ takes any values, as long as $\mu(i)\mu(i^*)=1.$ This follows from the fact that turning a coupon upside down, and then turning the resulting morphism upside down will yield the original morphism. Call this operation $F.$ Then the equation above reads $F(q_i)=\mu(i)q_{i^*}.$ Similarly it can be seen that if $i^*=i$ then we must have $\mu(i)^2=1.$ Using the axioms for the unit object of a modular tensor category, it is also easily seen that $\mu(0)=1.$

We note the following result, that allow us to define $\mu$ on the self-dual objects independently of $q.$

\begin{pro}[$\mu$ is weldefined on self-dual objects] Assume that $i\in I$ satisfies $i=i^*.$ Then $\mu(i)$ is independent of $q_i.$
\end{pro}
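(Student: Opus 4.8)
The plan is to reduce the statement to a one-line observation about a rank-one free module, using the characterisation of $\mu(i)$ recorded in the Remark just above: $\mu(i)$ is the unique scalar for which $F(q_i)=\mu(i)\,q_{i^*}$, where $F$ denotes the diagrammatic operation of rotating a coupon (``turning a coupon upside down''). When $i=i^{*}$ we have $q_{i^{*}}=q_i$, and $F$ carries $\text{Hom}(V_{i^{*}},V_i^{*})=\text{Hom}(V_i,V_i^{*})$ to $\text{Hom}(V_i,V_{i^{*}}^{*})=\text{Hom}(V_i,V_i^{*})$; since $V_i$ is simple and $V_i^{*}\cong V_{i^{*}}=V_i$, this Hom-module is free of rank one over $K=\text{End}(\mathbf 1)$, with any isomorphism $V_i\to V_i^{*}$ as a generator. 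So $F$ becomes a $K$-linear endomorphism of a free rank-one module, and $\mu(i)$ is, by definition, the scalar by which it acts.

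First I would check that $F$ is $K$-linear in the colour of the coupon. If a coupon is coloured by $c\cdot f$ with $c\in K$ and $f\colon V_i\to V_i^{*}$, then up to the unit constraints this coupon is the $f$-coloured coupon together with a disjoint closed component coloured by $c$; such a scalar factor is central and may be isotoped anywhere in the diagram and pulled out. Since the elementary moves making up $F$ (duality morphisms, braidings, twists) and the Reshetikhin--Turaev functor $F_{\mathcal V}$ used to evaluate the diagram are all $K$-linear and do not interact with this floating scalar, we obtain $F(c\,f)=c\,F(f)$.

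Then, given two admissible choices $q_i,q_i'\colon V_i\to V_i^{*}$---which may legitimately be compared precisely because $i=i^{*}$---simplicity of $V_i$ forces $q_i'=c\,q_i$ for a unique $c\in K^{*}$. Writing $\mu(i),\mu'(i)$ for the scalars attached to $q_i,q_i'$, linearity of $F$ gives
\[
\mu'(i)\,q_i'=F(q_i')=F(c\,q_i)=c\,F(q_i)=c\,\mu(i)\,q_i=\mu(i)\,q_i'.
\]
As $q_i'$ generates the free rank-one $K$-module $\text{Hom}(V_i,V_i^{*})$, we may cancel it to conclude $\mu'(i)=\mu(i)$, which is the claim.

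I expect the only genuinely delicate point to be the first step: making it airtight that the rotation operation $F$, followed by evaluation with $F_{\mathcal V}$, is $K$-linear rather than merely $K$-semilinear---that is, that rotating the coupon introduces no conjugation or rescaling of the scalar $c$. This is pure bookkeeping with the centrality of $\text{End}(\mathbf 1)$ and the functoriality of $F_{\mathcal V}$, but it is the place where one must be explicit; after that, the result is immediate from the structure of a rank-one free module.
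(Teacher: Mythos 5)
Your proof is correct and is exactly the argument the paper leaves implicit: the paper states this proposition without proof, but your computation is precisely the verification of the scaling formula $\mu(i,w)=\tfrac{w_i}{w_{i^*}}\mu(i)$ of Proposition \ref{scalingofmu} specialised to $i=i^*$, where the numerator and denominator cancel. The one point you flag as delicate --- $K$-linearity of the rotation operation $F$ --- is indeed the only thing to check, and your justification via centrality of $\mathrm{End}(\mathbf 1)$ and $K$-linearity of the Reshetikhin--Turaev functor is the standard and correct one.
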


The fact that $\mu(i)$ might be $-1$ for $i=i^*$ leads us to consider the strict self-duality question in section \ref{FG of a MTC}, where we introduce a new algebraic concept associated to a modular tensor category. As will be clear below, this will in may cases produce a very interesting normalization of the duality pairing, that will be strictly self-dual. 

\section{Unitarity}

Consider a complex vector space $W$ with scalar multiplication $(\lambda,w) \mapsto \lambda.w$ Let $\overline{W}$ be the complex vector space with the same underlying set and scalar multiplication given by $(\lambda,w) \mapsto \overline{\lambda}.w.$ Here $\overline{\lambda}$ is the complex conjugate of $\lambda.$

\begin{df}[Unitarity]
\label{unitarity} Let $(V,g)$ be a modular functor based on $\Lambda$ and $\C.$ A unitary structure on $V$ is a positive definite hermitian form
\[
(\emptyarg, \emptyarg)_{\mathbf{\Sigma}}: V(\mathbf{\Sigma})\otimes \overline{V(\mathbf{\Sigma})} \rightarrow \C,
\]
subject to the following axioms.

\paragraph{Naturality.} Let $\mathbf{f}=(f,s): \mathbf{\Sigma}_1 \rightarrow \mathbf{\Sigma}_2$ be a morphism between $\Lambda$-labeled marked surfaces. Then
\begin{equation}
(V(f),V(f))_{\mathbf{\Sigma}_2} =(\emptyarg, \emptyarg)_{\mathbf{\Sigma}_1}
\end{equation}
\paragraph{Compatibility with disjoint union.} Consider a disjoint union of $\Lambda$-labeled marked surface $\mathbf{\Sigma}=\mathbf{\Sigma}_1 \sqcup \mathbf{\Sigma}_2.$ Composing with a permutation of the factors, the modular functor $V$ provide an isomorphism
\[
\eta:V(\mathbf{\Sigma})\otimes \overline{V(\mathbf{\Sigma})} \overset{\sim}{\longrightarrow} V(\mathbf{\Sigma}_1)\otimes \overline{V(\mathbf{\Sigma}_1)} \otimes V(\mathbf{\Sigma}_2) \otimes \overline{V(\mathbf{\Sigma}_2)}.
\]
We demand that with respect to the natural isomorphism $\C \otimes \C \simeq \C$ we have:
\begin{equation}
( \emptyarg, \emptyarg)_{\mathbf{\Sigma}}=\left((\emptyarg, \emptyarg)_{\mathbf{\Sigma}_1} \otimes (\emptyarg, \emptyarg)_{\mathbf{\Sigma}_2}\right) \circ \eta.
\end{equation}.

\paragraph{Compatibility with gluing.} Let $\mathbf{\Sigma}$ be a $\Lambda$ labeled marked surface obtained from gluing. Consider the gluing isomorphism
\[
g: \bigoplus_{\lambda \in \Lambda} V(\mathbf{\Sigma}(\lambda)) \overset{\sim}{\longrightarrow} V(\mathbf{\Sigma}),
\] 
as desribed in the definition of a modular functor. Clearly $g$ also induce an isomorphism
\[
g: \bigoplus_{\lambda \in \Lambda}\overline{V(\mathbf{\Sigma}(\lambda))} \overset{\sim}{\longrightarrow} \overline{V(\mathbf{\Sigma})}.
\] We have
\begin{equation}
(g,g)_{\mathbf{\Sigma}}= \sum_{ \lambda \in \Lambda} \mu_{\lambda}(\emptyarg, \emptyarg)_{\mathbf{\Sigma}(\lambda)},
\end{equation}
where $\mu_{\lambda} \in \R_{>0}$ for all $\lambda.$

If the modular functor $(V,g)$ also has a duality pairing we demand the unitary structure and the duality is compatible in the following sense.

\paragraph{Compatibility with duality.}

For all labeled marked surfaces $\mathbf{\Sigma}$, we demand that the following diagram is commutative up to a scalar $\rho(\mathbf{\Sigma})$ depending only on the isomorphism class of $\mathbf{\Sigma}.$

\begin{equation}
\label{unidua}
\begin{tikzcd}[swap]
V(\mathbf{\Sigma}) \arrow{r}[swap]{\simeq}
\arrow{d}{\simeq}
& V(-\mathbf{\Sigma})^* \arrow{d}[swap]{\simeq}
\\ \overline{V(\mathbf{\Sigma})^*} \arrow{r}[swap]{\simeq} & \overline{V(-\mathbf{\Sigma})}
\end{tikzcd}
\end{equation}

Here the horizontal isomorphisms are induced by the duality pairing whereas the vertical isomorphisms are induced by the unitary structure.

\end{df}

We allow the $\mu_{\lambda}$ to depend on the isomorphism class of $(\mathbf{\Sigma},(p,q)).$

We now make explicit what the isomorphisms of the diagram $(\ref{unidua})$ are. We start with composition
\[
\omega: V(\mathbf{\Sigma}) \overset{\simeq}{\longrightarrow} V(-\mathbf{\Sigma})^* \overset{\simeq}{\longrightarrow} \overline{V(-\mathbf{\Sigma})}.
\]
Let $\langle \emptyarg, \emptyarg \rangle$ be the duality pairing. Let $(\emptyarg, \emptyarg)$ be the Hermitian form. The first map is given by
\[
V(\mathbf{\Sigma}) \ni f \mapsto \langle \emptyarg , f \rangle_{-\mathbf{\Sigma}} : V(-\mathbf{\Sigma}) \rightarrow \C.
\]

The second map is the inverse of the linear isomorphism $\overline{V(-\mathbf{\Sigma})}\overset{\simeq}{\longrightarrow} V(-\mathbf{\Sigma})^*$ given by
\[
\overline{V(-\mathbf{\Sigma})} \ni u \mapsto ( \emptyarg , u )_{-\mathbf{\Sigma}} : V(-\mathbf{\Sigma}) \rightarrow \C.
\]

Thus $\omega(f)$ is defined by
\begin{equation}\label{omega}
\langle x, f \rangle_{-\mathbf{\Sigma}} = (x, \omega(f))_{-\mathbf{\Sigma}},
\end{equation}
for all $x$ in $V(-\mathbf{\Sigma}).$

We now consider the composition
\[
\phi: V(\mathbf{\Sigma}) \overset{\simeq}{\longrightarrow} \overline{V(\mathbf{\Sigma})^*} \overset{\simeq}{\longrightarrow} \overline{V(-\mathbf{\Sigma})}.
\]

The first is the linear map

\[
V(\mathbf{\Sigma}) \ni f \mapsto ( \emptyarg , f )_{-\mathbf{\Sigma}} : V(\mathbf{\Sigma}) \rightarrow \C.
\]
The second map is the inverse of the linear isomorphism $\overline{V(-\mathbf{\Sigma})}\overset{\simeq}{\longrightarrow} \overline{V(\mathbf{\Sigma})^*}$ given by
\[
\overline{V(-\mathbf{\Sigma})} \ni u \mapsto \langle \emptyarg , u \rangle_{-\mathbf{\Sigma}} : V(\mathbf{\Sigma}) \rightarrow \C.
\]

Thus $\phi(f)$ is defined by
\begin{equation}\label{phi}
\langle y, \phi(f) \rangle_{\mathbf{\Sigma}} = (y, f)_{\mathbf{\Sigma}},
\end{equation}
for all $y$ in $V(\mathbf{\Sigma}).$

Projective commutativity of $(\ref{unidua})$ can be reformulated as the existence of $\rho(\mathbf{\Sigma})$ in $ \C$ with
\begin{equation}\label{projunidua}
\phi= \rho(\mathbf{\Sigma}) \omega.
\end{equation}

\section{Unitary structure from a unitary MTC}

Recall the definition of a unitary modular tensor category $(\mathcal{V},(V_i)_{i\in I})$ with conjugation $f\mapsto \overline{f}$ as defined in section $5.5.$ of chapter $\rom{5}$ in \cite{Tu}. Recall that $K=\C$ in this case.

Assume we are given a unitary modular tensor category. For an $e$-surface $\Sigma$ let $(\emptyarg, \emptyarg)_{\Sigma}$ be the Hermitian form on $\mathcal{T}^e(\Sigma)$ as defined in section $10$ of chapter $\rom{4}$ in \cite{Tu}.

\begin{thm}[Unitarity]
\label{HT}
Let $(\mathcal{V},(V_i)_{i\in I})$ be a unitary modular tensor category. Let $\mathbf{\Sigma}$ be an $I$-labeled marked surface. Consider the positive definit Hermitian form
\[
(\emptyarg, \emptyarg)_{\mathbf{\Sigma}}=(\emptyarg, \emptyarg)_{\overline{\G(\mathbf{\Sigma})}}.
\]
This defines a unitary structure on $Z_{\mathcal{V}}$ compatible with duality.
\end{thm}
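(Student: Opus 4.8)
The plan is to run the verification through the same three-step pattern used for the duality pairing in Theorem \ref{DT}: establish naturality, compatibility with disjoint union, and compatibility with gluing for the Hermitian form $(\emptyarg,\emptyarg)_{\mathbf{\Sigma}}:=(\emptyarg,\emptyarg)_{\overline{\G(\mathbf{\Sigma})}}$, and then settle the extra axiom, compatibility with duality, that is specific to the unitary setting. First I would recall from section $10$ of chapter $\rom{4}$ of \cite{Tu} that the Hermitian form $(\emptyarg,\emptyarg)_{\Sigma}$ on $\mathcal{T}^e(\Sigma)$ is positive definite (this is where unitarity of $\mathcal{V}$ enters), natural with respect to (weak) $e$-homeomorphisms, and multiplicative under disjoint union. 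Since $\overline{\G(-\mathbf{\Sigma})}$ and $\overline{\G(\mathbf{\Sigma})}$ have the same underlying $e$-surface except that the markings $(V_{i_p},1)$ are replaced by $(V_{i_p^*},1)$, and since $\mathcal{G}$ is functorial, naturality and disjoint-union compatibility of $(\emptyarg,\emptyarg)_{\mathbf{\Sigma}}$ follow immediately from the corresponding properties of $(\emptyarg,\emptyarg)_{\overline{\G(\mathbf{\Sigma})}}$, exactly as in the proof that $(\emptyarg,\emptyarg)_{\mathbf{\Sigma}}$ is functorial and compatible with disjoint union.

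The substantive point is compatibility with gluing. Here I would reuse the explicit description of the gluing homomorphism from section \ref{dofg} (both the same-component case, equation (\ref{ggg}), and the distinct-component case, equation (\ref{gggg})), together with a surgery/ribbon-graph presentation of the Hermitian form analogous to Proposition \ref{presentation 1} and Proposition \ref{rp}. Turaev's Hermitian form on $\mathcal{T}^e(\Sigma)$ has a surgery presentation very close to that of the duality pairing — it is essentially the duality pairing precomposed with the conjugate-linear involution coming from the conjugation $f\mapsto\overline{f}$ of $\mathcal{V}$ — so the computation of the projective factors $\mu_i$ proceeds by the same local surgery-link manipulations as in the propositions preceding the Corollary on compatibility of the pairing with gluing. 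I expect to recover $\mu_i = D^4\,\mathrm{dim}(i)^{-1}$ when the two glued points lie on the same component and $\mu_i = \mathrm{dim}(i)^{-1}$ when they lie on distinct components, with $\mu_i$ real and positive because $\mathrm{dim}(i)>0$ in a unitary modular tensor category; the same normalization (\ref{normalization}) then upgrades this to strict additivity if desired.

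Finally, for compatibility with duality I would verify the projective commutativity (\ref{projunidua}), i.e.\ produce $\rho(\mathbf{\Sigma})\in\C$ with $\phi=\rho(\mathbf{\Sigma})\omega$, where $\omega$ and $\phi$ are the two composites defined via $\langle\emptyarg,\emptyarg\rangle$ and $(\emptyarg,\emptyarg)$ through (\ref{omega}) and (\ref{phi}). Since both the duality pairing and the Hermitian form on $\mathcal{T}^e$ are built from the same operator invariant $\tau^e$ and differ only by the conjugation involution $u\mapsto\overline u$ and the half-twist/orientation-reversal bookkeeping encoded in $\zeta=\mathcal{T}^e(r)\circ\tilde h\circ\dot q$, the two maps $\omega$ and $\phi$ agree up to a scalar that can be read off from a surgery presentation and that depends only on the topological type of $\mathbf{\Sigma}$ (and on the chosen $q_i$); relating $\mathcal{H}$'s Hermitian form to its duality pairing is done in section $10$ of chapter $\rom{4}$ of \cite{Tu}, and I would cite that directly. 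The main obstacle will be the gluing computation — specifically, keeping the conjugate-linear structure, the half-twists in $r$ and $s$, and the coupons $q_i$ consistently tracked through the surgery presentation so that the projective factors come out real and positive; once that local surgery-link bookkeeping is in hand, the remaining axioms are formal consequences of the corresponding statements for $\mathcal{T}^e$ already established in \cite{Tu}.
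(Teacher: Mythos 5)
Your plan follows the paper's proof almost step for step: cite Turaev (chapter \rom{4}, section $10$) for positive definiteness, naturality under weak $e$-homeomorphisms and multiplicativity under disjoint union; compute the gluing factors using the explicit description of $\tilde g = g\circ h\circ\dot q$ from section \ref{dofg} together with theorem $10.4.1$; and obtain the duality-compatibility scalar $\rho(\mathbf{\Sigma})$ by conjugating a surgery presentation of (\ref{omega}) and comparing with (\ref{phi}). Two points in your write-up are imprecise enough to flag.

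First, your predicted gluing factors $D^4\dim(i)^{-1}$ and $\dim(i)^{-1}$ omit the contribution of the chosen isomorphisms $q_i: V_{i^*}\to V_i^*$. When you unwind $h\circ\dot q$ against the conjugate-linear Hermitian form, the coupon $q_i$ meets its conjugate and produces $\overline{q_i}\circ q_i = \lambda_i\,\mathrm{id}_{V_{i^*}}$, so the correct factors are $\lambda_i$ times powers of $D$ and $\dim(i)^{-1}$ (this $\lambda_i$ is exactly the quantity $\sigma(i)=\lambda_{i^*}\mu(i)$ is built from in the duality-compatibility step, so you cannot discard it there either). Consequently your justification that the factors are positive ``because $\dim(i)>0$'' is incomplete: you also need $\lambda_i\in\R_{>0}$, which follows from the positivity axiom of a unitary modular tensor category applied to $\mathrm{tr}(q_i\circ\overline{q_i})=\lambda_i\dim(i)$, not from $\dim(i)>0$ alone. (One may normalize $\overline{q_i}\circ q_i=\mathrm{id}$, but that is a later refinement in the paper, not part of this theorem.) Second, naturality is not quite immediate from Turaev's naturality for $\mathcal{H}$: a morphism of labeled marked surfaces carries an integer $s$ and $Z(f,s)=(\Delta^{-1}D)^s\mathcal{H}(\mathcal{G}(f,s))$, so you must also invoke $\overline{\Delta^{-1}D}=(\Delta^{-1}D)^{-1}$ (proved by Turaev) to see that the framing factor is unitary and drops out of the Hermitian form. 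Neither point derails your strategy, but both are needed to make the argument close.
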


\begin{proof}
 It is proven by Turaev, that the induced Hermitian form is natural with respect to weak $e$-morphisms, and that it is multiplicative with respect to disjoint union. As Turaev also proves that $\overline{\Delta^{-1}D}=(\Delta^{-1}D)^{-1}$ these two properties carry over. All of this is proven in section $10$ of chapter $\rom{4}.$

Let us now prove that it is compatible with gluing. We first consider the case where the two points lie on the same component. Since the gluing as well as the Hermitian form is multiplicative with respect to disjoint union, as well as natural with respect to morphisms, we may assume that we are in the situation desribed in section \ref{dofg}. We adopt the the notation form the first subsection of this section.

It follows directly from theorem $10.4.1$ in section $10.4$ of chapter $\rom{4}$ in \cite{Tu} that if $i\not=j$ then $(\tilde{g}_i,\tilde{g}_j)_{\mathbf{\Sigma}}=0.$ Let $x,y \in \text{Hom}(\mathbf{1},\Phi(t,j)) \subset Z(\mathbf{\Sigma}(i)).$  Using theorem $10.4.1,$ linearity of $\text{tr}$ and $\C \simeq \text{End}(V_{i^*}),$ we get that
\begin{align*}
(\tilde{g}_i(x),\tilde{g}_i(y))_{\mathbf{\Sigma}} &=D^{g}\left(\text{dim}(i)\prod_{c=1}^{g} \text{dim}(j_c)\right)^{-1}\text{tr}(\tilde{g}_i(x)\circ \overline{\tilde{g}_i(y)}).
\end{align*}
Here $g$ is the genus of $\mathbf{\Sigma}.$ Unwinding the gluing formula and using properties of the conjugation as well as of the trace we get that 
\begin{align*} & D^{g}(\left(\text{dim}(i)\text{dim}(j)\right)^{-1} \text{tr}((1_{W}\otimes q_{i} \otimes 1_{R})\circ x\circ \overline{y} \circ (1_{W}\otimes \overline{q_{i}} \otimes 1_{R}))
\\ &= D^{g}(\left(\text{dim}(i)\text{dim}(j)\right)^{-1} \text{tr}((1_{W}\otimes \overline{q_i}\circ q_{i} \otimes 1_{R})\circ x \circ \overline{y} )
\\ &= D\text{dim}(i)^{-1} \lambda_i (x,y)_{\mathbf{\Sigma(i)}}.
\end{align*}
Here $\lambda_i \in\C$ is defined by
\begin{equation}
\lambda_i1_{V_{i^*}}= \overline{q_i}\circ q_i.
\end{equation}
Thus we get
\begin{equation}
(\tilde{g},\tilde{g})_{\mathbf{\Sigma}}=\sum_{i \in I} D\text{dim}(i)^{-1} \lambda_i(\emptyarg, \emptyarg)_{\mathbf{\Sigma}(i)}.
\end{equation}

We now consider the case where the two points subject to gluing lie on distinct components. Using the result above, we may assume that these are homeomorphic to spheres. This can be argued as in the proof of Proposition (\ref{distinctcomponentspairing}). Using naturality and multiplicativity of the gluing as well as of the Hermitian form, we may assume that we are in the situation of the second subsection in section \ref{dofg}. An argument based on the surgery presentation of the form given in the proof of theorem $10.4.1$ and based on the ideas of section $10.6$ will show that in this case we have that
\begin{equation}
(\tilde{g},\tilde{g})_{\mathbf{\Sigma}}=\sum_{i \in I} \text{dim}(i)^{-1} \lambda_i(\emptyarg, \emptyarg)_{\mathbf{\Sigma}(i)}.
\end{equation}

Finally we prove that the unitary structure is compatible with duality.

This is done by considering surgery presentations of the equations (\ref{omega}),(\ref{phi}). Let $f \in Z(\mathbf{\Sigma}).$ We may assume $\mathbf{\Sigma}$ is conneced. Equation (\ref{omega}) is presented as an equation involving  $\overline{\omega(f)}$ and equation (\ref{phi}) is presented an equation involving $\phi(f).$ Conjugating the surgery presentation of (\ref{omega}) we see that $\phi(f)=\frac{1}{\sigma(i_1)\cdots \sigma(i_m)}\omega(f)$ where $\sigma(i)$ is defined as $\lambda_{i^*}\mu(i).$ Thus, if $\mathbf{\Sigma}$ is an $I$-labeled marked surface (not necessarily conneced) with labels $i_1,...,i_m$ we see that
\begin{equation}
\rho(\mathbf{\Sigma})= \prod_{l=1}^{m}\sigma(i_l)^{-1}.
\end{equation}\end{proof}

\section{Scaling of the duality, unitarity and gluing}
\label{scaling section}

Fix $q:=(q_i)_{i \in I}$ where $q_i:V_{i^*} \rightarrow {V_i}^*$ is an isomorphism. Let $\tilde{g}$ be the gluing defined using $q.$ Let $\langle \emptyarg, \emptyarg \rangle$ be the duality pairing defined using $q.$ Let $k_i$ be the twist coefficients. For the remainder of this article we fix for all $i$ a choice of $\sqrt{\text{dim}(i)}$ and a choice of $\sqrt{k_i}.$ We make these choices invariant under $i \mapsto i^*.$ If $K=\C$ and $\text{dim}(i)$ is positive, we of choose the positive square root. This will be the case if $\mathcal{V}$ is assumed to be unitary. We recall that if this is the case then $k_i \in S^1.$

Let $K^*$ be the units in $K.$ For $u \in {K^*}^{I}$ let $q(u):=(u_iq_i)_{i\in I}.$ Let $\tilde{g}_u$ be the gluing defined using $q(u).$ Let $\langle \emptyarg, \emptyarg\rangle^{u}$ be the pairing defined using $q(u).$ We start by observing that if $\mathbf{\Sigma}$ has labels $i_1,...,i_m$ then we have 
\begin{equation}
\langle \emptyarg, \emptyarg\rangle^{u}_{\mathbf{\Sigma}}=\left(\prod_{l=1}^m u_{i_l} \right) \langle \emptyarg, \emptyarg \rangle_{\mathbf{\Sigma}}.
\end{equation}

We will write \[
\langle \emptyarg, \emptyarg\rangle^{u}_{\mathbf{\Sigma}}=u(\mathbf{\Sigma}) \langle \emptyarg, \emptyarg \rangle_{\mathbf{\Sigma}}.
\]

Let $u,w \in {K^*}^{I}.$ Below we will consider what happens, if we use $q(u)$ to define the gluing, and $q(w)$.

\begin{df}[Genus normalized pairing]
\label{genus normalized pairing}
Let $w \in {K^*}^{I}.$ For a surface of genus $g$ we consider the following normalization
\[
\langle \emptyarg , \emptyarg \rangle_{*, \mathbf{\Sigma}}^w:= D^{-4g} \langle \emptyarg, \emptyarg\rangle^{w}.
\]
\end{df}

Consider a general modular functor $V$ with label set $I.$ Assume $V$ has a duality pairing $\langle \emptyarg , \emptyarg \rangle.$  consider $S^2$ equipped with the Stokes orientation, where $B^3$ is given the RHS orientation. Let $(S^2,i,j)$ have the northpole colored with $i$ and the southpole colored with $j.$ There is a natural isotopy to the standard decorated surface of type $(0;(V_i,1),(V_j,1)).$ This induce an isomorphism $Z(S^2,i,j) \simeq \text{Hom}(1,V_i\otimes V_j).$ Let $\omega(i)$ be the unique vector in $Z(S^2,i,i^*)$ that solves $\tilde{g}(\omega(i) \otimes \omega(i))=\omega(i).$ Let  $\zeta(i) \in Z(-(S^2,i,i^*))$ be the unique vector that solves the analogous gluing problem. We define
\begin{equation}
V(i):= \langle \omega(i), \zeta(i) \rangle_{(\mathbf{S}^2,i,i^*)}.
\end{equation}

We now return to $Z_{\mathcal{V}}.$
\begin{pro}
 Under the isomorphism $\text{Hom}(\mathbf{1},V_i\otimes V_{i^*}) \simeq Z(S^2,i,i^*)$ induced by the identity parametrization we see that $\omega(i)$ is given as
\begin{center}
\includegraphics{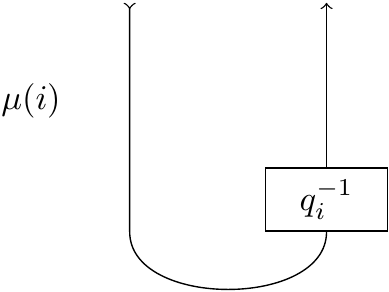}
\end{center}
\end{pro}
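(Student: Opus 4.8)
The plan is to unwind the definition of $\omega(i)$ as the unique solution of the self-gluing equation $\tilde g(\omega(i)\otimes\omega(i))=\omega(i)$ on the twice-punctured sphere $(S^2,i,i^*)$, and to identify this solution explicitly under the parametrization isomorphism $\mathrm{Hom}(\mathbf 1, V_i\otimes V_{i^*})\simeq Z(S^2,i,i^*)$. Here the gluing under consideration glues the two marked points of $(S^2,i,i^*)\sqcup(S^2,i,i^*)$ together — one point on each sphere — to produce again $(S^2,i,i^*)$; this is exactly the "two points on distinct components, both spheres" situation, so the gluing homomorphism $\tilde g_i$ is given by the explicit formula (\ref{gggg}) in section \ref{dofg}. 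First I would write down what (\ref{gggg}) says in this special case: with $V_1$ and $V_2$ being (here) the trivial tensor factors and the caps supplying the contraction, $\tilde g_i(x\otimes y)$ becomes $(1\otimes\overline{d_{V_i}}\otimes 1)\circ(x\otimes q_i\circ y)$, i.e. post-composition with $q_i$ on the $V_{i^*}$-leg of $y$ and then closing off with the cap morphism $\overline{d_{V_i}}=F(\cap^-_{V_i})$.

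Next I would turn the fixed-point equation into an equation in $\mathrm{Hom}(\mathbf 1,V_i\otimes V_{i^*})$. Writing $\omega=\omega(i)$ for the candidate morphism, the equation $\tilde g_i(\omega\otimes\omega)=\omega$ becomes a graphical identity: stacking two copies of $\omega$, applying $q_i$ to one $V_{i^*}$-strand, and capping, must reproduce $\omega$. Because $\mathrm{Hom}(\mathbf 1,V_i\otimes V_{i^*})$ is one-dimensional over $K$ (it is $\cong K$, generated by the standard coevaluation-type morphism $b_{V_i}$ composed with $q_i^{-1}$ on the appropriate leg, or equivalently by $F$ of the cup $\cup^+_{V_i}$), the morphism $\omega$ is determined up to a scalar, and the fixed-point equation pins down that scalar: one gets a scalar relation of the form $\mu\cdot\omega=\omega$ where $\mu$ is the quantum trace / dimension factor produced by the cap-cup composition, together with the normalization scalars $D,\Delta$ hidden in the definition of $\tilde g$ via the operator invariant $\tau^e$ and the factor $\mathcal D^{1-g}\dim(i)$ appearing in the definition of $\tau$ on decorated cobordisms. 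Carrying the bookkeeping through, the unique normalization is the one pictured: $\omega(i)$ is the standard morphism $\mathbf 1\to V_i\otimes V_{i^*}$ rescaled by the appropriate power of $D$ and of $\dim(i)$ so that capping it against itself returns it.

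The key steps, in order, are: (1) specialize formula (\ref{gggg}) to $(S^2,i,i^*)\sqcup(S^2,i,i^*)\rightsquigarrow(S^2,i,i^*)$, noting the integer attached to the gluing morphism is $0$ so the $(\Delta^{-1}D)^s$ twist drops out; (2) transport everything to $\mathrm{Hom}(\mathbf 1,V_i\otimes V_{i^*})$ via the identity parametrization, using the once/twice-punctured sphere normalizations to identify the relevant $D$- and $\dim(i)$-powers coming from $\eta$ in the definition of $\tau^e$ (the $\mathcal D^{1-g}\dim(i)$ factor with $g=0$); (3) write the graphical fixed-point equation and use $\dim\mathrm{Hom}(\mathbf 1,V_i\otimes V_{i^*})=1$ to reduce it to a scalar equation; (4) solve for the scalar and check it matches the picture \texttt{sp9-crop.pdf}. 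The main obstacle I expect is step (2)–(3): keeping precise track of all the normalization scalars ($D$, $\Delta$, $\dim(i)$, and the $\lambda(M)$-type factors buried inside $\tau^e$) and of the half-twist conventions on the marked arcs, so that the closing-up operation really reproduces the pictured morphism on the nose rather than up to an unaccounted-for root of unity or power of $D$; this is exactly the kind of scalar that the rest of section \ref{scaling section} is built to control, so the cleanest route is to phrase the computation so that it reuses the gluing-compatibility scalars $\dim(i)^{-1}$ already computed in Proposition \ref{distinctcomponentspairing}.
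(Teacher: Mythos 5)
The paper states this proposition without proof, treating it as a direct computation, and your route --- specializing the explicit sphere-gluing formula (\ref{gggg}) to the case $(S^2,i,i^*)\sqcup(S^2,i,i^*)\rightsquigarrow(S^2,i,i^*)$, using that $\text{Hom}(\mathbf{1},V_i\otimes V_{i^*})$ is free of rank one, and solving the resulting quadratic scalar equation for the unique nonzero solution --- is exactly that computation. One small correction to your bookkeeping: in the distinct-components case the gluing formula (\ref{gggg}) carries no factor of $D$ or $\Delta$ (those enter only in the same-component gluing and in morphisms with nonzero framing integer), so the normalizing scalar involves only $\dim(i)$ and the twist $k_i$, consistent with Proposition \ref{quantum inv.}.
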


\begin{pro}
\label{quantum inv.}
 We have that
\begin{equation}
\langle \omega(i), \zeta(i) \rangle =k_i^{-1} \text{dim}(i).
\end{equation}
\end{pro}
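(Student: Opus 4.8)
The plan is to compute $\langle \omega(i), \zeta(i) \rangle$ by combining the surgery presentation of the duality pairing from Proposition \ref{presentation 1} (respectively its $Z_{\mathcal{V}}$-version, Proposition \ref{rp}) with the explicit description of $\omega(i)$ given in the preceding proposition. The surface in question is $(S^2, i, i^*)$, which has genus $0$ and is parametrized by the standard decorated surface of type $t = (0; (V_i,1),(V_{i^*},1))$; hence $\Psi(t) \simeq \mathrm{Hom}(\mathbf{1}, V_i \otimes V_{i^*})$ and $\Psi(t*) \simeq \mathrm{Hom}(\mathbf{1}, V_i \otimes V_{i^*})$ as well. First I would fix the identity parametrization $f = \mathrm{id}$ and the induced parametrization $u(f)$ of $\overline{\G(-\mathbf{\Sigma})}$ from Proposition \ref{rp}, so that $(\emptyarg, \emptyarg)_{(S^2,i,i^*)}$ is presented, via the tangle in Figure sp3, as the pairing $\langle \emptyarg, \dot{q}(\emptyarg) \rangle_t$ on $\Psi(t) \times \Psi(t*)$.

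Next I would identify $\zeta(i)$. Since $\omega(i)$ is characterized by $\tilde g(\omega(i) \otimes \omega(i)) = \omega(i)$ and is shown in Figure sp9 to be (up to the normalizing scalar forced by the gluing formula of Proposition \ref{distinctcomponentspairing}) the canonical copairing $\widetilde{d}_{V_i}$ bent into $\mathrm{Hom}(\mathbf{1}, V_i \otimes V_{i^*})$, the vector $\zeta(i) \in Z(-(S^2,i,i^*))$ solving the analogous gluing problem is the corresponding canonical element with respect to the orientation-reversed parametrization — concretely the image of the same cup/cap element under $\dot q$. The scalar prefactors on $\omega(i)$ and $\zeta(i)$ are pinned down by the two gluing-compatibility computations (Proposition for the same component, giving $D^4 \dim(i)^{-1}$, and Proposition \ref{distinctcomponentspairing}, giving $\dim(i)^{-1}$): in the genus-$0$ two-sphere setting it is the distinct-components formula that applies, so $\omega(i)$ carries exactly the factor making $\dim(i)^{-1}\langle \omega(i)\otimes\omega(i),\dots\rangle$ self-reproducing.

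Then I would plug these into the surgery diagram. The resulting closed picture is a single Hopf-link-type surgery diagram (one surgery unknot from the genus-$0$ duality presentation) threaded by a ribbon graph that, after the cup–cap element is inserted at both the top and bottom and the coupons $q_i, q_{i^*}$ are slid together, collapses to the closure of a single $V_i$-colored strand carrying one negative twist. Evaluating via the Reshetikhin–Turaev functor $F$, the twisted unknot colored by $V_i$ gives $k_i^{-1}\dim(i)$, and the surgery normalization scalars $\Delta, D$ cancel exactly as in the genus-$0$ normalization (cf. equation (\ref{3}) and the discussion of the flat unknot). One must also check that the $q_i$–dependence disappears: pulling $q_{i^*}$ past $q_i$ around the closed strand produces $q_{i^*} \circ (\text{something}) \circ q_i^{-1}$-type cancellation, consistent with the remark that $\mu(0)=1$ and with $\langle \emptyarg,\emptyarg\rangle$ being genuinely $q$-independent on a self-paired configuration.

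The main obstacle I anticipate is bookkeeping of the half-twists and the orientation conventions: getting the sign of the framing twist right (so that one lands on $k_i^{-1}\dim(i)$ rather than $k_i \dim(i)$) requires carefully tracking the half-twist $r$ used to define $\zeta$ against the half-twist $s$ built into $y(f)$, exactly the cancellation arranged in the proof of Proposition \ref{rp}, together with the Dehn-twist discrepancy between $z(y(f))$ and $y(z(f))$ recorded in equation (\ref{zy=kyz}). Once those conventions are fixed consistently, the evaluation is the standard computation that a $+1$-framed (one negative twist) unknot colored $V_i$ has quantum invariant $k_i^{-1}\dim(i)$, and the claimed identity follows.
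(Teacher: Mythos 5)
Your proposal follows essentially the same route as the paper, whose own justification is simply that the identity is ``a straight forward calculation of the surgery presentation given above'': inserting $\omega(i)$ and $\zeta(i)$ into the genus-zero surgery presentation of the pairing and reducing to the $V_i$-coloured unknot with one negative twist is exactly the intended computation, and your tracking of the half-twists, the $q_i$-cancellation and the normalisation is consistent with the paper's conventions. One small correction: in genus zero the presentation of Proposition \ref{presentation 1} has no blue surgery components at all (there are $g$ of them), so the closed diagram sits in $S^3$ with empty surgery link rather than being a Hopf-link-type picture --- this only simplifies the evaluation and does not affect the conclusion.
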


 We start by observing that if $\mathbf{\Sigma}$ has labels $i_1,...,i_m$ then we have 
\begin{equation}
\langle \emptyarg, \emptyarg\rangle^{w}_{\mathbf{\Sigma}}=\left(\prod_{l=1}^m w_{i_l} \right) \langle \emptyarg, \emptyarg \rangle_{\mathbf{\Sigma}}.
\end{equation}

We will write \[
\langle \emptyarg, \emptyarg\rangle^{w}_{\mathbf{\Sigma}}=w(\mathbf{\Sigma}) \langle \emptyarg, \emptyarg \rangle_{\mathbf{\Sigma}}.
\]

\textbf{W: I have correceted the formulas below}

\begin{pro}
Assume $i,i^*$ lie on the same component. We have
\begin{equation}
\label{samecom}
\langle\tilde{g}_u^{i},\tilde{g}_{u}^{i^*}\rangle_{\Sigma}^{w}=\frac{u_iu_{i^*}}{w_iw_{i^*}}D^4 \text{dim}(i)^{-1}\langle \emptyarg, \emptyarg\rangle^w_{\Sigma(i)}.
\end{equation}
Assume $i,i^*$ lie on distinct components. We have
\begin{equation}
\label{difcom}
\langle\tilde{g}_u^{i},\tilde{g}_{u}^{i^*}\rangle_{\Sigma}^{w}=\frac{u_iu_{i^*}}{w_iw_{i^*}} \text{dim}(i)^{-1}\langle \emptyarg, \emptyarg\rangle_{\Sigma(i)}^w.
\end{equation}
\end{pro}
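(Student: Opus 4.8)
The plan is to reduce the scaled statement to the two already-proven unscaled compatibility-with-gluing propositions by carefully tracking how the scalars $u_i$ and $w_i$ enter. Recall that the left-hand side $\langle \tilde{g}_u^i, \tilde{g}_u^{i^*}\rangle^w_{\mathbf{\Sigma}}$ uses the gluing defined from $q(u)=(u_iq_i)$ on the $i$-th summand and the duality pairing defined from $q(w)=(w_iq_i)$. First I would record the basic scaling identities: from the construction in Definition \ref{the gluing isomorphism}, $\tilde{g}(q(u))_i = u_i \, \tilde{g}(q)_i$ on the $i$-th summand (since $\dot{q}$ is replaced by $\dot{(u_iq_i)} = u_i \dot{q_i}$ and $h$, $g$ are unchanged), and from the scaling analysis in section \ref{scaling section}, $\langle \emptyarg, \emptyarg\rangle^w_{\mathbf{\Sigma}} = w(\mathbf{\Sigma})\langle \emptyarg, \emptyarg\rangle_{\mathbf{\Sigma}}$ where $w(\mathbf{\Sigma}) = \prod_l w_{i_l}$ over the labels of $\mathbf{\Sigma}$.

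Next I would combine these. By bilinearity of the pairing and the scaling of $\tilde g$,
\[
\langle \tilde{g}_u^i, \tilde{g}_u^{i^*}\rangle^w_{\mathbf{\Sigma}} = u_i u_{i^*}\, w(\mathbf{\Sigma})\, \langle \tilde{g}^i, \tilde{g}^{i^*}\rangle_{\mathbf{\Sigma}},
\]
where $\tilde g$ is the unscaled gluing and $\langle\emptyarg,\emptyarg\rangle$ the unscaled duality. Now I apply the already-proven Proposition on compatibility of the (unscaled) pairing with gluing: if $i,i^*$ lie on the same component then $\langle \tilde{g}^i, \tilde{g}^{i^*}\rangle_{\mathbf{\Sigma}} = D^4\,\mathrm{dim}(i)^{-1}\,(\emptyarg,\emptyarg)_{\mathbf{\Sigma}(i)}$, and if they lie on distinct components then the factor $D^4$ is absent. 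Finally I would re-express $(\emptyarg,\emptyarg)_{\mathbf{\Sigma}(i)}$ in terms of $\langle\emptyarg,\emptyarg\rangle^w_{\mathbf{\Sigma}(i)}$: the labels of $\mathbf{\Sigma}(i)$ are exactly those of $\mathbf{\Sigma}$ together with $i$ and $i^*$, so $w(\mathbf{\Sigma}(i)) = w(\mathbf{\Sigma}) w_i w_{i^*}$ and hence $(\emptyarg,\emptyarg)_{\mathbf{\Sigma}(i)} = w(\mathbf{\Sigma})^{-1} w_i^{-1} w_{i^*}^{-1} \langle\emptyarg,\emptyarg\rangle^w_{\mathbf{\Sigma}(i)}$. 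Substituting, the factors $w(\mathbf{\Sigma})$ cancel and I am left with $\frac{u_i u_{i^*}}{w_i w_{i^*}}$ times $D^4\,\mathrm{dim}(i)^{-1}$ (same component) or $\mathrm{dim}(i)^{-1}$ (distinct components) times $\langle\emptyarg,\emptyarg\rangle^w_{\mathbf{\Sigma}(i)}$, which is exactly (\ref{samecom}) and (\ref{difcom}).

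The one genuine point needing care — and the place I expect the main obstacle — is justifying the identity $\tilde{g}(q(u))_i = u_i\,\tilde{g}(q)_i$ rigorously, i.e. that rescaling $q_i \mapsto u_i q_i$ rescales exactly the $i$-th summand of the gluing isomorphism by $u_i$ and leaves everything else untouched. This follows because in Definition \ref{the gluing isomorphism} the map on the $i$-th summand is $g \circ h \circ \dot{q_i}$, the morphism $\dot{q_i}$ is the coupon morphism colored by $q_i$ (Lemma \ref{f}), and coloring a coupon by $u_i q_i$ multiplies the resulting operator invariant by $u_i$ by $K$-linearity of the Reshetikhin–Turaev functor in coupon colors; the maps $h$ and $g$ do not involve the $q_j$ at all. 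A parallel remark handles the duality side: the pairing $(\emptyarg,\emptyarg)_{\mathbf{\Sigma}}$ is built from $\zeta = \mathcal{T}^e(r)\circ\tilde h\circ\dot q$ and each $\dot q_i$ contributes linearly, giving the stated product formula $w(\mathbf{\Sigma})$ over labels. Once these linearity facts are in place, the proof is the short bookkeeping computation above, and I would present it essentially in the displayed form shown, invoking the two unscaled propositions as black boxes.
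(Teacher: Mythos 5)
Your proposal is correct and is exactly the intended argument: the paper states this proposition without proof in the scaling section, and the natural derivation is precisely your reduction to the two unscaled compatibility-with-gluing propositions via the scaling identities the paper itself records (namely $\tilde{g}_j(q') = f_j\,\tilde{g}_j(q)$ from the proof of Theorem \ref{QIT}, and $\langle \emptyarg,\emptyarg\rangle^{w}_{\mathbf{\Sigma}} = w(\mathbf{\Sigma})\,\langle\emptyarg,\emptyarg\rangle_{\mathbf{\Sigma}}$ with $w(\mathbf{\Sigma})$ the product over the labels). Your bookkeeping is right, including the key point that $\mathbf{\Sigma}(i)$ carries the two extra labels $i,i^*$ so that $w(\mathbf{\Sigma}(i)) = w(\mathbf{\Sigma})\,w_i w_{i^*}$ and the factors $w(\mathbf{\Sigma})$ cancel, leaving $u_i u_{i^*}/(w_i w_{i^*})$.
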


We want to know how scaling affects the self-duality scalar $\mu.$

\begin{pro}
\label{scalingofmu}
We have \begin{equation}
\mu(i,w)= \frac{w_i}{w_{i^*}}\mu(i).
\end{equation}
\end{pro}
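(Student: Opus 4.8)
The plan is to unwind the definition of the self-duality scalar and to observe that it depends on $q=(q_i)_{i\in I}$ only through a single $K$-linear operation applied to $q_i$. Recall from the proof of Proposition~\ref{compatibility with orientation reversal} that, after fixing a parametrization $f\colon\Sigma_t\to\overline{\mathcal G(\mathbf\Sigma)}$ and the induced $u(f)=(-f)\circ\tilde s$, the pairing $(\emptyarg,\emptyarg)_{\mathbf\Sigma}$ is identified with $\langle\emptyarg,\dot q(\emptyarg)\rangle_t$, and the scalar $\mu(i)=\mu_i$ is extracted from the surgery presentation of Proposition~\ref{rp} by dragging the coupon colored with $q_i$ from left to right across the surgery link. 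As recorded in the Remark following Proposition~\ref{compatibility with orientation reversal}, the outcome of this move is precisely the identity $F(q_i)=\mu(i)\,q_{i^*}$, where $F$ is the $K$-linear isomorphism $\mathrm{Hom}(V_{i^*},V_i^*)\to\mathrm{Hom}(V_i,V_{i^*}^{*})$ obtained by turning a coupon upside down twice.

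Next I would note that the pairing $\langle\emptyarg,\emptyarg\rangle^{w}$ is built by the identical recipe, the only change being the systematic replacement of each coupon $q_{i_l}$ by the coupon $w_{i_l}q_{i_l}$; hence, by definition, $\mu(i,w)$ is determined by the same drag-across move performed on $w_iq_i$, i.e.\ by the equation $F(w_iq_i)=\mu(i,w)\,w_{i^*}q_{i^*}$ — here one uses that on the $-\mathbf\Sigma$ side the point formerly labeled $i$ now carries the label $i^*$ and its coupon is colored with $w_{i^*}q_{i^*}$. Since $F$ is $K$-linear, $F(w_iq_i)=w_i F(q_i)=w_i\mu(i)\,q_{i^*}$, so comparing the two expressions and cancelling the invertible morphism $q_{i^*}$ gives $w_i\mu(i)=\mu(i,w)\,w_{i^*}$, that is
\[
\mu(i,w)=\frac{w_i}{w_{i^*}}\,\mu(i).
\]
In the self-dual case $i=i^*$ this specializes to $\mu(i,w)=\mu(i)$, in accordance with the fact that $\mu(i)$ is then independent of $q_i$.

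The main obstacle is essentially bookkeeping: one has to be certain that, throughout the surgery-presentation argument of Proposition~\ref{compatibility with orientation reversal}, the scalars $w_{i_l}$ enter only via the colors of the coupons $\dot q$, and that the categorical move used to pull a coupon across the surgery link (composition together with the duality (co)evaluation morphisms) is $K$-linear in that color — which it is. All the remaining ingredients, namely the genus/label normalization of~\eqref{normalization}, the Maslov-index correction factors, and the chosen square roots of $\mathrm{dim}(i)$ and $k_i$, are literally the same for $\mathbf\Sigma$ and $-\mathbf\Sigma$ and are untouched by rescaling $q$, so they do not contribute to $\mu(i,w)$.
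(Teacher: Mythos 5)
Your argument is correct and is exactly the one the paper intends: the paper states this proposition without proof, but the Remark following Proposition \ref{compatibility with orientation reversal} records precisely the characterization $F(q_i)=\mu(i)q_{i^*}$ on which your computation rests, and the $K$-linearity of $F$ together with the replacement $q_i\mapsto w_iq_i$, $q_{i^*}\mapsto w_{i^*}q_{i^*}$ immediately yields $\mu(i,w)=\frac{w_i}{w_{i^*}}\mu(i)$. Your consistency check in the self-dual case also agrees with the paper's observation that $\mu(i)$ is independent of $q_i$ when $i=i^*$.
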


Let $\omega(i,u)\in Z(S^2,i,i^*)$ be the unique vector that solves the analogous equation 
$\tilde{g}_u(\omega(i,u)\otimes \omega(i,u))=\omega(i,u).$ Then 
\begin{equation}
\omega(i,u)=u_i^{-1}\omega(i).
\end{equation}

Let $\zeta(i) \in Z(-(S^2,i,i^*))=Z(-S^2,i^*,i)$ be the image of $\omega(i,u)$ under $Z(r).$ Again we have

\begin{equation}
\zeta(i,u)=u_i^{-1}\zeta(i).
\end{equation}

We now investigate how this affect the compatibility of the unitary structure with gluing.

\begin{pro}
Let $\mathbf{\Sigma}$ be a connected $\Lambda$-labeled marked surface obtained from gluing two points subject to gluing that lie on one and the same component. Consider the gluing isomorphism:
\[
\tilde{g}_u: \bigoplus_{i \in I} Z(\mathbf{\Sigma}(i)) \overset{\sim}{\longrightarrow} Z(\mathbf{\Sigma}),
\] 
 We have
\begin{equation}
(\tilde{g}_u,\tilde{g}_u)_{\mathbf{\Sigma}}=\sum_{i \in I} D^4\text{dim}(i)^{-1} \lambda_{i}u_i\overline{u_i}(\emptyarg, \emptyarg)_{\mathbf{\Sigma}(i)}.
\end{equation}
\end{pro}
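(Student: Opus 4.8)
The plan is to reduce this to the already-established unscaled gluing formula for the unitary (Hermitian) pairing, namely the identity
\[
(\tilde{g},\tilde{g})_{\mathbf{\Sigma}}=\sum_{i \in I} D^4\,\text{dim}(i)^{-1}\,\lambda_i\,(\emptyarg, \emptyarg)_{\mathbf{\Sigma}(i)},
\qquad \lambda_i 1_{V_{i^*}}=\overline{q_i}\circ q_i,
\]
proven in the proof of Theorem \ref{HT}, and then track the effect of replacing each $q_i$ by $u_i q_i$. First I would recall that by Definition \ref{the gluing isomorphism} the gluing isomorphism built from $q(u)=(u_iq_i)$ satisfies, on the $i$-th summand, $\tilde{g}_u^{\,i}=u_i\,\dot{q_i}$-twisted version of $\tilde{g}^{\,i}$; more precisely, since $\dot{q}$ depends linearly on the coupon morphism $q_i$ (Lemma \ref{f}), one has $\tilde{g}_u^{\,i}=u_i\,\tilde{g}^{\,i}$ as maps $Z(\mathbf{\Sigma}(i))\to Z(\mathbf{\Sigma})$. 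This is the same observation used in the proof of Theorem \ref{QIT} (where it is written $\tilde{g}_j(q')=f_j\tilde{g}_j(q)$).

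Next I would substitute this into the Hermitian form. Because $(\emptyarg,\emptyarg)_{\mathbf{\Sigma}}$ is conjugate-linear in the second variable, we get
\[
(\tilde{g}_u,\tilde{g}_u)_{\mathbf{\Sigma}}
=\sum_{i\in I} u_i\,\overline{u_i}\,(\tilde{g}^{\,i},\tilde{g}^{\,i})_{\mathbf{\Sigma}}
=\sum_{i\in I} u_i\overline{u_i}\,D^4\,\text{dim}(i)^{-1}\,\lambda_i\,(\emptyarg,\emptyarg)_{\mathbf{\Sigma}(i)},
\]
using orthogonality of distinct summands (which is part of Theorem \ref{HT}'s proof and does not involve the scalars $u_i$, since scaling a coupon does not affect which homomorphism spaces vanish). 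Here I must be slightly careful: the $\lambda_i$ appearing is defined via the \emph{original} $q_i$, i.e.\ $\lambda_i 1_{V_{i^*}}=\overline{q_i}\circ q_i$, and the factor $u_i\overline{u_i}$ accounts precisely for the difference $\overline{u_iq_i}\circ(u_iq_i)=u_i\overline{u_i}\,\overline{q_i}\circ q_i$. This matches the claimed formula with $\lambda_i u_i\overline{u_i}$ in place of what one would naively call $\lambda_i^{(u)}$.

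The only genuine point requiring care — and hence the main obstacle — is the first step: verifying that $\tilde{g}_u^{\,i}=u_i\tilde{g}^{\,i}$ on the nose, i.e.\ that the scalar $u_i$ pulls out cleanly through the composite $g\circ h\circ\dot{q_i}$ defining the gluing, with no extra factor hidden in $h$ or in Turaev's splitting map $g$. This follows because $h$ (Lemma \ref{h}) is induced by a coupon colored with $\mathrm{id}_{V_i^*}$ and is independent of $q$, and $g$ is Turaev's splitting isomorphism which is likewise independent of $q$; only $\dot{q_i}$ carries the dependence, and $\dot{(u_iq_i)}=u_i\dot{q_i}$ by the $K$-linearity of the coupon-labeling in Lemma \ref{f}. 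Once this is in hand the computation above is immediate, and I would also remark that in the unitary setting $\lambda_i\in\R_{>0}$ (since $\overline{q_i}\circ q_i$ is a positive operator scalar) and $u_i\overline{u_i}=|u_i|^2>0$, so the coefficients $D^4\,\text{dim}(i)^{-1}\lambda_i u_i\overline{u_i}$ remain positive, consistent with the compatibility-with-gluing axiom in Definition \ref{unitarity}. This yields the stated identity; the case where the two glued points lie on distinct components is handled identically, replacing $D^4\,\text{dim}(i)^{-1}$ by $\text{dim}(i)^{-1}$ as in Proposition \ref{distinctcomponentspairing}.
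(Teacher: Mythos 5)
Your argument is correct and is exactly the intended one: the paper states this proposition without proof in the scaling section, as an immediate consequence of the computation in the proof of Theorem \ref{HT}, where the only dependence on $q_i$ enters through $\overline{q_i}\circ q_i=\lambda_i 1_{V_{i^*}}$, so replacing $q_i$ by $u_iq_i$ (equivalently $\tilde{g}_u^{\,i}=u_i\tilde{g}^{\,i}$, the same linearity observation used in Theorem \ref{QIT}) multiplies the coefficient by $u_i\overline{u_i}$, with orthogonality of distinct summands unaffected. The only caveat is the power of $D$ ($D$ versus $D^4$), which is an inconsistency already present between the proof of Theorem \ref{HT} and the propositions in the scaling section, and your reduction is insensitive to it.
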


\begin{pro}
Let $\mathbf{\Sigma}$ be a connected $\Lambda$-labeled marked surface obtained from gluing two points subject to gluing that lie on two distinct components. Consider the gluing isomorphism:
\[
\tilde{g}_u: \bigoplus_{i \in I} Z(\mathbf{\Sigma}(i)) \overset{\sim}{\longrightarrow} Z(\mathbf{\Sigma}),
\]  We have
\begin{equation}
(\tilde{g}_u,\tilde{g}_u)_{\mathbf{\Sigma}}=\sum_{i \in I} \text{dim}(i)^{-1} \lambda_{i}u_i\overline{u_i}(\emptyarg, \emptyarg)_{\mathbf{\Sigma}(i)}.
\end{equation}
\end{pro}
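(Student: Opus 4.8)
The plan is to deduce this from the unscaled statement already contained in Theorem \ref{HT}, together with the elementary scaling behaviour of the gluing isomorphism. Recall from the proof of Theorem \ref{QIT} that replacing $q_i$ by $u_iq_i$ only affects the coupon morphism $\dot{q_i}$, and that since colouring a coupon is $K$-linear while both Turaev's gluing homomorphism $g$ and the morphism $h$ are independent of the choice of $q$, the restriction of $\tilde{g}_u$ to the $i$-th summand $Z(\mathbf{\Sigma}(i))$, which I write $\tilde{g}_u^i$, satisfies $\tilde{g}_u^i = u_i\,\tilde{g}^i$, where $\tilde{g}^i$ denotes the restriction to $Z(\mathbf{\Sigma}(i))$ of the gluing built from the fixed base family $q$ (this is exactly the identity $\tilde{g}_j(q')=f_j\tilde{g}_j(q)$ used in the proof of Theorem \ref{QIT}).

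The next step is to feed this into the Hermitian form. The form $(\emptyarg,\emptyarg)_{\mathbf{\Sigma}}=(\emptyarg,\emptyarg)_{\overline{\G(\mathbf{\Sigma})}}$ is intrinsic to $\mathbf{\Sigma}$ --- it does not depend on $q$ --- and, being Hermitian, is conjugate-linear in the second slot; hence for all $i,j\in I$ one has $(\tilde{g}_u^i,\tilde{g}_u^j)_{\mathbf{\Sigma}} = u_i\overline{u_j}\,(\tilde{g}^i,\tilde{g}^j)_{\mathbf{\Sigma}}$. By the distinct-components case of Theorem \ref{HT} the pairing $(\tilde{g}^i,\tilde{g}^j)_{\mathbf{\Sigma}}$ vanishes when $i\neq j$ and equals $\text{dim}(i)^{-1}\lambda_i\,(\emptyarg,\emptyarg)_{\mathbf{\Sigma}(i)}$ when $i=j$; summing over $i$ yields exactly $\sum_{i\in I}\text{dim}(i)^{-1}\lambda_i u_i\overline{u_i}(\emptyarg,\emptyarg)_{\mathbf{\Sigma}(i)}$. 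One should only check that the $\lambda_i$ appearing here is the scalar determined by $\overline{q_i}\circ q_i=\lambda_i\,1_{V_{i^*}}$ for the base family $q$; this is consistent with the statement, since $\overline{u_iq_i}\circ(u_iq_i)=u_i\overline{u_i}\,\overline{q_i}\circ q_i$, so $u_i\overline{u_i}\lambda_i$ is precisely the corresponding scalar attached to the scaled family $q(u)$.

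Alternatively --- and this is the route that explains why no factor $D^4$ appears here, in contrast with the same-component proposition above --- one can argue directly as in the proof of Proposition \ref{distinctcomponentspairing}: using naturality and multiplicativity of both $\tilde{g}_u$ and the Hermitian form one reduces to the case of two spheres by factoring each component out of a connected surface of one higher genus, applies the preceding (same-component, scaled) proposition, which introduces a $D^4$, and observes that by associativity of gluing the $D^4$ produced by each genus-reducing gluing cancels against the one it is subsequently undone by. In either approach the only real work is bookkeeping: keeping track of the complex conjugates on the $u_i$ and attributing the $\lambda_i$-type scalars to the correct choice of $q$; there is no new geometric input beyond what already went into Theorem \ref{HT}, and I expect this conjugation/normalisation bookkeeping to be the only point requiring care.
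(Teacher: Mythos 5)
Your main argument is correct and is exactly the derivation the paper intends: the proposition is stated without proof in the scaling section precisely because it follows from the distinct-components case established in the proof of Theorem \ref{HT} via the homogeneity $\tilde{g}_u^i = u_i\,\tilde{g}^i$ (multilinearity of the operator invariant in coupon colours) and sesquilinearity of the $q$-independent Hermitian form, with the $\lambda_i$-bookkeeping handled just as you describe. (Your alternative route is also fine in spirit, though the phrasing about the $D^4$ "cancelling against the one it is undone by" is slightly off: in the reduction of Proposition \ref{distinctcomponentspairing} the same constant $C$ appears on both sides of the identity to be verified and therefore drops out, rather than being undone by a subsequent gluing.)
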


\subsection{Scaled  normalization}
\label{scaled normalization section}
We now consider what happens if we scale the gluing and the pairing as above by using possibly different $q'$s and then normalize the pairing such that they are strictly compatible.

Let $\langle \emptyarg \mid \emptyarg \rangle^{u,w}$ be the normalized pairing given by
\begin{equation}
\label{scalenormalization}
\langle\emptyarg \mid \emptyarg\rangle_{\Sigma_g, i_1,...,i_k}^{u,w}=\left(D^{-4g} \prod_{l=1}^k s_{i_l}^{u,w} \right)\langle \emptyarg , \emptyarg\rangle_{\Sigma_g, i_1,...,i_k}^w,
\end{equation}
where
\begin{equation}
\label{s_i}
s_i^{u,w}:=\frac{\sqrt{w_iw_{i^*}}\sqrt{\text{dim}(i)}}{\sqrt{u_iu_{i^*}}},
\end{equation}

We will write

\[
\langle\emptyarg \mid \emptyarg\rangle_{\Sigma_g, i_1,...,i_k}^{u,w}:=s(\mathbf{\Sigma},u,w) \langle \emptyarg, \emptyarg \rangle.
\]
If we want to stress the choice of square roots chosen for $w_iw_i^*$ and $u_iu_i^*$ we will write 
\[
\langle \emptyarg \mid \emptyarg \rangle^{u,w,S}.
\]

Observe that \[
s(\mathbf{\Sigma},u,w)=\left(D^{-4g} \prod_{l=1}^k s_{i_l}^{u,w}w_{i_l} \right),
\]
when $\mathbf{\Sigma}=(\Sigma_g, i_1,...,i_k).$

We now consider a normalization of the Hermitian form. This normalized Hermitian form will be strictly compatible with the gluing $\tilde{g}_u.$ First we note that

\begin{equation}
\label{uninormalization}
(\emptyarg \mid \emptyarg)_{\mathbf{\Sigma}}^u=\left(D^{-4g} \prod_{l=1}^k r_{i_l}^{u} \right)(\emptyarg , \emptyarg)_{\mathbf{\Sigma}},
\end{equation}
where 
\begin{equation} r_i^u=\frac{\sqrt{\text{dim}(i)}}{\sqrt{\lambda_{i}u_{i}\overline{u_{i}}}}.
\end{equation}
Recall the convention that the square root of any positive number is assumed to be chosen positive. Thus there is no ambiguity in chosing the $r_i.$ Of course we have to choose them positively, if we want the pairing to remain positive definite. We will write
\[
(\emptyarg \mid \emptyarg)_{\mathbf{\Sigma}}^u=r(\mathbf{\Sigma},u)(\emptyarg , \emptyarg)_{\mathbf{\Sigma}}
\]

\begin{pro}
\label{RO}
Assume $\mathbf{\Sigma}$ is an $I$-labeled marked surface (not necessarily conneced) with labels $i_1,...,i_m.$ With respect to the normalized duality $\langle \emptyarg \mid \emptyarg \rangle^{u,w}$ and the normalized Hermitian form $(\emptyarg \mid \emptyarg)^u$ we have the following equation
\begin{equation}
\rho_N^{u,w}(\mathbf{\Sigma})= \left(r(\mathbf{\Sigma},u)\overline{r(-\mathbf{\Sigma},u)}\right)\left(s(\mathbf{\Sigma},u,w)\overline{s(-\mathbf{\Sigma},u,w)}\right)^{-1}\prod_{l=1}^{m}\sigma(i_l)^{-1}.
\end{equation}
With respect to the genus normalized pairing $\langle \emptyarg , \emptyarg \rangle_{*}^w$ we have the following equation
\begin{equation}
\rho_{g,N}^{u,w}(\mathbf{\Sigma})= \prod_{l=1}^{m}\left( r^u_{i_l}r^u_{{i_l}^*}\right)\left(\sigma(i_l)w_{i_l} \overline{w_{{i_l}^*}}\right)^{-1}
\end{equation}
\end{pro}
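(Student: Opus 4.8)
The plan is to deduce both formulas from Theorem \ref{HT} by means of a single elementary bookkeeping lemma that describes how the projective factor in the diagram (\ref{unidua}) changes when the duality pairing and the Hermitian form are each rescaled by a scalar depending only on the isomorphism class of the surface.

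\textbf{Step 1: a rescaling lemma.} Let $\langle\emptyarg,\emptyarg\rangle$ be a duality pairing and $(\emptyarg,\emptyarg)$ a unitary structure on $Z_{\mathcal{V}}$, and let $\rho(\mathbf{\Sigma})$ be the scalar realising the projective commutativity (\ref{projunidua}) for them. Given scalars $a(\mathbf{\Sigma}),b(\mathbf{\Sigma})\in\C^{*}$ depending only on the isomorphism class of $\mathbf{\Sigma}$, set $\langle\emptyarg,\emptyarg\rangle'_{\mathbf{\Sigma}}=a(\mathbf{\Sigma})\langle\emptyarg,\emptyarg\rangle_{\mathbf{\Sigma}}$ and $(\emptyarg,\emptyarg)'_{\mathbf{\Sigma}}=b(\mathbf{\Sigma})(\emptyarg,\emptyarg)_{\mathbf{\Sigma}}$. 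I would unwind the defining relations (\ref{omega}) and (\ref{phi}): since the duality is $K$-bilinear, the rescaled map satisfies $\phi'=\frac{b(\mathbf{\Sigma})}{a(\mathbf{\Sigma})}\phi$; since the Hermitian form is conjugate-linear in its second argument, the rescaled map satisfies $\omega'=\frac{\overline{a(-\mathbf{\Sigma})}}{\overline{b(-\mathbf{\Sigma})}}\,\omega$. Substituting these into $\phi'=\rho'(\mathbf{\Sigma})\,\omega'$ and using $\phi=\rho(\mathbf{\Sigma})\,\omega$ yields
\[
\rho'(\mathbf{\Sigma})=\rho(\mathbf{\Sigma})\,\frac{b(\mathbf{\Sigma})\,\overline{b(-\mathbf{\Sigma})}}{a(\mathbf{\Sigma})\,\overline{a(-\mathbf{\Sigma})}}.
\]
Observe that $\rho$ depends only on the duality pairing and the Hermitian form, not on the gluing, and that $\rho'(\mathbf{\Sigma})$ again depends only on the isomorphism class of $\mathbf{\Sigma}$; since all scalars involved are multiplicative over connected components, it suffices to treat connected $\mathbf{\Sigma}$, whose labels we write as $i_1,\dots,i_m$.

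\textbf{Step 2: base case and substitution.} I take as reference the duality pairing of Theorem \ref{DT} together with the Hermitian form $(\emptyarg,\emptyarg)_{\overline{\G(\mathbf{\Sigma})}}$ of Theorem \ref{HT}; by the last step of the proof of Theorem \ref{HT} the corresponding factor is $\rho(\mathbf{\Sigma})=\prod_{l=1}^{m}\sigma(i_l)^{-1}$ with $\sigma(i)=\lambda_{i^{*}}\mu(i)$. For the first assertion I apply Step 1 with the duality rescaling $a(\emptyarg)=s(\emptyarg,u,w)$ — legitimate by (\ref{scalenormalization}) and the identity $\langle\emptyarg,\emptyarg\rangle^{w}_{\mathbf{\Sigma}}=w(\mathbf{\Sigma})\langle\emptyarg,\emptyarg\rangle_{\mathbf{\Sigma}}$ — and the Hermitian rescaling $b(\emptyarg)=r(\emptyarg,u)$ by (\ref{uninormalization}); since $r(\emptyarg,u)$ is positive real by the stated convention, $\overline{r(-\mathbf{\Sigma},u)}=r(-\mathbf{\Sigma},u)$, and the lemma reads off directly as
\[
\rho_N^{u,w}(\mathbf{\Sigma})=\bigl(r(\mathbf{\Sigma},u)\,\overline{r(-\mathbf{\Sigma},u)}\bigr)\bigl(s(\mathbf{\Sigma},u,w)\,\overline{s(-\mathbf{\Sigma},u,w)}\bigr)^{-1}\prod_{l=1}^{m}\sigma(i_l)^{-1}.
\]
For the second assertion, Definition \ref{genus normalized pairing} together with the same identity gives $\langle\emptyarg,\emptyarg\rangle_{*,\mathbf{\Sigma}}^{w}=D^{-4g}w(\mathbf{\Sigma})\langle\emptyarg,\emptyarg\rangle_{\mathbf{\Sigma}}$, so I apply Step 1 with $a(\mathbf{\Sigma})=D^{-4g}\prod_{l}w_{i_l}$ (hence $a(-\mathbf{\Sigma})=D^{-4g}\prod_{l}w_{i_l^{*}}$) and $b(\emptyarg)=r(\emptyarg,u)=D^{-4g}\prod_{l}r^{u}_{i_l}$; the factor $D^{-8g}$ cancels between numerator and denominator, leaving
\[
\rho_{g,N}^{u,w}(\mathbf{\Sigma})=\prod_{l=1}^{m}\bigl(r^{u}_{i_l}r^{u}_{i_l^{*}}\bigr)\bigl(\sigma(i_l)\,w_{i_l}\,\overline{w_{i_l^{*}}}\bigr)^{-1}.
\]

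\textbf{Main obstacle.} The one genuinely delicate point — which I would write out most carefully — is the complex conjugate attached to the $a(-\mathbf{\Sigma})$ factor in Step 1. It is forced by the asymmetry between the $K$-bilinear duality pairing and the sesquilinear unitary pairing: a scalar rescaling of the Hermitian form on $-\mathbf{\Sigma}$ re-enters the definition of $\omega$ through conjugation. Keeping track of which of $\mathbf{\Sigma}$, $-\mathbf{\Sigma}$ carries the bar, and verifying that $r(\emptyarg,u)$ is genuinely a positive real so that its conjugate is itself, is where care is required; the remaining content is a direct substitution into (\ref{scalenormalization}), (\ref{uninormalization}), Definition \ref{genus normalized pairing} and the computation of $\rho$ in Theorem \ref{HT}.
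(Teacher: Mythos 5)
Your proof is correct, and it is the argument the paper intends: Proposition \ref{RO} is stated without proof as a routine rescaling computation, and your Step 1 lemma (tracking how the projective factor $\rho$ transforms under scalar rescalings of the bilinear duality and the sesquilinear Hermitian form, with the conjugate landing on the $-\mathbf{\Sigma}$ factors via (\ref{omega})) combined with the base case $\rho(\mathbf{\Sigma})=\prod_l\sigma(i_l)^{-1}$ from the end of the proof of Theorem \ref{HT} is exactly the intended derivation. Both displayed formulas then follow by the substitutions you indicate.
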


Observe that since $r$ is always real (and positive) we have $\overline{r(-\mathbf{\Sigma},u)}=r(-\mathbf{\Sigma}).$

\section{The canonical symplectic rescalling}

Assume in the following that $K$ is an integral domain. Fix a set of isomorphisms $q_i: V_{i^*} \overset{\sim}{\longrightarrow} V_i^*$ such that $\mu(i)=1$ for all $i$ with $i\not=i^*.$ In this section, we only consider scalings $\lambda:I \rightarrow K^*$ that satisfies $\lambda_i=\lambda_{i^*}.$ Recall the definition of the normalization factors: $s_i^{u,w}:=\frac{\sqrt{w_iw_{i^*}}\sqrt{\text{dim}(i)}}{\sqrt{u_iu_{i^*}}}.$ If $u,w$ are invariant under $i \mapsto i^*$ we see that we have canonical square roots given by $\sqrt{X^2}=X.$ In the following, these coefficients shall be interpreted according to this. 

 \begin{df}[Symplectic labels]
 \label{symplectic label}
 Let $i \in I$ satisfy $i=i^*.$ We say that $i$ is symplectic if $\mu(i)=-1.$
 \end{df}

\begin{df}[Symplectic multiplicity]
\label{symplectic multiplicity}
Let $\mathbf{\Sigma}$ be a labeled marked surface. Let $\nu(\mathbf{\Sigma})$ denote the number of marked points on $\mathbf{\Sigma}$ labeled with symplectic labels. We call this number the symplectic multiplicity.
\end{df}

\begin{thm}[Canonical symplectic scalling]
\label{Canonical solution}
Choose $u,w \in (K^*)^I$ that solves 
\begin{equation}
\label{canonical}
u_i= s_i^{u,w} w_i
\end{equation}
for all $i.$ Then the modular functor $Z_{\mathcal{V}}$ with gluing $\tilde{g}(u)$ and genus normalized duality $\langle \emptyarg , \emptyarg \rangle^{u}_{*}$ satisfies that gluing and duality are strictly compatible and the duality is self-dual up to a sign which is given by the symplectic multiplicity
\begin{equation}\label{prjs}
\mu = (-1)^{\nu}.
\end{equation}
We have
\begin{equation}
\label{3}
Z(i):=\frac{\text{dim}(i)}{k_i}.
\end{equation}
Moreover, any two solutions $(u,w)$ and $(u',w')$ result in modular functors with duality that are isomorphic  through an isomorphism that preserve the duality pairing.
\end{thm}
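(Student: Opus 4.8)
The plan is to prove Theorem \ref{Canonical solution} in three stages: first solve the system (\ref{canonical}), then verify the three claimed properties (strict compatibility, the sign $(-1)^\nu$, the value $Z(i)$), and finally establish uniqueness up to duality-preserving isomorphism.

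\textbf{Solving the system.} First I would unwind the definition $s_i^{u,w}=\frac{\sqrt{w_iw_{i^*}}\sqrt{\dim(i)}}{\sqrt{u_iu_{i^*}}}$, where (since we restrict to $u,w$ invariant under $i\mapsto i^*$) the canonical square roots are $\sqrt{X^2}=X$, so $s_i^{u,w}=\frac{w_i\sqrt{\dim(i)}}{u_i}$ on the self-dual sector and more generally $s_i^{u,w}w_i = \frac{w_iw_{i^*}\sqrt{\dim(i)}}{u_iu_{i^*}}\cdot\frac{u_i}{w_{i^*}}\cdot(\text{...})$; the cleanest route is to substitute (\ref{canonical}), namely $u_i = s_i^{u,w}w_i = \frac{w_iw_{i^*}}{u_iu_{i^*}}\sqrt{\dim(i)}\,w_i / w_{i^*}$... so I would instead just rewrite (\ref{canonical}) as $u_i u_{i^*} = w_i w_{i^*}\sqrt{\dim(i)}$ after clearing denominators (using $u_i=u_{i^*}$, $w_i=w_{i^*}$ is NOT assumed here, only symmetry of the chosen square roots), giving $u_i^2 = w_i^2\sqrt{\dim(i)}$ under the $i\mapsto i^*$-invariance hypothesis stated at the start of the section. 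This has a solution in $K^*$ precisely after adjoining the fourth root of $\dim(i)$; since $K$ is an integral domain one may pass to an extension or simply take $w_i=1$ and $u_i$ a chosen fourth root of $\dim(i)$, and I would record that one convenient normalized choice is $w_i\equiv 1$, $u_i = \dim(i)^{1/4}$ (with $u_{i^*}=u_i$).

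\textbf{Verifying the properties.} With such $(u,w)$ chosen, strict compatibility of gluing with the genus normalized duality follows by plugging $u_iu_{i^*}=w_iw_{i^*}\sqrt{\dim(i)}$ into the two formulas (\ref{samecom}) and (\ref{difcom}): in the same-component case the prefactor becomes $\frac{u_iu_{i^*}}{w_iw_{i^*}}D^4\dim(i)^{-1} = \sqrt{\dim(i)}\,D^4\dim(i)^{-1}$, and the genus normalization $D^{-4g}$ versus $D^{-4(g+1)}$ absorbs the $D^4$ while the $\prod s_{i_l}^{u,w}$-type factors built into $\langle\cdot\mid\cdot\rangle^{u}_*$ absorb the $\sqrt{\dim(i)}\dim(i)^{-1}$ against the new label $i$ appearing on $\mathbf{\Sigma}(i)$ — this is exactly the bookkeeping already done for (\ref{normalization}) and (\ref{scalenormalization}), so I would cite those and check the one surviving constant is $1$. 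The distinct-component case is identical without the $D^4$ and without the genus shift. For the self-duality sign: by Proposition \ref{scalingofmu}, $\mu(i,w)=\frac{w_i}{w_{i^*}}\mu(i)=\mu(i)$ since $w$ is $i\mapsto i^*$-invariant, so the orientation-reversal scalar on a connected surface with labels $i_1,\dots,i_k$ is $\prod_l \mu(i_l)$; since $\mu(i_l)=1$ unless $i_l$ is self-dual and $\mu(i_l)=-1$ exactly when $i_l$ is symplectic (Definition \ref{symplectic label}) and $\mu=1$ on non-symplectic self-dual and on $0$ (the Remark after Proposition \ref{compatibility with orientation reversal}), this product is $(-1)^{\nu(\mathbf{\Sigma})}$, proving (\ref{prjs}); I should also note the genus normalization factor is the same for $\mathbf{\Sigma}$ and $-\mathbf{\Sigma}$ so it does not affect $\mu$. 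Finally, $Z(i)$ is defined as $\langle\omega(i,u),\zeta(i,u)\rangle^{u,w}_*$ on $(S^2,i,i^*)$; using $\omega(i,u)=u_i^{-1}\omega(i)$, $\zeta(i,u)=u_i^{-1}\zeta(i)$, Proposition \ref{quantum inv.} giving $\langle\omega(i),\zeta(i)\rangle=k_i^{-1}\dim(i)$, the scaling $\langle\cdot,\cdot\rangle^w = w(\mathbf{\Sigma})\langle\cdot,\cdot\rangle$, and the genus-$0$ normalization $s_i^{u,w}s_{i^*}^{u,w}w_iw_{i^*}$, everything collapses (using $u_iu_{i^*}=w_iw_{i^*}\sqrt{\dim(i)}$) to $k_i^{-1}\dim(i)$, which is (\ref{3}).

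\textbf{Uniqueness.} For the last sentence, given two solutions $(u,w)$ and $(u',w')$, I would compare the two modular functors $Z(\tilde g(u))$ and $Z(\tilde g(u'))$ using the quasi-isomorphism machinery of Theorem \ref{QIT}: writing $u'_i = \epsilon_i u_i$, the gluings differ by the label-dependent scalars $\epsilon_i$, and one builds $\Phi(\mathbf{\Sigma}) = \big(\prod_l \alpha_{i_l}\big)\mathrm{Id}$ exactly as in that proof. The extra requirement now is that $\Phi$ also intertwine the genus normalized duality pairings $\langle\cdot,\cdot\rangle^u_*$ and $\langle\cdot,\cdot\rangle^{u'}_*$; since these differ on labels $i_1,\dots,i_k$ by $\prod_l (s_{i_l}^{u',w'}w'_{i_l})/(s_{i_l}^{u,w}w_{i_l})$, a ratio of label contributions, one chooses the $\alpha_i$ to simultaneously fix both the gluing ratio and this pairing ratio — a finite system of multiplicative equations over $K^*$ (solvable after adjoining square roots, harmlessly, as $K$ is an integral domain), subject to the constraint $\alpha_i\alpha_{i^*}$ being determined; the compatibility of the two constraints is guaranteed because both reduce to the same relation via (\ref{canonical}). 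The main obstacle I expect is precisely this last step: checking that the gluing-ratio condition and the duality-ratio condition on the $\alpha_i$ are consistent (not over-determined) — this requires carefully matching the exponents of $u_i, w_i, \dim(i)$ coming from $s_i^{u,w}$ against those coming from Proposition \ref{scalingofmu} and the definition of $\langle\cdot\mid\cdot\rangle^{u,w}$, and is the one place where a genuine computation, rather than a citation, is unavoidable.
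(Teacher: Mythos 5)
Your proposal follows essentially the same route as the paper's proof: the strict compatibility claim is exactly the observation that equation (\ref{canonical}) forces $\langle \emptyarg,\emptyarg\rangle^{u}_{*}=\langle \emptyarg\mid\emptyarg\rangle^{u,w}$ (since $s^{u,w}_{i}w_i=u_i$ label by label), after which strictness is the already-established property of the scaled normalized pairing from section \ref{scaled normalization section} -- your direct substitution into (\ref{samecom}) and (\ref{difcom}) is just that same computation unpacked; the sign $(-1)^{\nu}$ comes from Proposition \ref{scalingofmu} together with $*$-invariance of $u$ exactly as you say; and $Z(i)$ follows from Proposition \ref{quantum inv.} because the $u$-factors cancel ($u_iu_{i^*}\cdot u_i^{-1}u_i^{-1}=1$ when $u_i=u_{i^*}$).

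The one place you stop short is the uniqueness step, which you flag as a possibly over-determined system for $\alpha$. It is not: the paper's proof shows that the single condition $\alpha(i)\alpha(i^*)=u_i/u'_i$ already suffices for \emph{both} constraints. The reason is that the duality pairing scales multiplicatively over labels and the second argument lives on $-\mathbf{\Sigma}$, whose labels are the duals $i_l^*$; hence the duality-intertwining condition on $\Phi_\alpha$ is again a condition only on the products $\alpha(i_l)\alpha(i_l^*)$, and with (\ref{canonical}) in force it is the same condition as the gluing one. Solvability of $\alpha(i)\alpha(i^*)=u_i/u'_i$ with no further root extensions then uses that $u,u'$ are $*$-invariant (take $\alpha(i)$ to be a fixed square root of $u_i/u'_i$, chosen consistently on $\{i,i^*\}$). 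So the "genuine computation" you defer is a two-line verification, not an obstacle; with that filled in, your argument is complete and agrees with the paper's.
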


\noindent{\bf Remark} {\em We emphasise that equation (\ref{canonical}) means that one uses the same scaling for the glueing isomorphism as one uses in the duality paring.}

Before commencing the proof, we observe that up to a sign there is a preferred solution given by choosing $w_i=1$ for all $i$ and solving 
\[
\frac{\sqrt{\text{dim}(i)}}{u_i}=u_i.
\]
If there is no such $u_i$ we may formally add it.  If $(u,w)$ is a solution, we will write $Z^u$ for the resulting modular functor with duality $\left(Z, \tilde{g}(u), \langle \emptyarg , \emptyarg \rangle_{*}^u\right).$

\begin{proof}
Let $(u,w)$ be a solution. The fact that this is a solution to (\ref{canonical}) implies that
\[
\langle \emptyarg , \emptyarg \rangle_{*}^u= \langle \emptyarg \mid \emptyarg \rangle^{u,w}.
\]
Since the bracket on the left is strictly compatible with $\tilde{g}(u)$ the first claim follows. 

Equation (\ref{prjs}) is an easy consequence of $w_i=w_{i^*},$ proposition \ref{scalingofmu} and the proof of proposition \ref{compatibility with orientation reversal}.

Equation (\ref{3}) follows from the fact that a proof of proposition \ref{quantum inv.} only depends on the fact that the same set of isomorphisms $V_{i^*} \overset{\sim}{\longrightarrow} V_i^*$ is used for the duality as well as for the gluing, and that we always have $\mu(i,w)\mu(i^*,w)=1$ for all $w: I \rightarrow K^*.$ The proof of proposition \ref{quantum inv.} is a straight forward calculation of the surgery presentation given above. 

Finally we prove that if we have two solutions, then they are isomorphic as modular functors through an isomorphism that preserve the duality. 

Consider a function $\alpha: I \rightarrow K^*.$ Let $\mathbf{\Sigma}$ be a labeled marked surface with labels $i_1,...,i_k.$ Then $\alpha$ induce an automorphism $\Phi_{\alpha}=\Phi$
\[
\Phi(\mathbf{\Sigma}) : Z(\mathbf{\Sigma}) \overset{\sim}{\longrightarrow} Z(\mathbf{\Sigma}), 
\]
given by $\Phi(\mathbf{\Sigma})=\left( \prod_{l=1}^k \alpha(i_l) \right) \text{id}_{Z(\mathbf{\Sigma})}.$ Since this is multiplicative on labels, it is easily seen that $\Phi$ is compatible with the action induced by morphisms of labeled marked surfaces, disjoint union and the permutation. 

We will think of $\Phi$ as a morphism of modular functors $Z^{u} \rightarrow Z^{u'}.$

We now identify sufficient conditions for $\Phi$ to be compatible with gluing and with the duality pairings. We start with gluing. Let $\mathbf{\Sigma}(\lambda)$  be obtained by gluing $\mathbf{\Sigma}(\lambda,i,i^*).$ Compatibility with gluing is equivalent with the following equation
\begin{equation*}
\tilde{g}_{u'}^i\circ \Phi(\lambda,i,i^*) = \Phi(\lambda) \circ \tilde{g}_u^i .
\end{equation*}

This is equivalent to
\begin{equation}
\label{nr2}
\alpha(i)\alpha(i^*)=\frac{u_i}{u'_i}.
\end{equation}

For $\Phi$ to be compatible with duality we must have that
\begin{equation*}
\langle \emptyarg, \emptyarg \rangle_{*,u'} = \langle \Phi(\mathbf{\Sigma})(\emptyarg), \Phi(-\mathbf{\Sigma})(\emptyarg) \rangle_{*,u}.
\end{equation*}

For this equation to be satsified we see that eqaution (\ref{nr2}) is sufficient. We see that if this is so, then $\beta=\alpha^{-1}: I \rightarrow K^*$ will satisfy
\begin{equation*}
\beta(i)\beta(i^*)=\frac{u'_i}{u_i}.
\end{equation*}

Therefore $\Phi_{\beta}$ will be an inverse morphism that preserve the duality. Thus we can choose any function $\alpha:I \rightarrow K^*$ that satisfies (\ref{nr2}), and then $\Phi=\Phi_{\alpha}$ will be an isomorphism $Z^{u} \overset{\sim}{\longrightarrow} Z^{u'}$ that preserve the duality pairing. That such a function exists follows from the fact that $u,u'$ are both invariant under $i \mapsto i^*.$ \end{proof}

\subsection{Unitarity}

Assume now that $K=\C$ and that $\mathcal{V}$ comes equipped with a unitary structure $\text{Hom}(V,W) \ni f \mapsto \overline{f} \in \text{Hom}(W,V).$

We may as well assume, that the chosen set of isomorphisms $q_i : V_{i^*} \overset{\sim}{\longrightarrow} V_i^*$ satisfies $\overline{q_i}\circ q_i= \text{id}_{V_i^*}.$ Thus $\lambda_i=1$ for all $i.$ 

\begin{thm}
\label{Canonical solution w. unitarity}
Assume $(u,w)$ is a solution to (\ref{canonical}) with $\lvert w_i \rvert =1$ for all $i.$  Then the following holds. Up to a sign the genus normalized duality pairing $\langle \emptyarg, \emptyarg \rangle_{*,u}$ is compatible with the normalized Hermitian form $( \emptyarg  \mid \emptyarg )^u.$ This sign is given by the parity of the symplectic multiplicity
\[
\rho= (-1)^{\nu}.
\]
Moreover any two solutions $(u,w)$ and $(u',w')$ to (\ref{canonical}) yields modular functors $Z^{u},Z^{u'}$ that are isomorphic through an isomorphism that respects the duality pairing as well as the Hermitian form.
\end{thm}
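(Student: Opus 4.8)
The plan is to rerun the proof of Theorem~\ref{Canonical solution}, carrying the Hermitian form alongside the duality pairing, and then to read off the projective factor $\rho$ from Proposition~\ref{RO}. First I would record the consequences of the hypothesis $\overline{q_i}\circ q_i=\text{id}_{V_i^*}$: it says $\lambda_i=1$ for all $i$, so in the two propositions of Section~\ref{scaling section} computing $(\tilde{g}_u,\tilde{g}_u)_{\mathbf{\Sigma}}$ the factor $\lambda_i u_i\overline{u_i}$ becomes the positive real $|u_i|^2$, and the normalized Hermitian form $(\emptyarg\mid\emptyarg)^u$ of (\ref{uninormalization}) has $r_i^u=\sqrt{\text{dim}(i)}/|u_i|$. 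Exactly as in the proof of Theorem~\ref{Canonical solution}, the fact that $(u,w)$ solves (\ref{canonical}) gives $\langle\emptyarg,\emptyarg\rangle_{*}^{u}=\langle\emptyarg\mid\emptyarg\rangle^{u,w}$, so gluing and duality are strictly compatible. The factor $\rho(\mathbf{\Sigma})$ of (\ref{projunidua}) comparing $\langle\emptyarg,\emptyarg\rangle_{*}^{u}$ with $(\emptyarg\mid\emptyarg)^u$ is then precisely the scalar $\rho_N^{u,w}(\mathbf{\Sigma})$ of Proposition~\ref{RO}.

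The computational heart is the identity $|u_i|=\text{dim}(i)^{1/4}$, valid for every solution $(u,w)$ of (\ref{canonical}) with $|w_i|=1$: since $u,w$ are $i\mapsto i^*$-invariant throughout this section (so $\sqrt{u_iu_{i^*}}=u_i$, $\sqrt{w_iw_{i^*}}=w_i$ and $s_i^{u,w}=s_{i^*}^{u,w}$), equation~(\ref{canonical}) gives $u_i^2=w_i^2\sqrt{\text{dim}(i)}$, whence $|u_i|^2=\sqrt{\text{dim}(i)}$; in particular $r_i^u=\text{dim}(i)^{1/4}$, so the normalized Hermitian form is the same for every such solution. Substituting $|u_i|=\text{dim}(i)^{1/4}$ and (\ref{canonical}) into the formula of Proposition~\ref{RO}, one gets $r(\mathbf{\Sigma},u)\,\overline{r(-\mathbf{\Sigma},u)}=D^{-8g}\prod_l\sqrt{\text{dim}(i_l)}=s(\mathbf{\Sigma},u,w)\,\overline{s(-\mathbf{\Sigma},u,w)}$, so these cancel and $\rho_N^{u,w}(\mathbf{\Sigma})=\prod_{l=1}^m\sigma(i_l)^{-1}$. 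Since $\lambda_i=1$ forces $\sigma(i)=\mu(i)$, and by the choice of the $q_i$ we have $\mu(i)=1$ unless $i$ is a symplectic self-dual label, in which case $\mu(i)=-1$ (and these values are unaffected by the scaling because $w_i=w_{i^*}$, by Proposition~\ref{scalingofmu}), we conclude $\rho=\prod_l\mu(i_l)^{-1}=\prod_l\mu(i_l)=(-1)^{\nu(\mathbf{\Sigma})}$.

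For the last statement I would mirror the end of the proof of Theorem~\ref{Canonical solution}. Given two solutions $(u,w)$ and $(u',w')$, pick $\alpha:I\to\C^*$ with $\alpha(i)\alpha(i^*)=u_i/u_i'$; such $\alpha$ exists because $u,u'$ are $i\mapsto i^*$-invariant, and by that proof $\Phi_\alpha:Z^u\to Z^{u'}$ is an isomorphism of modular functors respecting gluing and the genus normalized duality, with inverse $\Phi_{\alpha^{-1}}$. Since $|u_i|=|u_i'|=\text{dim}(i)^{1/4}$ by the previous paragraph, $u_i/u_i'$ lies on the unit circle, so $\alpha$ may in fact be taken with $|\alpha(i)|=1$ for all $i$ (a unit-circle square root when $i=i^*$, and an arbitrary unit-circle value otherwise); then $\Phi_\alpha$ rescales the Hermitian form by $\prod_l|\alpha(i_l)|^2=1$, and since $r_i^u=r_i^{u'}=\text{dim}(i)^{1/4}$ the normalized forms $(\emptyarg\mid\emptyarg)^u$ and $(\emptyarg\mid\emptyarg)^{u'}$ coincide, so $\Phi_\alpha$ (and $\Phi_{\alpha^{-1}}$) respects the Hermitian form as well.

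The step I expect to be the main obstacle is the square-root bookkeeping in the second paragraph: one must verify, with the conventions fixed in Section~\ref{scaling section} (the positive root of $\text{dim}(i)$, and $\sqrt{X^2}=X$ on $i\mapsto i^*$-invariant quantities), both that $|u_i|=\text{dim}(i)^{1/4}$ genuinely drops out of (\ref{canonical}) and that the four factors $r(\mathbf{\Sigma},u)$, $\overline{r(-\mathbf{\Sigma},u)}$, $s(\mathbf{\Sigma},u,w)$, $\overline{s(-\mathbf{\Sigma},u,w)}$ in Proposition~\ref{RO} evaluate so that the cancellation down to $\prod_l\sigma(i_l)^{-1}$ is exact rather than merely up to a sign; everything else is a transcription of the arguments in the proofs of Theorems~\ref{Canonical solution} and~\ref{HT}.
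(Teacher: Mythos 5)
Your proposal is correct and follows essentially the same route as the paper: use $\lambda_i=1$, deduce $|u_i|^2=\sqrt{\dim(i)}$ from (\ref{canonical}) with $|w_i|=1$, feed this into Proposition \ref{RO} so that the dimension factors cancel and only $\prod_l\sigma(i_l)^{-1}=(-1)^{\nu}$ survives, and for uniqueness choose the rescaling function $\alpha$ on the unit circle so that $\Phi_\alpha$ preserves the Hermitian form. The only cosmetic difference is that you track the cancellation through $\rho_N^{u,w}$ while the paper works directly with the genus-normalized $\rho_{g,N}^{u,u}$; these agree because $(u,w)$ solves (\ref{canonical}).
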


\begin{proof}
Consider a labeled marked surface $\mathbf{\Sigma}$ with labels $i_1,...,i_k.$ Recall the following formula from proposition \ref{RO} 
\[
\rho_{g,N}^{u,u}(\mathbf{\Sigma})= \prod_{l=1}^{m}\left( r^u_{i_l}r^u_{{i_l}^*}\right) \left( \sigma(i_l)u_{i_l} \overline{u_{{i_l}^*}}\right)^{-1}
\]

Recall that $\sigma(i)=\lambda_{i^*}\mu(i).$ So in our situation we see that the product of the $\sigma_i'$s is equal to $\mu(\mathbf{\Sigma}),$ which we already know is given by $(-1)^{\nu(\mathbf{\Sigma})}.$ Since $u_i=u_i^*$ and $\lambda_i=1$ for all $i$ we get
\[
r^u_{i_l}r^u_{{i_l}^*}= \frac{\text{dim}(i)}{\lvert u_i \rvert^2}
\]
Thus $\rho/\mu$ is seen to be a product of factors of the form
\[
\frac{\text{dim}(i)}{\lvert u_i \rvert^4}.
\]
The equation
\[
u_i^2=w_i^2 \sqrt{\text{dim}(i)},
\]
implies that all of these factors are $1.$ Here we use that $\lvert w_i\rvert=1$ for all $i.$

Assume now that $(u',w')$ is another solution. We recall that the isomorphism $Z^u \overset{\sim}{\longrightarrow}Z^{u'}$ from theorem \ref{Canonical solution} can be constructed by choosing a suitable function $\alpha: I \rightarrow \C$ with $\alpha(i)\alpha(i^*)=u_i/u'_i$ for all $i.$ For any labeled marked surface $\mathbf{\Sigma}$ the isomorphism 
\[
\Phi_{\alpha}:Z^u(\mathbf{\Sigma}) \overset{\sim}{\longrightarrow} Z^{u'}(\mathbf{\Sigma}),
\] will be multiplication by $\alpha(i_1)\cdots \alpha(i_k)$ where $i_1,...,i_k$ are the labels of $\mathbf{\Sigma}.$ However, since $\lvert w_i \rvert =\lvert w'_i \rvert =1$ for all $i$ we see that $\lvert u_i \rvert =\lvert u'_i \rvert$ for all $i.$ This implies the following two things. First $r^u_i=r^{u'}_i$ for all $i.$ Second it implies that we can choose $\alpha(i)=\alpha(i^*)$ to be a square root of $u_i/u'_i$ which lies on the unit circle. Therefore $\Phi_{\alpha}$ will be a Hermitian isomorphism.
 \end{proof}

\section{The dual of the fundamental group of a modular tensor category}
\label{FG of a MTC}

\label{Thedualofthefundamentalgroup}

Recall the definition of the dual of the fundamental group and a fundamental symplectic character of a modular tensor category given in the introduction.

\begin{thm}\label{strict}
Assume that a modular tensor category $({\mathcal V}, I)$ has a fundamental symplectic character. Then there exists $u: I \rightarrow K^*$ such that the genus normalized duality pairing $\langle \emptyarg , \emptyarg \rangle_{*,u}$ is strictly self-dual, strictly compatible with gluing and we have that
\[
Z(u,i)= \frac{\text{dim}(i)}{k_i}.
\]
Moreover if $\mathcal{V}$ is unitary and the image of $\tilde{\mu}$ above is a subset of 
$$S(K)= \{z\in K| z \bar z = 1\},$$ 
then we can choose $u$, such that $\langle \emptyarg, \emptyarg \rangle_{*,u}$ is strictly compatible with the Hermitian form $(\emptyarg \mid \emptyarg )^u.$
\end{thm}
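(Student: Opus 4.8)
The plan is to reduce the statement to the scaling analysis already carried out in Section \ref{scaling section} and the Canonical Symplectic Scaling Theorem \ref{Canonical solution}, and then to exploit the defining properties of a fundamental symplectic character to kill the remaining projective factors. First I would recall that by Theorem \ref{Canonical solution} any solution $(u,w)$ of the canonical equation (\ref{canonical}) already gives strict compatibility of gluing with the genus normalized duality $\langle \emptyarg, \emptyarg\rangle_{*}^u$, together with the evaluation $Z(u,i)=\dim(i)/k_i$ and the self-duality scalar $\mu=(-1)^{\nu}$. So the entire content of the theorem is to \emph{remove the sign} $(-1)^{\nu}$, i.e. to arrange strict self-duality, and (in the unitary case) to simultaneously arrange $\rho=1$. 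The idea is that a fundamental symplectic character $\tilde\mu$ is exactly the extra freedom needed to absorb the $-1$'s coming from symplectic simple objects.

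The key steps, in order, would be: (1) Starting from a chosen $q=(q_i)$ with $\mu(i)=1$ for all $i\neq i^*$ as in Section \ref{scaling section}, recall from Proposition \ref{scalingofmu} that rescaling $q_i\mapsto w_iq_i$ changes the self-duality scalar on the simple object $i$ by $w_i/w_{i^*}$; on self-dual $i$ this is trivial, so rescaling alone cannot change $\mu(i)=\pm1$ on self-dual objects. (2) Instead, use the fundamental symplectic character $\tilde\mu$ to \emph{replace the chosen isomorphisms} $q_i$ themselves: set $q_i' := \tilde\mu(i)\,q_i$ — more precisely, exploit the remark after Proposition \ref{compatibility with orientation reversal} that $F(q_i)=\mu(i)q_{i^*}$, so that modifying $q_i$ on self-dual objects multiplies $\mu(i)$ by the corresponding value of $\tilde\mu$; since $\tilde\mu$ is $-1$ precisely on the symplectic objects and $+1$ on the other self-dual objects, the new $\mu'(i)$ is $+1$ for \emph{all} self-dual $i$, hence $\mu'(i)=1$ for all $i$ by the condition $\mu'(i)\mu'(i^*)=1$. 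The point where the hypothesis $\tilde\mu\in\Pi(\mathcal V,I)^*$ (not just a function) is essential is in checking that the gluing isomorphism $\tilde g(q')$ still satisfies Walker's associativity axiom (i) and the compatibility-with-gluing axiom for duality: the constraint that $\tilde\mu(i)\tilde\mu(j)\tilde\mu(k)\neq1$ forces $\mathrm{Hom}(\mathbf1,V_i\otimes V_j\otimes V_k)=\mathbf0$ is exactly what guarantees that the scalar discrepancies introduced at trivalent vertices of the underlying ribbon graphs vanish on the relevant $\mathrm{Hom}$-spaces, so no new projective factors appear. (3) With $\mu'(i)=1$ for all $i$ in hand, apply Theorem \ref{Canonical solution} to this new $q'$: choose $u,w$ solving (\ref{canonical}) for $q'$, so that $\langle\emptyarg,\emptyarg\rangle_{*}^u$ is strictly compatible with $\tilde g(u)$, and now $\mu=(-1)^{\nu}$ becomes $\mu=1$ because $\nu$ is computed from the symplectic objects of the \emph{new} duality datum, on which $\mu'=1$. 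The evaluation $Z(u,i)=\dim(i)/k_i$ is unchanged, being insensitive to which $q$ is used as long as the same one is used for gluing and duality.

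For the unitary half, I would impose additionally $|w_i|=1$ and $\overline{q_i'}\circ q_i'=\mathrm{id}$, which is possible precisely because the image of $\tilde\mu$ lies in $S(K)$: the correction factor $\tilde\mu(i)$ has modulus one, so it does not disturb the unitarity normalization $\lambda_i=1$. Then Theorem \ref{Canonical solution w. unitarity} applies verbatim and gives $\rho=(-1)^{\nu}$; but again $\nu$ counts symplectic objects for the \emph{modified} $\mu'$, which is identically $+1$, so $\rho=1$ and $\langle\emptyarg,\emptyarg\rangle_{*,u}$ is strictly compatible with $(\emptyarg\mid\emptyarg)^u$.

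I expect the main obstacle to be step (2): verifying rigorously that replacing the $q_i$ by $\tilde\mu(i)q_i$ on the self-dual objects is compatible with \emph{all four} gluing axioms and the three duality compatibilities without reintroducing projective factors — this is where one must trace through the surgery presentations of Propositions \ref{rp}, \ref{distinctcomponentspairing} and of the gluing homomorphism in Section \ref{dofg} and check that the only places a factor of $\tilde\mu$ could survive are at trivalent couplings $\mathrm{Hom}(\mathbf1,V_i\otimes V_j\otimes V_k)$, which the defining property of $\Pi(\mathcal V,I)^*$ forces to be zero. Everything else — the existence of the square roots needed to solve (\ref{canonical}), the multiplicativity over labels, the construction of the intertwining isomorphism between two solutions — is routine and already handled by the cited theorems.
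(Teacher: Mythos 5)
Your proposal founders on its central step (2), and the failure is one your own step (1) already predicts. You correctly note that rescaling $q_i\mapsto w_iq_i$ changes the self-duality scalar by $w_i/w_{i^*}$, which is trivial when $i=i^*$. But your step (2) then proposes exactly such a rescaling, $q_i':=\tilde\mu(i)q_i$, and claims it multiplies $\mu(i)$ by $\tilde\mu(i)$ on self-dual objects. It does not: for $i=i^*$ the change is $\tilde\mu(i)/\tilde\mu(i^*)=1$, and indeed the paper records as a proposition that $\mu(i)$ is \emph{independent} of the choice of $q_i$ when $i$ is self-dual. The sign $\mu(i)=-1$ on a symplectic simple object is an invariant of the modular tensor category; no choice of duality isomorphisms, and no appeal to the relation $F(q_i)=\mu(i)q_{i^*}$, can convert it to $+1$. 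Consequently your plan of ``first make $\mu'(i)=1$ for all $i$, then apply Theorem \ref{Canonical solution} with $\nu=0$'' cannot be executed. (The subsidiary inference that $\mu'(i)\mu'(i^*)=1$ forces $\mu'(i)=1$ on non-self-dual objects is also invalid, though that part is easily repaired since one may choose $q$ with $\mu(i)=1$ for $i\neq i^*$ from the outset.)

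The actual mechanism in the paper is different in kind: one does not kill the signs label by label, but shows the \emph{total} self-duality defect is $1$ on every surface whose module of states is nonzero. Concretely, one rescales so that $\mu(i,w)=\tilde\mu(i)$ for \emph{all} $i$ (only the non-self-dual labels genuinely move; on self-dual labels this is the hypothesis that $\tilde\mu$ extends $\mu$), and then factors an arbitrary labeled marked surface along a Lagrangian system of curves into spheres with one, two or three marked points. The pairing decomposes accordingly, and the defect on each piece is controlled: a one-punctured sphere forces label $0$ with $\tilde\mu(0)=1$; a two-punctured sphere forces labels $i,i^*$ with $\tilde\mu(i)\tilde\mu(i^*)=1$; and a three-punctured sphere with $\tilde\mu(i)\tilde\mu(j)\tilde\mu(k)\neq1$ has vanishing module of states by the defining property of $\Pi(\mathcal{V},I)^*$. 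This is where the trivalent-vertex condition enters --- not, as you suggest, to verify that a modified gluing still satisfies Walker's axioms. After this, the explicit construction of $(u,w)$ solving the coupled equations (the analogue of (\ref{canonical}) together with $\mu(i)u_i/u_{i^*}=\tilde\mu(i)$) via $\eta_i=\sqrt{\tilde\mu(i)}\sqrt{\mu(i)}$, and the unitary computation of $\rho$ as $\prod_l\tilde\mu(i_l^*)$, proceed along lines close to what you sketch. You would need to replace step (2) entirely with this global factorization argument for the proof to go through.
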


Before commencing the proof, we remark that if $\mu(i)=1$ for all self-dual objects $i,$ then the neutral element $e \in \Pi(\mathcal{V},I)^*$ is such an extension. 

\begin{proof}

Partition $I=I_{SD} \sqcup I_{NSD}$ such that $i \in I_{SD} $ if and only if $i^*=i$. We further have the natural splitting
$$ I_{SD} = I_{SD}^+ \sqcup I_{SD}^-,$$
where $i\in I_{SD}^+$ if and only if $\mu(i) = 1$. Hence we have of course that $i\in _{SD}^-$ if and only if $\mu(i) = -1$.

Let us now pick a splitting
$$ I_{NSD} = I_{NSD}^1 \sqcup I_{NSD}^2,$$
such that $i\in I_{NSD}^1 $ if and only if $i^* \in I_{NSD}^2.$

We start by describing the normalization. We start by choosing $q_i,q_{i^*}$ such that $\mu(i)=\mu(i^*)=1$ whenever $i\not=i^*.$ Let $w:I \rightarrow K^*.$ We can then scale the pairing by using $q(w)$ in the isomorphism $Z(-\mathbf{\Sigma}) \overset{\sim}{\rightarrow} \mathcal{T}(-\overline{\mathcal{G}(\mathbf{\Sigma})}).$ Then we have $\mu(i,w)=\frac{w_{i}}{w_i^*}\mu(i).$ Since $\tilde{\mu}(i)={\tilde{\mu}(i^*)}^{-1}$ this implies that we can consistently choose $w_i,w_i^*$ such that $\mu(i,w)=\tilde{\mu}(i)$ whenever $i$ is not self-dual. Since $\tilde{\mu}$ is assumed to extend the $\mu$ on the self-dual objects we conclude that we can normalize such that $\mu(i,w)=\tilde{\mu}(i).$ Assume that we can choose $w$ such that $\langle \emptyarg \mid  \emptyarg\rangle_{w,*}$ is also strictly compatible with gluing. We want to argue that in this case, we have $\mu(\mathbf{\Sigma})=1$ unless $Z(\mathbf{\Sigma})=\mathbf{0}.$

Let $\mathbf{\Sigma}$ be a labeled marked surface. We recall that to prove strict self-duality is the same as proving that for all $(u,w) \in Z(\mathbf{\Sigma})\times Z(-\mathbf{\Sigma})$ we have
\[
\langle u,w  \rangle_{\mathbf{\Sigma}}=\langle w,u \rangle_{-\mathbf{\Sigma}}.
\] 
Let $C$ be a collection of simple closed curves on $\mathbf{\Sigma}$, whose homology classes are contained in the Lagrangian subspace of $\mathbf{\Sigma}$ and such that factorization along all of these will produce a disjoint union of spheres with one, two or three marked points.   For the existence of such a collection see \cite{W}. Let $\lambda \in I^{C}$ and let $\mathbf{\Sigma}_C(\lambda)$ be the labeled marked surface obtained from factorization in $C$. Thus $\mathbf{\Sigma}_C(\lambda)$ is a disjoint union of labeled marked surfaces of genus zero with one, two or three labels. Write $\mathbf{\Sigma}_C(\lambda)=\sqcup_{l=1}^k \mathbf{S}_l(\lambda).$  Let $P_{\lambda}: Z(\Sigma) \rightarrow Z(\mathbf{\Sigma}_C(\lambda))$ be the projection resulting from the factorization isomorphism.  
Let $u \in Z(\mathbf{\Sigma})$ and let $w \in Z(-\mathbf{\Sigma}).$ We can write $ P_{\lambda}(u)$ as a finite sum $\sum_{\alpha \in (u,\lambda)} u(\alpha,\lambda)^{(1)}\otimes \cdots \otimes u(\alpha,\lambda)^{(l)}$ with $u(\alpha,\lambda)^{(i)} \in Z(\mathbf{S}_i(\lambda)).$ Here $(u,\lambda)$ is a finite index set depending only on $u$ and $\lambda.$ In a similar way we write $ P_{\lambda^*}(w)$ as a finite sum of the form $\sum_{\beta \in (w,\lambda^*)} w(\beta,\lambda^*)^{(1)}\otimes \cdots \otimes w(\beta,\lambda^*)^{(l)}.$ Recall the following identity: $-(\mathbf{\Sigma}(\lambda))=(-\mathbf{\Sigma})(\lambda^*).$ We have that
\begin{align*}
\langle u, w \rangle_{\mathbf{\Sigma}}&= \sum_{\lambda \in I^C} {\langle P_{\lambda}(u), P_{\lambda^*}(w) \rangle_{\mathbf{\Sigma}_C(\lambda)}}
\\  &= \sum_{\lambda \in I^C} \sum_{\alpha \in (u,\lambda),\beta \in (w,\lambda^*)} \prod_{l=1}^k {\langle u(\alpha,\lambda)^{(l)}, w(\beta,\lambda^*)^{(l)} \rangle_{\mathbf{S}_l(\lambda)}}
\end{align*}

 Similarly we see that
\[
\langle w, u \rangle_{-\mathbf{\Sigma}} =  \sum_{\lambda^* \in I^C} \sum_{\alpha \in (u,\lambda), \beta \in (w,\lambda^*)} \prod_{l=1}^k {\langle w(\beta,\lambda^*)^{(l)},u(\alpha,\lambda)^{(l)} \rangle_{(-\mathbf{S}_l)(\lambda^*)}}
\]
Therefore we see that it reduces to the case of spheres marked with one, two or three points. If a sphere is marked with one point its module of states is zero unless the point is labeled with $0,$ but we already saw that $\mu(0,w)=1.$ If it is marked with two points then we use that its associated module of states is zero unless its labels are $i,i^*.$ If this is the case then the desired equality follows from $\tilde{\mu}(i)\tilde{\mu}(i^*)=1.$  Finally, for a sphere with three points labeled by $i,j,k$, we recall that the associated module of states is isomorphic to $\text{Hom}(\mathbf{1},V_i \otimes V_j \otimes V_k)$ which is zero unless $\tilde{\mu}(i)\tilde{\mu}(j)\tilde{\mu}(k)=1.$ 

Therefore it amounts to choosing $(u,w)$ such
\begin{equation}
\label{nummer1}
u_i=\frac{\sqrt{w_iw_i^*}\sqrt{\text{dim}(i)}}{\sqrt{u_iu_{i^*}}}w_i,
\end{equation}
and

\begin{equation}
\label{nummer2}
\mu(i)\frac{u_i}{u_{i^*}}=\tilde{\mu}(i).
\end{equation}

Choose a square root of $\tilde{\mu}(i)$ and a square root of $\mu(i)$ for each $i.$ This can be done consistently such that $\sqrt{\tilde{\mu}(i^*)}=1/\sqrt{\tilde{\mu}(i)},$ for $i\neq i^*$. Now define $\eta: I \rightarrow K^*$ by
\[
\eta_i:= \sqrt{\tilde{\mu}(i)} \sqrt{\mu(i)}.
\]
With such a choice we have for all $i\in I$ 
\[
\eta_i \eta_{i^*}=1.
\]
This implies the important equation

\begin{equation}
\label{eta}
\frac{\eta_i}{\eta_{i^*}}=\tilde{\mu}(i)\mu(i).
\end{equation}

Take $w_i=\eta_i$ for all $i.$ Consider $i \in I_{NSD}^1.$ Fix a choice $\sqrt{\tilde{\mu}(i^*)}$ and then solve 
\[
u_i^2 = \frac{\sqrt{\text{dim}(i)}}{\sqrt{\tilde{\mu}(i^*)}}\eta_i.
\]

Therefore, if we define $u_{i^*}=u_i \tilde{\mu}(i^*)$ then equation (\ref{nummer1}) is true for $i,$ since we may choose $\sqrt{u_iu_{i^*}}=u_i\sqrt{\tilde{\mu}(i^*)}$ in this case. We now need to check that equation (\ref{nummer1}) is also true for $i^*.$ We can choose $\sqrt{u_iu_{i^*}}=u_{i^*}\sqrt{\tilde{\mu}(i)},$ with  $\sqrt{\tilde{\mu}(i)}=1/\sqrt{\tilde{\mu}(i^*)}$ Then we must check

\[
u_{i^*}^2=\frac{\sqrt{\text{dim}(i)}}{\sqrt{\tilde{\mu}(i)}}\eta_{i^*}
\]
We have that
\begin{align*}
u_{i^*}^2 &= u_i^2 \tilde{\mu}(i^*)^2
\\&=  \tilde{\mu}(i^*)u_i^2\tilde{\mu}(i)^{-1}
\\&= \frac{\tilde{\mu}(i^*)}{\sqrt{\tilde{\mu}(i^*)}} \sqrt{\text{dim}(i)}\eta_i \tilde{\mu}(i)^{-1}
\\&=\sqrt{\tilde{\mu}(i^*)}\sqrt{\text{dim}(i)}\eta_{i^*}
\\&=\frac{1}{\sqrt{\tilde{\mu}(i)}}\sqrt{\text{dim}(i)}\eta_{i^*}.
\end{align*}
Thus (\ref{nummer1}) holds for all $j \in I_{NSD}.$ For $i\in I_{SD}$ we have $\eta_i=1$ and it is easy to choose $u_i$ satisfying (\ref{nummer1}). That (\ref{nummer2}) holds is an easy consequence of equation (\ref{eta}).

That $Z(i)=\frac{\text{dim}(i)}{k_i}$ follow as in the proof of theorem \ref{Canonical solution}.

Finally, assume that $(\mathcal{V},I)$ is unitary. As above we may assume $\lambda_i=1$ for all $i.$ Observe that $\eta_i \in S^1$ for all $i.$ Therefore $\lvert u_i \rvert=\lvert u_{i^*} \rvert =\frac{1}{\text{dim}(i)^4}.$ According to proposition \ref{RO} we know $\rho$ is given by
\[
\rho_{g,N}^{u,u}(\mathbf{\Sigma})= \prod_{l=1}^{m}\left( r^u_{i_l}r^u_{{i_l}^*}\right) \left( \sigma(i_l)u_{i_l} \overline{u_{{i_l}^*}}\right)^{-1}.
\]
We have
\[
r^u_ir^u_{i^*}= \frac{\text{dim}(i)}{\lvert u_i \rvert \lvert u_{i^*}\rvert}.
\]
Using $\sigma(i)=\mu(i)$ we see that
\[
\sigma(i)u_i \overline{u_{i^*}}= \mu(i)\frac{u_i}{u_{i^*}}u_{i^*}\overline{u_{i^*}}=\tilde{\mu}(i)\lvert u_{i^*}\rvert ^2.
\]
Thus we get
\[
\rho_{g,N}^{u,u}(\mathbf{\Sigma})=\prod_{l=1}^m \tilde{\mu}(i_l^*).
\]
The argument given above proves that this is $1$ unless $Z(\mathbf{\Sigma})=0.$
 \end{proof}

\begin{cor}
Assume $\tilde{\mu}$ is as above. Assume a labeled marked $\mathbf{\Sigma}$ surface has labels $i_1,...,i_k.$ We see that $\prod_{l=1}^l \tilde{\mu}(i_l)\not=1$ implies $Z(\mathbf{\Sigma})=\mathbf{0}.$
\end{cor}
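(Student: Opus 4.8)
The plan is to deduce this from the factorization argument already carried out in the proof of Theorem \ref{strict}. First I would choose, as in that proof and using \cite{W}, a collection $C$ of disjoint simple closed curves on $\mathbf{\Sigma}$ whose homology classes lie in the distinguished Lagrangian subspace and such that factorizing along all curves of $C$ yields a disjoint union of genus-zero surfaces, each carrying one, two, or three marked points. Iterating the gluing axiom of Definition \ref{defofmf} — using associativity (axiom (i)) and compatibility with disjoint union (axiom (iii)) — together with compatibility with the relevant morphisms, produces an isomorphism
\[
Z(\mathbf{\Sigma}) \;\cong\; \bigoplus_{\lambda \in I^{C}} Z(\mathbf{\Sigma}_C(\lambda)),
\]
where $\mathbf{\Sigma}_C(\lambda) = \bigsqcup_{l} \mathbf{S}_l(\lambda)$ is the factorized surface, each curve $c \in C$ having been replaced by a pair of marked points labeled $(\lambda_c,\lambda_c^{*})$. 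Hence it suffices to show that whenever the product of $\tilde\mu$ over the labels of $\mathbf{\Sigma}$ is $\neq 1$, every summand $Z(\mathbf{\Sigma}_C(\lambda))$ vanishes, i.e. some component $\mathbf{S}_l(\lambda)$ has zero module of states.

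The second step is the bookkeeping of labels. Since each cut contributes a pair $(\lambda_c,\lambda_c^{*})$ and $\tilde\mu(\lambda_c)\tilde\mu(\lambda_c^{*}) = 1$ because $\tilde\mu \in \Pi(\mathcal{V}, I)^{*}$, the product of $\tilde\mu$ over all labels occurring on $\mathbf{\Sigma}_C(\lambda)$ equals the product of $\tilde\mu$ over the labels $i_1,\dots,i_k$ of $\mathbf{\Sigma}$, independently of $\lambda$. Consequently, if this product is not $1$, then for every $\lambda \in I^{C}$ there is at least one component $\mathbf{S}_l(\lambda)$ on which the product of $\tilde\mu$ over its (one, two, or three) labels is $\neq 1$.

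The third step records why such a sphere has vanishing module of states, using the description of $Z$ on genus-zero surfaces. If $\mathbf{S}_l(\lambda)$ carries a single label $a$, then $Z(\mathbf{S}_l(\lambda)) \cong \text{Hom}(\mathbf{1},V_a)$, which is $\mathbf{0}$ unless $a = 0$; and $\tilde\mu(0) = 1$ — it satisfies $\tilde\mu(0)^{2} = 1$ from $0^{*} = 0$ and $\tilde\mu(0)^{3} = 1$ since $\text{Hom}(\mathbf{1}, V_0^{\otimes 3}) = \text{Hom}(\mathbf{1},\mathbf{1}) \neq \mathbf{0}$ forces the defining implication to fail, so $\tilde\mu(0) = \tilde\mu(0)^{3}\tilde\mu(0)^{-2} = 1$ — whence the $\tilde\mu$-product on any nonzero one-point component is $1$. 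If $\mathbf{S}_l(\lambda)$ carries two labels $a,b$, then $Z(\mathbf{S}_l(\lambda)) \cong \text{Hom}(\mathbf{1}, V_a\otimes V_b)$ is $\mathbf{0}$ unless $b = a^{*}$, in which case $\tilde\mu(a)\tilde\mu(b) = 1$. If it carries three labels $a,b,c$, then $Z(\mathbf{S}_l(\lambda)) \cong \text{Hom}(\mathbf{1}, V_a\otimes V_b\otimes V_c)$, and by the very definition of $\Pi(\mathcal{V},I)^{*}$ this is $\mathbf{0}$ whenever $\tilde\mu(a)\tilde\mu(b)\tilde\mu(c) \neq 1$. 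In all three cases, a component whose $\tilde\mu$-product is $\neq 1$ has trivial module of states, so $Z(\mathbf{\Sigma}_C(\lambda)) = \mathbf{0}$ for every $\lambda$, and therefore $Z(\mathbf{\Sigma}) = \mathbf{0}$.

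The only thing requiring care is making the iterated-gluing isomorphism and the accompanying label bookkeeping precise; but this is exactly the computation already performed in the proof of Theorem \ref{strict}, so no new obstacle arises. The genuine input is the defining property of $\Pi(\mathcal{V},I)^{*}$ applied to three-punctured spheres, which is precisely where the hypothesis $\tilde\mu \in \Pi(\mathcal{V},I)^{*}$ enters.
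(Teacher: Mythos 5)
Your proposal is correct and follows essentially the same route as the paper: the corollary is exactly the by-product of the factorization argument in the proof of Theorem \ref{strict}, where one cuts along a collection of curves into one-, two-, and three-punctured spheres, notes that each cut contributes a cancelling pair $\tilde{\mu}(\lambda_c)\tilde{\mu}(\lambda_c^{*})=1$, and uses the defining property of $\Pi(\mathcal{V},I)^{*}$ on the resulting components. Your explicit verification that $\tilde{\mu}(0)=1$ is a small but welcome addition that the paper leaves implicit.
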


\section{The quantum $\mbox{SU}(N)$ modular tensor categories}
\label{SU(N)}

We refer to the \cite{TW1}, \cite{TW2} and \cite{B} (which uses the skein theory model for the $\mbox{SU}(2)$ case build in \cite{BHMV1}, \cite{BHMV2}) for the complete construction of the quantum $\mbox{SU}(N)$ modular tensor category $H^{\rm{SU}(N)}_k$ at the root of unity $q= e^{2\pi i /(k+N)}$. For a short review see also \cite{AU4}. The simple objects of this category are indexed by the following set of young diagrams 
$$ \Gamma_{N,k} =\{ (\lambda_1,\ldots,\lambda_p)\mid \lambda_1 \leq k, p < N\}.$$
The involution $\dagger : \Gamma_{N,k} \rightarrow \Gamma_{N,K}$ is defined as follows.
For a Young diagram $\lambda$
in $\Gamma_{N,k}$ we define
  $\lambda^\dagger \in\Gamma_{N,k}$ to be the Young diagram
  obtained from the skew-diagram $(\lambda_1^N)/\lambda$ by rotation
  as indicated in the figure below. 
  \begin{center}
    \includegraphics[scale=0.8]{andersen15aa.mps}
  \end{center}
Let $\mu = e^{2\pi i/N}$ and $\zeta_N$ be the set of $N$'th roots of $1$ in ${\mathbb C}$. We then consider the following map
$$ \tilde \mu : \Gamma_{N,k} \rightarrow \zeta_N$$
given by
$$\tilde \mu(\lambda) = \mu^{|\lambda|},$$
where $|\lambda| = \lambda_1 + \ldots + \lambda_p$.

\begin{pro}
We have that
$$\tilde \mu \in \Pi(H^{\rm{SU}(N)}_k, \Gamma_{N,k})^*.$$
\end{pro}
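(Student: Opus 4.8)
The plan is to verify the three defining conditions of $\Pi(H^{\rm{SU}(N)}_k,\Gamma_{N,k})^*$ for the map $\tilde\mu(\lambda)=\mu^{|\lambda|}$, where $\mu=e^{2\pi i/N}$. The first two conditions are essentially combinatorial bookkeeping about $|\lambda|$, while the third, the ``fusion vanishing'' condition, is where the representation-theoretic input of the quantum $\mathrm{SU}(N)$ category enters and will be the main obstacle.

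First I would record that $\tilde\mu$ genuinely lands in $\zeta_N\subset K^*$, so it is a well-defined function $\Gamma_{N,k}\to K^*$. Next, for the relation $\tilde\mu(\lambda)\tilde\mu(\lambda^\dagger)=1$: from the definition of $\dagger$, the diagram $\lambda$ together with $\lambda^\dagger$ (suitably rotated) fills up the $\lambda_1\times N$ rectangle, so $|\lambda|+|\lambda^\dagger|=N\lambda_1\equiv 0 \pmod N$; hence $\tilde\mu(\lambda)\tilde\mu(\lambda^\dagger)=\mu^{|\lambda|+|\lambda^\dagger|}=1$. (One should double-check the precise bookkeeping of which rectangle is used in the figure, since $\lambda_1^N$ denotes the $N$-row rectangle of width $\lambda_1$; the point is simply that the box-counts are complementary in a rectangle whose area is divisible by $N$.)

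For the third condition I would argue contrapositively: suppose $\mathrm{Hom}(\mathbf 1, V_i\otimes V_j\otimes V_k)\neq \mathbf 0$ for simple objects labeled by Young diagrams $\lambda,\nu,\kappa\in\Gamma_{N,k}$, and show $\tilde\mu(\lambda)\tilde\mu(\nu)\tilde\mu(\kappa)=1$, i.e.\ $|\lambda|+|\nu|+|\kappa|\equiv 0\pmod N$. The key structural fact is that the fusion rules of $H^{\rm{SU}(N)}_k$ are a truncation of the classical $\mathrm{SU}(N)$ (equivalently $\mathrm{GL}(N)$) tensor product rules given by Littlewood--Richardson coefficients; in particular a nonzero fusion coefficient for $(\lambda,\nu,\kappa)$ forces the corresponding classical multiplicity to be nonzero, which by the theory of $\mathrm{GL}(N)$ representations requires that $\lambda\otimes\nu\otimes\kappa$ contain the trivial (or a determinant-power) representation. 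Counting boxes modulo $N$: a box in a Young diagram contributes $1$ to a $\mathbb Z/N$-grading coming from the center of $\mathrm{SU}(N)$ (the action of $\zeta_N$ on the defining representation), and this grading is multiplicative under tensor product; since the only diagrams in $\Gamma_{N,k}$ with fewer than $N$ rows that are central-trivial have $|\lambda|\equiv 0$, the condition $\mathrm{Hom}(\mathbf 1,\cdots)\neq 0$ forces $|\lambda|+|\nu|+|\kappa|\equiv 0 \pmod N$. I would phrase this cleanly via the ``$N$-ality'' or central character homomorphism $\Gamma_{N,k}\to\mathbb Z/N$, $\lambda\mapsto |\lambda| \bmod N$, and the fact that it is a homomorphism of fusion rules into $\mathbb Z/N$ detecting the trivial object.

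The main obstacle is making the last step rigorous without re-deriving the whole fusion-rule structure: I need a precise citation (to \cite{TW1,TW2,B} or \cite{AU4}) for the statement that $|\lambda|\bmod N$ defines a grading on $H^{\rm{SU}(N)}_k$ compatible with tensor product and that $\mathbf 1$ sits in degree $0$, together with the observation that $\mathrm{Hom}(\mathbf 1, V_i\otimes V_j\otimes V_k)\neq 0$ implies $V_k^*$ appears in $V_i\otimes V_j$, whence the degrees add: $|\lambda|+|\nu|\equiv |\kappa^\dagger| \equiv -|\kappa| \pmod N$ using the first relation already established. Assembling these gives $\tilde\mu(\lambda)\tilde\mu(\nu)\tilde\mu(\kappa)=1$, completing the verification that $\tilde\mu\in\Pi(H^{\rm{SU}(N)}_k,\Gamma_{N,k})^*$.
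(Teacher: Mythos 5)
Your proposal is correct, and the first half (the relation $|\lambda|+|\lambda^\dagger|=N\lambda_1$, hence $\tilde\mu(\lambda)\tilde\mu(\lambda^\dagger)=1$) is exactly the paper's argument. For the fusion-vanishing condition, however, you take a genuinely different route. The paper argues skein-theoretically and entirely inside the model: a morphism $\mathbf{1}\to V_\lambda\otimes V_\mu\otimes V_\nu$ is represented by a tangle in the cylinder over the disc with $|\lambda|+|\mu|+|\nu|$ strands entering at the top and the empty diagram at the bottom, and strands can only terminate by running into (determinant) coupons $N$ at a time; hence nonvanishing forces $|\lambda|+|\mu|+|\nu|\in N\mathbb{Z}$. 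You instead invoke the $N$-ality, i.e.\ the $\mathbb{Z}/N$-grading of the fusion ring coming from the center of $\mathrm{SU}(N)$, together with the fact that the level-$k$ fusion multiplicities are dominated by the classical Littlewood--Richardson numbers. This is a correct argument, and it is in fact the strategy the paper itself adopts in its later section on general quantum groups, where the character is pulled back from the fundamental group $\mathcal{W}/\mathcal{R}$ (which for $\mathfrak{sl}_N$ is $\mathbb{Z}/N$ with the class of $\lambda$ given by $|\lambda|\bmod N$). What the paper's skein argument buys is self-containedness in the skein/Hecke model of \cite{B} --- no appeal to classical representation theory or to the truncation property of fusion rules is needed; what your argument buys is that it generalizes immediately to any simple Lie algebra, at the cost of the citation you correctly flag as necessary (the compatibility of the central grading with the quantum tensor product, available in \cite{TW1,TW2}). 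Either way the verification is complete once that reference is supplied.
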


\begin{proof}
We have 
$$\tilde\mu(\lambda)\tilde \mu(\lambda^\dagger) = 1,$$
since $|\lambda| + |\lambda^\dagger| = N \lambda_1$ by construction. Now consider $\lambda, \mu, \nu \in  \Gamma_{N,k}$. By the very definition $H^{\rm{SU}(N)}_k(0,\lambda\otimes\mu\otimes\nu)=0$ if 
$$|\lambda| + |\mu| + |\nu| \not\in N {\mathbb Z},$$
since the $|\lambda| + |\mu| + |\nu|$ ingoing strands at the top of the cylinder over the disc can only disappear into coupons $N$ at the time inside the cylinder, since we have the empty diagram at be bottom determined by the label $0$.

\end{proof}
Using the notation in \cite{AU4}, we will now fix isomorphism
$$ q_\lambda \in H^{\rm{SU}(N)}(\lambda^\dagger,\lambda^*)$$
as indicated in figure below (illustrated for some particular element $\lambda\in \Gamma_{6,k}$ for $k\geq 5$)
\vskip.3cm
\begin{center}
    \includegraphics[scale=1.7]{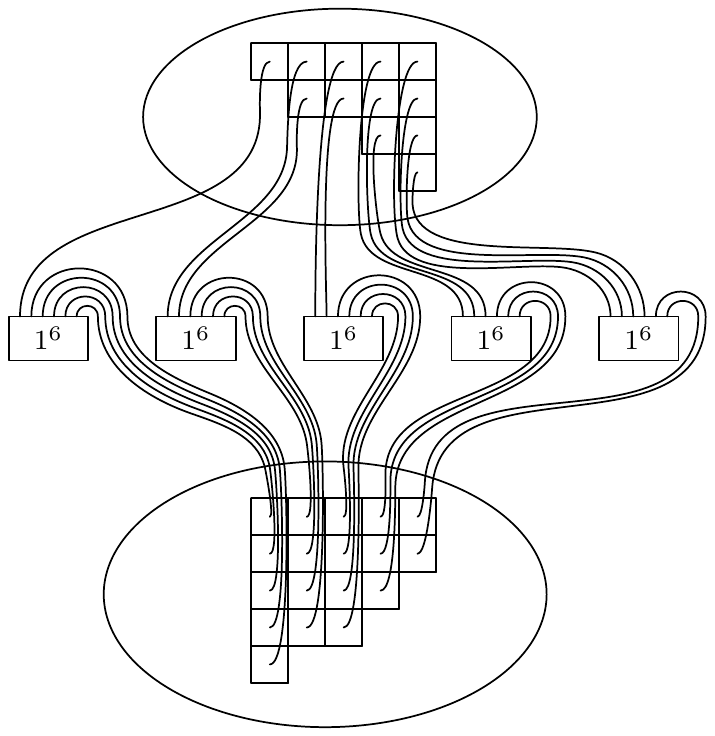}
  \end{center}

This gives us $\mu : \Gamma_{N,k} \rightarrow {\mathbb C}^*$
such that
$$ F(q_\lambda) = \mu(q) q_{\lambda^\dagger}.$$

\begin{pro} For $N$ odd and any $\lambda \in \Gamma_{N,k}$, we have that
$$ \mu(\lambda) =1.$$
For $N$ even and any $\lambda \in \Gamma_{N,k}$ we have that
$$\mu(\lambda) = (-1)^{|\lambda|}.$$
\end{pro}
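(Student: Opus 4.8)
The plan is to compute $\mu(\lambda)$ directly from the explicit diagrammatic presentation of $q_\lambda$ drawn in the figure above (following the skein/Hecke conventions of \cite{AU4}), by carefully tracking the framing and twist contributions that appear when one applies the coupon-flipping operation $F$. First I would recall precisely what $F$ does on the level of the Reshetikhin--Turaev functor: by the remark following the proposition ``$\mu$ is well-defined on self-dual objects,'' $F$ is the involution obtained by turning a coupon (together with its attached strands) upside down, and the defining relation is $F(q_\lambda)=\mu(\lambda)\,q_{\lambda^\dagger}$. Thus the whole problem reduces to: take the tangle presenting $q_\lambda$, flip it, then isotope it back — in the ribbon sense, keeping every framing change — to the standard tangle presenting $q_{\lambda^\dagger}$, and read off the scalar that is collected along the way.

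Next I would unpack the tangle for $q_\lambda$. It is built from the $N$-row rectangle $R=(\lambda_1^N)$, which in $H^{\rm{SU}(N)}_k$ is the unit object (up to the framing anomaly of the top antisymmetrizer on $N$ strands), together with the fusion morphism $V_\lambda\otimes V_{\lambda^\dagger}\to V_R$ read off from the complement skew diagram $(\lambda_1^N)/\lambda$, and the rotation by $\pi$ that identifies the rotated skew diagram with $\lambda^\dagger$. Applying $F$ completes this half-turn into a full turn of the block of $|\lambda|$ fundamental strands carrying $V_\lambda$ around the closed $N$-row rectangle. When one isotopes back into standard position, the twists on the $V_\lambda$-strands cancel against those built into the definition of $q_{\lambda^\dagger}$ except for a single residual framing sign per strand of the fundamental that is dragged past the closed $N$-box column. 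This residual sign is $(-1)^{N-1}$, and it is the one skein-theoretic input I would isolate as a lemma, citing \cite{TW1,TW2,B} (and \cite{BHMV1,BHMV2} for $N=2$): it is the statement that the closed $N$-box column, though isomorphic to the trivial object, carries ribbon/framing data $(-1)^{N-1}$. Accumulating one such sign per box of $\lambda$ gives
\[
\mu(\lambda)=(-1)^{(N-1)|\lambda|},
\]
which is $1$ when $N$ is odd and $(-1)^{|\lambda|}$ when $N$ is even, as claimed. A convenient internal consistency check: for $N=2$ this recovers the Frobenius--Schur indicator $(-1)^{|\lambda|}=(-1)^{2j}$ of the spin-$j$ representation, so the symplectic labels come out to be exactly the half-integer spins; and for $N$ odd one recovers $\mu\equiv 1$, matching the fact that $\mathrm{SU}(N)$ has no symplectic self-dual simples in that case.

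The main obstacle is the bookkeeping: making the half-turn/full-turn isotopy completely precise in the conventions of \cite{AU4} so that no spurious twists or signs are introduced, and giving a clean, convention-independent proof of the $(-1)^{N-1}$ column sign rather than quoting it. One way to make the last step robust is to peel boxes off $\lambda$ one column (or one box) at a time, using that the framing contributions are multiplicative under this decomposition, which reduces the whole computation to the single statement about one fundamental strand encircling the $N$-box antisymmetrizer — a standard computation in the HOMFLYPT/Hecke skein calculus. I expect the bulk of the written proof to be this reduction plus the figure manipulations, with the structural facts $\mu(\lambda)\mu(\lambda^\dagger)=1$, $\mu(0)=1$, and the $q$-independence of $\mu$ on self-dual objects serving as sanity checks along the way.
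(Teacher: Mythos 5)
Your proposal is correct and follows essentially the same route as the paper: apply $F$, bring top and bottom back into standard position by a half rotation, and collect a residual braiding/twist sign of $(-1)^{N-1}$ per box, giving $\mu(\lambda)=(-1)^{(N-1)|\lambda|}$. The one computational input you isolate as a lemma (the sign picked up at each closed $N$-box column) is exactly what the paper computes explicitly per coupon in Blanchet's variables, $(-a^{-1}s)^{nm+m(m-1)}(a^{-1}v)^m=(-1)^{(N-1)m}$ with $n+m=N$, so to complete your write-up you would just carry out that skein calculation rather than only citing it.
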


\begin{proof}
We observe that if we apply $F$ to $q_\lambda$, top and bottom can by a half rotation be brought into the right position for comparison with $q_{\lambda^\dagger}$ and the relevant computation for each coupons in between is the following
\begin{center}
\includegraphics[scale=1]{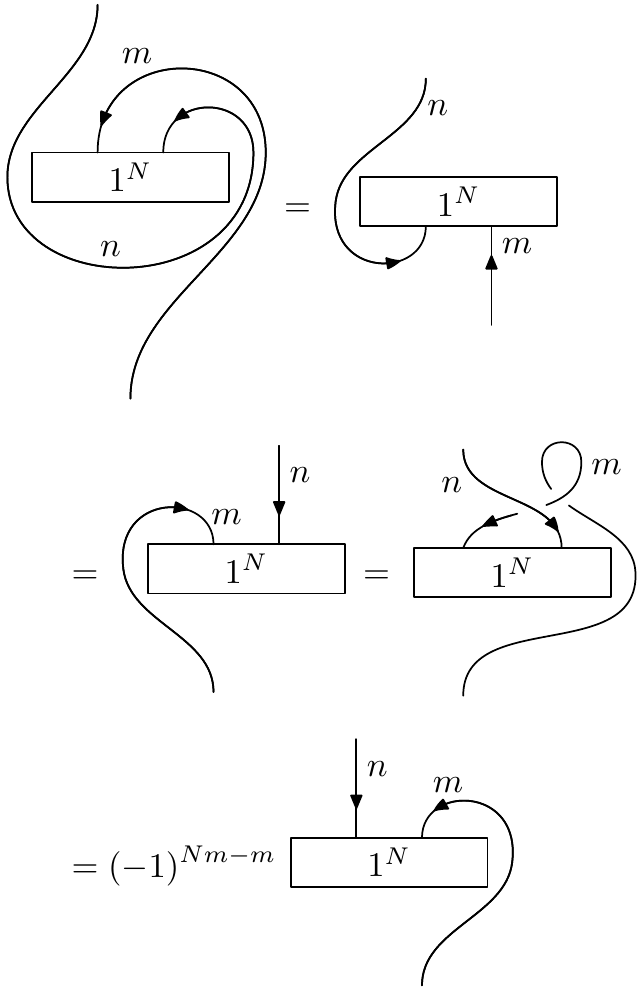} 
\end{center}
Here the last sign is a result of the following calculation in the notation of \cite{B}, using that
$$ a = q^{-\frac{1}{2N}}, v = q^{-\frac{N}{2}}, s=q^{\frac12},$$
namely, the braiding and the twist on top of the coupon contributes
$$ (-a^{-1}s)^{nm + m(m-1)} (a^{-1}v)^m = (-1)^{Nm -m}$$
times the coupon with the strands in the original position again.

\end{proof}

From this proposition, we observe that if $N$ is odd or $N$ is divisible by $4$, then there are no self-dual symplectic objects in $H^{\rm{SU}(N)}_k$. If however, $N$ is even, but $N/2$ is odd, then all self-dual objects are symplectic, since they have $N/2$ boxes. Moreover, we observe that on these self-dual objects
$$ \tilde \mu (\lambda) = -1.$$ In all cases, we see that $\tilde \mu$ is a fundamental symplectic character. 

\section{The general quantum group modular tensor categories}
\label{g}

We will now fix a simple Lie algebra $\Lie{g}$ and we will consider the correspond quantum group at the root  of unity $q = e^{2\pi i /(k+h)}$. The associated modular tensor category will be denoted $(H^{\Lie{g}}_k, \Lambda^{\Lie{g}}_k)$.
Let ${\mathcal W}$ be the weight lattice and ${\mathcal R}$ the root lattice for $\Lie{g}$. We recall that the fundamental group of $\Lie{g}$ is $\Pi(\Lie{g}) = {\mathcal W}/{\mathcal R}$. We have three general facts about $\Pi(\Lie{g})$. The first one is that
$$\lambda + \lambda^\dagger =0 \mbox{ } \rm{ mod } \mbox{ }{\mathcal R}$$
for all dominant weights $\lambda\in {\mathcal W}_+$ and $\lambda^\dagger = - w_0(\lambda)$, where $w_0$ is the longest element of the Weyl group, e.g. $\lambda^\dagger$ is the highest weight vector of the dual of the irreducible representation $V_\lambda$, corresponding to $\lambda$.
We further observe that if $V_\lambda$ is self-dual, then $2\lambda$ will be in ${\mathcal R}$.

The second fact is that if we know that for $\lambda,\mu,\nu\in {\mathcal W}_+$
$$\rm{Hom}_G(0, V_\lambda\otimes\V_\mu\otimes V_\nu) \neq 0,$$
then 
$$\lambda+\mu+\nu \neq 0 \mbox{ } \rm{ mod } \mbox{ }{\mathcal R}.$$

We recall that 
$$\rm{Hom}_G(0, V_\lambda\otimes\V_\mu\otimes V_\nu) = 0$$
implies that
$$H^{\Lie{g}}_k(0, V_\lambda\otimes\V_\mu\otimes V_\nu) = 0.$$
The corresponding property for the modular functor coming from Conformal Field Theory (see \cite{AU2}) is clear by construction. 

We now recall that $\Pi(\Lie{g})$ is cyclic unless $\Lie{g} = D_n$, where $\Pi(\Lie{g}) = {\mathbb Z}_2\times {\mathbb Z}_2$. In the last case, one knows that $(1,0)$ and $(0,1)$ are symplectic, but $(1,1)$ is not.
From this we conclude that in all cases, we see that there exist some even $N$ and a homeomorphism
$$ \tilde \mu' : \Pi(\Lie{g}) \rightarrow \zeta_N,$$
such that $\tilde \mu'(\lambda) = -1$ if and only if $\lambda$ is symplectic.
But then we define
$$\tilde \mu : \Lambda^{\Lie{g}}_k \rightarrow \zeta_N$$
to be the composite of the projection from $ \Lambda^{\Lie{g}}_k$ to $\Pi(\Lie{g})$ followed by $\tilde \mu'$.

We then see that $\tilde \mu \in \Pi(H^{\Lie{g}}_k)^*$ and indeed it is a fundamental symplectic character.

We remark that the result of this section applied to $\Lie{g} = \Lie{sl}(N)$ gives a second proof for the exists of a fundamental symplectic character in that case.

\end{document}